\newenvironment{proof*}{\noindent\emph{Proof}}{$\square$\smallskip}
\newtheorem{theorem}{Theorem}[section]
\newtheorem{Definition}[theorem]{Definition}
\newtheorem{lemma}[theorem]{Lemma}
\newtheorem{Example}[theorem]{Example}
\newtheorem{corollary}[theorem]{Corollary}
\newtheorem{Remark}[theorem]{Remark}
\newtheorem{proposition}[theorem]{Proposition}
\newtheorem{Exercise}[theorem]{Exercise}
\newtheorem{Exercises}[theorem]{Exercises}
\newtheorem{Notation}[theorem]{Notation}
\newtheorem{Convention}[theorem]{Convention}
\newtheorem{hypothesis}[theorem]{Hypothesis}
\newenvironment{definition}{\begin{Definition}\normalfont}{\end{Definition}}
\newenvironment{example}{\begin{Example}\normalfont}{\end{Example}}
\newenvironment{remark}{\begin{Remark}\normalfont}{\end{Remark}}
\newenvironment{notation}{\begin{Notation}\normalfont}{\end{Notation}}
\newenvironment{convention}{\begin{Convention}\normalfont}{\end{Convention}}
\newcommand{\bz}{\ensuremath{\mathbb{Z}}} %The integers
\newcommand{\bn}{\ensuremath{\mathbb{N}}} %The natural numbers
\newcommand{\id}{\ensuremath{\mathrm{id}}} %The identity morphism
\newcommand{\incl}{\ensuremath{\mathrm{incl}}} %The inclusion morphism
\newcommand{\e}{\ensuremath{\mathrm{e}}} %The number e
\newcommand{\image}{\ensuremath{\mathrm{im}}} %The image of a map
\newcommand{\co}{\ensuremath{\colon}} % colon for funnctions
\newcommand{\sm}{\ensuremath{{\rm Sim}}} %Notation for similarity structure
\newcommand{\lk}{\ensuremath{{\rm lk}}} %Notation for link
\newcommand{\sta}{\ensuremath{{\rm st}}} %Notation for star
\newcommand{\dlk}{\ensuremath{{\lk_{\downarrow}}}} %Notation for link
\newcommand{\expa}{\ensuremath{{\rm expansion}}} %Notation expansion of a vertex
\newcommand{\depth}{\ensuremath{{\rm depth}}} %Notation depth of a vertex
\title[Finiteness properties]{Finiteness properties of some groups of local similarities}  
\author[D.~S.~Farley]{Daniel S. Farley}
\address{Department of Mathematics and Statistics\\ Miami University\\ Oxford, OH 45056 U.S.A.}
\email{farleyds@muohio.edu}
\author[B.~Hughes]{Bruce Hughes}
\address{Department of Mathematics\\ Vanderbilt University\\ Nashville, TN 37240 U.S.A.}
\email{bruce.hughes@vanderbilt.edu}
\thanks{The second-named author was supported in part by NSF Grant DMS--0504176.}
\date{\today}
\begin{document}

\begin{abstract} Hughes has defined a class of groups, which we call FSS (finite similarity structure) groups.
Each FSS group acts on a compact ultrametric space by local similarities. The best-known example is Thompson's group $V$.

Guided by previous work on Thompson's group $V$, we establish a number of new results about
FSS groups.
Our main result is that a class of FSS groups are of type $F_\infty$. This generalizes work of Ken Brown from the 1980s. 
Next, we develop methods
for distinguishing between isomorphism types of some of the Nekrashevych-R\"{o}ver groups $V_{d}(H)$, where $H$ is a finite group, and show that all such groups
$V_{d}(H)$ have simple subgroups of finite index. 
Lastly, 
we show that FSS groups defined by small $\mathrm{Sim}$-structures are braided diagram groups
over tree-like semigroup presentations. This generalizes a result of Guba and Sapir, who first showed that 
Thompson's group $V$ is a braided diagram group.

\end{abstract}

\maketitle

\setcounter{tocdepth}{1} \tableofcontents

{\footnotesize {\hspace*{15pt}}2000 Math.\ Subject Class. Primary 20F65, 22D10, 54E45 {\hspace*{30pt}} }
\tableofcontents

%%%%%%%%%%%%%%%%%%%%%%%%%%%%%%%%%%%%%%%%%%%%%%%%%%%%%%%%%%%%%%%%%%
\section{Introduction}

In \cite{HugLSHP}, Hughes defined a class of groups that act by homeomorphisms on compact ultrametric spaces.
Let $X$ be a compact ultrametric space. A finite similarity structure $\mathrm{Sim}_{X}$ on $X$ assigns to each 
pair of balls $B_{1}, B_{2} \subseteq X$ a finite set $\mathrm{Sim}_{X}(B_{1}, B_{2})$ of surjective similarities from
$B_{1}$ to $B_{2}$. The sets $\mathrm{Sim}_{X}(B_{1}, B_{2})$ are required to have certain additional properties,
such as closure under compositions and under restrictions to subballs. (A complete list of the required properties appears in 
Definition \ref{def:fin sim struct}.) Given a finite similarity structure, one defines an associated group
$\Gamma(\mathrm{Sim}_{X})$: it is the group of homeomorphisms of $X$ that locally resemble elements of $\mathrm{Sim}_{X}$. 
We will call the groups $\Gamma(\mathrm{Sim}_{X})$ finite similarity structure (FSS) groups. 
Perhaps the best-known example of an FSS group is Thompson's group
$V$. Section \ref{section:review} contains a review of FSS groups.

Hughes \cite{HugLSHP} proved that all FSS
groups have the Haagerup property. His argument even established the stronger conclusion that all FSS groups
act properly by isometries on CAT(0) cubical complexes. This greatly extended earlier 
results of Farley \cite{Far}, who showed that $V$ has the 
Haagerup property. 

The results of \cite{HugLSHP} left many open questions about the new class of FSS groups. In this paper, guided by previous work on
Thompson's group $V$ and related groups, we will establish several new properties of FSS groups. 
For instance, Brown \cite{Brown1987} proved
that Thompson's group $V$ has type $F_{\infty}$. 
It seems natural to expect
some more general class of FSS groups to have type $F_{\infty}$ as well.
Our main theorem states a fairly general sufficient condition for an FSS group to have type $F_{\infty}$.
Recall that a group $\Gamma$ has \emph{type $F_{\infty}$} means there exists a $K(\Gamma,1)$-complex each of whose skeleta is finite.  

\begin{theorem}[Main Theorem]
\label{thm: main}
Let $X$ be a compact ultrametric space together with a finite similarity structure $\sm_X$ that is rich in simple contractions
and has at  most finitely many $\sm_X$-equivalence classes of balls of $X$.
If $\Gamma$ is the FSS group associated to $\sm_X$,
then $\Gamma$ is of type $F_\infty$.
\end{theorem}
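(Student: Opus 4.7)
The plan is to adapt Brown's proof \cite{Brown1987} that Thompson's group $V$ is of type $F_\infty$, applying it to a contractible filtered $\Gamma$-CW-complex built directly from $\sm_X$. The standard Brown--Stein machinery will furnish a contractible complex $K$ with cocompact skeleta and finite cell stabilizers, and Brown's finiteness criterion will then reduce the theorem to a connectivity estimate on descending links that ultimately must be extracted from the hypotheses on $\sm_X$.

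First I would construct the relevant complex. Consider the poset $\mathcal{E}$ whose elements are (equivalence classes of) $\sm_X$-expansions of a fixed base pattern on $X$, ordered by refinement; let $K$ be its geometric realization. Higher-dimensional cells correspond to collections of simple expansions that can be performed on disjoint balls simultaneously. Since any two partitions of $X$ into balls admit a common refinement and $\sm_X$ is closed under composition and restriction, the poset $\mathcal{E}$ is directed, and $K$ is contractible by the usual colimit/nerve argument. The group $\Gamma$ acts on $K$ by applying local similarities to expansions, and this action is cellular.

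Next I would filter $K$ by vertex complexity $n(P)=\#P$, letting $K_n$ be the subcomplex spanned by vertices with $n(P)\leq n$. The two finiteness hypotheses do exactly what they are supposed to do here: finitely many $\sm_X$-equivalence classes of balls of $X$ yields finitely many $\Gamma$-orbits of cells in each $K_n$, while the finiteness of each set $\sm_X(B_1,B_2)$ yields finite cell stabilizers (so each stabilizer is trivially of type $F_\infty$). Brown's criterion then reduces the theorem to showing that the connectivity of the pair $(K_{n+1},K_n)$ tends to infinity with $n$, which by a Bestvina--Brady-style Morse argument is equivalent to proving that the descending link $\dlk(P)$ in the Morse function $n(P)$ becomes arbitrarily highly connected as $\#P\to\infty$.

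The main obstacle, as expected, is this final connectivity statement; this is where the hypothesis that $\sm_X$ is \emph{rich in simple contractions} must bear all the weight. I would identify $\dlk(P)$ with a simplicial complex $\Sigma(P)$ whose vertices are the simple contractions of $P$ permitted by $\sm_X$ and whose simplices are the collections of such contractions supported on pairwise disjoint subballs of $P$ --- a matching-complex-like object. Richness, together with the finite number of $\sm_X$-classes of balls, should force $P$ to contain many disjoint regions each admitting at least one simple contraction, so that $\Sigma(P)$ contains a large ``matching'' and, via either a direct Nerve Lemma computation or a Quillen fiber argument over the poset of compatible contraction sets, is $\nu(\#P)$-connected with $\nu(n)\to\infty$. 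Combining this with the previous steps completes the proof that $\Gamma$ is of type $F_\infty$.
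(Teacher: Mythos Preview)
Your proposal is correct and follows essentially the same strategy as the paper: build the order complex $K$ of the directed poset of vertices (what you call expansions), filter by height, verify $\Gamma$-finiteness and finite stabilizers from the two hypotheses, and then identify the descending link of a vertex $v$ with the ``matching complex'' $\mathcal{N}_v$ of pairwise-disjoint simple contractions, whose connectivity is established via the Nerve Theorem and the rich-in-simple-contractions hypothesis. The one place your sketch is slightly loose is the identification $\dlk(v)\simeq\Sigma(P)$: in the paper this step requires showing that a collection of simple contractions has a (greatest) lower bound if and only if they are pairwise disjoint (Lemma~\ref{lem: lower bounds}), which is what makes the nerve of the cover by cones $B(v_i)$ coincide with your $\Sigma(P)$.
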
 
This theorem is proved as Theorem~\ref{thm: main in body} below.
Thompson's group $V$ is covered by the  theorem above, and our method of proof can be considered a generalization
of Brown's original argument. The strategy can be briefly sketched as follows. We show 
%%%%%%%%%%%%%% in Section \ref{sec:The similarity complex}
that every FSS group $\Gamma$ acts on a certain simplicial complex $K$,
which we call its similarity complex. Under the hypothesis that there are finitely many $\mathrm{Sim}_X$-equivalence classes of balls 
(Definition \ref{def:simequivalenceclass}),
we show 
%%%%%%%%%%%%%% in Section \ref{sec:localfiniteness} 
that the complex $K$ will be filtered by 
$\Gamma$-finite subcomplexes. If the finite similarity structure $\mathrm{Sim}_X$ is also rich in simple
contractions (Definition \ref{def: rich in contractions}), then one can argue that the connectivity of the $\Gamma$-finite subcomplexes tends to infinity.
%%%%%%%%%%%%%%% This is argued in Section \ref{sec: connectivityofdownwardlinks}. 
The fact that $\Gamma$ has type $F_{\infty}$ then follows from well-established principles.
The proof of Theorem \ref{thm: main} occupies Sections \ref{sec:The similarity complex}-\ref{sec: zipper}. 

Section~\ref{sec: zipper} also contains a proof that for an arbitrary FSS group $\Gamma$, the similarity complex $K$ is a model for 
$E_{Fin}= \underline{E}\Gamma$,
the classifying space for proper $\Gamma$ actions. 

In Section \ref{sec:isomorphismtype}, we investigate the problem of determining when two Nekrashevych-R\"{o}ver groups $V_{d'}(H')$ and 
$V_{d}(H)$ are isomorphic. (The definition of these groups is recalled at the end of Section \ref{section:review}; in particular, each
Nekrashevych-R\"{o}ver group is indeed an FSS group, and has type $F_{\infty}$ by Theorem \ref{thm: main}. Note that, in this paper, the groups
$H$ in $V_{d}(H)$ are always finite, which is not necessarily the case in \cite{NekJOT}, for instance.)  
Our approach uses results of Rubin \cite{Rubin1996, Rubin1989}. The basic idea is to analyze the germs of the action
of the FSS group $\Gamma$ on the compact ultrametric space $X$. In the event that $V_{d'}(H')$ and $V_{d}(H)$ are isomorphic, Rubin's work implies that
there will be a 
homeomorphism $h: X \rightarrow Y$ between the associated compact ultrametric spaces, and this homeomorphism will induce an isomorphism between
the germ group at $x$ and the germ group at $h(x)$ for every $x\in X$. We can thus distinguish between $V_{d'}(H')$ and $V_{d}(H)$ by showing that they have
different germ groups. We show how to compute the germ group of any group $V_{d}(H)$ at any point $x$, and give a sample application
(Proposition \ref{prop: nonisomorphism}). Our results do not give complete information on the isomorphism types of the Nekrashevych-R\"{o}ver examples,
but should allow one to distinguish between two given groups in many cases.

In Section \ref{sec:simplicity}, we establish simplicity results. Each of the generalized Thompson groups $V_{d}$ is either simple or has
a simple subgroup of index two. We show more generally that every group $V_{d}(H)$ has a simple subgroup of finite index. Specifically, we define
a group $V'_{d}(H)$ that has index one or two in $V_{d}(H)$. We determine the abelianization of $V'_{d}(H)$, show that it is always finite, and prove
that the commutator subgroup $[V'_{d}(H), V'_{d}(H)]$ is simple. Our arguments draw on work of Nekrashevych \cite{NekJOT} and Brin \cite{BrinHDTG}.

In Section \ref{sec:diagramgroups}, we show that every braided diagram group over a tree-like semigroup presentation is an FSS group. Thompson's
group $V$ (and the more general class of generalized Thompson groups $V_{d}$) are all braided diagram groups of this type by 
\cite{Far3} and \cite{GubaSapir}.
It is an open question whether all FSS groups are braided diagram groups.

%%%%%%%%%%%%%%%%%%%%%%%%%%%%%%%%%%%%%%%%%%%%%%%%%%%%%%%%%%%%%%%%%
\section{Groups defined by finite similarity structures}
\label{section:review}
%%%%%%%%%%%%%%%%%%%%%%%%%%%%%%%%%%%%%%%%%%%%%%%%%%%%%%%%%%%%%%%%%%%%%

%%%%%%%%%%%%%%%%%%%%%%%%%%%%%%%%%%%%%%%%%%%%%%%%%%%%%%
\subsection*{Review of finite similarity structures}
%%%%%%%%%%%%%%%%%%%%%%%%%%%%%%%%%%%%%%%%%%%%%%%%%%%%%%

We begin with a review of finite similarity structures on  compact, ultrametric spaces, as defined in Hughes \cite{HugLSHP}.

\begin{definition} An {\it ultrametric space} is a metric space $(X,d)$ such that $d(x,y)\leq\max\{d(x,z), d(z,y)\}$ for all
$x,y,z\in X$.
\end{definition}

If $(X,d)$ is a metric space, $x\in X$, and $r>0$, then $B(x,r)=\{ y\in X ~|~ d(x,y) \leq r\}$ denotes the closed ball about $x$ of radius $r$.
In an ultrametric space, closed balls are open sets; in a compact ultrametric space, closed balls are also open balls (perhaps with a different radius).
Moreover, in an ultrametric space, if two balls intersect, then one must contain the other. 

Throughout this paper, a \emph{ball} in $X$ means a closed ball in $X$.

\begin{definition}
If $\lambda > 0$, then a map $g\co X\to Y$ between metric spaces
$(X,d_X)$ and $(Y,d_Y)$
is a {\it $\lambda$-similarity} provided $d_Y(gx, gy)=\lambda d_X(x,y)$
for all $x,y\in X$.
\end{definition}

\begin{definition} A homeomorphism $g\co X\to Y$ between metric spaces is
a {\it local similarity} if for every $x\in X$ there exists $r, \lambda >0$
such that $g$ restricts to a surjective $\lambda$-similarity 
$g|\co B(x,r)\to B(gx,\lambda r)$.
In this case,
$\lambda$ is the {\it similarity modulus of $g$ at $x$} and we write $sim(g,x)=\lambda$.
A \emph{local similarity embedding} is a local similarity onto its image.
\end{definition}

\begin{convention}
\label{convention:isolated}
For a local similarity $g$, the similarity modulus $sim(g,x)$ is uniquely determined by $g$ and $x$, except in the
case $x$ is an isolated point of $X$. In that case, we will always take $sim(g,x)=1$.
Likewise, if $g\co X\to Y$ is a map between metric spaces and $X=\{ x\}$ is a singleton, then $g$ will only be referred to 
as a $\lambda$-similarity for $\lambda=1$.
\end{convention}

The \emph{group of all local similarities} of a metric space $X$ onto $X$ is denoted $LS(X)$
and is a subgroup of the group of self-homeomorphisms on $X$.

Let $(X, d)$ be a compact ultrametric space.
The metric will usually not be explicitly mentioned.

\begin{definition}
\label{def:fin sim struct}
A {\it finite similarity structure for $X$} is a function
$\sm_X$ that assigns to
each ordered pair $B_1, B_2$ of balls in $X$
a (possibly empty)
set $\sm_X(B_1,B_2)$ of surjective similarities
$B_1\to B_2$ such that whenever 
$B_1, B_2, B_3$ are balls in $X$, the following properties
hold:
\begin{enumerate}
\item (Finiteness) $\sm_X(B_1,B_2)$ is a finite set.
\item (Identities) $\id_{B_1}\in\sm_X(B_1,B_1)$.
\item (Inverses) If $h\in\sm_X(B_1,B_2)$, then $h^{-1}\in\sm_X(B_2,B_1)$.
\item (Compositions) 
If $h_1\in\sm_X(B_1,B_2)$ and $h_2\in\sm_X(B_2,B_3)$, then
$h_2h_1\in\sm_X(B_1,B_3)$.
\item (Restrictions)
If $h\in\sm_X(B_1,B_2)$ and $B_3\subseteq B_1$, 
then $$h|B_3\in\sm_X(B_3,h(B_3)).$$
\end{enumerate}
\end{definition}

In other words, $\sm_X$ is a category whose objects are the balls of $X$ and whose morphisms are finite sets of surjective similarities together 
with a restriction operation.

\begin{definition}
If $B$ is a ball in $X$, then an embedding $h\co B\to X$ is  {\it locally determined by 
$\sm_X$} provided for every $x\in B$, there exists
a ball $B'$ in $X$ such that $x\in B'\subseteq B$, $h(B')$ is a ball in  $X$, and
$h|B'\in\sm_X(B', h(B'))$.
\end{definition}

\begin{definition}
The \emph{finite similarity structure} 
(\emph{FSS}) \emph{group
$\Gamma = \Gamma(\sm_X)$ associated to $\sm_X$} is the set of all homeomorphisms $h\co X\to X$ such that $h$ is locally determined by
$\sm_X$.
\end{definition}

Properties (2)--(5) of Definition~\ref{def:fin sim struct}
imply that $\Gamma(\sm_X)$ is indeed a group.
In fact, it is the maximal subgroup of the homeomorphism group of $X$
consisting of homeomorphisms locally determined by $\sm_X$.
Moreover, 
$\Gamma(\sm_X)$ is a subgroup of the group $LS(X)$.

\begin{definition}
\label{def:loc det}
A subgroup of $\Gamma(\sm_X)$ is said to be a {\it group locally determined by
$\sm_X$}. 
\end{definition}

%%%%%%%%%%%%%%%%%%%%%%%%%%%%%%%%%%%%%%%%%%%%%%%%%%%%%%%%%%%
\subsection*{Examples of FSS groups} 
%%%%%%%%%%%%%%%%%%%%%%%%%%%%%%%%%%%%%%%%%%%%%%%%%%%%%%%%%%%%%%%%

We recall standard alphabet language and notation.
An {\it alphabet} is a non-empty finite set $A$. Finite (perhaps empty) $n$-tuples
of $A$ are {\it words}. We typically write a word as a string of letters from $A$. 
The set of all words is denoted $A^*$
and the set of {\it infinite words} is denoted
$A^\omega $; that is,
$$A^* = \coprod_{n=0}^\infty A^n ~~~\text{and}~~~ 
A^\omega = \prod_1^\infty A.$$
The set of non-empty words is denoted $A^+$; that is, $A^+ = \coprod_{n=1}^\infty A^n$.
If $u\in A^*$, then $|u|=n$ means $u\in A^n$.
If $u\in A^*$ with $u\not=\emptyset$ and $n$ is a non-negative integer, then $u^n:= uu\cdots u$ ($n$ times) $\in A^*$ 
and $\bar{u} := uuu\cdots\in A^\omega$.

Let $T_A$ be the tree associated to $A$. The vertex set of $T_A$ is $A^*$. Two words $v,w$ are connected by an edge
if and only if there exists $x\in A$ such that
$v=wx$ or $vx=w$. The root of $T_A$ is $\emptyset$.
Thus, $A^\omega = \mathrm{Ends}(T_A,\emptyset)$, the end space of the tree $T_A$ with root $\emptyset$, and so comes with a natural
ultrametric  $d$ making $A^\omega$  compact.
That is, if $x=x_1x_2x_3\dots$ and $y=y_1y_2y_3\dots$ are in $A^\omega$, then
$$d(x,y) = \begin{cases} 0 &\text{if $x=y$}\\
                        \e^{1-n} &\text{if $n=\min\{ k ~|~ x_k\not= y_k\}$}
\end{cases}.$$                        

\begin{remark}\label{rem: characterization of balls}
The metric balls in $A^\omega$ are of the form $wA^\omega$, where $w\in A^*$.
\end{remark}

We may assume that $A$ is totally ordered. There is then an induced
total order on $A^\omega$, namely  the lexicographic order.

%\begin{definition} An element $x\in A^\omega$ is \emph{eventually periodic} if there exists $u,v\in A^*$ such that
%$x=u\bar{v}$. 
%If $x$ is eventually periodic, the \emph{period of $x$} is 
%$$\pi(x) := \min\{ \vert v\vert ~|~ x=u\bar{v}, \text{where $u,v\in A^\ast$}\}\in\bn.$$
%\end{definition}

Let $A= \{ a_1, a_2,\dots, a_d\}$ and let $\Sigma_d$ be the symmetric group on $A$. 
There is an action of $\Sigma_d$ on $A^\ast$ given by 
$\sigma(x_1\dots x_n)=\sigma(x_1)\dots\sigma(x_n)$; 
this action induces an action of $\Sigma_d$ on the tree $T_A$.
Indeed, there is an action of $\Sigma_d$ on $A^\omega$ given by
$$\sigma(x_1 x_2 x_3\dots)=\sigma(x_1)\sigma(x_2)\sigma(x_3)\dots .$$

\begin{notation}
Let $H$ be a subgroup of $\Sigma_d$.
\end{notation}

\begin{definition}
\label{def:symmetric group sim}
If $w_1, w_2\in A^\ast$, then let 
$\sm(w_1A^\omega, w_2A^\omega)$ consist of all homeomorphisms $h\co w_1A^\omega\to w_2A^\omega$
for which there exists $\sigma\in H$ such that
$h(w_1x)=w_2\sigma(x)$ for all $x\in A^\omega$.
Then $\sm$ is the \emph{finite similarity structure for $A^\omega$ determined by $H$.}
\end{definition}

\begin{remark}
\label{rem:sim properties}
Here are some observations related to Definition~\ref{def:symmetric group sim}.
\begin{enumerate}
\item  $\sm$ is a finite similarity structure for $A^\omega$.
	\item The element $\sigma\in H$ is uniquely determined by $h\in\sm(w_1A^\omega, w_2A^\omega)$.
	\item Even though $w_1$ and $w_2$ are not uniquely determined by $h$, the integer $|w_2|-|w_1|$ is the 
	natural logarithm of the 	similarity modulus of $h$ at each point of $w_1A^\omega$. 
	Hence, $|w_2|-|w_1|$ is uniquely determined by $h$. 
	Moreover, $h$ together with either $w_1$ or $w_2$ uniquely determines the other.
	\item If $p, q\in A^*$ are such that $h|\in\sm(w_1pA^\omega, w_2qA^\omega)$, then $h|$ is given by $w_1px\mapsto w_2q\sigma(x)$
	for all $x\in A^\omega$ and $|p| = |q|$. 
	\item $\sm(w_1A^\omega, w_2A^\omega)$ contains the unique order--preserving similarity, which is given by
$w_1x\mapsto w_2x$ for all $x\in A^\omega$.
\end{enumerate}
\end{remark}

\begin{remark}
 \label{claim: Vdh is Gamma} 
If $\Gamma=\Gamma(\sm)$ is the FSS group associated to $\sm$, then
$\Gamma$ is isomorphic to the Nekrashevych--R\"over groups $V_d(H)$.
See Hughes \cite{HugLSHP} for comments about the groups of
Nekrashevych \cite{NekJOT} and  R\"over \cite{Rov99}.
For example,
note that in the special case $H=\{ 1\}$,  the group $V_d(H)$ is   $G_{d,1}$, which is a Higman--Thompson group.
\end{remark}

%%%%%%%%%%%%%%%%%%%%%%%%%%%%%%%%%%%%%%%%%%%%%%%%%%%%%%%%%%%%%%%%%%%
\section{The similarity complex associated to a finite similarity structure}
\label{sec:The similarity complex}

Throughout this section, $X$ will 
denote a non-empty, compact, 
ultrametric space with a
finite similarity structure $\sm=\sm_X$ on $X$.

Note that  the image of a local similarity embedding $f\co B\to X$, where $B$ is a ball in $X$, is a finite union of mutually disjoint
balls in $X$ (see \cite[Lemma 2.4]{HugLSHP}).

We begin by recalling the zipper  as defined in Hughes \cite{HugLSHP}.
Consider the set 
\begin{align*}
 \mathcal{S} := \{ (f,B) ~|~ &\text{$B$ is a  ball in $X$ and $f\co B\to X$} \\
&\text{is an embedding locally determined by $\sm$}\}.\end{align*}
Define an equivalence relation on $\mathcal S$ by declaring 
that $(f_1, B_1)$ and $(f_2, B_2)$ are {\it equivalent}
provided there exists $h\in\sm(B_1, B_2)$ such
that $f_2h=f_1$ (in particular,
$f_1(B_1)=f_2(B_2)$). The verification that this is an equivalence
relation requires the Identities, Compositions, and Inverses
Properties of the similarity structure.
Equivalence classes are denoted by $[f,B]$.
Let $\mathcal E$ be the set of equivalence classes of pairs $(f,B)\in\mathcal S$.
Thus,
$$\mathcal E := \{ [f,B] ~|~ (f,B)\in\mathcal{S}\}.$$
The \emph{zipper} is 
$$Z := \{[f,B]\in\mathcal E ~|~ f(B) ~\text{is a ball in $X$
and $f\in\sm(B, f(B))$}\}.$$

Note that an element $[f,B]\in Z$ is uniquely determined
by the ball $f(B)$. In fact, $[f,B] = [\incl_{f(B)},f(B)]$,
where $\incl_Y\co Y\to X$ denotes the inclusion map.
Thus,
$$Z = \{[\incl_B, B]\in{\mathcal E} ~|~ B ~\text{is a ball in $X$}\}.$$
In particular, $Z$ can be identified with the collection of all balls in $X$.

We now begin the construction of a complex on which $\Gamma$ acts.

\begin{definition}
Let $k$ be a positive integer. 
A \emph{pseudo-vertex $v$ of height $k$}  is a set
$$v = \{ [f_i, B_i] ~|~ 1\leq i\leq k \},$$ where $[f_i, B_i]\in\mathcal E$ for each $i= 1,\dots, k$
and such that $\{f_i(B_i)\}_{i=1}^k$ is a collection of disjoint subsets of $X$.
The height of $v$ is denoted $\Vert v\Vert = k$.
The \emph{image of $v$} is $\image(v) :=\bigcup_{i=1}^k f_i(B_i)\subseteq X$.
\end{definition}

Note that the image of a pseudo-vertex $v$ is well-defined.
Note also that the set of pseudo-vertices of height $1$ 
is $\{ \{ [f,B]\} ~|~ [f,B]\in\mathcal{E}\}$. 
That is, with a slight abuse of notation, $\mathcal E$ is the set of pseudo-vertices of height $1$.

\begin{definition} \label{def:simequivalenceclass}
The \emph{$\sm$--equivalence class} of a ball $B$ in $X$ is
$$[B] := \{ A\subseteq X ~|~ \text{$A$ is a ball and $\sm(A,B)\not=\emptyset$}\}.$$
\end{definition}

The Identities, Inverses, and Compositions Properties imply that $\sm$--equivalence
is an equivalence relation on the set of all balls in $X$.

\begin{definition}
The \emph{second coordinate} of a pseudo-vertex $v=\{[f,B]\}$ of height $1$ is the 
$\sm$-equivalence class $[B]$. 
The \emph{set of second coordinates} of a pseudo-vertex $v=\{[f_i,B_i] ~|~ 1\leq i\leq k\}$ of height $k$ is
the set $\{ [B_i] ~|~ 1\leq i\leq k\}$.
\end{definition}

Note that this is well-defined; that is, if $[f,B] = [f',B']$, then $[B]=[B']$.

\begin{definition}
A \emph{vertex $v$ of height $k$}  is a pseudo-vertex
$$v = \{ [f_i, B_i] ~|~ 1\leq i\leq k \}$$ 
of height $k$ such that 
$X=\coprod_{i=1}^k f_i(B_i)$,
where $\coprod$ denotes disjoint union.
The set of all vertices of all heights is denoted $K^0$.
\end{definition}

Note that a pseudo-vertex $v$ is a vertex if and only if $\image(v) = X$.
Note also that every homeomorphism $ \gamma\co X\to X$ locally determined by $\sm$ 
represents a vertex $[\gamma ,X]$ of height $1$.

\begin{definition}
A pseudo-vertex $v$ is \emph{positive} if each element of $v$ is in the zipper $Z$.
\end{definition}

\begin{remark}
As noted above, there is a bijection from the zipper $Z$ to the set of balls in $X$.
That bijection induces a bijection from the set of positive vertices to the set of
partitions of $X$ into balls. This bijection sends a positive vertex
$v= \{ [f_i,B_i] \}_{i=1}^k$ to the partition $\{f_i(B_i) \}_{i=1}^k$.
The inverse of this bijection sends a partition $\{ B_i\}_{i=1}^k$ of $X$ into balls
to the positive vertex $\{ [\incl_{B_i},B_i] \}_{i=1}^k$.
\end{remark}

\begin{definition}
If $v$ is a pseudo-vertex and $[f,B]\in v$ with $B$ containing more than one point, then the \emph{simple expansion of $v$ at $[f,B]$}
is the pseudo-vertex 
$$w= \{[g,A]\in v ~|~ [g,A]\not= [f,B]\} \cup \{[f|A, A] ~|~ A \text{~is a maximal proper sub-ball of $B$}\}.$$
Moreover, $v$ is the \emph{simple contraction of $w$ at} 
$$\{[f|A, A] ~|~ A \text{~is a maximal proper sub-ball of $B$}\}.$$
In this situation, we write $v\nearrow w$ and $w\searrow v$.
\end{definition}

If $v$ is a pseudo-vertex and $[f,B]\in v$ with $B$ containing exactly one point (which is to say, $B$ does not contain a proper sub-ball),
then the expansion of $v$ at $[f,B]$ is not defined.

\begin{remark}
If $v$ and $w$ are pseudo-vertices such that $v\nearrow w$, then the following hold.
\begin{enumerate}
	\item $||v|| <||w||$.
	\item $v$ is a vertex if and only if $w$ is a vertex.
	\item If $v$ is positive, then $w$ is positive.
\end{enumerate}
\end{remark}

\begin{remark}\label{rem: well-defined expansions} Simple expansions  are well-defined in the following sense.
If $[f_1,B_1] = [f_2, B_2]\in v$, then
\begin{align*}
&\{[f_1|A_1, A_1] ~|~ A_1 \text{~is a maximal proper sub-ball of $B_1$}\} =\\
&\{[f_2|A_2, A_2] ~|~ A_2 \text{~is a maximal proper sub-ball of $B_2$}\}.
\end{align*}
(This follows from the fact that a surjective similarity $B_1\to B_2$ carries maximal proper sub-balls of $B_1$ to maximal proper sub-balls of $B_2$
and from the Restrictions property of $\sm$.)
The converse need not be true. That is, if $w$ is a pseudo-vertex and $u\subseteq w$, then it might be the case
that there is more than one pseudo-vertex that is a simple contraction of $w$ at $u$. However, if $v$ is a simple contraction
of $w$ at $u$, then $u$ is uniquely determined: if $v$ is also a simple contraction of $w$ at $u'$, then $u = u'$.
\end{remark}

\begin{remark}
\label{rem:unions of pseudo-vertices}
Let $v$ and $w$ be pseudo-vertices such that $\image(v)\cap\image(w)=\emptyset$.
The following observations are immediate.
\begin{enumerate}
	\item $v\cup w$ is a pseudo-vertex and $||v\cup w||= ||v||+||w||$.
	\item If $v\nearrow v'$, then $\image(v')\cap\image(w)=\emptyset$ 
	and $v\cup w\nearrow v'\cup w$.
	\item If $v$ and $w$ are positive, then so is $v\cup w$.
\end{enumerate}
\end{remark}

\begin{definition} If $v$ and $w$ are pseudo-vertices, then write $v\leq w$ if and only if there is a finite sequence of simple
expansions $v=v_1\nearrow v_2  \cdots \nearrow v_n= w.$ The pseudo-vertex $w$ is an \emph{expansion} of $v$
and $v$ \emph{expands to} $w$.
\end{definition}

\begin{lemma} The set of pseudo-vertices is partially ordered by $\leq$.
\end{lemma}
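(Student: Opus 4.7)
The plan is to verify directly that $\leq$ satisfies the three axioms of a partial order: reflexivity, transitivity, and antisymmetry.

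For reflexivity, I would simply invoke the definition with the trivial chain $v = v_1$ of length $n=1$, which involves no simple expansions but still qualifies as a finite sequence in the sense of the definition. Hence $v \leq v$ for every pseudo-vertex $v$.

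For transitivity, suppose $v \leq w$ and $w \leq u$. Then there exist chains
\[
v = v_1 \nearrow v_2 \nearrow \cdots \nearrow v_n = w \quad\text{and}\quad w = w_1 \nearrow w_2 \nearrow \cdots \nearrow w_m = u.
\]
Concatenating them yields a single chain of simple expansions from $v$ to $u$, which establishes $v \leq u$.

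The only part requiring real content is antisymmetry, and this is where I would lean on the height-monotonicity clause from the preceding remark: whenever $v \nearrow w$, one has $\|v\| < \|w\|$. Consequently, any chain of simple expansions of length at least two strictly increases height from start to finish. So if $v \leq w$ via a chain $v = v_1 \nearrow \cdots \nearrow v_n = w$, then either $n=1$ and $v=w$, or $\|v\| < \|w\|$. Now assume both $v \leq w$ and $w \leq v$. Applying the above dichotomy to each inequality gives $\|v\| \leq \|w\|$ and $\|w\| \leq \|v\|$, so $\|v\| = \|w\|$. The strict-inequality alternative is then ruled out in each direction, forcing both chains to have length one, and hence $v = w$. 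The main (and only modest) obstacle is making sure the convention about length-one chains is acknowledged when handling the reflexive case and the equality case of antisymmetry; everything else is immediate bookkeeping from the height remark.
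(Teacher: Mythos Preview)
Your proof is correct and follows essentially the same approach as the paper's: reflexivity from the trivial chain, transitivity by concatenation (the paper phrases this as $\leq$ being a transitive closure), and antisymmetry via the height-monotonicity observation $\|v\| < \|w\|$ whenever $v \nearrow w$. If anything, you have been more careful than the paper in distinguishing the length-one chain case when arguing antisymmetry.
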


\begin{proof}
The relation is clearly reflexive. 
It is antisymmetric because if $w$ is an expansion of $v$, then $||v|| < ||w||$.
The relation is transitive because it is defined to be the transitive closure of a reflexive, antisymmetric relation.
\end{proof}

The following remark is an immediate consequence of Remark~\ref{rem:unions of pseudo-vertices}(2) and the definitions.

\begin{remark}
If $v,w, v', w'$ are pseudo-vertices such that $\image (v) \cap \image (w) =\emptyset$,
$v\leq v'$, and $w\leq w'$, then $\image(v') \cap \image(w') =\emptyset$
and  $v\cup w\leq v'\cup w'$.
\end{remark}

\begin{remark}
The only pseudo-vertices that are maximal with respect to $\leq$ are those of the form
$\{[f_i,B_i] ~|~ 1\leq i\leq k\}$, where $B_i$ is a singleton for each $i=1,\dots,k$.
In particular, if $X$ has no isolated points, then there are no maximal pseudo-vertices.
\end{remark}

\begin{remark}
\label{rem: unique containment}
If $v, w$ are pseudo-vertices, $v\leq w$, and $[f,B]\in w$, then there exists a unique $[g,A]\in v$
such that $f(B)\subseteq g(A)$.
\end{remark}

\begin{definition} \label{def: complete expansion}
If $v=\{[f_i,B_i] ~|~ 1\leq i\leq k\}$ is a pseudo-vertex, then the \emph{complete expansion of $v$}
is the pseudo-vertex
\begin{align*}
\expa(v) := \{ [f_i|A,A] ~|~ \text{$1\leq i\leq k$ and~} & \text{$A$ is a maximal, proper sub-ball of $B_i$,}\\
                             &\text{or $A=B$ if $B_i$ is a singleton}\}.
\end{align*}                             
\end{definition}

\begin{remark}
\label{rem:expansion}
If $v$ is a pseudo-vertex of height $1$, then $v\nearrow\expa(v)$.
It follows from Remark~\ref{rem:unions of pseudo-vertices}(2) that if 
$v=\{ [f_i, B_i] ~|~ 1\leq i\leq k\}$ is a pseudo-vertex of height $k$, then
\begin{align*}
v &\nearrow \expa\{ [f_1,B_1]\} \cup \{ [f_i, B_i] ~|~ 2\leq i\leq k\} \nearrow \cdots \\
& \cdots \nearrow \bigcup_{i=1}^k\expa\{[f_i,B_i]\} =\expa(v).
\end{align*}
In particular, $v\leq\expa(v)$.
\end{remark}

\begin{definition}
Let $B$ be a ball in $X$. Inductively define a sequence $\{ \mathcal{B}_i\}_{i=0}^\infty$ of partitions of $B$ into sub-balls
as follows. First, $\mathcal{B}_0=\{ B\}$. 
Assuming $i>0$ and $\mathcal{B}_i$ has been defined, a sub-ball $A$ of $B$ is in $\mathcal{B}_{i+1}$ if and only if
there exists a ball $C\in\mathcal{B}_i$ such that $A$ is a maximal proper sub-ball of $C$, or $C$ is a singleton and $A=C$.
The sequence $\{ \mathcal{B}_i\}_{i=0}^\infty$ is the \emph{ball hierarchy of $B$}.
\end{definition}

Suppose $(f,B)\in\mathcal{S}$ and let $\{ \mathcal{B}_i\}_{i=0}^\infty$ be the ball hierarchy of $B$. 
Observe that if $i\geq 1$ and $A\in\mathcal{B}_i$, then the Restrictions property implies $(f|A,A)\in\mathcal{S}$.
For each $x\in B$, let $D((f,B),x)$ denote the smallest nonnegative integer $i$ such that
there exists $A\in\mathcal{B}_i$ with $x\in A$, $f(A)$ a ball, and $f|A\in\sm(A,f(A))$.
The integer $D((f,B),x)$ is called the \emph{depth of $(f,B)$ at $x$}.
Note that if $y\in A$, then $D((f,B),y) = D((f,B),x)$ (since any two balls are either disjoint or one contains the other).
Thus, $D((f,B),\cdot)$ is a locally constant function on $X$.

\begin{definition}
If $(f,B)\in\mathcal{S}$, then the \emph{depth of $[f,B]\in\mathcal{E}$} is
$$D[f,B] := \max\{ D((f,B),x) ~|~ x\in B\}.$$
\end{definition}

Note that $D[f,B]$ is well-defined; that is, it is independent of the representative in $\mathcal{S}$ of $[f,B]\in\mathcal{E}$.

\begin{definition}
If $v=\{ [f_i,B_i] ~|~ 1\leq i \leq k\}$ is a pseudo-vertex, then the \emph{depth of v} is
$$\depth(v) := \max \{ D[f_i,B_i] ~|~ 1\leq i \leq k\}.$$
\end{definition}

\begin{remark}
\label{rem:depth}
If $v$ is a pseudo-vertex, then the following hold.
\begin{enumerate}
	\item $\depth(v) = 0$ if and only if $v$ is positive.
	\item $\depth(\expa(v))\leq\depth(v)$, with equality if and only if $\depth(v)=0$.
\end{enumerate}
\end{remark}

\begin{lemma}
\label{lem:positive expansion} Every pseudo-vertex expands to a positive pseudo-vertex. 
In particular, for every vertex $v\in K^0$, there exists a positive vertex $w$ 
such that $v\leq w$.
\end{lemma}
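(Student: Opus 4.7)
The plan is to induct on $\depth(v)$, using Remark~\ref{rem:depth} as the main tool. The two pieces of that remark exactly set up the induction: depth zero characterizes positivity, and the complete expansion operator $\expa(\cdot)$ strictly decreases depth whenever depth is positive.

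First, I would handle the base case $\depth(v)=0$. By Remark~\ref{rem:depth}(1) this means $v$ is already positive, and since $\leq$ is reflexive, $v$ expands to itself, which is the required positive pseudo-vertex.

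For the inductive step, suppose the lemma holds whenever the starting pseudo-vertex has depth at most $n-1$, and let $v$ be a pseudo-vertex with $\depth(v)=n\geq 1$. By Remark~\ref{rem:expansion} we have $v\leq\expa(v)$, and by Remark~\ref{rem:depth}(2) the strict inequality $\depth(\expa(v))<\depth(v)=n$ holds. The inductive hypothesis then produces a positive pseudo-vertex $w$ with $\expa(v)\leq w$, and transitivity of $\leq$ gives $v\leq w$, proving the first assertion.

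For the ``In particular'' part, I would simply note the remark following the definition of simple expansion: if $v\nearrow v'$ then $v$ is a vertex if and only if $v'$ is a vertex. Chaining this through the finitely many simple expansions that constitute the relation $v\leq w$ shows that the property of being a vertex is preserved along any expansion sequence, so the positive pseudo-vertex $w$ obtained above is in fact a positive vertex whenever $v\in K^0$. There is no real obstacle here: the only subtlety is that one should be careful to phrase the induction on $\depth(v)\in\bn$ (a well-ordered quantity) rather than on $\Vert v\Vert$, since expansions increase height but decrease depth, so the termination comes from the depth side.
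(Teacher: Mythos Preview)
Your proof is correct and takes essentially the same approach as the paper: both arguments repeatedly apply the complete-expansion operator $\expa$ and use Remark~\ref{rem:depth}(2) to see that depth strictly drops until it reaches zero, together with Remark~\ref{rem:expansion} to chain $\leq$. The paper phrases this as an explicit iteration (apply $\expa$ exactly $\depth(v)$ times), while you phrase it as an induction on $\depth(v)$; these are the same argument in different clothing.
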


\begin{proof}
If $k=\depth(v)$, then it follows from Remarks~\ref{rem:expansion} and \ref{rem:depth}
that $v:=v_0\leq v_1\leq\cdots\leq v_k$, where
$v_i:=\expa(v_{i-1})$ for $1\leq i\leq k$
and that $\depth(v_k)=0$.
Thus, $w:=v_k$ is the desired positive pseudo-vertex. 

The second statement of the lemma follows from the first together with  the observation that the expansion
of a vertex is a vertex.
\end{proof}

\begin{lemma}
\label{lem:partition expansion}
If $B$ is a ball in $X$ and $\mathcal{P}$ is a partition of $B$ into sub-balls,
then the positive pseudo-vertex $\{ [\incl_B,B]\}$ expands to the positive pseudo-vertex
$\{ [\incl_A, A] ~|~ A\in\mathcal{P}\}$.
\end{lemma}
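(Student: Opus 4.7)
The plan is to induct on the cardinality $|\mathcal{P}|$, which is finite because $B$ is compact and the elements of $\mathcal{P}$ form a pairwise disjoint open cover of $B$. The base case $|\mathcal{P}|=1$ is immediate: one must have $\mathcal{P}=\{B\}$, so the source and target pseudo-vertices coincide and reflexivity of $\leq$ finishes.

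For the inductive step, I would assume $|\mathcal{P}|\geq 2$. Then $B$ cannot be a singleton, so a simple expansion is defined at $[\incl_B,B]$; performing it yields $\{[\incl_B,B]\}\nearrow w_0$, where $w_0=\{[\incl_{A_i},A_i] : 1\leq i\leq k\}$ and $A_1,\dots,A_k$ are the maximal proper sub-balls of $B$ (using the identity $\incl_B|A_i=\incl_{A_i}$). Next I would check, via the ultrametric nesting property, that every $P\in\mathcal{P}$ lies in a unique $A_i$: since $P\subseteq B=\bigsqcup_i A_i$, the ball $P$ must meet some $A_i$, hence $P\subseteq A_i$ or $A_i\subsetneq P$, and the latter forces $P=B$, contradicting $|\mathcal{P}|\geq 2$. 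This shows the sets $\mathcal{P}_i:=\{P\in\mathcal{P} : P\subseteq A_i\}$ partition $\mathcal{P}$, and that each $\mathcal{P}_i$ is a non-empty partition of $A_i$ into sub-balls with $|\mathcal{P}_i|<|\mathcal{P}|$.

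To close the step, the inductive hypothesis applied to each pair $(A_i,\mathcal{P}_i)$ gives $\{[\incl_{A_i},A_i]\}\leq\{[\incl_P,P] : P\in\mathcal{P}_i\}$. Because the images $A_i$ are pairwise disjoint, I would iterate the remark on unions of disjoint expansions (the one asserting $v\cup w\leq v'\cup w'$ whenever $v\leq v'$, $w\leq w'$, and $\image(v)\cap\image(w)=\emptyset$) a total of $k-1$ times to combine these local expansions into $w_0\leq\{[\incl_P,P] : P\in\mathcal{P}\}$. Transitivity of $\leq$ then produces the desired inequality $\{[\incl_B,B]\}\leq\{[\incl_P,P] : P\in\mathcal{P}\}$.

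The main obstacle is really just careful bookkeeping rather than a conceptual difficulty: verifying that each $\mathcal{P}_i$ is non-empty, that $k\geq 2$ when $|\mathcal{P}|\geq 2$, and that $|\mathcal{P}_i|<|\mathcal{P}|$ so the induction is legitimate. All three follow from the observation above that no $P\in\mathcal{P}$ can strictly contain any $A_i$ under the hypothesis $|\mathcal{P}|\geq 2$, together with the fact that $\mathcal{P}$ covers $B$.
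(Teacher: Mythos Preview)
Your proof is correct, but it proceeds differently from the paper's argument. Both induct on $|\mathcal{P}|$, but the inductive step is organized in opposite directions. The paper works \emph{bottom-up}: it uses the ball hierarchy of $B$ to locate a member $C\in\mathcal{P}$ at maximal depth, observes that the parent ball $D$ of $C$ must then have all of its maximal proper sub-balls already in $\mathcal{P}$, replaces those sub-balls by $D$ to obtain a strictly smaller partition $\mathcal{P}'$, applies the inductive hypothesis once to reach $\mathcal{P}'$, and finishes with a single simple expansion at $[\incl_D,D]$. Your argument works \emph{top-down}: you expand $B$ into its maximal proper sub-balls $A_1,\dots,A_k$ first, split $\mathcal{P}$ into the induced partitions $\mathcal{P}_i$ of the $A_i$, and apply the inductive hypothesis $k$ times before assembling the pieces via the union remark. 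The paper's route keeps the induction step to a single call of the hypothesis plus one simple expansion, at the cost of introducing the ball hierarchy and the depth index $N$; your route avoids that machinery and is arguably more transparent, but requires the auxiliary checks that each $\mathcal{P}_i$ is nonempty and strictly smaller than $\mathcal{P}$, which you handle correctly via the ultrametric nesting property and the maximality of the $A_i$.
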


\begin{proof}
Observe first that if $B'$ is a sub-ball of $B$, and $B'' \in \mathcal{P}$ is a sub-ball of $B'$, then there is
$\mathcal{P}' \subseteq \mathcal{P}$ partitioning $B'$. 
The proof of the lemma is by induction on the cardinality of $\mathcal P$.
If $|\mathcal P| = 1$, then $\mathcal{P} = \{ B\}$ and there is nothing to prove.
Assume $|\mathcal P|>1$ and that the statement is true for partitions of smaller cardinality.
Let $\{ \mathcal{B}_i\}_{i=0}^\infty$ be the ball hierarchy of $B$ and 
let
$N=\max\{ i>0 ~|~ \mathcal{P}\cap\mathcal{B}_i\not=\emptyset\}$
and choose $C\in\mathcal{P}\cap\mathcal{B}_N$. Note $C\not= B$.
Let $D$ be the smallest sub-ball of $B$ such that $C\not= D$ and $C\subseteq D$.
Note that $C$ is a maximal proper sub-ball of $D$.
By the observation above, $\mathcal P$ contains a partition $\mathcal{P}_D$ of $D$.
By the definition of $N$, $\mathcal{P}_D$ is the partition of $D$ into maximal proper sub-balls.
Clearly, $C\in\mathcal{P}_D$ and $|\mathcal{P}_D|>1$.
Let $\mathcal{P}'=\mathcal{P}\setminus\mathcal{P}_D\cup\{ D\}$.
Since $\mathcal{P}'$ is a partition of $B$ by balls and 
$|\mathcal{P}'|<|\mathcal{P}|$, the inductive assumption implies that
$\{ [\incl_B,B]\}$ expands to the pseudo-vertex
$w=\{ [\incl_A, A] ~|~ A\in\mathcal{P}'\}$.
The proof is now complete upon observing that the simple expansion of $w$
at $[\incl_D,D]$ is the pseudo-vertex
$\{ [\incl_A, A] ~|~ A\in\mathcal{P}\}$.
\end{proof}

\begin{definition}
The \emph{similarity complex associated to $\sm$} is the simplicial complex $K=K_\sm$ obtained from $(K^0, \leq)$. Thus, an $n$-simplex of $K$ is an
ascending chain $(v_0, v_1, \dots, v_n)$ of distinct vertices  $v_0 < v_1 <\cdots < v_n$. 
\end{definition}

Note that the vertices of an $n$-simplex of $K$ are totally ordered by  $\leq$. 
Note also that $K\not=\emptyset$ because it contains the positive vertex $\{ [\id_X,X]\}$ of height $1$.

\begin{proposition}
\label{prop: contractible}
The partially ordered set $(K^0, \leq)$ is a directed set. Hence, $K$ is contractible.
\end{proposition}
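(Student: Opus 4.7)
The plan is to prove the directed set claim directly, from which contractibility of $K$ (the nerve/order complex of $(K^0,\leq)$) follows by a standard argument. To show directedness, I must produce, for any two vertices $v,w\in K^0$, a common expansion $u\in K^0$ with $v\leq u$ and $w\leq u$.

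First I would reduce to the case where $v$ and $w$ are positive. By Lemma~\ref{lem:positive expansion}, there exist positive vertices $v_+, w_+\in K^0$ with $v\leq v_+$ and $w\leq w_+$, so it suffices to find a common expansion of $v_+$ and $w_+$. Write $v_+ = \{[\incl_{A_i}, A_i] : 1\le i\le k\}$ and $w_+ = \{[\incl_{C_j}, C_j] : 1\le j\le \ell\}$, where $\{A_i\}$ and $\{C_j\}$ are partitions of $X$ into balls.

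Next I would build the common ball refinement. Because $X$ is ultrametric, any two balls are either disjoint or one contains the other. Hence whenever $A_i\cap C_j\ne\emptyset$, the intersection equals the smaller of the two balls, and in particular is itself a ball. Let
\[
\mathcal{P} := \{\, A_i\cap C_j \ne\emptyset \,\}.
\]
Since each point of $X$ lies in a unique $A_i$ and a unique $C_j$, the set $\mathcal{P}$ is a partition of $X$ into balls that refines both $\{A_i\}$ and $\{C_j\}$. For each fixed $i$, the elements of $\mathcal{P}$ contained in $A_i$ form a partition $\mathcal{P}_i$ of $A_i$ into sub-balls. By Lemma~\ref{lem:partition expansion}, the positive pseudo-vertex $\{[\incl_{A_i},A_i]\}$ expands to $\{[\incl_B,B] : B\in \mathcal{P}_i\}$. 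Applying Remark~\ref{rem:unions of pseudo-vertices}(2) iteratively to combine these component-wise expansions (the images of the $A_i$ are pairwise disjoint, and this remains true throughout the expansions), we obtain
\[
v_+ \ \leq\ u := \{[\incl_B, B] : B \in \mathcal{P}\}.
\]
Since $u$ arises from a partition of $X$, it is a (positive) vertex. By the symmetric argument applied to $w_+$ and its partition, we also have $w_+\leq u$. Combined with $v\leq v_+$ and $w\leq w_+$, this gives $v\leq u$ and $w\leq u$, establishing directedness.

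Finally, for contractibility of $K$: the order complex of a directed poset is contractible by a standard argument, which I would record briefly. Any continuous map $f\colon S^n\to K$ has image in a finite subcomplex, involving only finitely many vertices $v_1,\dots,v_m$. Iterating directedness yields a single upper bound $u\in K^0$ with $v_i\le u$ for all $i$ (and we may assume $u$ is distinct from each $v_i$ by expanding once more if necessary, using Lemma~\ref{lem:positive expansion} and the fact that non-maximal elements exist when relevant). Every simplex of the finite subcomplex is a chain $v_{i_0}<\cdots<v_{i_p}$, which extends to the chain $v_{i_0}<\cdots<v_{i_p}<u$; thus the finite subcomplex lies in the simplicial cone on $u$, which is contractible inside $K$. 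Hence $\pi_n(K)=0$ for all $n$, and since $K$ is a CW complex, $K$ is contractible. The main obstacle here is the explicit construction of the common refinement, whose correctness hinges on the ultrametric nested/disjoint dichotomy for balls; once that is in hand, Lemma~\ref{lem:partition expansion} and Remark~\ref{rem:unions of pseudo-vertices} assemble the expansion with no further subtlety.
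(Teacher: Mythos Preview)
Your proof is correct and follows essentially the same approach as the paper: reduce to positive vertices via Lemma~\ref{lem:positive expansion}, form the common ball refinement, and invoke Lemma~\ref{lem:partition expansion} together with Remark~\ref{rem:unions of pseudo-vertices} to exhibit the common expansion. The only difference is that the paper simply cites the standard fact that the order complex of a directed poset is contractible (Geoghegan), whereas you spell out the cone argument explicitly.
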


\begin{proof}
By Lemma~\ref{lem:positive expansion} $(K^0, \leq)$ is a directed set if any two positive vertices have an upper bound.
If $v_1$ and $v_2$ are positive vertices, then there are partitions $\mathcal{P}_1$ and $\mathcal{P}_2$ of $X$ into balls
such that 
$v_i=\{ [\incl_B,B] ~|~ B\in\mathcal{P}_i\}$ for $i=1,2$.
Let $\mathcal{P} =\{ B_1\cap B_2 ~|~ \text{$B_1\in\mathcal{P}_1$, $B_2\in\mathcal{P}_2$, and $B_1\cap B_2\not=\emptyset$}\}.$
Thus, $\mathcal P$ is a common refinement of $\mathcal{P}_1$ and $\mathcal{P}_2$ and $\mathcal P$ is a partition of 
$X$ into balls. Moreover, $\mathcal P$ contains a partition of any ball in $\mathcal{P}_1$ or in $\mathcal{P}_2$.
Lemma~\ref{lem:partition expansion} implies that if $i=1$ or $2$ and $B\in\mathcal{P}_i$,
then the pseudo-vertex $\{[\incl_B,B]\}$ expands to the pseudo-vertex
$\{[\incl_A,A] ~|~ \text{$A\in\mathcal{P}$ and $A\subseteq B$}\}$ for $i=1,2$.
Remark~\ref{rem:unions of pseudo-vertices} implies that both $v_1$ and $v_2$ expand to the vertex
$ \{[\incl_A,A] ~|~ \text{$A\in\mathcal{P}$}\}.$
This completes the proof of the first statement of the proposition.
The second statement follows from the well-known fact that the complex obtained from a directed, partially ordered set
is contractible (see Geoghegan \cite[Proposition 9.3.14, page 210]{Geoghegan}).
\end{proof}

\begin{example} Let $X=\{x_1, \dots, x_n\}$ be a finite ultrametric space in which the distance between any two distinct points
is $1$. Note that $\{ x_{i} \}$ (for $i \in \{ 1, \ldots, n \}$) and $X$ itself are the only balls in $X$. For a pair of balls
$B_{1}$, $B_{2} \subseteq X$, we define $\sm_{X}(B_{1},B_{2})$ as follows:
\begin{enumerate}
\item $\sm_{X}( \{ x_{i} \}, \{ x_{j} \}) = \{ \phi_{ij} \}$, where $\phi_{ij}$ is the only possible map
$\phi_{ij}: \{ x_{i} \} \rightarrow \{ x_{j} \}$;
\item \label{choice} $\sm_{X} (X,X) = \{ \id_{X} \}$;
%\item $\sm_{X} (B_{1},B_{2}) = \emptyset$ otherwise.
\end{enumerate}
It is straightforward to check that $\sm_{X}$ is a finite similarity  structure, and that $\Gamma(\sm_{X}) = \Sigma_{X}$, the symmetric group
on $X$. There are exactly $n! + 1$ vertices:
\begin{itemize}
\item If $\phi \in \Sigma_{X}$, then $\{ [\phi, X] \}$ is a vertex of height $1$. Since $\sm_{X}(X,X) = \{ \id_{X} \}$, 
$\{ [\phi_{1}, X] \} \neq \{ [\phi_{2}, X] \}$ if $\phi_{1} \neq \phi_{2}$, so there are $n!$ vertices of this type.
\item The remaining vertex is $\{ [\phi_{ii}, \{ x_{i} \}]  ~|~ 1\leq i\leq n \}$.
\end{itemize}
Every vertex of the form $\{ [\phi, X] \}$ expands to $\{ [\phi_{ii}, \{ x_{i} \}] \}$. It follows that $K$ may be identified with
the cone on $\Sigma_{X}$; that is,
$$ K = ( \Sigma_{X} \times I )/ \sim,$$
where $I$ denotes the unit interval and $(\phi_{1}, t_{1}) \sim (\phi_{2}, t_{2})$ if $t_{1} = t_{2} = 0$. The action of 
$\Gamma(\sm_{X})$ on $K$ under this identification is the same as the natural action of $\Sigma_{X}$ on its cone.

On the other hand, we might set $\sm_{X}(X,X) = \Sigma_{X}$ 
(in place of (\ref{choice}) above). The result is still a finite similarity structure. In
this case, there are just two vertices, $\{ [\id_{X}, X] \}$ and $\{ [\phi_{ii}, \{ x_{i} \}] \mid i \in \{ 1, \ldots, n \} \}$,
and $K$ may be identified with the unit interval. We still have $\Gamma(\sm_{X}) = \Sigma_{X}$, but the action of $\Gamma(\sm_{X})$
on $K$ is now trivial.

Various intermediate constructions are possible, depending on the size of the group $\sm_{X}(X,X)$. 
\end{example}

Note that up to this point we have not used the Finiteness property of the $\sm$ structure.

%%%%%%%%%%%%%%%%%%%%%%%%%%%%%%%%%%%%%%%%%%%%%%%%%%%%%%%%%%%%%%%%%%%%%%%%%%%%%%%%%%%%
\section{Local finiteness of the sub-level complexes} \label{sec:localfiniteness}

We continue to use the same notation as in the previous section.
In particular, $X$ denotes a non-empty, compact ultrametric space with a finite similarity structure $\sm$. 
Moreover, $K$ denotes the similarity complex associated to $\sm$.

The  goal of this section is to filter $K$ by subcomplexes that are locally finite if the set of $\sm$--equivalence classes of balls in $X$ is assumed to be finite (see Proposition~\ref{prop: locally finite filtration}).

\begin{definition}
For $n\in\bn$, the \emph{sub-level complex $K_{\leq n}$} is the subcomplex of $K$ spanned by all vertices of height less than or equal to $n$.
\end{definition}

\begin{lemma}
\label{lem:finite contractions} 
Suppose that  $B$ is a ball in $X$, $w$ is a  pseudo-vertex,
and $P_{w,B}$ denotes the set of all pseudo-vertices $v$ of height $1$
such that the second coordinate of $v$ is $[B]$ and such that $v\nearrow w$.
Then $P_{w,B}$ is finite.
\end{lemma}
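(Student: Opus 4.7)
The plan is to use the equivalence relation on $\mathcal{S}$ to replace every element of $P_{w,B}$ by a representative with domain $B$, and then bound the number of such representatives using the Finiteness property of $\sm$.

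If $B$ is a singleton, then no height $1$ pseudo-vertex whose second coordinate is $[B]$ admits a simple expansion, so $P_{w,B}=\emptyset$. Otherwise, let $A_1,\ldots,A_m$ be the (finitely many) maximal proper sub-balls of $B$. For any $[f,B']\in P_{w,B}$, the condition $[B']=[B]$ provides some $h\in\sm(B,B')$, and then $(f\circ h,B)\sim(f,B')$ directly from the definition of the equivalence relation, so every class in $P_{w,B}$ has a representative of the form $(g,B)$ with $g\co B\to X$ a local similarity embedding. The condition $\{[g,B]\}\nearrow w$ then translates into $\{[g|A_i,A_i]\mid 1\leq i\leq m\}=w$.

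Because $g$ is injective and the $A_i$ are pairwise disjoint, the classes $[g|A_i,A_i]$ are pairwise distinct, so this equation forces $\Vert w\Vert=m$ (else $P_{w,B}=\emptyset$) and gives a bijection $i\mapsto[e_i,E_i]\in w$; there are $m!$ such bijections. For each one, the equality $[g|A_i,A_i]=[e_i,E_i]$ forces $g|A_i=e_i\circ\eta_i^{-1}$ for some $\eta_i\in\sm(E_i,A_i)$. By the Finiteness property, each $\sm(E_i,A_i)$ is a finite set, so there are only finitely many tuples $(\eta_1,\ldots,\eta_m)$, and each such tuple determines $g$ on the partition $B=\coprod_i A_i$ and hence determines $[g,B]$. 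This yields the bound $|P_{w,B}|\leq m!\cdot\prod_i|\sm(E_i,A_i)|<\infty$.

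The only nonroutine step is the initial reduction to representatives with domain $B$, which uses the precise form of the equivalence relation on $\mathcal{S}$; after that, the Finiteness property essentially finishes the count.
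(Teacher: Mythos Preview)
Your proof is correct and follows essentially the same approach as the paper: reduce to representatives with domain $B$, observe that a simple expansion forces a bijection between the maximal proper sub-balls of $B$ and the elements of $w$, and then use the Finiteness property of $\sm$ to bound the number of possibilities piecewise. The paper packages the count as an explicit injection $\Psi\co\mathcal{S}_{w,B}\to\coprod_{\sigma\in\Sigma_k}\prod_i\sm(\widehat{B}_i,B_{\sigma(i)})$, but the content is the same; your slightly informal bound $m!\cdot\prod_i|\sm(E_i,A_i)|$ glosses over the fact that the $E_i$ depend on the chosen bijection, but this is harmless since only finiteness is claimed.
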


\begin{proof}
Write
$w=\{ [f_{i}, B_i] ~|~ 1\leq i\leq k\}$.
We may assume that $P_{w,B}$ is not empty so that there is an element in $P_{w,B}$ of the form $[f,B]$.
The fact that $[f,B]\nearrow w$ implies that there are exactly $k$ maximal proper sub-balls of $B$,
say $\widehat{B}_1,\dots,\widehat{B}_k$, indexed so that if $\widehat{f}_i= f|\widehat{B}_i$, then
$[\widehat{f}_i,\widehat{B}_i]= [f_{i}, B_i]$ for $i=1,\dots, k$.
Let $\mathcal{S}_{w,B} = \{ (g,B) \in \mathcal{S} ~|~ [g,B]\in P_{w,B}\}.$
Since the function $\mathcal{S}_{w,B}\to P_{w,B}$ defined by $(g,B)\mapsto [g,B]$ is surjective, it suffices
to show that $\mathcal{S}_{w,B}$ is finite.
Let $\Sigma_k$ be the set of all permutations of $\{1,\dots, k\}$.
The proof will be completed by defining an injection 
$$\Psi\co \mathcal{S}_{w,B}\to\coprod_{\sigma\in\Sigma_k}\prod_{i=1}^k\sm(\widehat{B}_i, B_{\sigma(i)}).$$
Given 
$(g,B)\in\mathcal{S}_{w,B}$,
we know that $\{ [g_i,\widehat{B}_i] ~|~ 1\leq i\leq k\} = \{ [f_{i}, B_i] ~|~ 1\leq i\leq k\}$,
where $g_i=g|\widehat{B}_i$.
It follows that there exists a unique $\sigma\in\Sigma_k$ such that $[g_i,\widehat{B}_i] = [f_{{\sigma(i)}}, B_{\sigma(i)}]$
for $i=1,\dots, k$.
Thus, $f_{\sigma(i)}^{-1}g_i\in\sm(\widehat{B}_i,{B}_{\sigma(i)})$ and we can define
$\Psi(g,B) = (f_{\sigma(1)}^{-1}g_1,\dots, f_{\sigma(k)}^{-1}g_k)\in\prod_{i=1}^k \sm(\widehat{B}_i, B_{\sigma(i)}).$
To see that $\Psi$ is injective, suppose we have another element 
$(h,B)\in\mathcal{S}_{w,B}$ and $\Psi(h,B)=\Psi(g,B)$.
It follows that  $f_{\sigma(i)}^{-1}g_i = f_{\sigma(i)}^{-1}h_i$ for each $i=1,\dots, k$,
where $h_i = h|\widehat{B}_i$.
Thus, $g=h$ and $(g,B)= (h,B)$.
\end{proof}

\begin{remark}
Note that the previous argument relied on the Finiteness  property of the similarity structure.
\end{remark}

\begin{lemma}
\label{lem:finite immediate successors} 
If $v$ is a pseudo-vertex, then $v$ has only finitely many immediate successors.
\end{lemma}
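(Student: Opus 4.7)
The plan is a short reduction: the immediate successors of $v$ in the poset of pseudo-vertices are all obtained from $v$ by a single simple expansion $v \nearrow w$, and the simple expansions of $v$ are parameterized by a subset of the finite set $v$.

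First I would reduce to counting simple expansions. Recall that $v < w$ means there exists a chain $v = v_0 \nearrow v_1 \nearrow \cdots \nearrow v_n = w$ with $n \geq 1$; if $n \geq 2$, then $v_1$ is a pseudo-vertex satisfying $v < v_1 < w$, which prevents $w$ from being an immediate successor. Hence every immediate successor $w$ of $v$ satisfies $v \nearrow w$, i.e., arises from a single simple expansion.

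Next I would count simple expansions. By the definition of simple expansion, each such expansion is determined by the choice of an element $[f,B] \in v$ with $B$ containing more than one point, and by Remark~\ref{rem: well-defined expansions} the resulting pseudo-vertex depends only on the equivalence class $[f,B]$ and not on the chosen representative $(f,B)$. Distinct choices of element give distinct simple expansions, since an element $[f_2,B_2] \in v$ with $[f_2,B_2] \neq [f_1,B_1]$ survives in the simple expansion of $v$ at $[f_1,B_1]$ but is removed in the simple expansion at $[f_2,B_2]$. Therefore the number of simple expansions of $v$ is at most $\|v\| < \infty$, and combined with the previous paragraph this yields the stated finiteness of the set of immediate successors.

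I do not anticipate a substantive obstacle: the argument is essentially bookkeeping using the definitions of $\nearrow$ and $\leq$, together with the well-definedness of simple expansions from Remark~\ref{rem: well-defined expansions}. Notably, we do not need the converse direction (that every simple expansion is in fact an immediate successor); the bound on the number of simple expansions already dominates the number of immediate successors. In particular, unlike the preceding Lemma~\ref{lem:finite contractions}, this result does not require the Finiteness property of the similarity structure $\sm$.
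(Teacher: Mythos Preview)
Your proposal is correct and follows essentially the same approach as the paper: the paper's proof is a single sentence observing that $v$ contains only finitely many elements at which a simple expansion may be performed. You have filled in more detail (the reduction from immediate successors to single simple expansions, the well-definedness via Remark~\ref{rem: well-defined expansions}, and the distinctness of expansions at distinct elements), but the core idea is identical.
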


\begin{proof} This is clear because $v$ contains only finitely many elements at which a simple
expansion may be performed.
\end{proof}

In the next result, we will begin using the assumption that the set of $\sm$-equivalence classes of balls in $X$ is finite.
This assumption will be required for the main result, Theorem \ref{thm: main in body}.

\begin{lemma}
\label{lem:finite immediate predecessors}
If $w$ is a pseudo-vertex and the set of $\sm$--equivalence classes of balls in $X$ is finite, then $w$ has only finitely many
immediate predecessors. 
\end{lemma}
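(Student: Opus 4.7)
The plan is to reduce the counting of immediate predecessors of $w$ to a bounded collection of applications of Lemma~\ref{lem:finite contractions}. The key observation is that if $v \nearrow w$, then by definition there exists $[f,B] \in v$ such that
$$w = (v \setminus \{[f,B]\}) \cup \{[f|A,A] : A \text{ a maximal proper sub-ball of } B\}.$$
Setting $u := \{[f|A,A] : A \text{ a maximal proper sub-ball of } B\} \subseteq w$, we recover $v = (w \setminus u) \cup \{[f,B]\}$. Thus every immediate predecessor of $w$ is encoded by a pair $(u,[f,B])$ where $u \subseteq w$ and $\{[f,B]\} \nearrow u$ (viewing $u$ as a pseudo-vertex in its own right, which it is by Remark~\ref{rem:unions of pseudo-vertices}(1) since $u \subseteq w$). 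It therefore suffices to bound the number of such pairs.

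First I would observe that $w$ is a finite set (it is a pseudo-vertex and hence has finite height), so there are only finitely many subsets $u \subseteq w$. Next, I would fix such a subset $u$ and bound the number of height-$1$ pseudo-vertices $\{[f,B]\}$ that expand to $u$. Partitioning these candidates by the $\sm$-equivalence class $[B]$ of the second coordinate, Lemma~\ref{lem:finite contractions} applied to the pseudo-vertex $u$ and to a representative ball in each equivalence class shows that for each fixed class $[B_0]$ the set $P_{u,B_0}$ of such $\{[f,B]\}$ is finite. Since the hypothesis gives only finitely many $\sm$-equivalence classes of balls in $X$, the union
$$\bigcup_{[B_0]} P_{u,B_0}$$
is a finite union of finite sets, hence finite.

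Combining these two finiteness statements, the total number of pairs $(u,[f,B])$ is bounded by a finite sum of finite quantities, so $w$ has only finitely many immediate predecessors. The map sending an immediate predecessor $v$ to the pair $(u,[f,B])$ described above is well-defined (by Remark~\ref{rem: well-defined expansions}, the subset $u$ is uniquely determined by $v$ and $w$, and $[f,B]$ is then read off as the unique element of $v \setminus w$), so the bound on pairs is a bound on predecessors.

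The main obstacle is essentially bookkeeping: making sure that the partition into equivalence classes $[B_0]$ really does let one invoke Lemma~\ref{lem:finite contractions} correctly, and that the Finiteness hypothesis on equivalence classes is what removes the only potentially unbounded quantity in the count. Once the correspondence with pairs $(u,[f,B])$ is set up, the argument is short.
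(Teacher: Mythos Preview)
Your proof is correct and follows essentially the same approach as the paper: encode each immediate predecessor by a pair consisting of a subset $u\subseteq w$ and a height-$1$ pseudo-vertex $\{[f,B]\}$ with $\{[f,B]\}\nearrow u$, then bound the pairs using the finiteness of subsets of $w$, the finiteness of $\sm$-equivalence classes, and Lemma~\ref{lem:finite contractions}. The paper's argument is identical up to notation (it writes $w'$ for your $u$ and $v'$ for your $\{[f,B]\}$), and your added remark on well-definedness via Remark~\ref{rem: well-defined expansions} is a harmless extra detail.
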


\begin{proof} 
An immediate predecessor of $w$ is a pseudo-vertex $v$ such that there is an elementary expansion $v\nearrow w$.
Thus, there is a subset $w'\subseteq w$ and a pseudo-vertex $v'\subseteq v$
of height $1$ such that $v'\nearrow w'$ and $w\setminus w'=v\setminus v'$.
There are only finitely many possibilities for $w'$ (since $w$ has only finitely many subsets).
Once $w'$ is fixed, there are only finitely many possibilities for the second coordinate of $v'$
(by the assumption of the finiteness of the set of $\sm$--equivalence classes of balls).
Finally, once $w'$ and the second coordinate of $v'$ are fixed, there are only finitely many possibilities for $v'$ by
Lemma~\ref{lem:finite contractions}.
\end{proof}

\begin{proposition} 
\label{prop: locally finite filtration}
If the set of $\sm$--equivalence classes of balls in $X$ is finite and $n\in\bn$, then the sub-level complex $K_{\leq n}$
is locally finite.
\end{proposition}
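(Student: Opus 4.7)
The plan is to show that every vertex $v$ of $K_{\leq n}$ has a finite star, which is equivalent to local finiteness of $K_{\leq n}$. Since $K$ is the order complex of $(K^0,\leq)$, any simplex of $K_{\leq n}$ containing $v$ is a chain of vertices, each of which is comparable to $v$ and has height at most $n$. Consequently it suffices to bound the cardinality of
\[
\mathcal{C}(v,n) := \{\, w\in K^0 \mid w\leq v \text{ or } v\leq w,\ \text{and } \|w\|\leq n\,\}.
\]
I will split $\mathcal{C}(v,n)$ into the predecessor set $P(v):=\{w\in K^0\mid w\leq v\}$ and the truncated successor set $S(v,n):=\{w\in K^0\mid v\leq w,\ \|w\|\leq n\}$ and bound each separately. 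Two structural facts make this feasible: height strictly increases along simple expansions $v'\nearrow w'$ (Remark after the definition of simple expansion), so the poset is height-graded; and the property of being a vertex is preserved by simple expansions in both directions, so the immediate predecessors and successors of a vertex are again vertices.

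For $P(v)$, I induct on $\|v\|$. When $\|v\|=1$ there are no proper predecessors, so $P(v)=\{v\}$. For the inductive step, every $w<v$ lies below some immediate predecessor $v'$ of $v$, so $P(v)=\{v\}\cup\bigcup_{v'} P(v')$, the union taken over the immediate predecessors of $v$. By Lemma~\ref{lem:finite immediate predecessors}, there are only finitely many such $v'$, and each has $\|v'\|<\|v\|$, so the inductive hypothesis finishes the step. This is the only place the hypothesis of finitely many $\sm$-equivalence classes is used.

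For $S(v,n)$, I induct on $n-\|v\|$. When $\|v\|=n$, any $w\geq v$ of height $\leq n$ must equal $v$, since strict expansion raises height. For the inductive step, every $w>v$ with $\|w\|\leq n$ lies above some immediate successor $v''$ of $v$, with $\|v''\|>\|v\|$, so $S(v,n)=\{v\}\cup\bigcup_{v''} S(v'',n)$. Lemma~\ref{lem:finite immediate successors} gives finiteness of the index set, and the inductive hypothesis applies because $n-\|v''\|<n-\|v\|$. Combining the two inductions shows $\mathcal{C}(v,n)=P(v)\cup S(v,n)$ is finite, as required. I do not anticipate a substantial obstacle: the only point that needs care is the bookkeeping between pseudo-vertices and vertices, and this is handled by the observation that simple expansions and contractions preserve the property of being a vertex, so the two inductions remain within $K^0$ throughout.
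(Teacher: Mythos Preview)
Your proof is correct and follows essentially the same approach as the paper's: both arguments reduce local finiteness to the finiteness of immediate predecessors (Lemma~\ref{lem:finite immediate predecessors}) and immediate successors (Lemma~\ref{lem:finite immediate successors}), using that height is strictly monotone along simple expansions so that chains in $K_{\leq n}$ have bounded length. The paper's proof is a two-line appeal to those lemmas, whereas you have spelled out explicitly the two inductions (on $\|v\|$ for predecessors, on $n-\|v\|$ for successors) that make the conclusion rigorous; this is a welcome elaboration rather than a different argument.
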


\begin{proof}
It follows from Lemmas~\ref{lem:finite immediate successors} and \ref{lem:finite immediate predecessors}
that any vertex $v$ of $K_{\leq n}$ is contained in at most finitely many ascending chains of vertices in $K_{\leq n}$.
That is to say, $v$ is in only finitely many simplices of $K_{\leq n}$.
\end{proof}

\begin{remark} \label{rem: K not locally finite}
The complex $K$ is usually not locally finite. 
In fact, the following are equivalent:
\begin{enumerate}
	\item $K$ is finite.
	\item $K$ is locally finite.
	\item $X$ is finite.
\end{enumerate}
\end{remark}
\begin{proof} 
If $X$ is not finite, then since $X$ is compact there exists a sequence of balls
$X=B_1\supseteq B_2 \supseteq B_3 \supseteq \cdots$ such that $B_{i+1}$ is a maximal proper sub-ball of $B_i$
for each $i\in\bn$.
Define vertices $v_1 < v_2 < v_3< \cdots$ inductively as follows. Let $v_1 =\{[\incl_{B_1}, B_1]\}$. If $i>1$ and $v_i$
has been defined so that $[\incl_{B_i}, B_i] \in v_i$, let $v_{i+1}$ be obtained from $v_i$ by a simple expansion at 
$[\incl_{B_i}, B_i]$. Then $v_1$ is a vertex of the simplex spanned by $\{v_1,\dots, v_n\}$ for every $n\in\bn$, showing
that $K$ is not locally finite.

On the other hand, if $X$ is finite, then it is rather obvious that $K$ is finite: if $X$ has cardinality $n$, then there are only finitely many 
partitions of $X$ and each has cardinality $\leq n$, there are only finitely many collections of at most $n$ balls, and only
a finite number of functions between any two subsets of $X$. This shows that there are only finitely many vertices of $K$.
\end{proof}

%%%%%%%%%%%%%%%%%%%%%%%%%%%%%%%%%%%%%%%%%%%%%%%%%%%%%%%%%%%%%%%%%%%%%%%%%%%%%%%%%%%%%%%%%%%%%%%%%%%%%%%%%%
\section{Connectivity of the descending links} \label{sec: connectivityofdownwardlinks}
%%%%%%%%%%%%%%%%%%%%%%%%%%%%%%%%%%%%%%%%%%%%%%%%%%%%%%%%%%%%%%%%%%%%%%%%%%%

We continue to use the same notation as in the previous two sections.
In particular, $X$ denotes a non-empty, compact, ultrametric space with a finite similarity structure $\sm$. 
Moreover, $K$ denotes the similarity complex associated to $\sm$.

The goal of this section is to prove, under the assumptions in the Main Theorem~\ref{thm: main},
that the descending link of a vertex in $K$ is highly connected depending on the height of the vertex
(see Corollary~\ref{cor: high connectivity}).
The main technical result is Theorem~\ref{thm: connectivity of downward links}.

\begin{comment}
\begin{definition}
A collection $\{ B_1, \dots , B_k\}$ of balls in $X$ is  \emph{contracting} if there exists a ball $\widehat{B}$ in $X$
that has exactly $k$ maximal proper sub-balls, say $\widehat{B}_1,\dots, \widehat{B}_k$, such that
$\sm(B_i,\widehat{B}_i)\not=\empty$ for $1\leq i\leq k$.
\end{definition}
\end{comment}

\begin{definition}
A  pseudo-vertex $v$ is \emph{contracting} if there exists $[f,B]\in \mathcal{E}$ such that
$v= \{[f|A, A] ~|~ A \text{~is a maximal proper sub-ball of $B$}\}.$
\end{definition}

Note that $v$ is contracting if and only if there exists $[f,B]\in \mathcal{E}$ such that $B$ is not a singleton 
and $v=\expa\{[f,B]\}$.
%Note that if $v=\{ [f_i, B_i] ~|~ 1\leq i \leq k\}$ is a contracting pseudo-vertex, then  $\{ B_i ~|~ 1\leq i \leq k\}$
%is a contracting collection of balls.
Note also that every simple contraction of a vertex $v$ takes place at a subset $w$ of $v$, where $w$ is a contracting
pseudo-vertex. 

\begin{definition} \label{def: pairwise disjoint simple contractions}
For $1\leq i \leq k$, let $v_i$ be pseudo-vertices each obtained by simple contractions of a pseudo-vertex $v$
at contracting pseudo-vertices $w_i\subseteq v$.
Then $v_1,\dots, v_k$ are \emph{obtained from $v$ by pairwise disjoint simple contractions} if $w_i \cap w_j =\emptyset$
whenever $i\not= j$.
\end{definition}

We note that, by the final line of Remark \ref{rem: well-defined expansions}, 
the property of being obtained from $v$ by pairwise disjoint simple
contractions is well-defined.

\begin{lemma}
\label{lem: expansion and order}
Suppose $v, w$, and $y$ are pseudo-vertices, $v\leq w$, $[f,B]\in v$, and
$[f,B]\notin w$.
If $\{[f,B]\}\nearrow y$, then $z := \left( v\setminus\{ [f,B]\}\right)\cup y$ is a pseudo-vertex and
$z\leq w$.
\end{lemma}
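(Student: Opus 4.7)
First I would observe that, because $v$ is a pseudo-vertex, the image $f(B)$ of $[f,B]$ is disjoint from the images of the remaining elements of $v$; meanwhile $\image(y)=f(B)$ since $y$ is obtained from $\{[f,B]\}$ by a simple expansion. Applying Remark~\ref{rem:unions of pseudo-vertices}(1) and (2) to the decomposition $v = (v\setminus\{[f,B]\}) \cup \{[f,B]\}$ together with the hypothesis $\{[f,B]\}\nearrow y$ gives
\[ v = (v\setminus\{[f,B]\}) \cup \{[f,B]\} \;\nearrow\; (v\setminus\{[f,B]\}) \cup y \;=\; z. \]
In particular $z$ is a pseudo-vertex and indeed $v\nearrow z$, so $v\leq z$. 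This handles the first conclusion of the lemma and reduces the second to showing $z\leq w$.

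Next I would fix a chain $v = v_0 \nearrow v_1 \nearrow \cdots \nearrow v_n = w$ of simple expansions witnessing $v\leq w$, and let $i$ be the smallest index with $[f,B]\notin v_i$ (such an $i$ exists because $[f,B]\in v_0$ but $[f,B]\notin v_n=w$). The step $v_{i-1}\nearrow v_i$ is then a simple expansion performed at $[f,B]$, and the well-definedness clause in Remark~\ref{rem: well-defined expansions} identifies the newly introduced elements as precisely the restrictions $[f|A,A]$ ranging over maximal proper sub-balls $A$ of $B$. That is, $v_i = (v_{i-1}\setminus\{[f,B]\})\cup y$.

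For each earlier step $v_{j-1}\nearrow v_j$ with $j<i$, the expansion occurs at some element of $v_{j-1}$ distinct from $[f,B]$ (by the minimality of $i$), so removing $[f,B]$ from both sides yields a legitimate simple expansion $v_{j-1}\setminus\{[f,B]\}\nearrow v_j\setminus\{[f,B]\}$. Chaining these produces $v\setminus\{[f,B]\}\leq v_{i-1}\setminus\{[f,B]\}$. Since $\image\bigl(v_{i-1}\setminus\{[f,B]\}\bigr)$ is disjoint from $f(B)=\image(y)$, the remark immediately following Remark~\ref{rem:unions of pseudo-vertices}—which combines two $\leq$-relations over disjoint images—gives
\[ z = (v\setminus\{[f,B]\})\cup y \;\leq\; (v_{i-1}\setminus\{[f,B]\})\cup y \;=\; v_i \;\leq\; w, \]
as desired.

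The main obstacle is the bookkeeping in the third paragraph: one must verify that each step $v_{j-1}\nearrow v_j$ with $j<i$ remains a legitimate simple expansion after deletion of $[f,B]$. This comes down to two points, both of which follow from the minimality of $i$ and the disjointness of images in a pseudo-vertex: the element being expanded is never $[f,B]$ itself, and the sets $v_j\setminus\{[f,B]\}$ inherit the disjoint-image condition from $v_j$. Once this is in hand, the proof is essentially an accounting of how a single simple expansion in the chain from $v$ to $w$ can be commuted past the others.
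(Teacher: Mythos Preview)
Your proof is correct and follows essentially the same approach as the paper's: choose a chain of simple expansions from $v$ to $w$, locate the step at which $[f,B]$ is expanded, and observe that deleting $[f,B]$ from each earlier term preserves the chain of simple expansions. The paper phrases this with ``the greatest $m$ such that $[f,B]\in v_m$'' rather than your ``smallest $i$ with $[f,B]\notin v_i$'' and is considerably more terse, but the argument is the same.
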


\begin{proof}
The fact that $f(B) =\image(y)$ implies that $z$ is a pseudo-vertex.
Now choose a sequence of simple expansions $v=v_1\nearrow v_2\nearrow\cdots\nearrow v_n= w$ and let $m$ be the greatest integer such that
$[f,B]\in v_m$. It follows that $v_{m+1} = \left( v_m\setminus\{ [f,B]\}\right)\cup y$
and $v\setminus\{ [f,B]\} \leq v_m\setminus\{ [f,B]\}$.
Thus, $z\leq v_{m+1}\leq w$.
\end{proof}

A pseudo-vertex $\widehat{q}$ is a \emph{maximal lower bound for $v_1,\dots, v_k$} if $\widehat{q}$ is a lower bound for $v_1,\dots, v_k$
and if $\widehat{q} < q$, then $q$ is not a lower bound for $v_1,\dots, v_k$. 
By contrast, $\widehat{q}$ is the \emph{greatest lower bound for $v_1,\dots, v_k$} if $\widehat{q}$ is a lower bound for $v_1,\dots, v_k$
and if $q$ is another lower bound for $v_1,\dots, v_k$, then $q\leq \widehat{q}$. 
A greatest lower bound is maximal, but the converse need not hold in 
arbitrary partially ordered sets. 

\begin{lemma}
\label{lem: max lower bounds}
Let $\widehat{q}$ be a maximal lower bound for $v_1,\dots, v_k$.
If $[g,A]\in \bigcap_{i=1}^k v_i$, then $[g,A]\in \widehat{q}$.
\end{lemma}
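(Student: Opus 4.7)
The plan is to argue by contradiction: assume $[g, A] \notin \widehat{q}$, then construct a single simple expansion of $\widehat{q}$ that is still a lower bound for all of $v_1, \dots, v_k$, contradicting maximality.

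The key observation is that a single element of $\widehat{q}$ witnesses the lower-bound relation for every $v_i$ at $[g, A]$. Indeed, applying Remark~\ref{rem: unique containment} to each $\widehat{q} \leq v_i$ with $[g, A] \in v_i$ produces, for each $i$, a unique $[h_i, C_i] \in \widehat{q}$ with $g(A) \subseteq h_i(C_i)$. Since the images of the elements of $\widehat{q}$ are pairwise disjoint and $g(A)$ is nonempty, at most one element of $\widehat{q}$ can satisfy this inclusion, so the choice is independent of $i$; call it $[h, C]$. The same disjointness argument applied inside $v_i$ shows that if $[h, C]$ were an element of $v_i$, then $\emptyset \neq g(A) \subseteq h(C)$ would force $[g, A] = [h, C] \in \widehat{q}$, contradicting our assumption. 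Hence $[h, C] \notin v_i$ for every $i$.

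Assuming $C$ has a proper sub-ball, set $y := \expa(\{[h, C]\})$ so that $\{[h, C]\} \nearrow y$, and apply Lemma~\ref{lem: expansion and order} to each relation $\widehat{q} \leq v_i$ with $[f, B] = [h, C]$: the resulting pseudo-vertex $z := (\widehat{q} \setminus \{[h, C]\}) \cup y$ satisfies $z \leq v_i$ for every $i$, and $\widehat{q} \nearrow z$ gives $\widehat{q} < z$, contradicting the maximality of $\widehat{q}$. The only remaining case is when $C$ is a singleton, which has to be dispatched separately: then $g(A) \subseteq h(C)$ forces $A$ to be a singleton too, and because every embedding locally determined by $\sm$ from a singleton is itself an element of $\sm$, we have $g \in \sm(A, g(A))$ and $h \in \sm(C, h(C))$, so $h^{-1} g \in \sm(A, C)$ with $h \circ (h^{-1} g) = g$, forcing $[g, A] = [h, C]$ after all. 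The main obstacle is recognizing that a single $[h, C] \in \widehat{q}$ can be used to build a strictly larger common lower bound of all the $v_i$; this is precisely what the disjointness of images inside $\widehat{q}$ guarantees.
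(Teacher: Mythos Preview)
Your proof is correct and follows essentially the same approach as the paper: locate the unique element $[h,C]\in\widehat{q}$ whose image contains $g(A)$, argue that it cannot lie in any $v_i$ (else it would equal $[g,A]$), and then apply Lemma~\ref{lem: expansion and order} to the simple expansion at $[h,C]$ to contradict maximality. Your explicit treatment of the singleton case is a nice touch of extra care; the paper glosses over this, since when $C$ is a singleton the element $[h,C]$ persists under every expansion and hence lies in each $v_i$, forcing $[h,C]=[g,A]$ anyway.
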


\begin{proof}
Remark~\ref{rem: unique containment} implies there exists a unique $[\widehat{g},\widehat{A}]\in \widehat{q}$ such that
$g(A)\subseteq \widehat{g}(\widehat{A})$. We note that, since $\widehat{g}(\widehat{A}) \cap g(A) \neq \emptyset$ and $v_1, \dots, v_k, \widehat{q}$
are pseudo-vertices,
either $[\widehat{g}, \widehat{A}] = [g,A]$, or $[\widehat{g}, \widehat{A}] \notin v_i$, for all $i = 1, \dots, k$. 
Let $y$ be such that $[\widehat{g},\widehat{A}] \nearrow y$.
If $[\widehat{g},\widehat{A}]\notin v_i$ for all $i= 1, \dots, k$, then Lemma~\ref{lem: expansion and order} implies that
$q' := y\cup( \widehat{q} \setminus \{[\widehat{g},\widehat{A}]\})\leq v_i$, for all $i = 1, \dots, k$.
Since $\widehat{q}\nearrow q'$, this contradicts maximality of $\widehat{q}$. 
Therefore, $[\widehat{g},\widehat{A}] = [g,A]$ and $[g,A]\in \widehat{q}$.
\end{proof}

\begin{lemma}
\label{lem: lower bounds}
Let $v$ be a pseudo-vertex containing distinct contracting pseudo-vertices $w_1, \dots, w_k$ 
and let $v_i$ be a pseudo-vertex obtained from a simple contraction of $v$ at $w_i$ for $1\leq i\leq k$.
\begin{enumerate}
	\item The pseudo-vertices $v_1,\dots, v_k$ have a lower bound if and only if  
	$v_1,\dots, v_k$ are obtained from $v$ by pairwise disjoint simple contractions.
	\item If the pseudo-vertices $v_1,\dots, v_k$ have a lower bound, then they have a greatest lower bound.
\end{enumerate}
\end{lemma}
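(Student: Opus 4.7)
The plan is to produce a canonical common lower bound $q$ and then show it is, in fact, the greatest lower bound. Fix representatives $[f_i,B_i] \in \mathcal{E}$ with $w_i = \expa\{[f_i,B_i]\}$, so $v_i = (v \setminus w_i) \cup \{[f_i,B_i]\}$. For the backward direction of part (1), I would set
\[
q := \Bigl(v \setminus \bigcup_{i=1}^k w_i\Bigr) \cup \bigl\{[f_i,B_i] : 1 \leq i \leq k\bigr\}.
\]
Pairwise disjointness of the $w_i$'s combined with the pseudo-vertex property of $v$ makes the images in $q$ pairwise disjoint, so $q$ is a pseudo-vertex. Since
\[
v_i = \Bigl(q \setminus \bigl\{[f_j,B_j] : j \neq i\bigr\}\Bigr) \cup \bigcup_{j \neq i} w_j,
\]
the relation $q \leq v_i$ follows by successively expanding each $[f_j,B_j]$ (for $j \neq i$) to $w_j$, invoking Remark~\ref{rem:unions of pseudo-vertices}.

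For the forward direction of part (1), I would actually establish the stronger observation that two distinct contracting pseudo-vertices $w_i, w_j \subseteq v$ are automatically disjoint. If $[g,A] \in w_i \cap w_j$, then $g(A)$ is a maximal proper sub-ball of each of $f_i(B_i)$ and $f_j(B_j)$; these balls cannot be properly nested (the smaller would then lie strictly between $g(A)$ and the larger, contradicting maximality) nor disjoint, so $f_i(B_i) = f_j(B_j)$. The images of the elements of $w_i$ and of $w_j$ then give the same partition of this common ball into maximal proper sub-balls. For each such sub-ball $C$, the unique element of $w_i$ with image $C$ and the unique element of $w_j$ with image $C$ both lie in $v$, so the pseudo-vertex property of $v$ forces them to coincide. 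Hence $w_i = w_j$, contradicting distinctness.

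For part (2), I would show the $q$ above is the greatest lower bound by analyzing maximal lower bounds. Let $\widehat{q}$ be any maximal lower bound. Lemma~\ref{lem: max lower bounds} together with the computation $\bigcap_{i=1}^k v_i = v \setminus \bigcup_i w_i$ yields $\widehat{q} \supseteq v \setminus \bigcup_i w_i$. To show each $[f_i,B_i] \in \widehat{q}$, let $[g,G] \in \widehat{q}$ be the unique element with $f_i(B_i) \subseteq g(G)$ (Remark~\ref{rem: unique containment}). If $f_i(B_i) \subsetneq g(G)$, one checks that $[g,G] \notin v_j$ for any $j$: the case $[g,G] = [f_j,B_j]$ would require $f_j(B_j) \supsetneq f_i(B_i)$, impossible since these images are disjoint; and the case $[g,G] \in v \setminus w_j$ would put $[g,G] \in v$ with image strictly containing $\image(w_i) = f_i(B_i)$, forcing $[g,G]$ to overlap with an element of $w_i \subseteq v$, violating the pseudo-vertex property of $v$. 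Lemma~\ref{lem: expansion and order} then produces a strictly larger lower bound by expanding $\widehat{q}$ at $[g,G]$, contradicting maximality. Hence $g(G) = f_i(B_i)$, and pseudo-vertex uniqueness in $v_i$ gives $[g,G] = [f_i,B_i]$. So $\widehat{q} \supseteq q$, and the equality of images $\image(\widehat{q}) = \image(v) = \image(q)$ together with the pseudo-vertex property yields $\widehat{q} = q$. Finally, any lower bound admits iterated simple expansions preserving the lower-bound property (terminating since heights are bounded above by $\|v_1\|$), producing a maximal lower bound equal to $q$; so every lower bound is $\leq q$.

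The most delicate step is the exclusion $[g,G] \notin v_j$ in part (2); the obstacle is essentially combinatorial, requiring a careful use of the disjointness of the $f_i(B_i)$'s together with the pseudo-vertex property of $v$ to rule out both possibilities for $[g,G]$ lying in $v_j$, so that Lemma~\ref{lem: expansion and order} becomes applicable and the maximality of $\widehat{q}$ can be exploited.
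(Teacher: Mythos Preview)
Your construction of $q$ for the backward direction of (1), and your entire Part~(2) argument, are correct and essentially match the paper's approach. The real gap is in the forward direction of (1): the ``stronger observation'' that distinct contracting pseudo-vertices $w_i, w_j \subseteq v$ are \emph{automatically} disjoint is false, so the lower-bound hypothesis cannot be discarded. For a counterexample in Thompson's $V$ (so $X = \{0,1\}^\omega$ with the standard $\sm$-structure), take
\[
v = \{[\incl_{00X}, 00X],\ [\incl_{01X}, 01X],\ [\incl_{1X}, 1X]\},
\]
let $w_1 = \{[\incl_{00X}, 00X], [\incl_{01X}, 01X]\} = \expa\{[\incl_{0X}, 0X]\}$, and let $w_2 = \{[\incl_{00X}, 00X], [\incl_{1X}, 1X]\} = \expa\{[f_2, X]\}$ where $f_2(0a) = 00a$ and $f_2(1a) = 1a$. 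Both $w_1, w_2$ are contracting and contained in $v$, they are distinct, yet $w_1 \cap w_2 \neq \emptyset$.

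The specific error is the claim that $g(A)$ is a maximal proper sub-ball of $f_i(B_i)$. Since $f_i$ is only a \emph{local} similarity embedding, neither $f_i(B_i)$ nor the images $f_i(A')$ of maximal proper sub-balls $A' \subseteq B_i$ need be balls at all (in the example, $f_2(X) = 00X \cup 1X$ is not a ball), so your ultrametric nesting argument does not apply. The paper's proof of this direction genuinely uses the assumed lower bound $z$: via Remark~\ref{rem: unique containment} it locates a single element $[h,C] \in z$ whose image contains both $f_1(B_1)$ and $f_2(B_2)$, pulls everything back to sub-balls $C_1, C_2, \widehat{C}_1, \widehat{C}_2$ of the honest ball $C$, and then uses the ultrametric fact that a ball is a maximal proper sub-ball of at most one ball to force $C_1 = C_2$, hence $[f_1,B_1] = [f_2,B_2]$ and $w_1 = w_2$.
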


\begin{proof}
For notation that will be used throughout the proof, choose $[f_i, B_i]\in\mathcal{E}$ such that $[f_i, B_i]\in v_i$ 
and if $u_i := \{ [f_i, B_i]\}$, then $u_i\subseteq v_i$ and $u_i\nearrow w_i$
for $1\leq i\leq k$.
Note that $v\setminus w_i = v_i\setminus u_i$ for $1\leq i\leq k$.

To prove the ``if'' part of the first statement, the assumption is that $w_i\cap w_j=\emptyset$
whenever $i\not= j$.
Define a pseudo-vertex
$$\widehat{v} = \left[ v\setminus  \bigcup_{i=1}^k w_i\right] \cup \bigcup_{i=1}^ku_i .$$
It follows that $\widehat{v}\leq v_j$ for $1\leq j\leq k$ as is amply illustrated for the case $j=k$:
$$\widehat{v} \nearrow \left[ v\setminus  \bigcup_{i=2}^k w_i\right] \cup \bigcup_{i=2}^ku_i 
\nearrow \left[ v\setminus  \bigcup_{i=3}^k w_i\right] \cup \bigcup_{i=3}^ku_i 
\nearrow \cdots \nearrow
\left[ v\setminus   w_k\right] \cup u_k = v_k 
,$$
where the  $\ell^{th}$ simple expansion in the sequence above uses $u_\ell\nearrow w_\ell$.

To prove the ``only if'' part of the first statement, it suffices to consider the case $k=2$.
Suppose $z$ is a lower bound of $v_1$ and $v_2$. The goal is to show $w_1\cap w_2=\emptyset$.
Suppose on the contrary that there exists $[f,B]\in w_1\cap w_2$.
Since $u_1\nearrow w_1$ and $u_2\nearrow w_2$, it follows that there exist maximal proper sub-balls,
$\widehat{B}_1\subseteq B_1$ and $\widehat{B}_2\subseteq B_2$,
such that $[f_1|\widehat{B}_1,\widehat{B}_1] =[f,B] = [f_2| \widehat{B}_2, \widehat{B}_2]$.
Since $z\leq v$ and $[f,B]\in v$, it follows from 
Remark \ref{rem: unique containment} that there exists a unique $[h,C]\in z$ such that
$f(B)\subseteq h(C)$. Now, since $z \leq v_{i}$ and $[f_{i}, B_{i}] \in v_{i}$ ($i=1,2$),
it follows from Remark \ref{rem: unique containment} that there are unique $[h_{i}, D_{i}] \in z$
($i=1,2$) such that $f_{i}(B_{i}) \subseteq h_{i}(D_{i})$ ($i=1,2$). Since
$[h_{1}, D_{1}]$, $[h_{2}, D_{2}]$, $[h, C] \in z$ and $z$ is a pseudo-vertex, we must have that any
two of $h_{1}(D_{1})$, $h_{2}(D_{2})$, $h(C)$ are either identical or disjoint. We have
$h_{i}(D_{i}) \cap h(C) \neq \emptyset$ for $i=1,2$, however (since $f(B)$ is a subset of both). It follows
that $h_{1}(D_{1}) = h_{2}(D_{2}) = h(C)$, and so $f_{i}(B_{i}) \subseteq h(C)$ for $i=1,2$. Since
$z$ expands to $v_{i}$ ($i=1,2$) and $f_{i}(B_{i}) \subseteq h(C)$, there exist sub-balls $C_{1}$, $C_{2} \subseteq C$
such that $[h|_{C_{i}}, C_{i}] = [f_{i}, B_{i}]$ ($i=1,2$). Since $v_{i}$ ($i=1,2$) expands to $v$, there exist sub-balls
$\widehat{C}_{1} \subseteq C_{1}$ and $\widehat{C}_{2} \subseteq C_{2}$ such that
$[h|\widehat{C}_1,\widehat{C}_1] = [f,B] = [h|\widehat{C}_2,\widehat{C}_2]$.  
In particular, $h(\widehat{C}_1)= f(B) = h(\widehat{C}_2)$, from which it follows that
$\widehat{C}_1=\widehat{C}_2$.

We will now show that $\widehat{C}_1$ is a maximal proper sub-ball of $C_1$.
There exists $g\in\sm(B,\widehat{B}_1)$ such that $f_1g=f$.
There exists $\widehat{h}\in\sm(B_1,C_1)$ such that $h\widehat{h}=f_1$.
Since $\widehat{B}_1$ is a maximal proper sub-ball of $B_1$, $\widehat{h}(\widehat{B}_1)$ is a 
maximal proper sub-ball of $C_1$.
Now $h\widehat{h}(\widehat{B}_1) = f_1(\widehat{B}_1) = fg^{-1}(\widehat{B}_1) = f(B) = h(\widehat{C}_1).$
Thus, $\widehat{h}(\widehat{B}_1) =\widehat{C}_1$ and $\widehat{C}_1$ is a maximal proper sub-ball of $C_1$
as claimed. 
Likewise, $\widehat{C}_2$ is a maximal proper sub-ball of $C_2$. Since $\widehat{C}_1=\widehat{C}_2$, 
it follows that $C_1=C_2$ (in an ultrametric space a ball is a maximal proper sub-ball of at most one ball).
Therefore, $[f_1,B_1] = [f_2,B_2]$; that is, $u_1=u_2$ and $w_1=w_2$, contradicting the assumption that
$w_1$ and $w_2$ are distinct.

To prove the second statement,
assuming $v_1,\dots, v_k$ have a lower bound (equivalently, they are obtained from $v$ by pairwise disjoint
simple contractions), we will show that the pseudo-vertex $\widehat{v}$ defined above is the greatest lower
bound of $v_1,\dots, v_k$.
Let $\widehat{q}$ be a maximal lower bound for $v_1,\dots, v_k$. 
Let $i \in \{ 1, \ldots, k \}$ be arbitrary. We claim that $u_{i} \subseteq \widehat{q}$. Since $\widehat{q} \leq v_{i}$
and $u_{i} = \{ [f_{i}, B_{i}] \} \subseteq v_{i}$, Remark \ref{rem: unique containment} implies that there is a unique 
$[\widehat{f}_{i}, \widehat{B}_{i}] \in \widehat{q}$ such that $f_{i}(B_{i}) \subseteq \widehat{f}_{i}(\widehat{B}_{i})$.
Suppose, for a contradiction, that $i \neq j$, but $[ \widehat{f}_{i}, \widehat{B}_{i}] \in v_{j}$. Since $w_{i} \in v_{j}$, we have
$$\expa\{ u_i \} = \{ [f_{i}|_{B'_{l}}, B'_{l}] \mid B'_{l} \text{ is a maximal proper subball of } B_{i} \} \subseteq v_{j}.$$
Clearly, each $f_{i}(B'_{l})$ is a proper subset of $\widehat{f}_{i}(\widehat{B}_{i})$. Since $v_{j}$ is a pseudo-vertex 
and $[f_{i}|_{B'_{l}}, B'_{l}]$, $[\widehat{f}_{i}, \widehat{B}_{i}] \in v_{j}$, we have a contradiction. Thus, 
$[\widehat{f}_{i}, \widehat{B}_{i}] \notin v_{j}$ if $i \neq j$. Now, if $[\widehat{f}_{i}, \widehat{B}_{i}] \notin v_{i}$,
then, by Lemma \ref{lem: expansion and order}, the simple expansion $\widetilde{q}_{i}$ of $\widehat{q}$ at 
$[\widehat{f}_{i}, \widehat{B}_{i}]$ satisfies $\widetilde{q}_{i} \leq v_{j}$ for all $j \in \{ 1, \ldots, k \}$, violating
maximality of $\widehat{q}$. Thus, $u_{i} \subseteq \widehat{q}$. It follows that $[f_{i}, B_{i}] \in \widehat{q}$, for
$i = 1, \ldots, k$.  

Lemma~\ref{lem: max lower bounds} implies that $z\subseteq \widehat{q}$.
Since $\image(u_1\cup\cdots\cup u_k\cup z) =\image(\widehat{v})$,
we must have $\widehat{q} = \bigcup_{i=1}^k u_i\cup z= \widehat{v}$.
\end{proof}

\begin{definition}
Let $v$ be a vertex.
\begin{enumerate}
	\item The \emph{descending link of $v$}, denoted $\dlk(v)$, is the subcomplex of $K$
spanned by $\{ v'\in K^0 ~|~ v' < v\}$.
	\item The \emph{complex below $v$}, denoted $B(v)$, is the subcomplex of $K$ 
spanned by $\{ v'\in K^0 ~|~ v' \leq v\}$.	
\end{enumerate}
\end{definition}

Note that the set of vertices of $B(v)$ is a directed set; in fact, it has a greatest element $v$.
Thus, $B(v)$ is contractible.

\begin{definition}
The \emph{nerve complex associated to a pseudo-vertex $v$}, denoted $\mathcal{N}_v$,
is the abstract simplicial complex of which 
a vertex is a pseudo-vertex  obtained from $v$ by a simple contraction and 
a $k$--simplex is a set of the form
$\{ v_0, \dots, v_k\}$, where $v_0,\dots, v_k$ are pseudo-vertices 
obtained from $v$ by pairwise disjoint simple contractions.
\end{definition}

\begin{remark}
\label{rem: nerve}
The reason for the nerve terminology is the following alternative interpretation
of $\mathcal{N}_v$ in the case $v$ is a vertex.
Recall that in general if $\mathcal{U}$ is a cover of a space, then the \emph{nerve of $\mathcal{U}$}
is the simplicial complex, denoted $N(\mathcal{U})$, whose vertices are the elements of $\mathcal{U}$ and such that a collection $\{U_0,\dots, U_n\}$
of vertices spans an $n$-simplex of $N(\mathcal{U})$ if and only if $\bigcap_{i=0}^nU_i\not=\emptyset$.
Let $v_1, \dots, v_n$ be the complete list of distinct vertices that can be obtained from $v$
by simple contractions; that is, $v_i\nearrow v$ for $1\leq i\leq n$.
Note that $\mathcal{U} = \{ B(v_1) , \dots, B(v_n)\}$ is a cover of $\dlk(v)$
by subcomplexes.
Moreover, a $k$--element subset  $\{ B(v_{i_1}) , \dots, B(v_{i_k})\}$ 
of $\mathcal{U}$ has a non-empty intersection if and only if $v_{i_1} , \dots, v_{i_k}$ have a lower bound.
By Lemma~\ref{lem: lower bounds}, this means that  $\{ B(v_{i_1}) , \dots, B(v_{i_k})\}$ 
has a non-empty intersection if and only if $v_{i_1} , \dots, v_{i_k}$
are obtained from $v$ by pairwise disjoint simple contractions.
Therefore, $\mathcal{N}_v$ is the nerve of the cover $\mathcal{U}$.
\end{remark}

\begin{proposition}
If $v$ is a vertex, then $\dlk(v)$ is homotopy equivalent to
$\mathcal{N}_v$.
\end{proposition}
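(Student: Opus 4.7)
The plan is to apply the Nerve Lemma, using the identification given in Remark~\ref{rem: nerve}. Let $v_1, \dots, v_n$ be the distinct pseudo-vertices obtained from $v$ by a single simple contraction, and set $\mathcal{U} = \{B(v_1), \dots, B(v_n)\}$, so that $\mathcal{N}_v$ is by definition the nerve of $\mathcal{U}$.

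First, I would verify that $\mathcal{U}$ is a cover of $\dlk(v)$ by subcomplexes. If $\sigma = (v'_0 < v'_1 < \cdots < v'_k)$ is a simplex of $\dlk(v)$, then $v'_k < v$, so any chain of simple expansions from $v'_k$ up to $v$ has penultimate term equal to one of the $v_i$. Consequently $v'_k \leq v_i$, hence $v'_j \leq v_i$ for every $j$, placing the entire simplex $\sigma$ inside $B(v_i)$.

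The main point is to show that $\mathcal{U}$ is a \emph{good cover}: every non-empty intersection $B(v_{i_1}) \cap \cdots \cap B(v_{i_k})$ is contractible. The vertices of such an intersection are precisely the common lower bounds of $v_{i_1}, \dots, v_{i_k}$. By Lemma~\ref{lem: lower bounds}(2), these pseudo-vertices admit a greatest lower bound $\widehat{v}$; moreover $\widehat{v}$ is a vertex, since the property of being a vertex is preserved under simple expansion. Hence the intersection coincides with the subcomplex $B(\widehat{v})$, whose poset of vertices is directed with maximum $\widehat{v}$, so $B(\widehat{v})$ is contractible by the same argument used in Proposition~\ref{prop: contractible}.

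With $\mathcal{U}$ identified as a good cover of $\dlk(v)$ by subcomplexes whose nerve is $\mathcal{N}_v$, the classical Nerve Lemma (as in Hatcher's \emph{Algebraic Topology}, Corollary~4G.3, or Bj\"orner's survey on topological methods) yields the homotopy equivalence $\dlk(v) \simeq \mathcal{N}_v$. The only substantive step is the contractibility of non-empty intersections, and this has already been packaged into Lemma~\ref{lem: lower bounds}; everything else is bookkeeping.
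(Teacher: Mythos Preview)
Your proof is correct and follows essentially the same approach as the paper: both invoke the Nerve Lemma on the cover $\mathcal{U}=\{B(v_1),\dots,B(v_n)\}$ of $\dlk(v)$, and both establish contractibility of each non-empty finite intersection by appealing to Lemma~\ref{lem: lower bounds}(2) to produce a greatest lower bound, which makes the vertex set of the intersection directed. Your additional observations---explicitly checking that $\mathcal{U}$ covers $\dlk(v)$, and identifying the intersection with $B(\widehat{v})$ rather than merely noting it has a greatest vertex---are correct refinements but do not change the argument's substance.
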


\begin{proof}
Let $v_1, \dots, v_n$ be the complete list of distinct vertices that can be obtained from $v$
by simple contractions.
Using the alternative interpretation of $\mathcal{N}_v$ in Remark~\ref{rem: nerve} and a
standard fact about nerves of covers, (which may be found in Geoghegan \cite[Proposition 9.3.20]{Geoghegan}),
it suffices to show that $\bigcap_{j=1}^k B(v_{i_j})$ is contractible whenever it is non-empty.
The intersection is non-empty precisely when the vertices $v_{i_1}, \dots, v_{i_k}$ have a lower bound.
In that case, Lemma~\ref{lem: lower bounds} implies that the vertices have a greatest lower bound.
That is to say, $\bigcap_{j=1}^k B(v_{i_j})^{0}$ has a greatest element, in particular, it is a directed set.
Therefore, the intersection $\bigcap_{j=1}^k B(v_{i_j})$ is contractible.
\end{proof}

Recall that a simplicial complex $M$ is a \emph{flag complex} if every finite subset of vertices of $M$ that is pairwise
joined by edges spans a simplex.

\begin{lemma}
If $v$ is a pseudo-vertex, then $\mathcal{N}_v$ is a flag complex.
\end{lemma}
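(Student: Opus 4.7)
The plan is to unpack the definitions and observe that the flag property is essentially automatic, hinging only on the well-definedness of the contracting pseudo-vertex attached to a simple contraction.

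First I would fix a finite set $\{v_0, v_1, \dots, v_k\}$ of vertices of $\mathcal{N}_v$ that is pairwise joined by edges, i.e., every two-element subset spans a $1$-simplex. By definition of $\mathcal{N}_v$, each $v_i$ is a pseudo-vertex obtained from $v$ by a simple contraction. Thus for each $i \in \{0, 1, \dots, k\}$ there is a contracting pseudo-vertex $w_i \subseteq v$ such that $v_i$ is the simple contraction of $v$ at $w_i$. The crucial point (which I would flag explicitly) is the last sentence of Remark~\ref{rem: well-defined expansions}: once $v_i$ is fixed as a simple contraction of $v$, the subset $w_i \subseteq v$ is uniquely determined. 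Therefore the assignment $v_i \mapsto w_i$ is unambiguous, and the pairwise disjointness condition of Definition~\ref{def: pairwise disjoint simple contractions} is well-defined.

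Next, for each pair $i \neq j$, the hypothesis that $\{v_i, v_j\}$ spans an edge in $\mathcal{N}_v$ means, directly from the definition of $\mathcal{N}_v$, that $v_i$ and $v_j$ are obtained from $v$ by pairwise disjoint simple contractions; in view of the uniqueness established above, this is precisely the statement $w_i \cap w_j = \emptyset$. Since this holds for every pair $i \neq j$, the family $\{w_0, w_1, \dots, w_k\}$ consists of pairwise disjoint subsets of $v$, so by Definition~\ref{def: pairwise disjoint simple contractions} the full collection $v_0, v_1, \dots, v_k$ is obtained from $v$ by pairwise disjoint simple contractions. Hence $\{v_0, v_1, \dots, v_k\}$ spans a $k$-simplex of $\mathcal{N}_v$, and $\mathcal{N}_v$ is a flag complex.

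There is no real obstacle here; the entire content of the lemma is that the defining relation for simplices in $\mathcal{N}_v$ is determined pairwise on the $w_i$'s, and the only subtle point is making sure the $w_i$'s are canonically associated to the $v_i$'s, which is exactly what Remark~\ref{rem: well-defined expansions} provides. I would therefore keep the proof short and direct, with the uniqueness remark being the only nontrivial ingredient invoked.
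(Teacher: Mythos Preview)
Your proof is correct and follows the same approach as the paper's. The paper's own proof is terser, essentially omitting the well-definedness discussion you include because that point was already made immediately after Definition~\ref{def: pairwise disjoint simple contractions}; your explicit invocation of Remark~\ref{rem: well-defined expansions} is a harmless reinforcement of what the paper treats as already settled.
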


\begin{proof}
Let $v_0, \dots, v_k$ be vertices of $\mathcal{N}_v$ such that any pair spans a $1$-simplex of $\mathcal{N}_v$.
Thus, $v_0, \dots, v_k$ are pseudo-vertices obtained from $v$ by pairwise disjoint simple contractions.
That is to say, $\{v_0, \dots, v_k\}$ is a $k$-simplex of $\mathcal{N}_v$.
\end{proof}

We will need to assume the following property in order to establish our main finiteness result Theorem\ref{thm: main}.

\begin{definition}
\label{def: rich in contractions} 
The space $X$ together with $\sm$ is \emph{rich in simple contractions}
if there exists a constant $C_0 > 0$ such that if $k\geq C_0$ and $v$ is a pseudo-vertex of height $k$,
then there exists a pseudo-vertex $w\subseteq v$ with $\Vert w\Vert > 1$ and a simple contraction of $v$ at $w$.
\end{definition}

Note that the condition $\Vert w\Vert >1$ in the definition above is redundant because it is implied by the definition of a simple contraction.

The  property of Definition~\ref{def: rich in contractions} is the one that we will need in our proof; however, the following property,
which is a bit more cumbersome to state, is easier to
verify and implies rich in simple contractions.

\begin{definition}
\label{def: rich in ball contractions} 
The space $X$ together with $\sm$ is \emph{rich in ball contractions}
if there exists a constant $C_0 > 0$ such that if $k\geq C_0$ and $(B_1,\dots, B_k)$ is a  $k$--tuple of balls of $X$,
then there exists a ball $B\subseteq X$  such that if $\mathcal{M}_B := \{ A ~|~ \text{$A$ is a maximal, proper sub-ball of $B$}\}$,
then $|\mathcal{M}_B|>1$ and there is an injection $\sigma\co\mathcal{M}_B\to \{ (B_i,i) ~|~  1\leq i\leq k\}$
such that $[A]=[B_i]$ whenever $\sigma(A) = (B_i,i)$.
\end{definition}

\begin{proposition}
\label{prop: rich implies rich}
If $X$ together with $\sm$ is rich in ball contractions, then it is rich in simple contractions.
\end{proposition}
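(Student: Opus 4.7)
The plan is to show that the same constant $C_0$ witnessing that $X$ is rich in ball contractions also witnesses that $X$ is rich in simple contractions. Fix a pseudo-vertex $v = \{[f_1, B_1], \ldots, [f_k, B_k]\}$ with $k \geq C_0$. Applying the hypothesis to the $k$-tuple $(B_1, \ldots, B_k)$, I obtain a ball $B \subseteq X$ whose set $\mathcal{M}_B$ of maximal proper sub-balls satisfies $|\mathcal{M}_B| > 1$, together with an injection $\sigma \co \mathcal{M}_B \to \{(B_i, i) \mid 1 \leq i \leq k\}$ such that $[A] = [B_{i(A)}]$ whenever $\sigma(A) = (B_{i(A)}, i(A))$.

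For each $A \in \mathcal{M}_B$, the non-empty set $\sm(A, B_{i(A)})$ supplies a similarity $h_A \in \sm(A, B_{i(A)})$. Since $|\mathcal{M}_B| > 1$, the ball $B$ is not a singleton, so $\mathcal{M}_B$ partitions $B$, and I may define $f \co B \to X$ piecewise by $f|A = f_{i(A)} \circ h_A$. I then check two things: first, that $f$ is an embedding, which follows because the indices $i(A)$ are pairwise distinct ($\sigma$ is injective), so the targets $f_{i(A)}(B_{i(A)})$ are pairwise disjoint (as $v$ is a pseudo-vertex), and each $f|A$ is a homeomorphism onto $f_{i(A)}(B_{i(A)})$; second, that $f$ is locally determined by $\sm$, which I check at a point $x \in A$ by choosing a ball $B' \subseteq B_{i(A)}$ around $h_A(x)$ on which $f_{i(A)}$ lies in $\sm$, pulling it back through $h_A$ via the Restrictions property to obtain a ball $h_A^{-1}(B') \subseteq A$, and invoking Compositions to conclude that $f|h_A^{-1}(B') = (f_{i(A)}|B') \circ (h_A|h_A^{-1}(B')) \in \sm(h_A^{-1}(B'), f_{i(A)}(B'))$.

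With $[f, B] \in \mathcal{E}$ in hand, the equivalence $[f|A, A] = [f_{i(A)}, B_{i(A)}]$ holds by definition of the equivalence on $\mathcal{S}$, witnessed by $h_A$. Hence
\begin{align*}
\expa\{[f, B]\} &= \{[f|A, A] \mid A \in \mathcal{M}_B\} \\
&= \{[f_{i(A)}, B_{i(A)}] \mid A \in \mathcal{M}_B\} \subseteq v.
\end{align*}
Calling this subset $w$, the pseudo-vertex $v^{\ast} := (v \setminus w) \cup \{[f, B]\}$ satisfies $v^{\ast} \nearrow v$ via a simple expansion at $[f, B]$, which exhibits $v$ as a simple contraction of $v^{\ast}$ at $w$. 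Since $\Vert w\Vert = |\mathcal{M}_B| > 1$, this is the desired contraction, and the same constant $C_0$ works.

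I expect the main obstacle to be the careful bookkeeping in verifying that the piecewise-defined $f$ is locally determined by $\sm$, since this requires correctly threading Restrictions (to pass from $f_{i(A)}$ on $B_{i(A)}$ to its restriction on a suitable sub-ball) together with Compositions (to combine with $h_A$), and one also needs that the preimage $h_A^{-1}(B')$ of a ball under a similarity is again a ball. Everything else is either a routine disjointness check or direct unwinding of the definitions of $\mathcal{E}$, $\expa$, and simple contraction.
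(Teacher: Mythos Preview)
Your proof is correct and follows essentially the same approach as the paper's: apply the ball-contraction hypothesis to the tuple of second coordinates, choose similarities $h_A$, define $f$ piecewise, and observe that the expansion of $\{[f,B]\}$ lands inside $v$. You supply more detail than the paper does in verifying that $f$ is an embedding locally determined by $\sm$, which is welcome. One small terminology slip: since $v^{\ast} \nearrow v$, it is $v^{\ast}$ that is the simple contraction of $v$ at $w$, not the other way around as you wrote; this does not affect the argument.
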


\begin{proof} Let $C_0$ be the constant given in Definition~\ref{def: rich in ball contractions}; we will show that 
Definition~\ref{def: rich in contractions} is satisfied with the same constant.
Let $v=\{ [f_i,B_i] ~|~ 1\leq i\leq k\}$ be a pseudo-vertex of height $k\geq C_0$.
Let $B\subseteq X$ be a ball such that $|\mathcal{M}_B| > 1$ and there exists an injection
$\sigma\co \mathcal{M}_B\to \{ (B_i,i)~|~ 1\leq i\leq k\}$.
Let $\sigma_1$ and $\sigma_2$ denote the first and second coordinates of $\sigma$, respectively; that is,
if $\sigma(A)=(B_i,i)$, then $\sigma_1(A)=B_i$ and $\sigma_2(A)=i$.
For each $A\in\mathcal{M}_B$, choose $h_A\in\sm(A,\sigma_1(A))$.
Define $f\co B\to X$ by setting
$f|A = f_{\sigma_2(A)} \circ h_A \co A\to X$ for each $A\in\mathcal{M}_B$.
Let $w=\{ [f_i,B_i] ~|~ i\in\image(\sigma_2)\}$. Then $w\subseteq v$ is a pseudo-vertex
and $\Vert w\Vert > 1$.
Define $u = \{[f,B]\}\cup v\setminus w$.
Clearly, $u$ is obtained from a simple contraction at $w$.
\end{proof}

\begin{example}
\label{example: vdh rich in ball contractions}
We let $A = \{ a_1, \dots, a_d \}$ be a finite alphabet, and consider, for arbitrary $H \leq \Sigma_{d}$, 
the finite similarity structure for $A^{\omega}$ from Definition \ref{def:symmetric group sim}.

We claim that $A^{\omega}$ with the given $\sm$ structure is rich in ball
contractions with $C_{0} = d$. Suppose $k \geq d$ and $(B_{1}, \ldots, B_{k})$ is 
a $k$-tuple of balls in $A^{\omega}$. We can write $(B_{1}, \ldots, B_{k}) = 
(u_{1}A^{\omega}, \ldots, u_{k}A^{\omega})$ for appropriate words $u_{1}, \ldots, u_{k} \in A^{\ast}$.
We consider $\mathcal{M}_{A^{\omega}} = \{ a_{i}A^{\omega} \mid a_{i} \in A \}$.
Let $\sigma: \mathcal{M}_{A^{\omega}} \rightarrow \{ (u_{i}A^{\omega}, i) \mid 1 \leq i \leq k \}$ 
be defined by $\sigma(a_{i}A^{\omega}) = (u_{i}A^{\omega}, i)$. This map is injective, and clearly
$\sm(a_{i}A^{\omega}, u_{i}A^{\omega}) \neq \emptyset$,
so $[a_{i}A^{\omega}] = [u_{i}A^{\omega}]$.
\end{example}

\begin{lemma}
\label{lem: C_1}
If the set of 
$\sm$-equivalence classes of balls in $X$ is finite, then there exists a constant $C_1$ such that $\Vert v\Vert \leq C_1$
whenever $v$ is a contracting pseudo-vertex.
\end{lemma}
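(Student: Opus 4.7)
The plan is to show that the height $\|v\|$ of a contracting pseudo-vertex depends only on the $\sm$-equivalence class of the ball from which it is contracted, and then to invoke the finiteness hypothesis to take a maximum over the finitely many classes.

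More precisely, if $v$ is a contracting pseudo-vertex, then by definition there exists $[f,B]\in\mathcal{E}$ with $B$ not a singleton such that $v=\expa\{[f,B]\}$, so $\|v\|$ equals the number $m(B)$ of maximal proper sub-balls of $B$. The first step I would carry out is to verify that $m(B)$ is constant on $\sm$-equivalence classes: if $\sm(B,B')\neq\emptyset$, pick any $g\in\sm(B,B')$; as noted in Remark~\ref{rem: well-defined expansions}, a surjective similarity carries maximal proper sub-balls to maximal proper sub-balls. Applying this to $g$ and to $g^{-1}\in\sm(B',B)$ (Inverses property) shows that $A\mapsto g(A)$ is a bijection from the maximal proper sub-balls of $B$ onto those of $B'$, so $m(B)=m(B')$.

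Hence $m$ descends to a well-defined function on the (finite) set of $\sm$-equivalence classes $[B]$ with $B$ not a singleton. I would then simply set
\[
C_1 := \max\{\, m(B) \mid [B] \text{ is an }\sm\text{-equivalence class of non-singleton balls}\,\},
\]
which is the maximum of finitely many positive integers, hence finite. For any contracting pseudo-vertex $v$ arising from $[f,B]\in\mathcal{E}$ we have $\|v\|=m(B)\leq C_1$, proving the lemma.

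There is no serious obstacle here; the only minor point worth being careful about is the observation that the definition of a contracting pseudo-vertex forces the source ball $B$ to be a non-singleton (so that $\expa\{[f,B]\}$ is defined and $m(B)\geq 1$), which is guaranteed by the clarification immediately following Definition~\ref{def: rich in contractions} that simple contractions only occur at balls properly containing sub-balls.
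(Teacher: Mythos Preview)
Your proof is correct and takes essentially the same approach as the paper: define $C_1$ as the maximum, over the finitely many $\sm$-equivalence classes $[B]$, of the number of maximal proper sub-balls of $B$, after noting this number is a class invariant. You are slightly more explicit than the paper in justifying that invariance via the bijection induced by any $g\in\sm(B,B')$.
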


\begin{proof}
Let $[B_1],\dots, [B_n]$ be the set of $\sm$-equivalence classes of balls in $X$.
Let $N_i$ be the number of maximal, proper sub-balls of $B_i$ for $1\leq i\leq n$.
Define $C_1:=\max\{ N_i ~|~ 1\leq i\leq n\}$.
If $v$ is a contracting pseudo-vertex, then there exist $i\in\{ 1,\dots, n\}$ and $[f,B_i]\in\mathcal{E}$
such that $v=\expa\{[f,B_i]\}$. Thus, $\Vert v\Vert\leq N_i$.
\end{proof}

\begin{hypothesis}
\label{hyp: hyp on X}
The following two conditions are satisfied.
\begin{enumerate}
	\item There exists at most finitely many $\sm$-equivalence classes of balls of $X$ and $C_1>0$ is the constant given by
	Lemma~\ref{lem: C_1}.
	\item The space $X$ together with $\sm$ is rich in simple contractions and $C_0>0$ is the constant in Definition~\ref{def: rich in contractions}. 
\end{enumerate}
\end{hypothesis}

For the proof of Theorem~\ref{thm: connectivity of downward links} we need the following three results concerning connectivity in simplicial complexes.

Recall that the \emph{star} of a vertex $v$ in a simplicial complex $M$, denoted $\sta(v,M)$, or $\sta(v)$
if $M$ is understood, is the subcomplex of $M$ consisting of all the simplices containing $v$, together with the faces of these simplices.
The \emph{link} of a vertex $v$ in a simplicial complex $M$, denoted $\lk(v,M)$ or $\lk(v)$, consists of all simplices in
$\sta(v,M)$ that do not contain $v$.

A reference for the following well-known result is Bj\"orner \cite[Theorem 10.6, page 1850]{Bjorner}.

\begin{theorem}[Nerve Theorem]
\label{thm: nerve} 
Let $M$ be a simplicial complex and let $\{ M_i\}_{i\in I}$ be a family of subcomplexes such
that $M=\bigcup_{i\in I} M_i$. If every non-empty intersection $M_{i_1}\cap\cdots \cap M_{i_t}$ is $(k-t+1)$-connected, then
$M$ is $k$-connected if and only if the nerve of the cover $\{ M_i\}_{i\in I}$ is $k$-connected. 
\end{theorem}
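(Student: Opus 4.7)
The plan is to compare $M$ with the nerve $N := N(\{M_i\}_{i \in I})$ via an auxiliary ``Mayer--Vietoris blowup'' $\Delta M$, which serves as a homotopy colimit of the diagram of intersections. Define
$$\Delta M := \Bigl( \coprod_{\sigma \in N} M_\sigma \times \Delta^\sigma \Bigr) / \sim,$$
where $\sigma = \{i_0, \dots, i_t\}$ ranges over the simplices of $N$, $M_\sigma := M_{i_0} \cap \dots \cap M_{i_t}$, $\Delta^\sigma$ is the geometric simplex on the vertex set $\sigma$, and the identifications come from pairing the inclusions $M_\sigma \hookrightarrow M_\tau$ with the face inclusions $\Delta^\tau \hookrightarrow \Delta^\sigma$ for $\tau \subseteq \sigma$. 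There are two canonical projections $p_1 \co \Delta M \to M$ (collapsing the simplex factors) and $p_2 \co \Delta M \to |N|$ (collapsing the $M_\sigma$ factors), and the strategy is to show that both projections are $k$-connected under the hypotheses.

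The first step is to verify that $p_1$ is a homotopy equivalence. The point-inverse over $x \in M$ is the geometric realization of the full subcomplex of $N$ spanned by $\{i : x \in M_i\}$, which is a (closed) simplex, hence contractible. Because each $M_i$ is a subcomplex of $M$, one can refine the CW structure so that $p_1$ restricts to a homotopy equivalence over each closed cell, and a standard inductive gluing argument then upgrades this to a global homotopy equivalence. The second step is to show $p_2$ induces isomorphisms on $\pi_j$ for $j \leq k$. For this, filter $|N|$ by its skeleta and $\Delta M$ by the $p_2$-preimages thereof; the stratum added at stage $t$ is a union of pieces of the form $M_\sigma \times \Delta^\sigma$ with $|\sigma| = t$, attached along $M_\sigma \times \partial \Delta^\sigma$. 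The hypothesis that each nonempty $M_\sigma$ is $(k-t+1)$-connected is precisely what forces each such attaching map to be sufficiently highly connected to leave $\pi_j$ for $j \leq k$ unchanged at every stage.

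Combining the two steps gives a zig-zag $M \xleftarrow{\simeq} \Delta M \to |N|$ in which the right-hand map is $(k+1)$-connected, so $M$ is $k$-connected if and only if $|N|$ is. The main obstacle is the bookkeeping in Step 2: one must confirm that the numerical shift ``$(k-t+1)$-connected when $|\sigma|=t$'' lines up with the diagonal of the skeletal spectral sequence that contributes to $H_j(\Delta M)$ and $\pi_j(\Delta M)$ for $j \leq k$. The cleanest implementation uses the homology spectral sequence with $E^1_{s,t} = \bigoplus_{|\sigma|=s+1} H_t(M_\sigma)$, supplemented by a separate Van Kampen argument for $\pi_1$ (when $k \geq 1$) and then a Whitehead/Hurewicz upgrade from homology to homotopy in the simply connected range. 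All of the remaining verifications --- nondegeneracy of the identifications, compatibility of the CW structures on the two sides, and the inductive connectivity estimates --- are routine once the skeletal filtration and the connectivity index have been matched up correctly.
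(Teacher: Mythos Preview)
The paper does not prove this theorem at all: it is quoted as a well-known result with a reference to Bj\"orner \cite[Theorem~10.6]{Bjorner}, and is then used as a black box in the proof of Theorem~\ref{thm: connectivity of downward links}. So there is nothing to compare your argument against in the paper itself.

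That said, your sketch is the standard homotopy-colimit (Mayer--Vietoris blowup) proof that underlies the cited result, and it is essentially correct. A couple of minor points worth tightening if you ever write it out in full: you use the symbol $t$ both for the filtration stage/cardinality of $\sigma$ and for the second index of the spectral sequence $E^1_{s,t}$, which is confusing; and the phrase ``the right-hand map is $(k+1)$-connected'' should be made precise as ``$p_2$ induces isomorphisms on $\pi_j$ for $j\le k$ and a surjection on $\pi_{k+1}$,'' since that is exactly what the skeletal/spectral-sequence argument yields and exactly what is needed for the ``if and only if'' conclusion. The separate treatment of $\pi_1$ via Van Kampen before invoking Hurewicz is the right call.
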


\begin{lemma}
\label{lem: stars and links}
Suppose $v_1,\dots, v_n$ are vertices in a flag complex $M$.
If 
$$\text{$\bigcap_{i=1}^n \sta(v_i,M) \not=\emptyset$\quad but \quad$\bigcap_{i=1}^n \lk(v_i,M) =\emptyset$,}$$
then $\bigcap_{i=1}^n \sta(v_i,M)$ is a simplex.
\end{lemma}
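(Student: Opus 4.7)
The plan is to pass to the $1$-skeleton of $M$ and exploit the flag hypothesis throughout. First I would note that, for each $i$, a simplex $\sigma\in M$ lies in $\sta(v_i,M)$ if and only if $\sigma\cup\{v_i\}$ spans a simplex of $M$; since $M$ is flag, this is equivalent to saying that every vertex $w$ of $\sigma$ either equals $v_i$ or is joined to $v_i$ by an edge. In particular, $\sta(v_i,M)$ is a full subcomplex of $M$, hence so is $\bigcap_{i=1}^n \sta(v_i,M)$.

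Let
\[ S := \bigl\{\, w\in M^0 \,\bigm|\, \{w\}\in \textstyle\bigcap_{i=1}^n \sta(v_i,M)\,\bigr\}. \]
By the observation above, $w\in S$ iff for every $i$, $w=v_i$ or $\{w,v_i\}$ is an edge of $M$; and the full-subcomplex remark says that $\bigcap_{i=1}^n\sta(v_i,M)$ is exactly the subcomplex of $M$ spanned by $S$. The lemma therefore reduces to showing that $S$ is a clique in the $1$-skeleton of $M$, since then the flag property gives that $S$ spans an actual simplex $\Delta$, and $\bigcap_{i=1}^n\sta(v_i,M)=\Delta$ together with its faces, i.e., a simplex.

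Next I would invoke the link hypothesis. If some $w\in S$ satisfied $w\neq v_i$ for every $i$, then $w$ would be adjacent to every $v_i$, so the single vertex $\{w\}$ would belong to $\lk(v_i,M)$ for each $i$, contradicting $\bigcap_{i=1}^n \lk(v_i,M)=\emptyset$. Thus $S\subseteq\{v_1,\dots,v_n\}$. Now, given distinct $v_i,v_j\in S$, the defining property of $S$ applied to $w=v_i$ with index $j$ forces $\{v_i,v_j\}$ to be an edge of $M$; hence $S$ is a clique, as required. The non-emptiness of $\bigcap_{i=1}^n\sta(v_i,M)$ guarantees $S$, and so $\Delta$, is non-empty.

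I do not expect a serious obstacle here: the whole argument is an exercise in the flag condition, and the only thing to be careful about is the bookkeeping that separates the case $w=v_i$ (which matters for $\sta$ but is excluded from $\lk$) from the case $w\neq v_i$ (which is exactly what is used to extract a contradiction from the link hypothesis). The hardest step, such as it is, is recognizing that it is enough to show the vertex set $S$ of $\bigcap_{i=1}^n\sta(v_i,M)$ is a clique, which is where the flag hypothesis on $M$ does the real work.
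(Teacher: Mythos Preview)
Your argument is correct and is essentially the paper's own proof, just with the details made explicit: the paper also reduces to showing the vertex set of $\bigcap_i \sta(v_i,M)$ is a clique (via the flag property), observes that any such vertex must lie in $\{v_1,\dots,v_n\}$ by the link hypothesis, and concludes adjacency from membership in the stars. Your additional remark that each $\sta(v_i,M)$ is full in $M$ is the justification the paper leaves implicit when it says ``by the flag property, it suffices\dots''.
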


\begin{proof}
By the flag property, it suffices to show that any two vertices of $\bigcap_{i=1}^n \sta(v_i,M)$
are adjacent. If $u, w$ are vertices of $\bigcap_{i=1}^n \sta(v_i,M)$, then, since the intersection of the links is empty,
$u, w\in \{v_1, \dots, v_n\}$. It follows that $w\in\sta(u,M)$, which is to say $u$ and $w$ are adjacent.
\end{proof}

The following result is due to Farley \cite[Lemma 6]{FarHFP}. We only require the second item;
however, we state both parts in order to clarify the statement in \cite{FarHFP}.

\begin{lemma}[Farley]
\label{lem: Dan's lemma 6}
Let $M$ be a non-empty finite flag complex.
\begin{enumerate}
	\item Assume $k\geq 0$ and for any collection $S$ of vertices of $M$ such that $|S| \geq 2$,
	$$\bigcap_{v\in S} \lk(v) \text{ is $(k-|S|+1)$-connected}.$$
	Then $M$ is $k$-connected.
	\item Assume $n\geq -1$. If $S$ is any collection of vertices of $M$ and $\bigcap_{v\in S} \lk(v)$ is $n$-connected, then
	so is $\bigcap_{v\in S}\sta(v)$.
\end{enumerate}
\end{lemma}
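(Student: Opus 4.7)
The plan is to prove Part (2) first---a purely combinatorial statement about flag complexes---and then to deduce Part (1) by applying the Nerve Theorem (Theorem~\ref{thm: nerve}) to the closed-star cover of $M$.

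For Part (2), I would fix a set $S$ of vertices and split into cases according to whether $S$ spans a simplex of $M$. If $S$ spans a simplex $\sigma_{S}$, the flag property yields the join decomposition $\bigcap_{v\in S}\sta(v) = \sigma_{S}\ast\bigcap_{v\in S}\lk(v)$, which is contractible because it is a join with a non-empty simplex; in particular, it is $n$-connected. If $S$ does not span a simplex, I would fix a pair $v_{1},v_{2}\in S$ not joined by an edge of $M$ and observe that no simplex of $\bigcap_{v\in S}\sta(v)$ can contain any vertex of $S$: if such a simplex $\tau$ contained some $v\in S$, membership in $\sta(v')$ for a non-adjacent partner $v'\in S$ would force the non-existent edge between $v$ and $v'$. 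Hence every simplex of $\bigcap_{v\in S}\sta(v)$ lies in $\bigcap_{v\in S}\lk(v)$, so the two complexes coincide and the $n$-connectivity transfers directly.

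For Part (1), I would apply the Nerve Theorem to the cover $\{\sta(v,M)\}_{v\in M^{0}}$. Each closed star is contractible, and by Part (2) combined with the hypothesis, every non-empty $t$-fold intersection with $t\geq 2$ is $(k-t+1)$-connected; the hypothesis of the Nerve Theorem is therefore satisfied, and $M$ is $k$-connected if and only if the nerve $N$ of this cover is. The key observation is that the hypothesis forces every subset $S\subseteq M^{0}$ with $|S|\leq k+2$ to be a simplex of $N$: when $2\leq|S|\leq k+2$ the assumption gives $\bigcap_{v\in S}\lk(v)$ at least $(-1)$-connected, hence non-empty, so $\bigcap_{v\in S}\sta(v)\neq\emptyset$. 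Thus the $(k+1)$-skeleton of $N$ coincides with the full $(k+1)$-skeleton of the simplex on the vertex set $M^{0}$, which is well known to be $k$-connected; since attaching cells of dimension greater than $k+1$ does not alter $\pi_{i}$ for $i\leq k$, the nerve $N$ is itself $k$-connected, and the Nerve Theorem then yields $k$-connectivity of $M$.

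The main technical obstacle I anticipate is the case analysis in Part (2)---specifically the use of the flag property to collapse $\bigcap_{v\in S}\sta(v)$ onto $\bigcap_{v\in S}\lk(v)$ when $S$ does not span a simplex. Once Part (2) is in hand, Part (1) follows cleanly from the Nerve Theorem together with the elementary observation that the $(k+1)$-skeleton of a simplex is $k$-connected.
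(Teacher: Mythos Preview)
The paper does not actually prove this lemma; it is quoted from \cite[Lemma 6]{FarHFP}. So there is no ``paper's own proof'' to compare against, and your outline must be judged on its own merits.

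Your Part~(1) argument via the Nerve Theorem and the star cover is correct (given Part~(2)). The observation that the $(k+1)$-skeleton of the nerve is the full $(k+1)$-skeleton of a simplex is exactly right, and this is essentially how the paper itself uses the lemma later in Theorem~\ref{thm: connectivity of downward links}.

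There is, however, a genuine gap in your Case~2 of Part~(2). You assert that if $S$ does not span a simplex then no simplex of $\bigcap_{v\in S}\sta(v)$ can contain a vertex of $S$, because any $v\in S$ has a ``non-adjacent partner'' in~$S$. This is false: take $S=\{v_{1},v_{2},v_{3}\}$ in a flag complex where $v_{1}$ is adjacent to both $v_{2}$ and $v_{3}$ but $v_{2}$ and $v_{3}$ are not adjacent. Then $S$ does not span a simplex, yet $\{v_{1}\}\in\bigcap_{v\in S}\sta(v)$ while $\{v_{1}\}\notin\lk(v_{1})\supseteq\bigcap_{v\in S}\lk(v)$. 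So the two intersections are \emph{not} equal in this case.

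The fix is to replace your dichotomy by the following. Set
\[
S' \;=\; \{\,v\in S : v \text{ is adjacent to every other vertex of } S\,\}.
\]
A short check using fullness of stars and links in a flag complex shows that the vertex set of $\bigcap_{v\in S}\sta(v)$ is exactly $S'\cup\bigl(\bigcap_{v\in S}\lk(v)\bigr)^{0}$, that $S'$ is a clique, and that every vertex of $S'$ is adjacent to every vertex of $\bigcap_{v\in S}\lk(v)$; hence
\[
\bigcap_{v\in S}\sta(v) \;=\; \sigma_{S'} \ast \bigcap_{v\in S}\lk(v).
\]
If $S'\neq\emptyset$ this is a cone and therefore contractible; if $S'=\emptyset$ the two intersections coincide and the hypothesis applies directly. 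Your Case~1 is the special case $S'=S$, and your Case~2 argument is valid precisely when $S'=\emptyset$; the missing middle case $\emptyset\subsetneq S'\subsetneq S$ is handled by the cone argument. (Compare also Lemma~\ref{lem: stars and links}, which is the special case $\bigcap_{v\in S}\lk(v)=\emptyset$ of this join formula.)
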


We are now ready for the main technical result of this section.

\begin{theorem}
\label{thm: connectivity of downward links}
If Hypothesis~\ref{hyp: hyp on X} is satisfied, $v$ is a pseudo-vertex, $k\geq -1$ is an integer, and
$$\Vert v\Vert \geq (2k+2)C_1+C_0,$$ 
then $\mathcal{N}_v$ is $k$-connected.
\end{theorem}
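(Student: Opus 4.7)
The plan is to proceed by induction on $k$, starting with the base case $k = -1$, where $k$-connectedness means non-emptiness. In this case the hypothesis reads $\Vert v\Vert \geq C_0$, so the richness in simple contractions supplies a contracting sub-pseudo-vertex $w \subseteq v$ with $\Vert w\Vert > 1$. The simple contraction of $v$ at $w$ is a vertex of $\mathcal{N}_v$, which is therefore non-empty.

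For the inductive step with $k \geq 0$, I would apply Lemma~\ref{lem: Dan's lemma 6}(1) to the complex $M = \mathcal{N}_v$. To set this up, I would first observe that $M$ is a non-empty finite flag complex: flagness is already established, non-emptiness follows as in the base case, and finiteness holds because $v$ has only finitely many subsets, and, for each such subset $w$, Lemma~\ref{lem:finite contractions} gives finitely many simple contractions whose result agrees with $w$. The remaining hypothesis to verify is that for every subset $S = \{v_1, \ldots, v_n\}$ of vertices of $M$ with $n \geq 2$, the intersection $\bigcap_{i=1}^n \lk(v_i, M)$ is $(k-n+1)$-connected.

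The crux of the argument is to identify this intersection with a smaller nerve complex. By Remark~\ref{rem: well-defined expansions}, each $v_i$ determines a unique contracting sub-pseudo-vertex $w_i \subseteq v$ such that $v_i$ is the simple contraction of $v$ at $w_i$. A vertex $u$ of $\mathcal{N}_v$ lies in $\bigcap_i \lk(v_i)$ precisely when its associated contracting sub-pseudo-vertex $w_u$ is disjoint from each $w_i$, equivalently $w_u \subseteq v' := v \setminus \bigcup_{i=1}^n w_i$. Moreover, $\{u_1,\ldots,u_m\}$ spans a simplex in $\bigcap_i \lk(v_i)$ iff the $w_{u_j}$'s are pairwise disjoint; this is exactly the simplex condition in $\mathcal{N}_{v'}$, so there is a natural simplicial isomorphism $\bigcap_i \lk(v_i, M) \cong \mathcal{N}_{v'}$.

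To close the induction, I would use Lemma~\ref{lem: C_1} to bound $\Vert w_i\Vert \leq C_1$, giving
\[
\Vert v'\Vert \;\geq\; \Vert v\Vert - nC_1 \;\geq\; (2k+2-n)C_1 + C_0 \;=\; (2(k-n+1)+2)C_1 + C_0.
\]
For $n \leq k+2$ this is exactly the bound needed to apply the inductive hypothesis, so $\mathcal{N}_{v'}$ is $(k-n+1)$-connected; for $n \geq k+3$ the connectivity requirement is vacuous. Hence the hypotheses of Lemma~\ref{lem: Dan's lemma 6}(1) are met and $\mathcal{N}_v$ is $k$-connected. I expect the main obstacle to be the identification $\bigcap_i \lk(v_i) \cong \mathcal{N}_{v'}$, which requires careful bookkeeping of pseudo-vertex subsets and leaning on the uniqueness clause of Remark~\ref{rem: well-defined expansions} to unambiguously recover $w_i$ from $v_i$.
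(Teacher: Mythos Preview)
Your proof is correct and takes a genuinely different route from the paper. The paper argues via the Nerve Theorem~\ref{thm: nerve}, covering $\mathcal{N}_v$ by the stars of its vertices; this forces it to verify two things separately---that intersections of stars are $(k-t+1)$-connected (reducing to intersections of links via Lemma~\ref{lem: Dan's lemma 6}(2), and then identifying those with smaller nerves $\mathcal{N}_u$, exactly as you do), and that the nerve of the star-cover is itself $k$-connected. You instead invoke Lemma~\ref{lem: Dan's lemma 6}(1) directly, which packages both of these verifications into a single link-intersection hypothesis. Your approach is shorter and avoids the separate $k=0$ case the paper treats by hand; the paper's approach is perhaps more self-contained, since the Nerve Theorem is standard while Lemma~\ref{lem: Dan's lemma 6}(1) is quoted from \cite{FarHFP}.

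Two small points to clean up. First, your displayed equality $(2k+2-n)C_1 + C_0 = (2(k-n+1)+2)C_1 + C_0$ is only an equality when $n=2$; for $n>2$ the left side is strictly larger. The argument still works because you only need the inequality $\geq$, which holds precisely because $n \geq 2$. Second, your finiteness argument for $\mathcal{N}_v$ should also mention that there are only finitely many $\sm$-classes $[B]$ (Hypothesis~\ref{hyp: hyp on X}(1)), since Lemma~\ref{lem:finite contractions} bounds the contractions for each fixed $[B]$ separately.
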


\begin{proof}
The proof is by induction on $k$. We begin with the case $k=-1$. Then $\Vert v\Vert\geq C_0$.
Thus, there exist a pseudo-vertex $w\subseteq v$ and a simple contraction of $v$ at $w$.
Let $v_1$ be a pseudo-vertex resulting from such a simple contraction.
Hence, $v_1$ is a vertex of $\mathcal{N}_v$; that is, $\mathcal{N}_v\not=\emptyset$, which is to say
$\mathcal{N}_v$ is $(-1)$-connected.

Now consider the case $k=0$. Then $\Vert v\Vert\geq 2C_1+C_0$.
To show that $\mathcal{N}_v$ is $0$-connected, let $v_1, v_2$ be vertices of $\mathcal{N}_v$.
Thus, there exist pseudo-vertices $w_1, w_2\subseteq v$ such that $v_i$ is obtained from a simple contraction
of $v$ at $w_i$ for $i=1,2$.
Thus, $w_1, w_2$ are contracting pseudo-vertices and
$\Vert w_i\Vert \leq C_1$ for $i=1,2$.
Hence, $\Vert v\setminus (w_1\cup w_2)\Vert \geq C_0$ and so there is a pseudo-vertex $w\subseteq v\setminus (w_1\cup w_2)$
and a pseudo-vertex $v_3'$ resulting from a simple contraction of $v\setminus (w_1\cup w_2)$ at $w$.
It follows that $v_3:=v_3'\cup w_1\cup w_2$ is a pseudo-vertex such that $v_3\nearrow v$.
Therefore, since, $w_1\cap w=\emptyset = w_2\cap w$,
$\{ v_1, v_3\}$ and $\{ v_2, v_3\}$ are $1$-simplices of $\mathcal{N}_v$ showing that 
$v_1$ and $v_2$ are in the same component.

Now suppose $k> 0$ and that the nerve complex $\mathcal{N}_w$ is $\ell$-connected whenever $w$ is a pseudo-vertex, $-1\leq\ell<k$, and
$\Vert w\Vert \geq (2\ell+2)C_1+C_0$.
We continue to let $v$ be a pseudo-vertex with $\Vert v\Vert \geq (2k+2)C_1+C_0$.
We will show that $\mathcal{N}_v$ is $k$-connected by appealing to the Nerve Theorem~\ref{thm: nerve}.
Let $v_1, \dots, v_n$ be the distinct pseudo-vertices obtained from $v$ by simple contractions
(since $\Vert v\Vert\geq C_0$, $n\geq 1$). 
Thus, $v_1,\dots, v_n$ are the vertices of $\mathcal{N}_v$ and 
$\mathcal{N}_v= \bigcup_{i=1}^n\sta(v_i,\mathcal{N}_v)$.
To apply the Nerve Theorem~\ref{thm: nerve}, we must verify the following two items.
\begin{enumerate}
	\item If $\emptyset \not= 
	\{ i_1,\dots, i_t\} \subseteq \{ 1,\dots, n\}$ 
	and $S:= \sta(v_{i_1},\mathcal{N}_v)\cap\cdots\cap\sta(v_{i_t},\mathcal{N}_v)\not=\emptyset$,
	then $S$ is $(k-t+1)$-connected.
	\item The nerve of the cover $\{ \sta(v_i,\mathcal{N}_v)\}_{i=1}^n$ is $k$-connected.
\end{enumerate}

We begin by setting notation. For $1\leq i\leq n$ there is $w_i\subseteq v$ such that $v_i$ is obtained from $v$ by a simple contraction
at $w_i$. By the choice of constants, $\Vert w_i\Vert \leq C_1$ for $1\leq i\leq n$.

We now begin the verification of item (1).
If $t=1$, then $S$ is a star, which is contractible.
Now assume $t\geq 2$. 
If $\lk(v_{i_1},\mathcal{N}_v)\cap\cdots\cap\lk(v_{i_t},\mathcal{N}_v) =\emptyset$, then Lemma~\ref{lem: stars and links} implies
that $S$ is a simplex. 
Hence, we may assume that
$\lk(v_{i_1},\mathcal{N}_v)\cap\cdots\cap\lk(v_{i_t},\mathcal{N}_v) \not=\emptyset$.
Lemma~\ref{lem: Dan's lemma 6}(2) implies that it suffices to show that 
$\lk(v_{i_1},\mathcal{N}_v)\cap\cdots\cap\lk(v_{i_t},\mathcal{N}_v)$ is $(k-t+1)$-connected.
If $t \geq k+2$, then $-1 \geq k-t+1$, and there is nothing to prove (since $\lk(v_{i_1},\mathcal{N}_v)\cap\cdots\cap\lk(v_{i_t},\mathcal{N}_v) \neq \emptyset$
by hypothesis). 
Thus, we may assume $t\leq k+1$.
Define $u := v\setminus (w_{i_1}\cup\cdots\cup w_{i_t})$ and estimate the height of $u$:
$$\Vert u\Vert \geq (2k+2)C_1+C_0 - tC_1  = (2k-t+2)C_1 +C_0.$$
%$$\geq (2k+2)C_1+C_0 - (k+1)C_1 = (k+1)C_1+C_0.$$
Since $t\leq k+1$, $\Vert u\Vert\geq C_0$ and $\mathcal{N}_u\not=\emptyset.$

We now show that $\lk(v_{i_1},\mathcal{N}_v)\cap\cdots\cap\lk(v_{i_t},\mathcal{N}_v)$ is isomorphic to  $\mathcal{N}_u$.
We begin by showing that $\mathcal{N}_u$ is isomorphic to a subcomplex $\mathcal{N}_u'$ of $\mathcal{N}_v$.
If $y$ is a pseudo-vertex obtained from $u$ by a simple contraction at $w\subseteq u$, then let
$y' := y\cup(v\setminus u)$. Thus, $y'\nearrow v$, so $y'$ is a vertex of $\mathcal{N}_v$.
Define a simplical map $\mathcal{N}_u\to \mathcal{N}_v$ by $y\mapsto y'$. This induces an isomorphism of $\mathcal{N}_u$
onto its image $\mathcal{N}_u'$.
We now show that $\lk(v_{i_1},\mathcal{N}_v)\cap\cdots\cap\lk(v_{i_t},\mathcal{N}_v)=\mathcal{N}_u'$.
Let $v_{m}$ be obtained from $v$ by a simple contraction at $w_{m}$. 
For $1\leq j\leq t$, the vertex $v_m$ of $\mathcal{N}_v$ is in $\lk(v_{i_j},\mathcal{N}_v)$ if and only if
$\{ v_{i_j}, v_m\}$ is a $1$-simplex of $\mathcal{N}_v$. 
Thus, $v_m$  is in $\lk(v_{i_j},\mathcal{N}_v)$ if and only if
$v_{i_j}$ and $v_m$ are obtained from $v$ by disjoint simple contractions. 
Therefore, $v_m\in \lk(v_{i_1},\mathcal{N}_v)\cap\cdots\cap\lk(v_{i_t},\mathcal{N}_v)$ if and only if $w_m\subseteq u$.
It follows that $\mathcal{N}_u'$ and $\lk(v_{i_1},\mathcal{N}_v)\cap\cdots\cap\lk(v_{i_t},\mathcal{N}_v)$ are both subcomplexes of
$\mathcal{N}_v$ with the same sets of vertices. Since they are both full subcomplexes, they are equal.
(Recall that a subcomplex $A$ of a complex $B$ is \emph{full} if $A$ is the largest subcomplex of $B$ having $A^0$ as its $0$-skeleton.
It is obvious that $\mathcal{N}_u'$ is a full subcomplex of $\mathcal{N}_v$. 
In general, the link of a vertex in a flag complex is a full subcomplex. Moreover, intersections of full subcomplexes are full.)

Define $\ell := k-t+1$. To finish the verification of item (1), we need to show that $\mathcal{N}_u$ is $\ell$-connected.
Since $t \geq 2$, we have $\ell < k$. Therefore, we will be able to invoke the inductive hypothesis to conclude that
$\mathcal{N}_u$ is $\ell$-connected if it is true that $\Vert u\Vert \geq (2\ell +2)C_1+C_0$.
We continue from the estimate above:
\begin{align*}
\Vert u\Vert & \geq (2k-t+2)C_1 +C_0  \\ 
& = (2(\ell +t-1)-t+2)C_1 +C_0  \\
& =(2\ell +t )C_1 +C_0 \\ 
& \geq (2\ell +2 )C_1 +C_0,
\end{align*}
which completes the verification of item (1).

For the verification of item (2),
let $M$ denote the nerve of the cover $\{ \sta(v_i,\mathcal{N}_v)\}_{i=1}^n$ of $\mathcal{N}_v$.
To show that $M$ is $k$-connected, it suffices to prove that
the $(k+1)$-skeleton of  $M$ is isomorphic to the $(k+1)$-skeleton of
the $n$-simplex.
Thus, we need to show that if $1\leq t\leq k+2$, then any collection of $t$ vertices of $M$ spans a $(t-1)$-simplex in $M$.
To this end, let 
$\emptyset \not= \{ i_1,\dots, i_t\} \subseteq \{ 1,\dots, n\}$, $1\leq t\leq k+2$, 
and show that $S:= \sta(v_{i_1},\mathcal{N}_v)\cap\cdots\cap\sta(v_{i_t},\mathcal{N}_v)$ is non-empty.
As above, let $u := v\setminus (w_{i_1}\cup\cdots\cup w_{i_t})$.
The estimate of the height if $u$ (using $1\leq t\leq k+2$)
is
$$\Vert u\Vert \geq (2k+2)C_1+C_0 - tC_1  = (2k-t+2)C_1 +C_0 \geq (k+1)C_1 +C_0\geq C_0.$$
This implies that there exist a pseudo-vertex $w\subseteq u$ and a simple contraction
of $u$ at $w$.
Let $y$ be the resulting pseudo-vertex. Thus, $y\nearrow u$ and 
$y\cup(w_{i_1}\cup\cdots\cup w_{i_t})\nearrow v$.
Hence, $\widehat{y} := y\cup(w_{i_1}\cup\cdots\cup w_{i_t})$ is a vertex of $\mathcal{N}_v$.
Since $w$ is disjoint from $w_{i_1}\cup\cdots\cup w_{i_t}$, 
it follows that for $1\leq j\leq t$, $\{v_{i_j}, \widehat{y}\}$ is a $1$-simplex of $\mathcal{N}_v$
and, in particular, $\widehat{y}\in\sta(v_{i_j},\mathcal{N}_v)$.
Thus, $\widehat{y}\in S$ and $S\not=\emptyset$, as desired.
\end{proof}

\begin{corollary}
Suppose Hypothesis~\ref{hyp: hyp on X} is satisfied.
If  $v$ is a vertex of $K$ such that 
$$\Vert v\Vert \geq (2k+2)C_1+C_0,$$ 
then $\lk_\downarrow v$ is $k$-connected.
\end{corollary}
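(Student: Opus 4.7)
The plan is to deduce this corollary almost immediately from Theorem~\ref{thm: connectivity of downward links} combined with the earlier proposition asserting that $\dlk(v) \simeq \mathcal{N}_v$ for every vertex $v$ of $K$. Since homotopy equivalences preserve $k$-connectedness, it suffices to show that $\mathcal{N}_v$ is $k$-connected under the given height hypothesis.

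To carry this out, I would first observe that $v$ is in particular a pseudo-vertex, and the hypothesis $\Vert v\Vert \geq (2k+2)C_1 + C_0$ is exactly the hypothesis of Theorem~\ref{thm: connectivity of downward links}. That theorem then directly yields that $\mathcal{N}_v$ is $k$-connected. Next I would invoke the proposition proved earlier in this section, which asserts (using Lemma~\ref{lem: lower bounds} and the standard nerve theorem applied via Remark~\ref{rem: nerve}) that $\dlk(v)$ is homotopy equivalent to $\mathcal{N}_v$. Combining these two facts gives that $\dlk(v)$ is $k$-connected, as required.

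There is essentially no obstacle here, since both inputs have already been established in the section; the corollary is really just a translation of Theorem~\ref{thm: connectivity of downward links} across the homotopy equivalence $\dlk(v)\simeq \mathcal{N}_v$. The only minor point worth noting is that the proposition identifying $\dlk(v)$ with $\mathcal{N}_v$ up to homotopy equivalence was stated for $v$ a vertex of $K$ (not merely a pseudo-vertex), which is precisely the setting of the corollary. Thus the proof proposal is simply: apply Theorem~\ref{thm: connectivity of downward links} to conclude $\mathcal{N}_v$ is $k$-connected, and then transport this conclusion to $\dlk(v)$ via the homotopy equivalence.
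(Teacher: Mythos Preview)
Your proposal is correct and takes essentially the same approach as the paper: the corollary is stated without proof in the paper, as it is an immediate consequence of Theorem~\ref{thm: connectivity of downward links} together with the earlier proposition that $\dlk(v)$ is homotopy equivalent to $\mathcal{N}_v$.
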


\begin{corollary}
\label{cor: high connectivity} 
Suppose Hypothesis~\ref{hyp: hyp on X} is satisfied.
There is a function $f\co\bn\to\bn\cup\{0, -1\}$ such that 
$\lim_{n\to\infty}f(n)=\infty$ and 
if $v$ is a vertex, then $\lk_\downarrow v$ is $f(\Vert v\Vert)$--connected.
\end{corollary}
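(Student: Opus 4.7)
The plan is to derive this corollary as an essentially immediate consequence of the preceding corollary by defining $f$ explicitly and checking its properties. Specifically, I would set
$$f(n) := \max\bigl\{k \in \{-1, 0, 1, 2, \ldots\} : (2k+2)C_1 + C_0 \leq n\bigr\},$$
with the convention that $f(n) = -1$ when the set on the right is empty (that is, when $n < C_0$). A direct computation gives the closed form $f(n) = \max\bigl\{-1, \lfloor (n - C_0)/(2C_1)\rfloor - 1\bigr\}$, which makes plain that $f$ maps $\bn$ into $\bn \cup \{0, -1\}$ and that $\lim_{n\to\infty} f(n) = \infty$.

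For the connectivity assertion, let $v$ be a vertex and set $n = \Vert v\Vert$. If $f(n) \geq 0$, then by construction $n \geq (2f(n)+2)C_1 + C_0$, so the preceding corollary immediately yields that $\dlk v$ is $f(n)$-connected. If $f(n) = -1$, the conclusion is vacuous (or at worst the assertion that $\dlk v$ is nonempty, which is handled by the $k=-1$ case of the preceding corollary whenever $n \geq C_0$, and by convention otherwise). Since the entire argument is a routine unpacking of the bound from the previous corollary, there is no real obstacle; the only mild subtlety is choosing a definition of $f$ that behaves sensibly for small values of $n$, which the max with $-1$ handles cleanly.
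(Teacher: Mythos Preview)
Your approach is correct and is the natural one; the paper states this corollary without proof, since it follows immediately from the preceding corollary in exactly the way you describe.

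The only quibble is your handling of $n < C_0$: dismissing it ``by convention'' does not quite work, since $(-1)$-connected means nonempty, and vertices of height $1$ always have empty descending link (nothing can expand to a height-$1$ vertex). This is really a slight imprecision in the corollary's stated codomain $\bn \cup \{0,-1\}$ rather than a flaw in your argument, and since only the limiting behavior $f(n)\to\infty$ is ever used downstream, the issue is harmless.
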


%%%%%%%%%%%%%%%%%%%%%%%%%%%%%%%%%%%%%%%%%%%%%%%%%%%%%%%%%%%%%%%%%%%%%%%%
\section{The zipper action of an FSS group on the similarity complex} \label{sec: zipper}

Throughout this section, $X$ will 
denote a compact 
ultrametric space  with a finite similarity structure $\sm=\sm_X$
and
$\Gamma=\Gamma(\sm)$ will be the FSS group associated to $\sm$.

The goal of this section is to define an action of $\Gamma$ on the similarity complex and use this action, together with
Brown's finiteness criterion \cite{Brown1987}, to prove the Main Theorem~\ref{thm: main} (see Theorem~\ref{thm: main in body} below).
We also show that the similarity complex $K$ is a model for $\underline{E}\Gamma$, the classifying space for proper $\Gamma$ actions
(see Proposition~\ref{prop: E fin}).

We begin by recalling the action of $\Gamma$ on $\mathcal{E}$ as defined in Hughes \cite{HugLSHP}.
The \emph{zipper action} is the left action 
$\Gamma\curvearrowright\mathcal E$ defined by
$\gamma [f,B] = [\gamma f,B]$. The fact that $[\gamma f,B]\in\mathcal{E}$ follows from the
Compositions and Restrictions Properties of the similarity structure. 

\begin{remark}
\label{rem:action properties}
The zipper action $\Gamma\curvearrowright\mathcal E$ extends to an action of $\Gamma$ on the set of all pseudo-vertices
as follows. If $\gamma\in\Gamma$ and $v= \{ [f_i,B_i] ~|~ 1\leq i\leq k\}$ is a pseudo-vertex, then
$\gamma v := \{ [\gamma f_i,B_i] ~|~ 1\leq i\leq k\}$.
The following facts are easily verified.
\begin{enumerate}
	\item Height is $\Gamma$-invariant; that is, if $\gamma\in\Gamma$ and $v$ is a pseudo-vertex, then
	$||\gamma v|| = ||v||$.
	\item $K^0$ is $\Gamma$-invariant; that is, if $g\in\Gamma$ and $v$ is a vertex, then $gv$ is a vertex.
	\item If $v\nearrow w$, where $v$ and $w$ are pseudo-vertices, and $\gamma\in\Gamma$, then $\gamma v\nearrow \gamma w$.
	\item If $\gamma\in\Gamma$ permutes the vertices of an $n$-simplex $\Delta$ of $K$, then $\gamma$ fixes each vertex of $\Delta$.
\end{enumerate}
It follows that the partial order on pseudo-vertices is preserved by the $\Gamma$--action.
Hence, there is an induced simplicial action $\Gamma\curvearrowright K$. Each of the actions of $\Gamma$ on pseudo-vertices,
on vertices, and on $K$ are called the \emph{zipper action}.
\end{remark}

The next task is to characterize orbits under the zipper action.

\begin{lemma} 
\label{lem: orbits}
Let $v=\{[f_i,B_i] ~|~ 1\leq i\leq k\}$ be a pseudo-vertex. 
If the pseudo-vertex $w=\{[\widehat{f}_i,\widehat{B}_i] ~|~ 1\leq i\leq\ell\}$ is in the
$\Gamma$--orbit of $v$, then
$k=\ell$ and there exists a permutation $\sigma$ of $\{1,\dots, k\}$ such that
$\sm(B_i,\widehat{B}_{\sigma(i)})\not=\emptyset$ for $i=1,\dots, k$.
If $v$ is a vertex, then the converse holds.
\end{lemma}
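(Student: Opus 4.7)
The plan is to handle the two directions separately, treating the forward direction as essentially bookkeeping and the converse as the substantive construction.

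For the forward direction, suppose $w = \gamma v$ for some $\gamma \in \Gamma$. By the definition of the extended zipper action in Remark~\ref{rem:action properties}, $\gamma v = \{[\gamma f_i, B_i] \mid 1 \leq i \leq k\}$. Since two representations of the same pseudo-vertex have equal cardinality, $k = \ell$, and we get a permutation $\sigma$ of $\{1, \ldots, k\}$ with $[\gamma f_i, B_i] = [\widehat{f}_{\sigma(i)}, \widehat{B}_{\sigma(i)}]$. Unpacking the equivalence relation defining $\mathcal{E}$, this means there exists $h_i \in \sm(B_i, \widehat{B}_{\sigma(i)})$ with $\widehat{f}_{\sigma(i)} h_i = \gamma f_i$; in particular $\sm(B_i, \widehat{B}_{\sigma(i)}) \neq \emptyset$.

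For the converse, assume $v$ is a vertex, so $X = \coprod_{i=1}^k f_i(B_i)$, and similarly $X = \coprod_{j=1}^k \widehat{f}_j(\widehat{B}_j)$. Choose $h_i \in \sm(B_i, \widehat{B}_{\sigma(i)})$ for each $i$ and define $\gamma \colon X \to X$ piecewise by $\gamma|_{f_i(B_i)} := \widehat{f}_{\sigma(i)} \circ h_i \circ f_i^{-1}$. Because the $f_i(B_i)$ and the $\widehat{f}_{\sigma(i)}(\widehat{B}_{\sigma(i)})$ both partition $X$, the map $\gamma$ is a well-defined bijection; and since each $f_i(B_i)$ is clopen (balls in ultrametric spaces are clopen) and there are only finitely many pieces, $\gamma$ is a homeomorphism. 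Once $\gamma \in \Gamma$ is established, the identity $\gamma f_i = \widehat{f}_{\sigma(i)} h_i$ together with $h_i \in \sm(B_i, \widehat{B}_{\sigma(i)})$ shows $[\gamma f_i, B_i] = [\widehat{f}_{\sigma(i)}, \widehat{B}_{\sigma(i)}]$, giving $\gamma v = w$.

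The main obstacle is verifying that this piecewise $\gamma$ is locally determined by $\sm$, which will occupy the bulk of the argument. Fix $y \in X$, locate the unique $i$ with $y \in f_i(B_i)$, and set $x := f_i^{-1}(y) \in B_i$. Since $f_i$ is locally determined by $\sm$, there is a ball $B_1' \ni x$ with $B_1' \subseteq B_i$ and $f_i|B_1' \in \sm(B_1', f_i(B_1'))$; similarly, since $\widehat{f}_{\sigma(i)}$ is locally determined by $\sm$, there is a ball $B_2' \ni h_i(x)$ with $B_2' \subseteq \widehat{B}_{\sigma(i)}$ and $\widehat{f}_{\sigma(i)}|B_2' \in \sm$. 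Because two balls in an ultrametric space that meet are comparable, one may pick a ball $B_3' \ni h_i(x)$ inside $h_i(B_1') \cap B_2'$, let $B_4' := h_i^{-1}(B_3') \subseteq B_1'$, and set $B'' := f_i(B_4')$. Using the Restrictions, Inverses, and Compositions properties of $\sm$, the factorization
\[
\gamma|B'' = \bigl(\widehat{f}_{\sigma(i)}|B_3'\bigr) \circ \bigl(h_i|B_4'\bigr) \circ \bigl(f_i^{-1}|B''\bigr)
\]
exhibits $\gamma|B''$ as an element of $\sm(B'', \gamma(B''))$, so $\gamma$ is locally determined by $\sm$ at $y$. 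Since $y$ was arbitrary, $\gamma \in \Gamma$, completing the converse.
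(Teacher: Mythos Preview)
Your proof is correct and follows the same approach as the paper's: the forward direction is the same bookkeeping via the definition of the equivalence relation on $\mathcal{S}$, and the converse constructs the same piecewise $\gamma$ from chosen $h_i \in \sm(B_i,\widehat{B}_{\sigma(i)})$. The only difference is that the paper dispatches the membership $\gamma \in \Gamma$ in one sentence (invoking that $\Gamma$ is the maximal group of homeomorphisms locally determined by $\sm$), whereas you spell out the local verification explicitly using Restrictions, Inverses, and Compositions; this extra detail is fine and arguably clearer.
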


\begin{proof}
Assume first that $w=\gamma v$ for some $\gamma\in\Gamma$.
The fact from Remark~\ref{rem:action properties} that height is $\Gamma$--invariant 
implies $k=\ell$. Since the sets $\gamma v=\{[\gamma f_i,B_i] ~|~ 1\leq i\leq k\}$
and $w=\{[\widehat{f}_i,\widehat{B}_i] ~|~ 1\leq i\leq k\}$ are equal, there exists a permutation
$\sigma$ such that $[\gamma f_i,B_i] = [\widehat{f}_{\sigma(i)},\widehat{B}_{\sigma(i)}]$ for each $i=1,\dots, k$.
The definition of the equivalence relation immediately implies
$\sm(B_i,\widehat{B}_{\sigma(i)})\not=\emptyset$ for $i=1,\dots, k$.

Conversely, if $v$ is a vertex, choose $h_i\in\sm(B_i,\widehat{B}_{\sigma(i)})$ for each $i=1,\dots, k$.
Define $\gamma\in\Gamma$ by 
$\gamma|f_i(B_i) = \widehat{f}_{\sigma(i)}h_if_i^{-1}\co f_i(B_i)\to \widehat{f}_{\sigma(i)}(B_{\sigma(i)})$.
Since $v$ and $w$ are vertices, $X=\coprod_{i=1}^k f_i(B_i)= \coprod_{i=1}^k \widehat{f}_{\sigma(i)}(\widehat{B}_{\sigma(i)}) $ and so $\gamma$ is a homeomorphism on $X$.
Since $\Gamma$ is the maximal group of homeomorphisms locally determined by $\sm$, it follows that $\gamma\in\Gamma$.
Clearly, $\gamma v=w$.
\end{proof}

We next show that the zipper action has finite vertex stabilizers.

\begin{lemma}
\label{lem: finite isotropy}
The isotropy group of any vertex of $K$ under the zipper action is a finite subgroup of $\Gamma$.
\end{lemma}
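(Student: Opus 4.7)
The plan is to show that an element $\gamma$ of the stabilizer of $v = \{[f_i,B_i] \mid 1\leq i\leq k\}$ is completely determined by a finite amount of combinatorial/similarity data, and then invoke the Finiteness property of $\sm$.

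First, I would use Lemma~\ref{lem: orbits}: if $\gamma v = v$, then since $\{[\gamma f_i,B_i]\}_{i=1}^k = \{[f_i,B_i]\}_{i=1}^k$, there exists a permutation $\sigma \in \Sigma_k$ such that $[\gamma f_i, B_i] = [f_{\sigma(i)}, B_{\sigma(i)}]$ for each $i$. By the definition of the equivalence relation on $\mathcal{S}$, this means there exists $h_i \in \sm(B_i, B_{\sigma(i)})$ with $f_{\sigma(i)} h_i = \gamma f_i$, i.e.\
\[
\gamma|_{f_i(B_i)} = f_{\sigma(i)} \circ h_i \circ f_i^{-1} \co f_i(B_i) \to f_{\sigma(i)}(B_{\sigma(i)}).
\]

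Next, because $v$ is a vertex we have $X = \coprod_{i=1}^k f_i(B_i)$, so the collection $(\sigma, h_1, \dots, h_k)$ determines $\gamma$ completely as a map on $X$. Hence the assignment $\gamma \mapsto (\sigma, h_1, \dots, h_k)$ defines an injection from the isotropy group of $v$ into the finite set
\[
\coprod_{\sigma \in \Sigma_k} \prod_{i=1}^k \sm(B_i, B_{\sigma(i)}),
\]
which is finite by the Finiteness property (Definition~\ref{def:fin sim struct}(1)) applied to the finitely many factors $\sm(B_i, B_{\sigma(i)})$, together with the finiteness of $\Sigma_k$.

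There is no real obstacle here; the argument is essentially a counting one and very close in spirit to the injection constructed in the proof of Lemma~\ref{lem:finite contractions}. The only small point to be careful about is that $\gamma$ is genuinely determined by the tuple $(\sigma, h_1,\dots,h_k)$, which follows because $X$ is partitioned by the images $f_i(B_i)$ so the piecewise formulas uniquely assemble into a single self-homeomorphism of $X$.
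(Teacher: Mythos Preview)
Your proposal is correct and follows essentially the same approach as the paper: both construct an injection from the stabilizer $\Gamma_v$ into the finite set $\coprod_{\sigma\in\Sigma_k}\prod_{i=1}^k\sm(B_i,B_{\sigma(i)})$ by recording, for each $\gamma$, the permutation $\sigma$ and the similarities $h_i=f_{\sigma(i)}^{-1}\gamma f_i$, and then use the partition $X=\coprod_i f_i(B_i)$ to conclude injectivity. The only cosmetic difference is that the paper argues the existence and uniqueness of $\sigma$ directly rather than citing Lemma~\ref{lem: orbits}.
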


\begin{proof}
Let $v=\{ [f_i,B_i] ~|~ 1\leq i\leq k\}$ be a vertex of height $k$, where representatives $(f_i,B_i)\in\mathcal{S}$, $1\leq i\leq k$, have been chosen
for each member of $v$.
Let $\Gamma_v$ be the isotropy subgroup of $\Gamma$ fixing $v$.
Let $\Sigma_k$ be the set of permutations of $\{1,\dots, k\}$.
The proof will be completed by defining an injection
$$\Psi\co \Gamma_v\to\coprod_{\sigma\in\Sigma_k}\prod_{i=1}^k \sm(B_i, B_{\sigma(i)}).$$
Given $\gamma\in\Gamma_v$, $v=\gamma v = \{ [\gamma f_i,B_i] ~|~ 1\leq i\leq k\}$ 
implies there exists a unique $\sigma\in\Sigma_k$ such that
$[\gamma f_i, B_i]= [f_{\sigma(i)}, B_{\sigma(i)}]$ for $1\leq i\leq k$.
It follows that $f_{\sigma(i)}^{-1}\gamma f_i\in\sm(B_i,B_{\sigma(i)})$ for $1\leq i\leq k$.
Define 
$$\Psi(\gamma) = (f_{\sigma(1)}^{-1}\gamma f_1, \dots, f_{\sigma(k)}^{-1}\gamma f_k)\in\prod_{i=1}^k\sm(B_i, B_{\sigma(i)}).$$
To see that $\Psi$ is injective, suppose we are given another element $\beta\in\Gamma_k$ and
$\Psi(\gamma)= \Psi(\beta)$. It follows that $\gamma|f_i(B_i) = \beta|f_i(B_i)$ for $1\leq i\leq k$.
Thus, $\gamma=\beta$ since $X=\coprod_{i=1}^k f_i(B_i)$.
\end{proof}

We next show that the zipper action restricted to sub-level sets is cocompact if 
the set of $\sm$--equivalence classes of balls in $X$ is finite.

\begin{proposition}
\label{prop: cocompactness}
If the set of $\sm$--equivalence classes of balls in $X$ is finite and $n\in\bn$, 
then the sub-level set $K_{\leq n}$ is $\Gamma$-finite; that is, $\Gamma\backslash K_{\leq n}$ is a finite complex.	
\end{proposition}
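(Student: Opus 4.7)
The plan is to combine the orbit characterization from Lemma~\ref{lem: orbits} with the local finiteness result of Proposition~\ref{prop: locally finite filtration}. Since a simplex of $K_{\leq n}$ has at most $n$ vertices (heights along a chain are strictly increasing and bounded above by $n$), it suffices to show (i) the $\Gamma$-action on the vertex set of $K_{\leq n}$ has only finitely many orbits, and (ii) each vertex lies in only finitely many simplices of $K_{\leq n}$.

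For (i), by Lemma~\ref{lem: orbits}, two vertices $v=\{[f_i,B_i] \mid 1\leq i\leq k\}$ and $w=\{[\widehat f_j,\widehat B_j] \mid 1\leq j\leq \ell\}$ lie in the same $\Gamma$-orbit iff $k=\ell$ and there is a permutation $\sigma$ such that $\sm(B_i,\widehat B_{\sigma(i)})\neq\emptyset$ for each $i$. Equivalently, the $\Gamma$-orbit of a vertex of height $k$ is determined by the multiset of $\sm$-equivalence classes of its second coordinates. By hypothesis there are only finitely many such classes, say $[B^{(1)}],\dots,[B^{(N)}]$, so for each fixed $k\leq n$ there are only $\binom{N+k-1}{k}$ possible multisets, and hence only finitely many $\Gamma$-orbits of vertices of height $k$. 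Summing over $k=1,\dots,n$ yields finitely many orbits of vertices in $K_{\leq n}$.

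For (ii), Proposition~\ref{prop: locally finite filtration} gives local finiteness of $K_{\leq n}$, so every vertex of $K_{\leq n}$ is contained in only finitely many simplices of $K_{\leq n}$.

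To conclude, choose orbit representatives $v_1,\dots,v_m$ for the vertices of $K_{\leq n}$. Every simplex $\Delta\subseteq K_{\leq n}$ contains some vertex $v$; translating by a suitable $\gamma\in\Gamma$ we may assume $\gamma\cdot v = v_j$ for some $j$, so every $\Gamma$-orbit of simplices in $K_{\leq n}$ has a representative containing some $v_j$. The number of such representatives is bounded by $\sum_{j=1}^m \#\{\text{simplices of }K_{\leq n}\text{ containing }v_j\}$, which is finite by (ii). Hence $\Gamma\backslash K_{\leq n}$ has finitely many simplices, i.e., it is a finite complex. No single step here is a serious obstacle; the only point requiring care is recognizing that Lemma~\ref{lem: orbits} (together with the finiteness assumption on $\sm$-equivalence classes) gives the required finite parameterization of vertex orbits.
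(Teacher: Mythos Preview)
Your proof is correct and follows essentially the same approach as the paper's. Both arguments use Lemma~\ref{lem: orbits} to identify vertex orbits with multisets of $\sm$-equivalence classes (yielding the same bound $\binom{N+k-1}{k}$), and both invoke the local finiteness of $K_{\leq n}$ from Proposition~\ref{prop: locally finite filtration} to pass from finitely many vertex orbits to finitely many simplex orbits; you simply spell out this last step more explicitly.
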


\begin{proof}
By Proposition \ref{prop: locally finite filtration}, it suffices to show 
that $\Gamma \backslash K_{\leq k}^{0}$ is finite for each $k=1,2,3,\ldots$. 
Let $[B_{1}], \ldots, [B_{l}]$ be the distinct $\sm$-equivalence classes of balls in $X$. For a vertex
$v \in K^{0}$, define
$$ n_{[B_{i}]}(v) = | \{ [\widehat{f}, \widehat{B}] \in v \mid [\widehat{B}] = [B_{i}] \}|.$$
Let $\widetilde{\phi}: K^{0} \rightarrow \prod_{i=1}^{l} \left( \mathbb{N} \cup \{ 0 \} \right)$ be defined
by $\widetilde{\phi} = n_{[B_{1}]} \times \ldots \times n_{[B_{l}]}$. By Lemma \ref{lem: orbits},
$\widetilde{\phi}$ descends to a well-defined injection $\phi$ on the quotient; that is,
$\phi: \Gamma \backslash K^{0} \rightarrow \prod_{i=1}^{l} \left( \mathbb{N} \cup \{ 0 \} \right)$. Fixing a height
$k$, we get an injection
$\phi_{k}: \Gamma \backslash K_{\leq k}^{0} \rightarrow \prod_{i=1}^{l} \left( \mathbb{N} \cup \{ 0 \} \right)$, where
the entries of an element in the image of $\phi_{k}$ must add up to $k$. There are $\binom{k+l-1}{k}$ distinct
ordered $l$-tuples of non-negative integers which add up to $k$, so
$| \Gamma \backslash K_{\leq k}^{0} | \leq \binom{k+l-1}{k}$.
\end{proof}

We can now prove the Main Theorem~\ref{thm: main}, which is restated here.

\begin{theorem}[Main Theorem]
\label{thm: main in body}
If Hypothesis~\ref{hyp: hyp on X} is satisfied, then $\Gamma$ is of type $F_\infty$.
\end{theorem}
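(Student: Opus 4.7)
The plan is to apply Ken Brown's finiteness criterion to the zipper action $\Gamma\curvearrowright K$, using the filtration by sub-level complexes $\{K_{\leq n}\}_{n\in\bn}$. All of the ingredients of Brown's criterion are already in hand from the previous sections. The complex $K$ is contractible (Proposition~\ref{prop: contractible}). The stabilizer of every simplex of $K$ is finite: since simplices are strictly increasing chains, any $\gamma\in\Gamma$ preserving a simplex setwise must fix each of its vertices (Remark~\ref{rem:action properties}(4)), so a simplex stabilizer is a vertex stabilizer, and these are finite by Lemma~\ref{lem: finite isotropy}. Each sub-level complex $K_{\leq n}$ is $\Gamma$-cocompact by Proposition~\ref{prop: cocompactness} (and locally finite by Proposition~\ref{prop: locally finite filtration}), and the filtration is exhaustive. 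The one condition remaining is the \emph{essential connectivity} hypothesis: for every $k$, the subcomplex $K_{\leq n}$ must be $k$-connected for all sufficiently large $n$.

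To verify this, I would run a Bestvina--Brady style Morse argument with the height function playing the role of the Morse function. For each $m\geq 2$, the complex $K_{\leq m}$ is obtained from $K_{\leq m-1}$ by adjoining, for each vertex $v$ of height exactly $m$, the closed star $\sta(v,K_{\leq m})$. Because simplices of $K$ are strictly increasing chains and $K_{\leq m}$ contains no vertex of height greater than $m$, every vertex of $\sta(v,K_{\leq m})$ other than $v$ itself has height strictly less than $m$; hence
\[
\sta(v,K_{\leq m}) \;=\; v \ast \dlk(v),
\]
a simplicial cone on $\dlk(v)\subseteq K_{\leq m-1}$. Corollary~\ref{cor: high connectivity} gives a function $f\co\bn\to\bn\cup\{0,-1\}$ with $f(m)\to\infty$ such that each such $\dlk(v)$ is $f(m)$-connected. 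The standard fact that adjoining cones on $f(m)$-connected subspaces produces an $(f(m)+1)$-connected pair yields that $(K_{\leq m},K_{\leq m-1})$ is $(f(m)+1)$-connected. Given $k$, choose $n$ so large that $f(m)\geq k$ for all $m>n$; concatenating the relative connectivities shows that $(K,K_{\leq n})$ is $(k+1)$-connected, and since $K$ is contractible this forces $K_{\leq n}$ to be $k$-connected, as required.

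The main obstacle has in fact already been overcome in Section~\ref{sec: connectivityofdownwardlinks}: the hard work is the inductive proof that the nerve complexes $\mathcal{N}_v$ become highly connected as $\Vert v\Vert$ grows, which is where both clauses of Hypothesis~\ref{hyp: hyp on X} are essential (richness in simple contractions to produce disjoint contractions, and finiteness of $\sm$-equivalence classes of balls to uniformly bound the height of any contracting pseudo-vertex). For the present theorem the remaining work is the packaging into Brown's criterion, which is essentially formal; the only points requiring care are the identification of $\sta(v,K_{\leq m})$ with the cone on $\dlk(v)$ and the observation that stars at distinct height-$m$ vertices overlap only inside $K_{\leq m-1}$ (a simplex containing two distinct vertices of height $m$ would violate the chain property). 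Once these are in place, Brown's criterion immediately delivers $\Gamma\in F_\infty$.
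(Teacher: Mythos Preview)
Your proposal is correct and follows essentially the same route as the paper's own proof: both invoke Brown's finiteness criterion, using contractibility of $K$, finiteness of simplex stabilizers, $\Gamma$-finiteness of each $K_{\leq n}$, and the fact that passing from $K_{\leq m-1}$ to $K_{\leq m}$ amounts to coning off descending links whose connectivity tends to infinity by Corollary~\ref{cor: high connectivity}. If anything, you are slightly more explicit than the paper about the cone identification $\sta(v,K_{\leq m})=v\ast\dlk(v)$ and the disjointness of stars at distinct height-$m$ vertices, but the argument is the same.
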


\begin{proof}
This is a standard application of Brown's criterion for finiteness.
See Geoghegan \cite[Section 7.4]{Geoghegan} for an exposition, and 
\cite[Exercise, page 179]{Geoghegan} for the result we need.

We refer to the original statement from 
Brown \cite[Corollary 3.3, part a]{Brown1987}. Note that the similarity complex $K$ is a contractible 
$\Gamma$-complex (Proposition~\ref{prop: contractible}; Remark~\ref{rem:action properties}),
it is filtered by the $\Gamma$-finite $\Gamma$-complexes $K_{\leq n}$ (Proposition~\ref{prop: cocompactness};  Remark~\ref{rem:action properties}(1)), 
and the stabilizer of each vertex is finite  (Lemma~\ref{lem: finite isotropy}). The final point to check is that the connectivity of the pair
$(K_{\leq n+1}, K_{\leq n})$  tends to infinity as $n$ tends to infinity. We may assume that there are vertices of height $n+1$, so 
$K_{\leq n+1} \neq K_{\leq n}$. The complex $K_{\leq n+1}$, up to homotopy, is $K_{\leq n}$ with a collection $\{ C_i \}_{i \in \mathcal{I}}$
of cones attached along their bases, each of which is homotopy equivalent to the descending link of a vertex of height $n+1$. By 
Corollary~\ref{cor: high connectivity}, the connectivity of such descending links tends to infinity with $n$, 
and it follows (from elementary Mayer-Vietoris
and van Kampen arguments) that 
the connectivity of $(K_{\leq n+1}, K_{\leq n})$ tends to infinity as well. Therefore, $\Gamma$ has type $F_{\infty}$.      

See Farley~\cite{FarHFP} for an illustration of how to put these ingredients together in a related context.
\end{proof} 

\begin{corollary}
\label{cor: Nekrashevych-Roever examples F_infinity}
The groups $V_{d}(H)$ have type $F_{\infty}$, for all $d \in \mathbb{N}$ and $H \leq \Sigma_{d}$.
\end{corollary}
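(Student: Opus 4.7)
The plan is to invoke Theorem~\ref{thm: main in body} directly, so it suffices to verify that the finite similarity structure $\sm$ on $A^\omega$ determined by $H \leq \Sigma_d$ (from Definition~\ref{def:symmetric group sim}) satisfies both clauses of Hypothesis~\ref{hyp: hyp on X}. Once that is done, the identification $\Gamma(\sm) \cong V_d(H)$ recorded in Remark~\ref{claim: Vdh is Gamma} gives the corollary.

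For clause (1), I would observe that by Remark~\ref{rem: characterization of balls} every ball in $A^\omega$ has the form $wA^\omega$ for some $w \in A^\ast$. By Remark~\ref{rem:sim properties}(5), for any two words $w_1, w_2 \in A^\ast$ the set $\sm(w_1 A^\omega, w_2 A^\omega)$ contains the order-preserving similarity $w_1 x \mapsto w_2 x$, so it is nonempty. Consequently any two balls of $A^\omega$ are $\sm$-equivalent, and there is exactly one $\sm$-equivalence class of balls. In particular the set of equivalence classes is finite.

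For clause (2), I would appeal to Example~\ref{example: vdh rich in ball contractions}, which shows that $A^\omega$ together with the $\sm$ structure determined by $H$ is rich in ball contractions with $C_0 = d$. Proposition~\ref{prop: rich implies rich} then yields that $A^\omega$ with $\sm$ is rich in simple contractions, completing the verification of Hypothesis~\ref{hyp: hyp on X}.

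With both clauses verified, Theorem~\ref{thm: main in body} applies and gives that $\Gamma(\sm)$ is of type $F_\infty$. By Remark~\ref{claim: Vdh is Gamma}, $\Gamma(\sm) \cong V_d(H)$, so $V_d(H)$ has type $F_\infty$ for every $d \in \mathbb{N}$ and every $H \leq \Sigma_d$. There is really no obstacle here: all the substantive work was done in Example~\ref{example: vdh rich in ball contractions}, Proposition~\ref{prop: rich implies rich}, and the Main Theorem itself, and the corollary is simply a matter of checking that the hypotheses are met in the Nekrashevych--R\"over setting.
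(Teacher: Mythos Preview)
Your proof is correct and follows essentially the same approach as the paper's own proof: verify the two clauses of Hypothesis~\ref{hyp: hyp on X} using Example~\ref{example: vdh rich in ball contractions} and the observation that all balls are $\sm$-equivalent, then apply Theorem~\ref{thm: main in body} and Remark~\ref{claim: Vdh is Gamma}. If anything, you are slightly more explicit than the paper in invoking Proposition~\ref{prop: rich implies rich} to pass from rich in ball contractions to rich in simple contractions.
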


\begin{proof}
We fix $A = \{ a_{1}, \ldots, a_{d} \}$. Recall that $\Sigma_{d}$ denotes the symmetric group on $A$. We choose $H \leq \Sigma_{d}$. 
We equip the space $A^{\omega}$ with the finite similarity structure $\sm$ 
from Definition~\ref{def:symmetric group sim}. 
By Remark~\ref{claim: Vdh is Gamma}, $V_{d}(H)$ is the FSS group associated to $\sm$. 
By Example~\ref{example: vdh rich in ball contractions},
$A^{\omega}$ with the given finite similarity structure is rich in ball contractions with 
constant $C_{0} = d$. Since there is only one
$\sm$-equivalence class $[B]$ of balls, and $B$ has $d$ maximal 
proper sub-balls, Hypothesis~\ref{hyp: hyp on X}(1) is satisfied with $C_{1} = d$.
Thus, $V_{d}(H)$ has type $F_{\infty}$ by Theorem \ref{thm: main in body}.   
\end{proof}

\begin{example}
The action $\Gamma\curvearrowright K$ is usually not free.
In fact, a vertex $v= \{ [f_{i}, B_{i}] \mid 1 \leq i \leq m \}$ will have a non-trivial stabilizer in either
of the following cases:
\begin{enumerate}
\item \label{ex: nonfree1} if $[B_{i}] = [B_{j}]$ for some $[f_{i}, B_{i}] \neq [f_{j}, B_{j}]$ with $i,j \in \{ 1, \ldots, m \}$, or
\item \label{ex: nonfree2} if the group $\sm_{X}(B_{i}, B_{i}) \neq \{ \id_{B_{i}} \}$ for some $[f_{i}, B_{i}] \in v$.
\end{enumerate}
For (\ref{ex: nonfree1}), suppose $[B_{i}] = [B_{j}]$ and $[f_{i}, B_{i}] \neq [f_{j}, B_{j}]$. We choose 
$h \in \sm_{X}(B_{i}, B_{j})$ and define a homeomorphism $g: X \rightarrow X$ as follows. If $k \neq i,j$, then 
$g|_{f_{k}(B_{k})} = \id_{f_{k}(B_{k})}$. We set $g|_{f_{i}(B_{i})} = f_{j}hf_{i}^{-1}$
and $g|_{f_{j}(B_{j})} = f_{i}h^{-1}f_{j}^{-1}$. These assignments completely determine $g$ on all of $X$, since
$\{ f_{1}(B_{1}), \ldots, f_{m}(B_{m}) \}$ is a partition of $X$. The map $g: X \rightarrow X$ is continuous
since the partition $\{ f_{1}(B_{1}), \ldots, f_{m}(B_{m})\}$ is made up of open (and, therefore, also closed) sets,
and $g$ is continuous on each piece. The map $g$ is bijective since it induces a bijection on the partition
$\{ f_{1}(B_{1}), \ldots, f_{m}(B_{m}) \}$, and $g$ also maps any element of the partition bijectively to another
such element. Lastly, $g$ is locally determined by $\sm_{X}$ since it is locally determined by $\sm_{X}$ on each
piece $f_{i}(B_{i})$, $i = 1, \ldots, m$. It follows that $g \in \Gamma(\sm_{X})$. One easily checks that 
$[gf_{k}, B_{k}] = [f_{k}, B_{k}]$ for $k \neq i,j$, $[gf_{i}, B_{i}] = [f_{j}, B_{j}]$
and $[gf_{j}, B_{j}] = [f_{i}, B_{i}]$. Thus $g \cdot v = v$. On the other hand, $g$ is not the identity, since
$g(f_{i}(B_{i})) \cap f_{i}(B_{i}) = f_{j}(B_{j}) \cap f_{i}(B_{i}) = \emptyset$.

For (\ref{ex: nonfree2}), suppose $\psi \in \sm_{X}(B_{i}, B_{i})$, where 
$\psi \neq \id_{B_i}$. We define $g \in \Gamma(\sm_{X})$ such that $g|_{f_{k}(B_{k})} = \id_{f_{k}(B_{k})}$ when
$k \neq i$, and $g|_{f_{i}(B_{i})} = f_{i}\psi f_{i}^{-1}$. By reasoning similar to that from Case (\ref{ex: nonfree1}),
$g$ is a non-trivial element of $\Gamma(\sm_{X})$, $g \cdot v = v$, and $g \neq \id_{X}$ since
$g|_{f_{i}(B_{i})} = f_{i} \psi f_{i}^{-1} \neq \id_{f_{i}(B_{i})}$.
\end{example}

\begin{example}
The quotient $\Gamma\backslash K$ is usually not locally finite.
In fact, the following are equivalent:
\begin{enumerate}
	\item \label{ex: nlf1} $\Gamma \backslash K$ is finite.
	\item \label{ex: nlf2} $\Gamma \backslash K$ is locally finite.
	\item \label{ex: nlf3} $X$ is finite.
\end{enumerate}
\begin{proof} 
It is clear that (\ref{ex: nlf1}) implies (\ref{ex: nlf2}). If $X$ is finite, then $K$ is finite by Remark 
\ref{rem: K not locally finite}, so $\Gamma \backslash K$
will also be finite. If $X$ is infinite, then the argument from Remark \ref{rem: K not locally finite} shows that
there is an infinite chain of vertices $v_0 < v_1 < v_2 < \ldots $. Any two of these vertices are adjacent in $K$,
and at different heights. Since the action of $\Gamma$ preserves height by Remark \ref{rem:action properties},
the vertex $v_0$ is adjacent to infinitely many vertices in the quotient $\Gamma \backslash K$. Thus $\Gamma \backslash
K$ is not locally finite.  
\end{proof}
\end{example}

%%%%%%%%%%%%%%%%%%%%%%%%%%%%%%%%%%%%%%%%%%%%%%%%%
\subsection*{The similarity complex as a classifying space}
%%%%%%%%%%%%%%%%%%%%%%%%%%%%%%%%%%%%%%%%%%%%%%%%%%%%%%%%%%%%%%%%%%
We now show that the similarity complex $K$ is a classifying space with finite isotropy; that is,
$K$ is a model for $E_{\text{Fin}}\Gamma$, where $\Gamma$ is the FSS group associated to the given finite similarity structure and
$\text{Fin}$ denotes the family of finite subgroups of $\Gamma$. 

\begin{definition} \label{def: family}
If $\Gamma$ is any group, then a \emph{family} $\mathcal{F}$ of subgroups
of $\Gamma$ is a non-empty collection of subgroups that is closed under conjugation by elements of $\Gamma$
and passage to subgroups.  If $\Gamma$ is any group, then we let ${Fin}$
denote the family of finite groups.
\end{definition}

\begin{definition} \label{def: EFin}
Let $X$ be a $\Gamma$-CW complex.  Suppose that, if $c \subseteq X$ is a cell of $X$, then $\gamma \cdot c = c$ if and only if
$\gamma$ fixes $c$ pointwise.  Let $\mathcal{F}$ be a family of subgroups of $\Gamma$.  We say that $X$ is an \emph{$E_{\mathcal{F}}\Gamma$-complex} if
\begin{enumerate}
\item $X$ is contractible;
\item whenever $H \in \mathcal{F}$, the fixed set $Fix(H) = \{ x \in X \mid \gamma \cdot x = x \text{ for all } \gamma \in H \}$ is contractible;
\item whenever $H \not \in \mathcal{F}$, $Fix(H)$ is empty.
\end{enumerate}
\end{definition}

\begin{proposition}
\label{prop: E fin}
$K$ is a model for $E_{Fin}= \underline{E}\Gamma$; that is, the fixed set by the action on $K$ of a subgroup $G$ of $\Gamma$ is empty if $G$ is infinite and
	contractible if $G$ is finite. 
\end{proposition}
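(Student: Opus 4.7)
The plan is to verify the three conditions of Definition~\ref{def: EFin} in turn. Contractibility of $K$ is already established in Proposition~\ref{prop: contractible}, so the work lies in analyzing the fixed sets.

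First, I would observe that Remark~\ref{rem:action properties}(4) implies that for any subgroup $G\leq\Gamma$, the fixed set $\mathrm{Fix}(G)$ is precisely the subcomplex of $K$ spanned by the $G$--fixed vertices: any $\gamma\in G$ that preserves a simplex of $K$ fixes each of its vertices, so on the geometric realization $\gamma$ acts trivially on that simplex. In particular, cells are fixed setwise iff fixed pointwise (the first part of Definition~\ref{def: EFin}), and a point lying in the interior of a simplex is $G$--fixed iff all the vertices of that simplex are $G$--fixed.

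For $G$ infinite, this reduces the question to vertex stabilizers. By Lemma~\ref{lem: finite isotropy}, every vertex stabilizer is finite, so no vertex of $K$ is $G$--fixed, whence $\mathrm{Fix}(G)=\emptyset$, as required.

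For $G$ finite, I would show $\mathrm{Fix}(G)$ is contractible by proving that the induced poset $\bigl((K^0)^G,\leq\bigr)$ of $G$--fixed vertices is directed. Since $\mathrm{Fix}(G)$ is then the order complex of a directed poset, contractibility follows from the same cited result (Geoghegan \cite[Proposition 9.3.14]{Geoghegan}) used in Proposition~\ref{prop: contractible}. To establish directedness, I would mimic the argument of Proposition~\ref{prop: contractible} equivariantly. The key equivariance observation is that complete expansion commutes with the $\Gamma$--action: if $v=\{[f_i,B_i]\}$ is a pseudo-vertex, then $g\cdot\expa(v)=\expa(g\cdot v)$ directly from the definitions. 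Hence if $v$ is $G$--fixed, so is each iterated expansion, and in particular $v$ expands (via Lemma~\ref{lem:positive expansion}) to a $G$--fixed positive vertex. So it suffices to find a common upper bound for two $G$--fixed positive vertices $v_1,v_2$, which correspond to partitions $\mathcal{P}_1,\mathcal{P}_2$ of $X$ into balls. The hypothesis that $v_i$ is $G$--fixed unpacks, via Lemma~\ref{lem: orbits}, to the statement that each $g\in G$ permutes $\mathcal{P}_i$ and satisfies $g|_B\in\sm(B,g(B))$ for every $B\in\mathcal{P}_i$. The common refinement $\mathcal{P}=\{B_1\cap B_2\mid B_i\in\mathcal{P}_i,\,B_1\cap B_2\neq\emptyset\}$ is then also $G$--invariant, since $g(B_1\cap B_2)=g(B_1)\cap g(B_2)$. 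Moreover, since in an ultrametric space any nonempty intersection of two balls equals the smaller of the two, each $A\in\mathcal{P}$ lies inside some $B\in\mathcal{P}_1\cup\mathcal{P}_2$ as a sub-ball, so the Restrictions property gives $g|_A\in\sm(A,g(A))$. Thus the positive vertex $w=\{[\incl_A,A]\mid A\in\mathcal{P}\}$ is $G$--fixed, and Lemma~\ref{lem:partition expansion} combined with Remark~\ref{rem:unions of pseudo-vertices} shows $v_1,v_2\leq w$, just as in Proposition~\ref{prop: contractible}. The main (modest) obstacle is the bookkeeping in verifying that the common refinement retains membership in $\sm$ on each piece, but the ultrametric nestedness of balls together with the Restrictions axiom handles this cleanly.
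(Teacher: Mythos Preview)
Your argument for directedness of the $G$-fixed vertices is correct once you know the fixed set is nonempty, but you never establish nonemptiness. Directedness requires the poset to be nonempty, and your two steps---expanding a $G$-fixed vertex to a $G$-fixed positive vertex, then finding a $G$-fixed upper bound for two $G$-fixed positive vertices---both presuppose that $G$-fixed vertices exist. For an arbitrary finite $G\leq\Gamma$ this is not automatic: the vertex $\{[\id_X,X]\}$ need not be $G$-fixed, since $g\cdot\{[\id_X,X]\}=\{[g,X]\}$ equals $\{[\id_X,X]\}$ only when $g\in\sm(X,X)$.

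The gap is easily filled with your own ingredients. Using your observation that $g\cdot\expa(v)=\expa(g\cdot v)$, the vertices $g\cdot\expa^N(\{[\id_X,X]\})=\expa^N(\{[g,X]\})$ are all positive once $N$ is large enough (apply Lemma~\ref{lem:positive expansion} to each $\{[g,X]\}$ and take the maximum over the finitely many $g\in G$). Now run your common-refinement argument on the finite $G$-invariant family of partitions corresponding to these positive vertices: the common refinement is $G$-invariant, and the same Restrictions reasoning you gave shows the associated positive vertex is $G$-fixed. This is essentially what the paper does, though the paper phrases both the nonemptiness step and the upper-bound step via uniqueness of least upper bounds of positive vertices rather than via $G$-invariance of the refinement; the two formulations are equivalent since the least upper bound of positive vertices \emph{is} the vertex of the common refinement.
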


\begin{proof}
It follows from Lemma~\ref{lem: finite isotropy} that the fixed set of an infinite subgroup of $\Gamma$ is empty.
Assume that $G$ is a finite subgroup of $\Gamma$. We first claim that there is a positive vertex $\widehat{v}$ such that
the orbit $G \cdot \widehat{v}$ contains only positive vertices. For a vertex $v$, we let $\expa^{k}(v)$ denote the result of
applying the $\expa$ function (Definition~\ref{def: complete expansion}) to $v$ $k$ times. By Lemma~\ref{lem:positive expansion}, 
there is, for each vertex $v_{g} = \{ [g, X] \}$ ($g \in G$), a positive integer $n_{g}$ such that $\expa^{n_{g}}(v_{g})$ is positive.
Since $G$ is finite, it follows that there is $N \in \mathbb{N}$ such that $\expa^{N}(v_{g})$ is positive for all $g \in G$. This immediately
implies that the orbit $G \cdot \expa^{N}(v_{id_{X}})$ consists of positive vertices, proving the claim with 
$\widehat{v} =  \expa^{N}(v_{id_{X}})$.

The usual partial order $\leq$ on vertices has the property that any two positive vertices $v_{1}, v_{2}$ have a least upper bound; that is,
there is $\widetilde{v} \in K^{0}$ such that $\widetilde{v} \geq v_{1}, v_{2}$ and if $v' \in K^{0}$ is such that $v' \geq v_{1}, v_{2}$, then 
$v' \geq \widetilde{v}$. (In fact, if $v_{1} = \{ [ \incl_{B_{i}}, B_{i}] \mid 1 \leq i \leq m \}$ and 
$v_{2} = \{ [ \incl_{\widehat{B}_{j}}, \widehat{B}_{j}] \mid 1 \leq j \leq n \}$, then 
$\widetilde{v} = \{ [ \incl_{B_{i} \cap \widehat{B}_{j}}, B_{i} \cap \widehat{B}_{j}] \mid B_{i} \cap \widehat{B}_{j} \neq \emptyset, 1 \leq i \leq m, 1 \leq j
\leq n \}$ is the required vertex.) The least upper bound is necessarily unique.

It follows from an entirely straightforward argument that any finite collection of positive vertices has a (unique) least upper bound in $K^{0}$.
Now since $G \cdot \widehat{v}$ consists of positive vertices, $G$ must fix the least upper bound of $G \cdot \widehat{v}$ by the uniqueness of the
least upper bound. Therefore, the fixed set of $G$ is non-empty.  

We now show that the fixed set of $G$ is contractible. It is enough to show that the set of fixed vertices 
is directed. Note that if $v,w$ are vertices, $g \in \Gamma$,
$gv = v$, and $\expa(v) = w$, then $gw = w$. Thus, given vertices $v_{1}, v_{2}$ such that $g v_{i} = v_{i}$ ($i=1,2$, $g \in G$), we can use 
Lemma~\ref{lem:positive expansion} to find positive vertices
$v'_{1}, v'_{2}$ such that $gv'_{i} = v'_{i}$ and $v_i \leq v'_{i}$ ($i=1,2$, $g \in G$). We let $\widetilde{v}$ be the least upper bound of $\{ v'_{1}, v'_{2} \}$.
Since $v'_{1}$ and $v'_{2}$ are fixed by $G$, $\widetilde{v}$ must also be fixed by $G$ due to the uniqueness of the least upper bound. Thus
$v_{1}, v_{2} \leq \widetilde{v}$, and all three vertices are fixed by $G$, so the set of fixed vertices is directed.
\end{proof}
 
%%%%%%%%%%%%%%%%%%%%%%%%%%%%%%%%%%%%%%%%%%%%%%%%%%%%%%%%%%%%%%%%%%%

%%%%%%%%%%%%%%%%%%%%%%%%%%%%%%%%%%%%%%%%%%%%%%%%%%%%%%%%%%%%%%%%%%%%%%%%%%%%%%%%%%%%%%%%%%%%%%%%%%%%%%%%%%%%%%%%%%%%%%

%%%%%%%%%%%%%%%%%%%%%%%%%%%%%%%%%%%%%%%%%%%%%%%%%%%%%%%%%%%%%%%%%%%
\section{Isomorphism classes of groups defined by finite similarity structures} \label{sec:isomorphismtype}

In this section, we will attempt to distinguish between FSS groups by analyzing the germ groups of their associated actions.
The main theoretical tool is Rubin's Theorem (Theorem \ref{thm:Rubin}).

%%%%%%%%%%%%%%%%%%%%%%%%%%%%%%%%%%%%%%%%%%%%%%%%%%%%%
\subsection*{Recollections on Rubin's theorem}
%%%%%%%%%%%%%%%%%%%%%%%%%%%%%%%%%%%%%%%%%%%%%%%%%%%%%%
We recall Rubin's theorem \cite{Rubin1996, Rubin1989} as exposited by Brin \cite[Section 9]{BrinHDTG}.

\begin{definition}
If $X$ is a topological space, then a subgroup $F$ of the group of homeomorphisms of $X$ is \emph{locally dense}
if for every $x\in X$ and open neighborhood $U\subseteq X$ of $x$, the closure of
$$\{ f(x) ~|~ f\in F, f|_{(X\setminus U)} = \id_{(X\setminus U)}\}$$
contains a nonempty open set.
\end{definition}

\begin{theorem}[Rubin]
\label{thm:Rubin}
Let $X$ and $Y$ be locally compact, Hausdorff spaces without isolated points.
Let $F$ and $G$ be subgroups of the groups of homeomorphisms of $X$ and $Y$, respectively.
If $F$ and $G$ are locally dense and $\phi\co F\to G$ is an isomorphism, then
there exists a unique homeomorphism $h\co X\to Y$ such that $\phi(f) = hfh^{-1}$
for every $f\in F$.
\end{theorem}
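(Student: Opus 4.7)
The plan is to reconstruct the topological space $X$ from the abstract group $F$ (together with its action on $X$) in a manner canonical enough to be transported along the isomorphism $\phi$; applying the same recipe to $G$ acting on $Y$ then yields $Y$, and the desired homeomorphism $h$ arises from this identification.

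First, I would establish that interiors of supports of non-trivial elements of $F$ form a basis for the topology of $X$. For $f \in F$, set $\mathrm{supp}(f) = \overline{\{ x \in X : f(x) \neq x \}}$. The local density hypothesis, together with the Hausdorff property and the absence of isolated points, implies that for every $x \in X$ and every open neighborhood $U$ of $x$ there exists $f \in F$ with $f(x) \neq x$ and $\mathrm{supp}(f) \subseteq \overline{U}$. Thus the topology of $X$ is detectable from the subgroups $F_U = \{ f \in F : \mathrm{supp}(f) \subseteq U \}$ as $U$ ranges over open sets.

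Next, I would encode each $x \in X$ by a group-theoretic object intrinsic to $F$. The natural candidate is the filter attached to $x$ built from the subgroups $F_U$ with $x \in U$, or, more symmetrically, a suitable ultrafilter of such subgroups. Three things need to be checked: (i) distinct points of $X$ produce distinct ultrafilters, by Hausdorffness combined with the basis property above; (ii) every ``admissible'' ultrafilter converges to a unique point of $X$, a step in which local compactness is used to extract a limit; and (iii) ``admissible'' is characterised purely algebraically inside $F$, e.g.\ via a maximality condition expressed in terms of commutation relations, since two elements of $F$ commute in a strong sense when their supports are disjoint. This step is the heart of the argument and the one I expect to be the main obstacle, because it requires producing a group-theoretic definition of ``point of $X$'' that makes no reference to the underlying topology beyond what the group structure of $F$ itself already encodes.

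Once this encoding is in place, $\phi: F \to G$ carries the admissible ultrafilter of each $x \in X$ to an admissible ultrafilter for $G$ on $Y$, which by the same analysis corresponds to a unique $h(x) \in Y$; this defines a bijection $h: X \to Y$. To finish I would verify that $h$ is a homeomorphism, which follows from the fact that $\phi$ sends the basis $\{ F_U \}$ for $X$ to the analogous basis for $Y$; that $\phi(f)(h(x)) = h(f(x))$ for every $f \in F$ and $x \in X$, which rearranges to the conjugation identity $\phi(f) = h f h^{-1}$; and that $h$ is unique, because any homeomorphism $h'$ implementing $\phi$ by conjugation must carry the algebraically-defined data attached to $x$ to that attached to $h(x)$, forcing $h'(x) = h(x)$.
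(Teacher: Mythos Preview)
The paper does not prove this theorem at all: it is quoted from Rubin \cite{Rubin1996, Rubin1989} (via Brin's exposition) and used as a black box, so there is no ``paper's own proof'' to compare against. Your outline is, in fact, a reasonable sketch of the strategy Rubin actually employs --- recover points of $X$ as suitable maximal objects built from the lattice of (subgroups supported on) open sets, transport this lattice along $\phi$, and read off $h$ --- but it remains only a sketch: step (iii), giving a purely group-theoretic characterisation of which ultrafilters correspond to points, is the hard technical core and would require substantial work to make rigorous, and you have not carried it out. For the purposes of this paper, however, no proof is expected; the theorem is simply cited.
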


\begin{definition} Let $X$ be a topological space, and let $\Gamma$ be a group acting on $X$.
For each $x\in X$, we let $G_x$ be the \emph{group of germs of the action
$\Gamma\curvearrowright X$ at $x$}.
That is, let $\Gamma_x\leq\Gamma$ be the isotropy subgroup consisting of all elements of $\Gamma$ that fix $x$.
Let $N_x\triangleleft \Gamma_x$ be the normal subgroup consisting of the elements $\gamma$ for which there
is an open neighborhood $U_{\gamma}$ of $x$ such that $\gamma|_{U_{\gamma}} = \id_{U_{\gamma}}$.
Then $G_x := \Gamma_x/N_x$.
\end{definition}

Brin \cite{BrinHDTG} used the first part of the following corollary to show that
$2V$ and $V$ are not isomorphic. Bleak and Lanoue \cite{BlL} used the second part
to show $nV$ is not isomorphic to $mV$ if $n\not= m$.

\begin{corollary} \label{corollary: applications from Rubin}
Assume the notation of Rubin's Theorem~\ref{thm:Rubin}. 
\begin{enumerate}
\item For every $f\in F$ and for every $x\in X$, $h$ induces a bijection from
the orbit $\{f^nx ~|~ n\in\bz\}$ to the orbit $\{ (\phi(f))^n(h(x)) ~|~ n\in \bz\}$.
	\item 
For every $x\in X$, $h$ induces an isomorphism from the group
of germs of the action $F\curvearrowright X$ at $x$ to the group of germs of the action 
$G \curvearrowright Y$ at $h(x)$.
		\item
For every $x\in X$, $h$ induces a bijection from the orbit $Fx$ to the orbit $Gh(x)$.
\end{enumerate}
\end{corollary}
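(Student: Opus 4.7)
The plan is to deduce all three items formally from the conjugation identity $\phi(f) = hfh^{-1}$ supplied by Rubin's Theorem~\ref{thm:Rubin}. Each assertion is really just an unwinding of this identity in the appropriate way; no new ingredient beyond Rubin's theorem is needed.

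For part (1), I would iterate the conjugation relation to obtain $\phi(f)^n = hf^nh^{-1}$ for every $n\in\bz$, and then evaluate both sides at $h(x)$ to get $\phi(f)^n(h(x)) = h(f^n x)$. This shows that $h$ sends the $f$-orbit of $x$ into the $\phi(f)$-orbit of $h(x)$. Injectivity is automatic because $h$ is a homeomorphism, and surjectivity follows by running the same identity for the inverse isomorphism $\phi^{-1}$, which is conjugation by $h^{-1}$. Part (3) is the single-element analogue: for $f\in F$, the identity $h(fx) = \phi(f)(h(x))$ yields $h(Fx)\subseteq Gh(x)$, and the reverse inclusion comes from applying the same argument to $\phi^{-1}$. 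Since $h$ is a bijection between $X$ and $Y$, the restriction to orbits is also a bijection.

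For part (2), I would first verify that $\phi$ carries the isotropy subgroup $F_x$ bijectively onto $G_{h(x)}$: if $f(x)=x$, then $\phi(f)(h(x)) = hfh^{-1}(h(x)) = h(x)$, and the symmetric computation for $\phi^{-1}$ handles the reverse direction. Next I would check that this restriction sends $N_x$ onto $N_{h(x)}$: if $f$ is the identity on an open neighborhood $U$ of $x$, then $\phi(f) = hfh^{-1}$ is the identity on the open neighborhood $h(U)$ of $h(x)$, so $\phi(f)\in N_{h(x)}$, and the reverse inclusion follows by the same argument applied to $\phi^{-1}$. Passing to the quotients then produces the required isomorphism of germ groups. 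The only care needed is in confirming that the induced map actually descends to the quotients, which is immediate from $\phi(N_x)\subseteq N_{h(x)}$. There is no real obstacle; the whole corollary is essentially bookkeeping around Rubin's conjugation identity.
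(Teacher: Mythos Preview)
Your proposal is correct; each part is indeed a routine consequence of the conjugation identity $\phi(f)=hfh^{-1}$, and your arguments for preserving isotropy subgroups, the normal subgroups $N_x$, and orbits are all sound. The paper does not supply a proof of this corollary at all---it is simply stated, with references to Brin and Bleak--Lanoue for its prior uses---so your write-up is more detailed than what appears in the paper.
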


%%%%%%%%%%%%%%%%%%%%%%%%%%%%%%%%%%%%%%%%%%%%%%%%%%%
\subsection*{The Case of the Nekrashevych-R\"{o}ver groups}
%%%%%%%%%%%%%%%%%%%%%%%%%%%%%%%%%%%%%%%%%%%%%%%%%
For the remainder of this section, we will consider the Nekrashevych-R\"{o}ver groups $V_{d}(H)$.
Thus, we choose a finite alphabet $A = \{ a_{1}, \ldots, a_{d} \}$ $(d \geq 2)$ and a subgroup
$H \leq \Sigma_{d}$, where $\Sigma_{d}$ is the group of permutations of $A$. These choices determine
a finite similarity structure (as specified in Definition~\ref{def:symmetric group sim}), and, therefore, 
a group $\Gamma$, which is isomorphic to $V_{d}(H)$ by Remark~\ref{claim: Vdh is Gamma}.

\begin{lemma} \label{lemma: Rubin's hypothesis is satisfied}
The space $A^{\omega}$ is locally compact, Hausdorff, and has no isolated points. The action of $\Gamma$
on $A^{\omega}$ is locally dense.
\end{lemma}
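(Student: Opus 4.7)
The plan is to handle the three easy topological properties first and then dispatch local density by an explicit cut-and-paste construction. The space $A^{\omega}$ is, by the construction recalled in Section~\ref{section:review}, a compact metric space, so it is automatically locally compact and Hausdorff. For the absence of isolated points, I would use $d = |A| \geq 2$: given any $x = x_{1}x_{2}\cdots \in A^{\omega}$ and any $n \in \mathbb{N}$, choose $a \in A \setminus \{x_{n+1}\}$ and set $y_{n} := x_{1}\cdots x_{n}\, a\, x_{n+2}x_{n+3}\cdots$; then $y_{n} \neq x$ and $d(x, y_{n}) = e^{-n} \to 0$, so $x$ is not isolated.

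For local density, fix $x \in A^{\omega}$ and an open neighbourhood $U$ of $x$. Since the metric balls $wA^{\omega}$ form a basis (Remark~\ref{rem: characterization of balls}), pick a ball $B = wA^{\omega}$ with $x \in B \subseteq U$. It suffices to show that $B$ itself lies in the closure of $F_{B}(x) := \{f(x) \mid f \in \Gamma,\ f|_{A^{\omega}\setminus B} = \id\}$, since $B$ is already a non-empty open set. Given any $y \in B$ and any $\varepsilon > 0$, I would choose $N$ large enough that $e^{-|w|-N} < \varepsilon$, write $x = wx'$ and $y = wy'$, and set
\[
B_{x} := w\,x'_{1}\cdots x'_{N}\,A^{\omega} \ni x, \qquad B_{y} := w\,y'_{1}\cdots y'_{N}\,A^{\omega} \ni y.
\]
Both $B_{x}$ and $B_{y}$ appear in the canonical partition of $B$ into the $d^{N}$ sub-balls of depth $N$ below $B$.

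To finish, I would build the desired $f \in \Gamma$ as follows. If $B_{x} = B_{y}$, then $d(x,y) \leq e^{-|w|-N} < \varepsilon$ already, and $f = \id$ works. Otherwise $B_{x}$ and $B_{y}$ are distinct sub-balls of $B$ of equal diameter, and I would define $f$ piecewise: on $B_{x}$ let $f$ be the unique order-preserving similarity to $B_{y}$, on $B_{y}$ the unique order-preserving similarity back to $B_{x}$, and on every other ball of the chosen partition of $B$ (as well as on every ball in the canonical decomposition of $A^{\omega}\setminus B$) let $f$ be the identity. By Remark~\ref{rem:sim properties}(5), each of these pieces lies in $\sm$, so $f$ is locally determined by $\sm$ and is a homeomorphism of $A^{\omega}$; hence $f \in \Gamma$. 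By construction, $f|_{A^{\omega}\setminus B} = \id$ and $f(x) \in B_{y}$, giving $d(f(x), y) \leq \diam(B_{y}) \leq e^{-|w|-N} < \varepsilon$, which exhibits $y$ as a limit point of $F_{B}(x)$. The only real thing to be careful about is verifying that the assembled $f$ actually lies in $\Gamma$, and this is handled cleanly by Remark~\ref{rem:sim properties}(5) together with the fact that order-preserving similarities compose and restrict well.
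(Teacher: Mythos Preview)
Your argument is correct. The paper takes a slightly different two-step route to local density: it first proves that every $\Gamma$-orbit is dense in $A^{\omega}$ (by building, for any $x$ and any target ball, an element of $\Gamma$ that permutes the balls of a uniform-depth partition so as to send $x$ into the target), and then localizes this fact via conjugation by the order-preserving similarity $h_{u}\in\sm(uA^{\omega},A^{\omega})$, setting $\gamma_{u}|_{uA^{\omega}}=h_{u}^{-1}\gamma h_{u}$ and $\gamma_{u}=\id$ elsewhere, so that the set $\{\gamma_{u}(x):\gamma\in\Gamma\}$ is dense in $uA^{\omega}$. Your approach bypasses the intermediate ``dense orbits'' step entirely by directly building a transposition of two small sub-balls $B_{x},B_{y}\subseteq B$ supported in $B$; this is more elementary and self-contained. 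The paper's decomposition has the minor advantage of isolating the dense-orbits statement as a reusable fact, but for the purpose of this lemma your direct construction is equally clean.
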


\begin{proof}
The space $A^{\omega}$ is compact metric, so it is locally compact and Hausdorff. It is straightforward to
check that $A^{\omega}$ has no isolated points using the description of balls in Remark~\ref{rem: characterization of balls}.

We turn to the proof that the action of $\Gamma$ is locally dense. As a first step, we show that the orbit of any $x \in A^{\omega}$
is dense. Let $x,y \in A^{\omega}$ and let $U$ be an open ball containing $y$. The ball $U$ must have the form $uA^{\omega}$, for some $u \in A^{\ast}$, by 
Remark~\ref{rem: characterization of balls}. Let $\mathcal{P} = \{ vA^{\omega} \mid |v| = |u| \}$. The set $\mathcal{P}$ is a partition
of $A^{\omega}$ into balls. We let $v_{x}A^{\omega} \in \mathcal{P}$ be the ball containing $x$. Now choose a bijection $\phi: \mathcal{P} \rightarrow \mathcal{P}$
such that $\phi(v_{x}A^{\omega}) = uA^{\omega}$. We define $\gamma \in \Gamma$ as follows. For each ball $vA^{\omega} \in \mathcal{P}$, we choose
an arbitrary $h_{v} \in \sm (vA^{\omega}, \phi(vA^{\omega}))$, and let $\gamma|_{vA^{\omega}} = h_{v}$. These choices determine a unique $\gamma \in \Gamma$,
and $\gamma(x) \in U$. It follows that the orbit of $x$ is dense.

Now we can prove that the action of $\Gamma$ is locally dense. Let $x \in A^{\omega}$, and let $U$ be an open neighborhood of $x$. We can find an open
ball $uA^{\omega} \subseteq U$ such that $x \in uA^{\omega}$. Let $\mathcal{P} = \{ vA^{\omega} \mid |v| = |u| \}$. For each $\gamma \in \Gamma$, we define
an element $\gamma_{u} \in \Gamma$ as follows. If $v \neq u$, then 
$\gamma_{u}|_{vA^{\omega}} = \id_{vA^{\omega}}$. Let $h_{u} \in \sm(uA^{\omega}, A^{\omega})$ denote the unique order-preserving similarity
in $\sm(uA^{\omega}, A^{\omega})$. We set $\gamma_{u}|_{uA^{\omega}} = h^{-1}_{u} \gamma h_{u}$. The fact that $\gamma_{u} \in \Gamma$ follows from the
fact that $\sm$ is closed under compositions and restrictions. Since the action of $\Gamma$ has dense orbits in $A^{\omega}$, the set
$$ \{ \gamma_{u}(x) \mid \gamma \in \Gamma \}$$
is dense in $uA^{\omega}$. Since $\gamma_{u}|_{A^{\omega} \backslash uA^{\omega}} = \id|_{A^{\omega} \backslash uA^{\omega}}$,
the local denseness of the action follows.
\end{proof}   

\begin{definition}\label{def: local representative}
Let $\gamma \in \Gamma_{x}$. There exists a pair of balls $uA^{\omega}, vA^{\omega}$ such that $x \in uA^{\omega} \cap vA^{\omega}$ and
$\gamma|_{uA^{\omega}} = h$, for some $h \in \sm(uA^{\omega}, vA^{\omega})$. We say that $h$ is a \emph{local representative for $\gamma$ at
$x$}, or that $h$ \emph{locally represents $\gamma$ at $x$}.
\end{definition}

\begin{proposition}\label{prop: each local representative represents something}
If $x \in uA^{\omega} \cap vA^{\omega}$, $u,v \neq 1$, and $h \in \sm(uA^{\omega}, vA^{\omega})$ fixes $x$, then
$h$ locally represents some $\gamma \in \Gamma$ at $x$. For any $\gamma \in \Gamma_{x}$, the collection $\{ h \mid  h \text{ locally represents }
\gamma \}$ is closed under restriction to open ball neighborhoods of $x$, and, given two local representatives of $\gamma$, one is the 
restriction of the other.
\end{proposition}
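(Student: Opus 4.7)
The statement is a conjunction of three claims: extension of $h$ to a global element of $\Gamma$; closure of the set of local representatives under restriction to ball neighborhoods of $x$; and comparability of any two local representatives by restriction. My plan is to treat these in order, with most of the work going into the extension.

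For the extension, I would construct $\gamma \in \Gamma$ by declaring $\gamma|_{uA^\omega} = h$ and extending over the complement. Since $u$ and $v$ are nonempty, both $A^\omega \setminus uA^\omega$ and $A^\omega \setminus vA^\omega$ are nonempty and decompose as disjoint unions of balls of the form $wA^\omega$ with $|w|=|u|$, $w\neq u$ (respectively $|w|=|v|$, $w\neq v$), having $d^{|u|}-1$ and $d^{|v|}-1$ pieces. These cardinalities need not agree; however, both are divisible by $d-1$ since $d^{a}-1=(d-1)(1+d+\cdots+d^{a-1})$, and any ball splits into its $d$ maximal proper sub-balls, which adds $d-1$ balls to a partition. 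By performing finitely many such splits in whichever complement has the smaller partition, I can match them into partitions $\{P_1,\ldots,P_n\}$ of $A^\omega\setminus uA^\omega$ and $\{Q_1,\ldots,Q_n\}$ of $A^\omega\setminus vA^\omega$ of common cardinality $n$. Choose any bijection $P_i\leftrightarrow Q_{\tau(i)}$ and, for each $i$, any $k_i\in\sm(P_i,Q_{\tau(i)})$; such $k_i$ exist by Remark~\ref{rem:sim properties}(5). Define $\gamma:A^\omega\to A^\omega$ to equal $h$ on $uA^\omega$ and $k_i$ on $P_i$. Since the pieces are clopen and $\gamma$ restricts to a member of the $\sm$-structure on each piece, $\gamma$ is a homeomorphism locally determined by $\sm$; hence $\gamma\in\Gamma$, and $h$ locally represents $\gamma$ at $x$ by construction.

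For closure under restriction, if $\gamma|_{uA^\omega}=h\in\sm(uA^\omega,vA^\omega)$ and $U$ is an open ball neighborhood of $x$ with $U\subseteq uA^\omega$, then $U=u'A^\omega$ for some $u'=up$. By the Restrictions property of $\sm$, $h|_U\in\sm(U,h(U))$ and $h(U)$ is a ball, while $\gamma|_U=h|_U$; hence $h|_U$ again locally represents $\gamma$ at $x$. For the final comparability claim, two local representatives $h_i\in\sm(u_iA^\omega,v_iA^\omega)$ of $\gamma$ at $x$ satisfy $x\in u_1A^\omega\cap u_2A^\omega$, so by the ultrametric nesting property one of these balls contains the other; assuming $u_1A^\omega\subseteq u_2A^\omega$, we obtain $h_1=\gamma|_{u_1A^\omega}=h_2|_{u_1A^\omega}$. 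The only subtle point is the combinatorial matching in the extension step, which is handled by the observation that both partition cardinalities are multiples of $d-1$; everything else is a direct application of the $\sm$-axioms and the ultrametric nesting principle.
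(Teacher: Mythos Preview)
Your proof is correct and follows essentially the same approach as the paper: you build $\gamma$ by extending $h$ over matching partitions of the complements $A^\omega\setminus uA^\omega$ and $A^\omega\setminus vA^\omega$, using the fact that ball-splitting changes partition size by multiples of $d-1$, and you dispatch the restriction and nesting claims exactly as the paper does. The only cosmetic difference is that the paper splits \emph{both} partitions (applying $\ell$ splits to one side and $k$ to the other to reach the common size $1+(k+\ell)(d-1)$) rather than splitting only the smaller one, but the underlying arithmetic observation is identical.
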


\begin{proof}
We prove the first statement. Thus, suppose $h \in \sm(uA^{\omega}, vA^{\omega})$ fixes $x$, and $u,v \neq 1$. We let $\mathcal{P}_{1}$, $\mathcal{P}_{2}$
be partitions of $A^{\omega}$ into balls, such that $uA^{\omega} \in \mathcal{P}_{1}$ and $vA^{\omega} \in \mathcal{P}_{2}$. In general, whenever $\mathcal{P}$
is a partition of $A^{\omega}$ into balls, then one can obtain another such partition $\mathcal{P}'$ by replacing $\widehat{u}A^{\omega} \in \mathcal{P}$
with $\widehat{u}a_{1}A^{\omega}, \ldots, \widehat{u}a_{d}A^{\omega}$; i.e., 
$$ \mathcal{P}' = \left( \mathcal{P} \backslash \{ \widehat{u}A^{\omega} \} \right) \cup \{ \widehat{u}a_{j}A^{\omega} \mid 1 \leq j \leq d \}.$$
Moreover, any partition of $A^{\omega}$ into balls arises from a repeated application of this procedure to the partition $\{ A^{\omega} \}$. It follows
that $|\mathcal{P}_{1}| = 1 + k(d-1)$ and $|\mathcal{P}_{2}| = 1 + \ell (d-1)$, for some $k, \ell \in \mathbb{N}$ ($k, \ell > 0$). Since, in particular,
$|\mathcal{P}_{i}| > 1$ for $i=1,2$, there exist $\widetilde{u}A^{\omega} \in \mathcal{P}_{1}$, $\widetilde{v}A^{\omega} \in \mathcal{P}_{2}$, distinct
from $uA^{\omega}$ and $vA^{\omega}$ (respectively). Now we apply the replacement procedure repeatedly, to $\mathcal{P}_{1}$ $\ell$ times and to $\mathcal{P}_{2}$
$k$ times, always replacing balls other than $uA^{\omega}$ (in the first case) and $vA^{\omega}$ (in the second case). The result is a pair of partitions
$\mathcal{P}'_{1}$, $\mathcal{P}'_{2}$ such that $|\mathcal{P}'_{1}| = |\mathcal{P}'_{2}| = 1 + (k + \ell)(d-1)$, $uA^{\omega} \in \mathcal{P}'_{1}$, 
and $vA^{\omega} \in \mathcal{P}'_{2}$. We choose a bijection $\psi: \mathcal{P}'_{1} \rightarrow \mathcal{P}'_{2}$ such that $\psi(uA^{\omega}) = vA^{\omega}$.
For each $tA^{\omega} \in \mathcal{P}'_{1} \backslash \{ uA^{\omega} \}$, we choose an arbitrary $h_{t} \in \sm(tA^{\omega}, \psi(tA^{\omega}))$. 
We let $\gamma$ be defined by the rule $\gamma|_{tA^{\omega}} = h_{t}$ for $tA^{\omega} \in \mathcal{P}'_{1}$, $t \neq u$, and 
$\gamma|_{uA^{\omega}} = h$. The result is an element of $\Gamma$, and $h$ locally represents $\gamma$.

The closure of $\{ h \mid h \text{ locally represents } \gamma \text{ at } x \}$ under restriction to open ball neighborhoods of $x$ is clear.

The final statement follows easily from the fact that the collection of balls containing $x$ is nested.
\end{proof}

\begin{definition}
Let $x \in A^{\omega}$. Consider $\mathcal{G}_{x} = \{ h \in \sm(uA^{\omega}, vA^{\omega}) \mid uA^{\omega}, vA^{\omega} \subseteq A^{\omega},
x \in uA^{\omega} \cap vA^{\omega}, h(x)=x \}$. 
If $h_{1}, h_{2} \in \mathcal{G}_{x}$, we write $h_{1} \sim h_{2}$ when there is a ball neighborhood of $x$ such that
$h_{1}|_{B} = h_{2}|_{B}$. The set $\mathcal{G}_{x} / \sim$ is called the \emph{abstract germ group at $x$}. The group operation is as follows.
If $[h_{1}], [h_{2}] \in \mathcal{G}_{x} / \sim$, then we choose representatives $h'_{1} \in [h_{1}]$, $h'_{2} \in [h_{2}]$ such that 
$h'_{1} \in \sm(B_{1}, B_{2})$ and $h'_{2} \in \sm(B_{2}, B_{3})$, where $x \in B_{1} \cap B_{2} \cap B_{3}$. We set 
$[h_{2}][h_{1}] = [h'_{2} \circ h'_{1}]$.
\end{definition}

\begin{proposition}
The abstract germ group at $x$, $\mathcal{G}_{x}/\sim$, is isomorphic to the germ group at $x$. The
map $\psi: G_{x} \rightarrow \mathcal{G}_{x}/ \sim$ defined by $\psi( [ \gamma ]) = [h_{\gamma}]$, where
$h_{\gamma}$ is a local representative of $\gamma$ at $x$, is an isomorphism.
\end{proposition}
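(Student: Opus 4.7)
The plan is to verify in order: well-definedness of $\psi$, the homomorphism property, surjectivity, and injectivity, relying on Proposition~\ref{prop: each local representative represents something} to move between the concrete group $\Gamma_x$ and the local data in $\mathcal{G}_x$.

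First I would check that $\psi$ is well-defined. Given $\gamma \in \Gamma_x$, any two local representatives $h, h'$ of $\gamma$ at $x$ are, by the last assertion of Proposition~\ref{prop: each local representative represents something}, restrictions of one another; so they agree on some ball neighborhood of $x$ and thus satisfy $h \sim h'$. If $\gamma' = \gamma n$ with $n \in N_x$, then $n$ is the identity on some ball neighborhood $U$ of $x$, and restricting a local representative of $\gamma$ to a small enough subball contained in $U$ also gives a local representative of $\gamma'$; so $[h_\gamma] = [h_{\gamma'}]$ in $\mathcal{G}_x/\!\sim$, and $\psi$ descends from $\Gamma_x$ to $G_x$.

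Next I would verify that $\psi$ is a homomorphism. Given $[\gamma_1], [\gamma_2] \in G_x$, pick local representatives $h_i \in \sm(u_iA^\omega, v_iA^\omega)$ of $\gamma_i$ at $x$. Using the restriction property of $\sm$, I can shrink $u_2 A^\omega$ so that $v_2 A^\omega \subseteq u_1 A^\omega$ (since $x$ lies in the intersection and balls containing $x$ are nested); then $h_1 \circ h_2 \in \sm(u_2 A^\omega, h_1(v_2 A^\omega))$ by the Compositions and Restrictions Properties, and on that neighborhood it coincides with $\gamma_1 \gamma_2$, i.e.\ it is a local representative of $\gamma_1 \gamma_2$. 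Hence $\psi([\gamma_1][\gamma_2]) = [h_1 \circ h_2] = \psi([\gamma_1])\psi([\gamma_2])$.

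For surjectivity, let $[h] \in \mathcal{G}_x/\!\sim$ with $h \in \sm(uA^\omega, vA^\omega)$ fixing $x$. If $u = v = 1$ then $h$ is already globally an element of $\Gamma_x$ and we are done; otherwise, after restricting $h$ to a proper subball of $uA^\omega$ containing $x$ (which replaces $h$ by something in the same $\sim$-class) we can ensure that the source and target are proper balls, and then Proposition~\ref{prop: each local representative represents something} produces $\gamma \in \Gamma$ with $h$ as a local representative. Since $h(x) = x$, we have $\gamma \in \Gamma_x$ and $\psi([\gamma]) = [h]$. For injectivity, suppose $\psi([\gamma]) = [\id_{A^\omega}]$. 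Then a local representative $h_\gamma$ of $\gamma$ at $x$ agrees with the identity on some ball neighborhood $B$ of $x$, so $\gamma|_B = \id_B$, which means $\gamma \in N_x$ and $[\gamma]$ is trivial in $G_x$.

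The main obstacle I anticipate is making sure composition of germs is compatible with composition in $\Gamma$: one must pass carefully through the nested structure of balls around $x$ so that, after suitable restriction, the representatives $h_1, h_2$ can actually be composed inside the category $\sm$. Once one commits to the convention that representatives may always be restricted to any smaller ball neighborhood of $x$ (justified by Proposition~\ref{prop: each local representative represents something}), the remaining verifications are bookkeeping with the $\sm$-axioms.
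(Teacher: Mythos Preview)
Your proposal is correct and follows essentially the same approach as the paper: well-definedness and injectivity are handled by the nestedness of local representatives, surjectivity comes directly from Proposition~\ref{prop: each local representative represents something}, and the homomorphism check proceeds by restricting representatives until they can be composed inside $\sm$. The paper is terser (declaring well-definedness and injectivity ``straightforward''), but your more detailed treatment of these steps, including the descent from $\Gamma_x$ to $G_x$ and the case analysis for surjectivity when $u$ or $v$ is trivial, is accurate and matches the intended argument.
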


\begin{proof}
We prove the second statement. It is straightforward to check that $\psi$ is well-defined and injective. Surjectivity follows
directly from Proposition~\ref{prop: each local representative represents something}.

Let $[\gamma_{1}], [\gamma_{2}] \in G_{x}$. We can choose local representatives $h_{1} \in \sm(B_{1}, \widehat{B}_{1})$,
$h_{2} \in \sm(B_{2}, \widehat{B}_{2})$ for $\gamma_{1}, \gamma_{2}$ (respectively). We consider the restrictions
$h'_{2} = h_{2}|_{h_{2}^{-1}(\widehat{B}_{2} \cap B_{1})}$ and $h'_{1} = h_{1}|_{B_{1} \cap \widehat{B}_{2}}$.
We have $h'_{1}, h'_{2} \in \mathcal{G}_{x}$ by the Restrictions property of $\sm$. By the definition of the operation in the abstract germ group,
$$ \psi([\gamma_{1}]) \psi([\gamma_{2}]) = [h'_{1}][h'_{2}] = [h'_{1} \circ h'_{2}].$$
On the other hand, $h'_{1}$ and $h'_{2}$ are still local representatives for $\gamma_{1}$ and $\gamma_{2}$ (respectively). It follows that $h'_{1} \circ h'_{2}$
is a local representative for $\gamma_{1} \circ \gamma_{2}$. That is, $\psi([\gamma_{1} \circ \gamma_{2}]) = [h'_{1} \circ h'_{2}]$,
so $\psi$ is a homomorphism.
\end{proof}

\begin{remark}
If $h \in \sm(uA^{\omega}, vA^{\omega}) \cap \mathcal{G}_{x}$, then $u$ and $v$ must satisfy a certain constraint. Let $x= x_{1}x_{2}\ldots x_{i}\ldots$. 
For each $n \in \mathbb{N}$, let $w_{n} = x_{1}x_{2}\ldots x_{n}$. Since $x \in uA^{\omega}$, we must have $u = w_{n}$, for some $n$. Similarly for $v$.
\end{remark}

\begin{remark} \label{rem: restrictions}
Let $h \in H \leq \Sigma_{d}$. Suppose that $\sigma_{h} \in \sm( w_{m}A^{\omega}, w_{n}A^{\omega})$, 
defined by $\sigma_{h} (w_{m}a_{i_{1}}a_{i_{2}}\ldots) = w_{n}h(a_{i_{1}}) h(a_{i_{2}}) \ldots$, satisfies 
$\sigma_{h}(x) = x$. Each restriction of $\sigma_{h}$ to a ball containing $x$ has the form
$\sigma'_{h} \in \sm(w_{m+j}A^{\omega}, w_{n+j}A^{\omega})$, where 
$\sigma'_{h}(w_{m+j}a_{i_{1}}a_{i_{2}}\ldots) = w_{n+j}h(a_{i_{1}})h(a_{i_{2}}) \ldots$, for some $j \in \mathbb{N}$.
\end{remark}

%%%%%%%%%%%%%%%%%%%%%%%%%%%%%%%%%%%%%%%%%%%%%%%
\subsection*{General analysis of the germ groups}
%%%%%%%%%%%%%%%%%%%%%%%%%%%%%%%%%%%%%%%%%%%%%%%%%
Lemma~\ref{lemma: Rubin's hypothesis is satisfied} implies that we can use Corollary~\ref{corollary: applications from Rubin}
in order to distinguish between isomorphism types of the groups $V_{d}(H)$, for varying $d>1$ and $H \leq \Sigma_{d}$. In this
subsection, we describe how to determine the isomorphism type of the germ group $G_{x}$, for arbitrary $d>1$, $H \leq \Sigma_{d}$,
and $x \in A^{\omega}$.

\begin{definition}
For each $a\in A$, the \emph{isotropy subgroup of $H$ fixing $a$} is defined by
$$H_a =\{ \sigma\in H ~|~ \sigma(a)=a\}.$$
\end{definition}

\begin{definition}
For each $x=x_1x_2x_3\dots\in A^\omega$, where each $x_i\in A$, the \emph{eventual isotropy subgroup of $H$ at $x$}
is the subgroup of $H$ defined by 
$$H_x := \bigcap\{ H_a ~|~ a=x_i \text{ for infinitely many $i$}\}.$$
\end{definition}

\begin{remark} \label{rem: eventual isotropy group}
Since $A$ is finite, it follows that for each $x=x_1x_2x_3\dots\in A^\omega$, where each $x_i\in A$, 
there exists $N\in\bn$ such that for all $k\geq N$,
the eventual isotropy subgroup of $H$ at $x$
is  
$$H_x = \bigcap\{ H_{x_i} ~|~ i\geq k \}.$$
\end{remark}

\begin{example} If every element of $A$ appears infinitely often in $x\in A^\omega$, then the eventual isotropy group of $H$
at $x$ is $H_x=\{ 1\}$.
\end{example}

\begin{definition} An element $x\in A^\omega$ is \emph{eventually periodic} if there exists $u,v\in A^*$ such that
$x=u\bar{v}$. 
If $x$ is eventually periodic, the \emph{period of $x$} is 
$$\pi(x) := \min\{ \vert v\vert ~|~ x=u\bar{v}, \text{where $u,v\in A^\ast$}\}\in\bn.$$
\end{definition}

\begin{example}
\label{example:eventual} 
If $x\in A^\omega$ is eventually periodic with  $x=u\bar{v}$, where  $u\in A^*$
and $v=y_1y_2\cdots y_n\in A^n$,
then the eventual isotropy group of $H$
at $x$ is 
$$H_x = \bigcap\{ H_{y_i} ~|~ 1\leq i\leq n \}.$$
\end{example}

\begin{theorem} \label{theorem: germ description}
The function $\phi: \mathcal{G}_{x}/\sim \, \rightarrow \mathbb{Z}$ sending
$\sigma_{h} \in \sm(w_{m}A^{\omega}, w_{n}A^{\omega})$ to $n-m$ is
a homomorphism. The image is non-trivial if and only if $x$ is eventually periodic.
The kernel is naturally isomorphic to the eventual isotropy group of $H$ at $x$.

Moreover, assuming that $x$ is eventually periodic, there is a constructive procedure
for determining the image of $\phi$.
\end{theorem}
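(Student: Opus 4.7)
The plan is to handle the four assertions in sequence. First, for well-definedness and the homomorphism property of $\phi$: by Remark~\ref{rem: restrictions}, every restriction of $\sigma_{h} \in \sm(w_{m}A^{\omega}, w_{n}A^{\omega})$ to a smaller ball neighborhood of $x$ has the form $\sigma'_{h} \in \sm(w_{m+j}A^{\omega}, w_{n+j}A^{\omega})$, so $n - m$ is constant on the $\sim$-class. Given $[\sigma_{h}]$ and $[\sigma_{h'}]$, I will use the Restrictions property of $\sm$ to pass to representatives in $\sm(w_{a}A^{\omega}, w_{b}A^{\omega})$ and $\sm(w_{b}A^{\omega}, w_{c}A^{\omega})$ respectively; their composition lies in $\sm(w_{a}A^{\omega}, w_{c}A^{\omega})$, and $(c-a) = (c-b) + (b-a)$ gives the homomorphism identity.

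Second, I identify $\ker\phi$ with $H_{x}$ via $\psi\co H_{x} \to \ker\phi$, $\psi(h) = [\sigma_{h}]$, where $\sigma_{h} \in \sm(w_{m}A^{\omega}, w_{m}A^{\omega})$ for any $m$ large enough that $h$ fixes $x_{i}$ for every $i > m$ (such $m$ exists by Remark~\ref{rem: eventual isotropy group}). The computation $\sigma_{h}(x) = w_{m}h(x_{m+1})h(x_{m+2})\cdots = x$ shows $\sigma_{h}$ fixes $x$, and independence of the choice of $m$ follows from the previous paragraph. This map is a homomorphism because $\sigma_{h}\circ\sigma_{h'} = \sigma_{hh'}$ on $w_{m}A^{\omega}$, injective by uniqueness of the labeling permutation (Remark~\ref{rem:sim properties}(2)), and surjective because any representative $\sigma_{h} \in \sm(w_{m}A^{\omega}, w_{m}A^{\omega})$ of a kernel element satisfies $h(x_{m+k}) = x_{m+k}$ for all $k \geq 1$, whence $h \in H_{x}$.

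Third, for the image dichotomy: if $\phi([\sigma_{h}]) = p \neq 0$, we may assume $p > 0$ (else replace $\sigma_{h}$ by its inverse, which also lies in $\mathcal{G}_{x}$), so $\sigma_{h} \in \sm(w_{m}A^{\omega}, w_{m+p}A^{\omega})$ and the identity $\sigma_{h}(x) = x$ unfolds to $h(x_{m+k}) = x_{m+p+k}$ for all $k \geq 1$. Iterating this identity $d = |h|$ times in the finite group $H$ gives $x_{m+k} = x_{m+dp+k}$ for all $k \geq 1$, so $x$ is eventually periodic. Conversely, if $x = u\bar{v}$, the order-preserving similarity $\sigma_{\id} \in \sm(uA^{\omega}, uvA^{\omega})$ satisfies $\sigma_{\id}(x) = uv\bar{v} = u\bar{v} = x$ (using $\bar{v} = v\bar{v}$), so $\phi([\sigma_{\id}]) = |v| \neq 0$.

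Fourth, assuming $x = u\bar{v}$ with $|v| = \pi(x)$, the image is a subgroup $k\bz$ of $\bz$ containing $\pi(x)\bz$, so $k$ divides $\pi(x)$. I will prove that a positive integer $q$ lies in the image if and only if there exists $h \in H$ such that $h(v_{i}) = v_{((i+q-1)\bmod \pi(x))+1}$ for all $i\in\{1,\dots,\pi(x)\}$: the \emph{if} direction uses $\sigma_{h} \in \sm(w_{|u|}A^{\omega}, w_{|u|+q}A^{\omega})$, which one checks fixes $x$; for the \emph{only if} direction, any germ in the image has a representative $\sigma_{h}$ with domain index $m \geq |u|$ (by restriction), and $\sigma_{h}(x) = x$ then forces the stated cyclic-shift identity after relabeling the indices of $v$ modulo $\pi(x)$. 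The algorithm enumerates positive divisors of $\pi(x)$ in increasing order and outputs the least $q$ realized by some $h \in H$; this $q$ generates the image. I expect the main obstacle to be this last step, where one must carefully show that the image is exhausted by divisors of $\pi(x)$ (using both the finiteness of $H$ and the minimality of $\pi(x)$) and verify the basepoint-independence of the cyclic-shift criterion.
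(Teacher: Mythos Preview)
Your proposal is correct and follows essentially the same approach as the paper. The well-definedness/homomorphism argument, the eventual-periodicity dichotomy, and the identification of $\ker\phi$ with $H_x$ all mirror the paper's proof (your kernel argument is in fact slightly more direct, combining into one map what the paper splits as $p\circ\psi$).

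Your worry about the final step is unnecessary. You do not need finiteness of $H$ or minimality of $\pi(x)$ to see that the generator of the image divides $\pi(x)$: the image is a subgroup of $\bz$, hence equals $k\bz$ for some $k\geq 0$, and since you already exhibited $\pi(x)\in\image\phi$, you get $k\mid\pi(x)$ for free. The basepoint-independence of the cyclic-shift criterion is the short computation you indicate: after restricting so that $m\geq |u|$, the condition $h(x_{m+j})=x_{m+q+j}$ for all $j\geq 1$ runs through every residue class modulo $\pi(x)$ and hence is equivalent to $h(v_i)=v_{(i+q)\bmod \pi(x)}$ for all $i$, independently of $m$. The paper does not bother restricting to divisors and simply checks every $k\in\{1,\dots,|v|-1\}$ against the same cyclic-shift condition; your divisor optimization is a harmless refinement.
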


\begin{proof}
The function $\phi$ is well-defined on equivalence classes by Remark~\ref{rem: restrictions}.
We check the homomorphism condition. Let $\sigma_{1} \in \sm(w_{m}A^{\omega}, w_{n}A^{\omega}) \cap \mathcal{G}_{x}$,
$\sigma_{2} \in \sm(w_{n}A^{\omega}, w_{p}A^{\omega}) \cap \mathcal{G}_{x}$. We have
$$ \phi(\sigma_{2}) + \phi(\sigma_{1}) = (p-n) + (n-m) = p-m = \phi(\sigma_{2} \circ \sigma_{1}),$$
since $\sigma_{2} \circ \sigma_{1} \in \sm(w_{m}A^{\omega}, w_{p}A^{\omega})$. Thus, $\phi$ is a homomorphism.

Suppose that $\image(\phi)$ is
non-trivial. Thus, there is $\sigma_{h} \in \sm(w_{m}A^{\omega}, w_{n}A^{\omega}) \cap \mathcal{G}_{x}$,
where $m \neq n$ and $h \in H$. (Here $\sigma_{h}(w_{m}a_{i_{1}}a_{i_{2}}\ldots) = w_{n}h(a_{i_{1}})h(a_{i_{2}})\ldots$.)
We assume, without loss of generality, that $m<n$. Since $\sigma_{h}(x) = x$, we have
\begin{align*}
x_{1}\ldots x_{m} x_{m+1} \ldots &= \sigma_{h}(w_{m}x_{m+1} x_{m+2} \ldots) \\
 &= x_{1} \ldots x_{n} h(x_{m+1}) h(x_{m+2}) \ldots .
\end{align*}
It follows that $h(x_{m+j}) = x_{n+j}$, for $j \in \mathbb{N}$. Since $h$ must have finite order, say $|h| = k$, we have
$$x_{m+j} = h^{k}(x_{m+j}) = x_{k(n-m)+m+j}$$
for $j \in \mathbb{N}$. It follows that $x = x_{1}\ldots x_{m} \overline{x_{m+1} \ldots x_{k(n-m)+m}}$,
so $x$ is eventually periodic.

Conversely, if $x$ is eventually periodic, we have $x = u\overline{v}$. Let $\sigma \in \sm(uA^{\omega}, uvA^{\omega})$ be
defined by 
$\sigma (ua_{i_{1}}a_{i_{2}}\ldots) = uva_{i_{1}}a_{i_{2}}\ldots$.
Clearly $\sigma \in \mathcal{G}_{x}$ and $\phi(\sigma) = |v| \neq 0$, so the image is nontrivial.

By Remark~\ref{rem: eventual isotropy group}, there is some $N \in \mathbb{N}$ such that 
$H_{x} = \bigcap\{H_{x_{i}} \mid i > N \}$. We claim that  $\psi: H_{x} \rightarrow \sm(w_{N}A^{\omega}, w_{N}A^{\omega}) \cap \mathcal{G}_{x}$
is an isomorphism of groups, where $\psi(h)(w_{N}a_{i_{1}}a_{i_{2}}\ldots) = w_{N}h(a_{i_{1}})h(a_{i_{2}})\ldots$.
It is  straightforward to check that $\psi$ is a well-defined homomorphism. If $\psi(h) = 1$, then $\psi(h)(w_{N}a_{i}a_{i}\ldots)
= w_{N}a_{i}a_{i}\ldots$ for $i \in \{1, \ldots, d \}$. Thus, $h=1$ and $\psi$ is injective. Let 
$\sigma_{h} \in \sm(w_{N}A^{\omega}, w_{N}A^{\omega}) \cap \mathcal{G}_{x}$, where $\sigma_{h}(w_{N}a_{i_{1}}a_{i_{2}}\ldots)
= w_{N}h(a_{i_{1}})h(a_{i_{2}})\ldots$. We must have $h(x_{i}) = x_{i}$ for $i > N$ (since $\sigma_{h}(x) = x$), so $h \in H_{x}$. Thus, $\psi$
is surjective. This proves the claim.

We further claim that the projection $p: \sm(w_{N}A^{\omega}, w_{N}A^{\omega}) \cap \mathcal{G}_{x} \rightarrow \mathcal{G}_{x} / \sim$
is an isomorphism onto its image; that is, $p$ is injective. Thus, suppose $\sigma \in \sm(w_{N}A^{\omega}, w_{N}A^{\omega}) \cap \mathcal{G}_{x}$,
and $p(\sigma) = 1$. There is $h \in H$ such that $\sigma(w_{N}a_{i_{1}}a_{i_{2}}\ldots) = w_{N}h(a_{i_{1}})h(a_{i_{2}})\ldots$. The statement
$p(\sigma) = 1$ means that some restriction of $\sigma$, say $\sigma_{1} \in \sm(w_{N+j}A^{\omega}, w_{N+j}A^{\omega}) \cap \mathcal{G}_{x}$,
is the identity. 
Since $\sigma_{1} (w_{N+j}a_{i_{1}}a_{i_{2}}\ldots) = w_{N+j}h(a_{i_{1}})h(a_{i_{2}})\ldots$ for all possible choices of the $a_{i_{j}}$, we must
have $h=1$. Thus, $\sigma = 1$, so $p$ is injective.

It follows that $p \circ \psi$ is an isomorphism onto its image. We now claim that this image is precisely $\text{Ker}\phi$. Indeed, it
is already clear that $(p \circ \psi)(H_{x}) \subseteq \text{Ker}\phi$. If $[\sigma] \in \text{Ker}\phi$, then we have 
$\sigma \in \sm(w_{M}A^{\omega}, w_{M}A^{\omega}) \cap \mathcal{G}_{x}$ for some $M \in \mathbb{N}$. There is $h \in H$ such
that $\sigma(w_{M}a_{i_{1}}a_{i_{2}} \ldots) = w_{M}h(a_{i_{1}})h(a_{i_{2}})\ldots$. We pick $P > \text{max}\{ M,N \}$ and let 
$\sigma' \in \sm(w_{P}A^{\omega}, w_{P}A^{\omega}) \cap \mathcal{G}_{x}$ be the restriction of $\sigma$ to $w_{P}A^{\omega}$. 
It is now clear that 
$\sigma'$ is the restriction of $\psi(h) \in \sm(w_{N}A^{\omega}, w_{N}A^{\omega}) \cap \mathcal{G}_{x}$, so 
$[\sigma'] = [\psi(h)] = (p \circ \psi)(h)$, and so $\text{Ker} \phi \subseteq (p \circ \psi)(H_{x})$.

Finally, we determine effectively whether a given $n \in \mathbb{Z}$ lies in the image of $\phi$, assuming that
$x$ is eventually periodic. Let $x = u\overline{v}$. We already know that $|v| \in \image(\phi)$, so we need only consider
$k \in \{ 1, \ldots, |v|-1 \}$. Let $v=v_{1}v_{2}\ldots v_{\ell}$ (so $|v| = \ell$). The integer $k \in \image(\phi)$ if and only if
$\sm(uA^{\omega}, uv_{1}\ldots v_{k} A^{\omega}) \cap \mathcal{G}_{x} \neq \emptyset$. There will be 
$\sigma \in \sm(uA^{\omega}, uv_{1}\ldots v_{k} A^{\omega}) \cap \mathcal{G}_{x}$ if and only if there is $h \in H$ such that
$h(v_{n}) = v_{k+n}$, for all $n \in \{ 1, \ldots, \ell \}$, where the subscript $k+n$ is interpreted modulo $\ell$. 
This is a finite list of conditions, and
we need only check each of the finitely many elements of $H$ against them. The claim follows. 
\end{proof}

\begin{corollary} \label{corollary: summary of germ groups}
Let $x \in A^{\omega}$. The germ group at $x$ is isomorphic to $H_{x}$ if $x$ is not eventually periodic. If $x$ is eventually
periodic, then $G_{x} \cong H_{x} \rtimes \mathbb{Z}$, where the action of $\mathbb{Z}$ on $H_{x}$ can be determined constructively.
\end{corollary}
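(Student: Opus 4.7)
The plan is to deduce the corollary directly from Theorem~\ref{theorem: germ description}. That theorem gives a homomorphism $\phi\co \mathcal{G}_x/\sim \, \to \bz$ whose kernel is naturally isomorphic to $H_x$, so we have a short exact sequence
\[ 1 \to H_x \to G_x \to \image(\phi) \to 1. \]
In the case where $x$ is not eventually periodic, the theorem says $\image(\phi)$ is trivial, so the projection to $\image(\phi)$ collapses and $G_x\cong H_x$ immediately. This handles the first assertion.

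Suppose now that $x$ is eventually periodic. Then $\image(\phi)$ is a nonzero subgroup of $\bz$, hence isomorphic to $\bz$. Since $\bz$ is free, the short exact sequence splits, yielding $G_x\cong H_x\rtimes\bz$. To construct an explicit splitting, I would let $k>0$ be the generator of $\image(\phi)$ and choose any $[\sigma]\in G_x$ with $\phi([\sigma])=k$; concretely, writing $x=u\bar v$, the element $\sigma\in\sm(uA^\omega, uv_1\cdots v_kA^\omega)$ produced in the last paragraph of the proof of Theorem~\ref{theorem: germ description} works. The map $n\mapsto [\sigma]^n$ is a section, and the conjugation action of $[\sigma]$ on $H_x$ (viewed inside $G_x$ via $p\circ\psi$ from the theorem's proof) determines the semidirect product structure.

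To see that the action is constructively computable, I would argue as follows. First, $k$ itself is computable by the final paragraph of the theorem's proof, which tests only finitely many integers in $\{1,\ldots,|v|-1\}$ and finitely many elements of $H$. Once $k$ is in hand, an explicit $\sigma$ can be written down as above, and so can the permutation $h_\sigma\in H$ determining $\sigma$ on infinite tails. For any $g\in H_x$, take $N$ large enough as in Remark~\ref{rem: eventual isotropy group} so that $\psi(g)\in\sm(w_NA^\omega, w_NA^\omega)\cap\mathcal{G}_x$; after replacing $\sigma$ by a restriction whose domain and codomain lie in $w_NA^\omega$ (using the Restrictions property of $\sm$), the composition $\sigma\psi(g)\sigma^{-1}$ is again an element of $\sm(w_NA^\omega, w_NA^\omega)\cap\mathcal{G}_x$, and under the inverse of $\psi$ it corresponds to a unique element $g'\in H_x$; this $g'$ is the image of $g$ under the action of the generator $1\in\bz$. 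Because $H$ is finite, this last computation is a finite calculation with permutations.

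The only real subtlety in the write-up is verifying that conjugation by $\sigma$ actually preserves $H_x$ inside $G_x$; this is automatic because $H_x = \ker\phi$ is a normal subgroup, but one should be careful to choose compatible representatives via the Restrictions property so that the compositions $\sigma\circ\psi(g)\circ\sigma^{-1}$ literally make sense as elements of $\mathcal{G}_x$ rather than just as germ classes. I expect this bookkeeping, rather than any conceptual difficulty, to be the main obstacle.
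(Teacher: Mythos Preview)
Your proposal is correct and follows essentially the same route as the paper: both derive the short exact sequence from Theorem~\ref{theorem: germ description}, use freeness of $\bz$ to split it in the eventually periodic case, and describe the action via conjugation by a chosen preimage of the positive generator of $\image(\phi)$. The paper goes one step further than you do by carrying out the conjugation explicitly and showing that $[\sigma][\widehat{\sigma}][\sigma]^{-1}$ corresponds to $h\widehat{h}h^{-1}$ in $H_x$ (where $h\in H$ is the permutation defining $\sigma$), but this is exactly the ``finite calculation with permutations'' you allude to.
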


\begin{proof}
The entire statement of the Corollary is an immediate consequence of Theorem~\ref{theorem: germ description}. We describe 
the action of $\mathbb{Z}$ on $H_{x}$ in more detail.

Assume $x$ is eventually periodic. Let $[\sigma] \in \mathcal{G}_{x}/\sim$ be such that $\phi([\sigma]) = \ell$ is the smallest positive 
number in $\image \phi$. Clearly, $\phi([\sigma])$ generates $\image \phi$. It follows from elementary group theory that 
$G_{x} \cong H_{x} \rtimes \langle [\sigma] \rangle$. (Here $H_{x}$ is identified with the image of $p \circ \psi$ as described in
the proof of Theorem~\ref{theorem: germ description}.) We assume without loss of generality that 
$\sigma \in \sm(w_{N}A^{\omega}, w_{N+\ell}A^{\omega}) \cap \mathcal{G}_{x}$, where $N$ is as in Remark~\ref{rem: eventual isotropy group}.
By the definition of $\sm$, $\sigma$ is defined by the rule $\sigma(w_{N}a_{i_{1}}a_{i_{2}}\ldots) = w_{N+\ell}h(a_{i_{1}})h(a_{i_{2}})\ldots$, 
for some $h \in H$. An element $[\widehat{\sigma}] \in (p \circ \psi)(H_{x}) \cong H_{x}$
can be represented by $\widehat{\sigma} \in \sm(w_{N}A^{\omega}, w_{N}A^{\omega}) \cap \mathcal{G}_{x}$
such that $\widehat{\sigma}(w_{N}a_{i_{1}}a_{i_{2}}\ldots) = w_{N}\widehat{h}(a_{i_{1}})\widehat{h}(a_{i_{2}})\ldots$, where
$\widehat{h} \in H_{x}$. Now
$\sigma \circ \widehat{\sigma} \circ \sigma^{-1} \in \sm(w_{N+\ell}A^{\omega}, w_{N+\ell}A^{\omega}) \cap \mathcal{G}_{x}$
is defined by $(\sigma \circ \widehat{\sigma} \circ \sigma^{-1})(w_{N+\ell}a_{i_{1}}a_{i_{2}}\ldots) = 
w_{N+\ell}h\widehat{h}h^{-1}(a_{i_{1}})h\widehat{h}h^{-1}(a_{i_{2}})\ldots$. It follows that,
modulo the identification of $H_{x}$ with $(p \circ \psi)(H_{x})$, $[\sigma][\widehat{\sigma}][\sigma]^{-1} = h\widehat{h}h^{-1}$. 
\end{proof}

%%%%%%%%%%%%%%%%%%%%%%%%%%%%%%%%%%%%%%
\subsection*{Examples of germ groups and nonisomorphism results}
%%%%%%%%%%%%%%%%%%%%%%%%%%%%%%%%%%%%%%%%%%%%%%%%%%%%%%%%%%%%%%%%%
We now use the results of the previous subsection to compute the germ groups in some examples and
to show that certain pairs of groups $V_{d'}(H')$, $V_{d}(H)$ are non-isomorphic.
We also give examples that demonstrate the limitations of our methods.

\begin{example}\label{example: compute the germs}
Let $A = \{ 1, 2, 3, 4 \}$, and let $H = S_{4}$. We compute a few of the germ groups $G_{x}$.

Let $x = \overline{12}$. The eventual isotropy group $H_{x}$ is $H_{1} \cap H_{2} = \langle (34) \rangle$.
We note that $v = 12$, so $v_{1} = 1$ and $v_{2} = 2$ (in the notation of the proof of Theorem~\ref{theorem: germ description}).
To determine whether $1 \in \image \phi$, we need to determine whether there is $h \in S_{4}$ such that $h(v_{n}) = v_{n+1}$, where 
$n \in \{ 1, 2 \}$ and the subscript $n+1$ is interpreted modulo $2$. Clearly, $h = (12)$ satisfies the given requirements. It follows
from Corollary~\ref{corollary: summary of germ groups} that $G_{x} \cong \langle (34) \rangle \oplus \mathbb{Z}$, since the action
of $h = (12)$ on $(34)$ by conjugation is trivial.

Let $x = \overline{122}$. We again have $H_{x} = \langle (34) \rangle$. If $1$ were in $\image \phi$, then there would be $h \in S_{4}$
such that $h(1) = 2$, $h(2) = 2$, and $h(2) = 1$ (since $1$, $2$, and $2$ are, respectively, $v_{1}$, $v_{2}$ and $v_{3}$). This is clearly
impossible. If $2$ were in $\image \phi$, we would similarly have $h(1) = 2$, $h(2) = 1$, and $h(2) = 2$ (respectively) for some $h \in S_{4}$.
This is again impossible. It is clear that $3 \in \image \phi$ since the required conditions are satisfied with $h=1$. It follows
that $G_{x} \cong \langle (34) \rangle \oplus \mathbb{Z}$.

Let $x = \overline{123}$. We have $H_{x} = H_{1} \cap H_{2} \cap H_{3} = \langle (1) \rangle$. To determine if $1 \in \image \phi$, we must
determine whether there exists $h \in S_{4}$ such that $h(1) = 2$, $h(2) = 3$, and $h(3) = 1$. Clearly, we can take $h = (123)$. The element
$\sigma_{h} \in \sm(123A^{\omega}, 1231A^{\omega})$ represents a generator of $G_{x}$, which is isomorphic to $\mathbb{Z}$.
\end{example}

\begin{remark}\label{rem: germ groups are sometimes direct products}
Let $A = \{ 1, \ldots, d \}$.
If $H = \Sigma_{d}$, then the germ group $G_{x}$ is isomorphic to $H_{x} \oplus \mathbb{Z}$ if $x$ is eventually periodic (or
to $H_{x}$ if not). This is because the conditions on $h \in \Sigma_{d}$ are either inconsistent, or they can be satisfied using only symbols
from the string $v$. (Here we assume $x = u\overline{v}$, and $h \in \Sigma_{d}$ is as in Example~\ref{example: compute the germs}.)
Since  each $\widehat{h} \in H_{x}$ fixes all of the symbols from $v$ by definition, $\widehat{h}$ and $h$ commute, so the action
of $h$ by conjugation is trivial.
\end{remark}

\begin{example}\label{example: semidirect product germ}
Consider $A = \{ 1, 2, 3, 4, 5 \}$ and let $H = A_{5}$ (the group of even permutations of $A$). We compute the
germ group $G_{x}$ for $x = \overline{12}$. First, we note that $H_{x} = H_{1} \cap H_{2} = \langle (345) \rangle$.
Next, we determine whether $1 \in \image \phi$. Thus we must determine whether there is $h \in A_{5}$ such that $h(1) = 2$ and
$h(2) = 1$. Clearly, we can let $h = (12)(34)$. (There are other possibilities, but we cannot let $h = (12)$, since $(12) \notin A_{5}$.)
Thus $1 \in \image \phi$. 

We conclude that $G_{x} \cong \langle (345) \rangle \rtimes \mathbb{Z}$, where the action of $\mathbb{Z}$ is conjugation by $(12)(34)$. This
example shows how a non-trivial action by $\mathbb{Z}$ can arise in a germ group $G_{x}$. 
\end{example}

We can now offer a sample application of the ideas in this section. Many other statements are possible.

\begin{proposition}\label{prop: nonisomorphism}
If $d \neq d'$, $d, d' > 1$, then $V_{d}(\Sigma_{d})$ is not isomorphic to $V_{d'}(\Sigma_{d'})$. 
\end{proposition}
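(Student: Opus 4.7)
The plan is to compare germ groups via Rubin's theorem and derive a contradiction from a cardinality obstruction on finite subgroups. Assume without loss of generality that $d < d'$ and suppose for contradiction there is an isomorphism $\phi : V_{d}(\Sigma_{d}) \to V_{d'}(\Sigma_{d'})$. Realize these as $\Gamma = \Gamma(\sm)$ acting on $A^{\omega}$ (with $A = \{a_{1}, \dots, a_{d}\}$) and $\Gamma' = \Gamma(\sm')$ acting on $B^{\omega}$ (with $B = \{b_{1}, \dots, b_{d'}\}$), as in Remark~\ref{claim: Vdh is Gamma}. Lemma~\ref{lemma: Rubin's hypothesis is satisfied} supplies the hypotheses of Rubin's Theorem~\ref{thm:Rubin}, so Corollary~\ref{corollary: applications from Rubin}(2) provides a homeomorphism $h : A^{\omega} \to B^{\omega}$ inducing, for every $x \in A^{\omega}$, an isomorphism between the germ group $G_{x}$ of $\Gamma \curvearrowright A^{\omega}$ at $x$ and the germ group $G'_{h(x)}$ of $\Gamma' \curvearrowright B^{\omega}$ at $h(x)$.

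The strategy is to locate a point $y \in B^{\omega}$ whose germ group contains a finite subgroup too large to occur on the $A^{\omega}$ side. I would take $y = \overline{b_{1}} \in B^{\omega}$. Only the letter $b_{1}$ appears infinitely often in $y$, so the eventual isotropy group of $\Sigma_{d'}$ at $y$ is the stabilizer $(\Sigma_{d'})_{b_{1}} \cong \Sigma_{d'-1}$. Since $y$ is eventually periodic, Remark~\ref{rem: germ groups are sometimes direct products} yields $G'_{y} \cong \Sigma_{d'-1} \oplus \bz$, which contains a copy of $\Sigma_{d'-1}$ as a finite subgroup of order $(d'-1)!$.

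Setting $x = h^{-1}(y)$, the final step is to bound the orders of finite subgroups of $G_{x}$. For any $x \in A^{\omega}$, at least one letter of $A$ must appear infinitely often (pigeonhole on the finite alphabet $A$), so the set of letters appearing only finitely often in $x$ has cardinality at most $d-1$, which forces $|H_{x}| \leq (d-1)!$. By Remark~\ref{rem: germ groups are sometimes direct products}, $G_{x}$ is isomorphic to either $H_{x}$ or $H_{x} \oplus \bz$; a finite subgroup of $H_{x} \oplus \bz$ must project trivially to the torsion-free factor $\bz$ and therefore sits inside $H_{x} \oplus \{0\}$, so in either case every finite subgroup of $G_{x}$ has order at most $(d-1)!$. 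Since $d \geq 2$ and $d < d'$ together imply $(d-1)! < (d'-1)!$, this contradicts the existence inside $G_{x} \cong G'_{y}$ of a subgroup of order $(d'-1)!$.

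There is no substantive obstacle here; the proposition follows once Rubin's theorem is combined with the explicit germ group computation of Remark~\ref{rem: germ groups are sometimes direct products}. The only minor points to verify are the pigeonhole bound on $|H_{x}|$ and the elementary fact that finite subgroups of $F \oplus \bz$ lie in the $F$ factor.
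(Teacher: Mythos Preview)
Your proof is correct and follows essentially the same route as the paper: invoke Rubin's theorem via Lemma~\ref{lemma: Rubin's hypothesis is satisfied} and Corollary~\ref{corollary: applications from Rubin}(2), then use Remark~\ref{rem: germ groups are sometimes direct products} to see that the germ groups of $V_{d}(\Sigma_{d})$ are exactly $\Sigma_{k}$ or $\Sigma_{k}\oplus\bz$ for $0\leq k\leq d-1$. The paper stops there, observing that this list of isomorphism types determines $d$; you make the obstruction explicit by comparing the maximal order of a finite subgroup, which is a clean way to finish and entirely in the spirit of the paper's argument.
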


\begin{proof}
If $x \in A^{\omega}$, where $A = \{ 1, \ldots, d \}$, then the germ group $G_{x}$ (for $\Gamma = V_{d}(\Sigma_{d})$)
must be $H_{x}$ or $H_{x} \oplus \mathbb{Z}$, by Corollary~\ref{rem: eventual isotropy group} and 
Remark~\ref{rem: germ groups are sometimes direct products}. The only possibilities for $H_{x}$, up to isomorphism, are
$\Sigma_{0}, \Sigma_{1}, \ldots, \Sigma_{d-1}$ (respectively), according to  
the number, $d, d-1, \ldots, 2$ or $1$, of symbols  occuring infinitely
often in $x$. 
(Here both $\Sigma_0$ and $\Sigma_1$ are the group with one element.)
The given statement now follows immediately from Corollary~\ref{corollary: applications from Rubin}.
\end{proof}

\begin{example}
Suppose that $H \leq \Sigma_{d}$ acts freely on $A = \{ 1, \ldots, d \}$. 
Since $H_{x}$ can be described as an intersection of stabilizer subgroups of symbols in $A$, we must have $H_{x} = 1$. Thus, by 
Corollary~\ref{corollary: summary of germ groups}, $G_{x}$ is either $\mathbb{Z}$ or $1$, according to whether $x$ is eventually periodic or not. 

Thus, for instance, if $H = 1$ or if $d$ is prime and $H \leq \Sigma_{d}$ is cyclic of order $d$, then the germ groups are either $\mathbb{Z}$
or $1$. We are therefore unable to distinguish such groups from each other using only Corollary~\ref{corollary: applications from Rubin}(2), 
although one expects many differences in isomorphism type.
\end{example}

%%%%%%%%%%%%%%%%%%%%%%%%%%%%%%%%%%%%%%%%%%%%%%%%%%%%%%%%%%%%%%%%%%
\section{Simplicity of some FSS groups} \label{sec:simplicity}

In \cite{NekJOT}, Nekrashevych defined the group $V_{d}'(H)$, which is a subgroup of $V_{d}(H)$. Here we will
show that $V_{d}'(H)$ has a simple commutator subgroup and describe this commutator subgroup explicitly.
We fix $d>1$ and $H \leq \Sigma_{d}$, the group of permutations of $\{ 1, \ldots, d \}$, for the remainder of the section.
The alternating subgroup of $\Sigma_d$ is denoted $A_d$. When $H$ is the group with one element, we use the notation
$V_d = V_d(H)$. 

%%%%%%%%%%%%%%%%%%%%%%%%%%%%%%%%%%%%%%%%%%%%%%%%%%%5
\subsection*{The Abelianization of $V'_{d}(H)$}
%%%%%%%%%%%%%%%%%%%%%%%%%%%%%%%%%%%%%%%%%%%%%%%%%%%%

\begin{definition} \cite{NekJOT} \label{def: tables}
Let $g \in V_{d}(H)$. There are partitions $\mathcal{P}_{1} = \{ v_{1}A^{\omega}, \ldots, v_{m}A^{\omega} \}$,
$\mathcal{P}_{2} = \{ u_{1}A^{\omega}, u_{2}A^{\omega}, \ldots, u_{m}A^{\omega} \}$, elements $h_{1}, \ldots, h_{m} \in H$,
and a permutation $\sigma \in \Sigma_{m}$ such that, for each $v_{i}A^{\omega} \in \mathcal{P}_{1}$, 
$$g(v_{i}x_{1}x_{2} \ldots) = u_{\sigma(i)}h_{i}(x_{1})h_{i}(x_{2})h_{i}(x_{3})\ldots.$$
All of this information can be summarized in a $3 \times m$ matrix, called a \emph{table} for $g$.
The first row of the table is $(v_{1}, \ldots, v_{m})$, the second is 
$(h_{1}, \ldots, h_{m})$, and the third is 
$(u_{\sigma(1)}, \ldots, u_{\sigma(m)})$. If $g \in V_{d}$, then all of the $h_{i}$ are the identity, so we omit the middle row.

For the moment, let us assume, without loss of generality, that $u_{1} \leq u_{2} \leq \ldots \leq u_{m}$
and $v_{1} \leq v_{2} \leq \ldots \leq v_{m}$ in the lexicographic ordering. We say that the table for the above element $g$ is 
\emph{even} or \emph{odd} when $\sigma$ is an even or odd permutation (respectively).

We will often try to avoid  writing down a $3 \times m$ matrix when we describe an element $g \in V_{d}(H)$.
Instead we write $g = \Sigma_{i=1}^{m} S_{u_{\sigma(i)}}h_{i}S^{\ast}_{v_{i}}$ in place of the table described above.
\end{definition}

(Note that we do not assume, in general, that the $u_{i}$ ($i = 1, \ldots, m$) satisfy $u_{1} \leq u_{2} \leq \ldots \leq u_{m}$.) 

\begin{remark} In \cite{NekJOT},  $\Sigma_{i=1}^{m} S_{u_{\sigma(i)}}h_{i}S^{\ast}_{v_{i}}$ is an element of a certain Cuntz-Pimsner algebra.
We will not use this interpretation in what follows, and refer the interested reader to \cite{NekJOT} for details.
\end{remark}

\begin{remark} Let $h \in H \leq \Sigma_{d}$. Such an $h$ acts on $A^{\omega}$ by the rule $h \cdot x_{1}x_{2} \ldots = h(x_{1})h(x_{2})\ldots$.
We may thus identify $H$ with the indicated subgroup of $V_{d}(H)$ and we do this freely in what follows.
\end{remark}

\begin{definition} \cite{NekJOT}
Every element $g \in V_{d}(H)$ can be represented by infinitely many different tables. If the column $(v_{i}, h_{i}, u_{i})$ appears in a table for
$g$, then we obtain another table for $g$ by striking out that column and replacing it with the $d$ columns $(v_{i}j, h_{i}, u_{i}h_{i}(j))$
for $1 \leq j \leq d$. We say that the latter table is the result of \emph{splitting} the former at the given column.

For odd $d$ the parity of a table for $g \in V_{d}$ is preserved, so there is a subset $V_{d}'$
of even elements. The set $V_{d}'$ is in fact a subgroup of index two in $V_{d}$.  
In the groups $V_{d}$, for even $d$, splitting a table changes its parity, and we simply set $V_{d}' = V_{d}$.
We note that, in either case, $V'_{d}$ is a simple nonabelian group \cite{NekJOT}.

More generally, we set $V_{d}'(H) = V_{d}(H)$ if $d$ is even or if $H$ contains an odd permutation. If $d$ is odd and $H$ contains only
even permutations, then the parity of a table for $g \in V_{d}(H)$ is preserved under splitting and the set $V_{d}'(H)$ consisting of the even elements
of $V_{d}(H)$ is a subgroup of index $2$ in $V_{d}(H)$. 
\end{definition}

In certain cases, we will need to track the change in the parity of a table after splitting when $d$ is odd. The proof of the following lemma
is routine.

\begin{lemma} \label{lemma: oddeventables}
Suppose $d$ is odd.
\begin{enumerate}
\item Let $h \in A_{d}$.  If $h$ can be expressed in the form $\displaystyle h = \sum_{i=1}^{m} S_{u_{i}}hS_{v_{i}}^{\ast}$, then the table
$$ \left( \begin{matrix} v_{1} & \ldots & v_{m} \\ u_{1} & \ldots & u_{m} \end{matrix} \right)$$
is even.
\item Let $h \in \Sigma_{d} \backslash A_{d}$.  If $\displaystyle h = \sum_{i=1}^{m} S_{u_{i}}hS_{v_{i}}^{\ast}$, then $m = 1 + (d-1)k$ for
some $k \in \mathbb{N} \cup \{ 0 \}$.  The table
$$ \left( \begin{matrix} v_{1} & \ldots & v_{m} \\ u_{1} & \ldots & u_{m} \end{matrix} \right)$$
is even if and only if $k$ is even.
\end{enumerate} \qed
\end{lemma}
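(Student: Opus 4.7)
The plan is to prove both statements simultaneously by induction on tables built via the splitting operation from Definition 7.2. The key observation is that any table for $h$ arises from the trivial single-column table (with $v_{1} = u_{1} = \emptyset$ and middle entry $h$) by a finite sequence of splittings: the domain prefix code $\{v_{1}, \ldots, v_{m}\}$ corresponds to the leaves of a finite subtree of $T_{A}$, which can be built from the root by repeatedly adding all $d$ children to a leaf, and each such step splits the corresponding column of the table (all middle entries stay equal to $h$). The trivial table has $m = 1$ and trivially even permutation, and each split increases $m$ by $d-1$, giving $m = 1 + k(d-1)$ for some $k \in \mathbb{N}\cup\{0\}$. This already establishes the counting assertion in part (2).

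Writing the $v_{i}$ in lexicographic order, I will exploit the fact that the parity of the table equals the parity of the number $I$ of inversions in the image sequence $(h(v_{1}), h(v_{2}), \ldots, h(v_{m}))$, and then track how $I$ changes under a single split at position $i$. The new top row $v_{1}, \ldots, v_{i-1}, v_{i}1, \ldots, v_{i}d, v_{i+1}, \ldots, v_{m}$ remains lex-sorted (a quick check using that $\{v_{1}, \ldots, v_{m}\}$ is a prefix code), and the image sequence is obtained by replacing $h(v_{i})$ with the block $h(v_{i})h(1), h(v_{i})h(2), \ldots, h(v_{i})h(d)$.

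The change in $I$ will decompose into two contributions. First, inversions internal to the new block are exactly the inversions of the sequence $(h(1), h(2), \ldots, h(d))$, namely $\mathrm{inv}(h)$, the inversion count of $h$ as a permutation of $A$. Second, for each outside position $j \neq i$, the prefix-code property of $\{h(v_{1}), \ldots, h(v_{m})\}$ implies that $h(v_{j})$ is neither a prefix nor an extension of any $h(v_{i})h(k)$, so $h(v_{j})$ is either lex-less than every block entry or lex-greater than every block entry. The interaction of position $j$ with the new block thus contributes $0$ or $d$ inversions after the split, replacing the $0$ or $1$ inversion between $j$ and $i$ before the split; the net change is $0$ or $d-1$. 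Summing over all $j \neq i$, the total change in $I$ is $(d-1)K + \mathrm{inv}(h)$ for some $K \in \mathbb{N}\cup\{0\}$.

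Since $d$ is odd, $d-1$ is even, so each split changes $I$ modulo $2$ by $\mathrm{inv}(h) \bmod 2$, which equals $0$ if $h \in A_{d}$ and $1$ if $h \in \Sigma_{d} \setminus A_{d}$. Starting from $I = 0$ and performing $k$ splits yields $I \equiv k \cdot \mathrm{sgn}(h) \pmod{2}$: for $h \in A_{d}$ the table is always even (giving (1)), while for $h \notin A_{d}$ the table is even if and only if $k$ is even (giving (2)). The hard part will be the prefix-code and lex-order case analysis showing that each outside position interacts uniformly with the entire inserted block, so that its contribution is always a multiple of $d-1$; once that is in hand, the parity bookkeeping is routine, and the oddness of $d$ enters essentially through the evenness of $d-1$.
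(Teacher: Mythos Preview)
Your argument is correct. The paper does not give a proof of this lemma at all: it simply states that ``the proof of the following lemma is routine'' and appends a \qed. Your induction on the number of splittings, together with the inversion-count bookkeeping, is exactly the kind of routine verification the authors have in mind, and every step checks out. In particular, your two key observations are sound: (i) because $\{v_1,\dots,v_m\}$ and $\{h(v_1),\dots,h(v_m)\}$ are prefix codes, each outside position $j$ compares the same way with every member of the inserted block as it did with $h(v_i)$, so the external contribution to the change in inversions is a multiple of $d-1$; and (ii) the internal contribution is exactly $\mathrm{inv}(h)$. Since $d$ is odd, $d-1$ is even and the parity flips by $\mathrm{inv}(h)\bmod 2$ at each split, which gives both conclusions.
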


\begin{definition}
Let $r \in A^{\ast}$. Let $g \in V_{d}(H)$. 
We define $\Lambda_{r}(g): A^{\omega} \rightarrow A^{\omega}$ by the rule
$\Lambda_{r}(g)(x) = x$ if $x \notin rA^{\omega}$ and $\Lambda_{r}(g)(rx_{1}x_{2}\ldots) = rg(x_{1}x_{2}\ldots)$ otherwise.
\end{definition}

It is clear that $\Lambda_{r}(g) \in V_{d}(H)$. If $g \in V_{d}'(H)$ then $\Lambda_{r}(g) \in V_{d}'(H)$ as well.
The maps $\Lambda_{r}: V_{d}(H) \rightarrow V_{d}(H)$ and $\Lambda_{r}: V'_{d}(H) \rightarrow V'_{d}(H)$ are injective homomorphisms.

\begin{lemma} \label{lemma: conjugate}
Given $r,s \in A^{+}$, there is some $v \in V_{d}'$ such that
$v(rA^{\omega}) = sA^{\omega}$ and $v(rx_{1}x_{2}\ldots) = sx_{1}x_{2}\ldots$ for all $x_{1}x_{2}\ldots \in A^{\omega}$.
In particular, for any $h \in H$ and $r, s \in A^{+}$, the elements $\Lambda_{r}(h)$ and $\Lambda_{s}(h)$
are conjugate in $V_{d}'(H)$ (and, therefore, in $V_{d}(H)$).
\end{lemma}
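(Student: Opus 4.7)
The plan is to build $v$ explicitly from a pair of partitions of $A^{\omega}$, and then to verify the conjugation formula by a direct computation. For any $w \in A^{+}$ with $w = w_{1}\ldots w_{n}$, let $\mathcal{P}_{w}$ denote the partition of $A^{\omega}$ consisting of $wA^{\omega}$ together with the ``sibling'' balls $w_{1}\ldots w_{i-1}aA^{\omega}$ for $1 \leq i \leq n$ and $a \in A \setminus \{w_{i}\}$; this has size $1 + n(d-1)$. I would form $\mathcal{P}_{r}$ and $\mathcal{P}_{s}$, and then, if $|r| \neq |s|$, refine the smaller one by repeatedly splitting some ball other than $rA^{\omega}$ (resp.\ $sA^{\omega}$) into its $d$ maximal proper sub-balls, until both refined partitions $\mathcal{P}'_{r}$ and $\mathcal{P}'_{s}$ have the same cardinality. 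I would then define $v \in V_{d}$ by mapping $rA^{\omega}$ to $sA^{\omega}$ via the order-preserving similarity $rx_{1}x_{2}\ldots \mapsto sx_{1}x_{2}\ldots$, and by choosing any bijection between $\mathcal{P}'_{r} \setminus \{rA^{\omega}\}$ and $\mathcal{P}'_{s} \setminus \{sA^{\omega}\}$, realized piecewise by the unique order-preserving similarities. By construction, $v \in V_{d}$ and $v(rx_{1}x_{2}\ldots) = sx_{1}x_{2}\ldots$ for all $x_{1}x_{2}\ldots \in A^{\omega}$.

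The main obstacle is arranging that $v$ lies in $V'_{d}$, not merely in $V_{d}$. When $d$ is even there is nothing to check, since $V'_{d} = V_{d}$. When $d$ is odd, the element $v$ is represented by a table whose third row is a permutation of its first row, and by Lemma~\ref{lemma: oddeventables} the parity of this permutation is an invariant of $v$. The key observation is that we retain freedom in the choice of the bijection on the non-distinguished balls: swapping the images of any two balls in $\mathcal{P}'_{r} \setminus \{rA^{\omega}\}$ multiplies the associated permutation by a transposition and therefore flips the parity. Since $|\mathcal{P}'_{r} \setminus \{rA^{\omega}\}| \geq d-1 \geq 2$, such a swap is always available, so we can arrange that $v \in V'_{d}$.

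Finally, I would verify the conjugation formula $v\Lambda_{r}(h)v^{-1} = \Lambda_{s}(h)$ for any $h \in H$ by a pointwise check. If $y \notin sA^{\omega}$, then $v^{-1}(y) \notin rA^{\omega}$ because $v$ carries $rA^{\omega}$ bijectively onto $sA^{\omega}$; thus $\Lambda_{r}(h)$ fixes $v^{-1}(y)$ and the conjugate fixes $y$. If $y = sx_{1}x_{2}\ldots \in sA^{\omega}$, then $v^{-1}(y) = rx_{1}x_{2}\ldots$ by the defining property of $v$, so $\Lambda_{r}(h)(v^{-1}(y)) = rh(x_{1})h(x_{2})\ldots$, and applying $v$ again yields $sh(x_{1})h(x_{2})\ldots = \Lambda_{s}(h)(y)$. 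Since $V'_{d} \subseteq V'_{d}(H)$ (elements of $V'_{d}$ use only the trivial element of $H$ in their middle row, which lies in $A_{d}$, so by Lemma~\ref{lemma: oddeventables} the parity is preserved on passing to $V_{d}(H)$), the conjugation takes place inside $V'_{d}(H)$, yielding the second assertion.
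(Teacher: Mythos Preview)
Your argument is correct and is precisely the natural elaboration of what the paper leaves as ``straightforward to check.'' The construction of $v$ from matched partitions, the parity adjustment when $d$ is odd by swapping two non-distinguished balls, and the pointwise verification of $v\Lambda_{r}(h)v^{-1}=\Lambda_{s}(h)$ are all sound.

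One small cleanup: your justification that $V'_{d}\subseteq V'_{d}(H)$ via Lemma~\ref{lemma: oddeventables} is misdirected, since that lemma concerns tables for elements $h\in\Sigma_{d}$, not arbitrary elements of $V_{d}$. The inclusion is simpler than that. If $d$ is even, or $d$ is odd and $H\not\leq A_{d}$, then $V'_{d}(H)=V_{d}(H)$ by definition and there is nothing to prove. If $d$ is odd and $H\leq A_{d}$, then parity in $V_{d}(H)$ is defined exactly as in $V_{d}$, namely via the permutation $\sigma$ in the table (the middle row plays no role in the definition of parity), so an element that is even in $V_{d}$ is automatically even in $V_{d}(H)$.
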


\begin{proof}
The first statement is straightforward to check and the second statement is a simple consequence of the first.
\end{proof}

\begin{proposition} \label{prop: necessaryprop}
Let $H \leq \Sigma_{d}$, let $A$ be an abelian group, and let $\widehat{\phi}: V_{d}'(H) \rightarrow A$ be  a homomorphism.
\begin{enumerate}
\item If $d$ is even, then $(d-1)\widehat{\phi}\Lambda_{1}(h) = 0$ for all $h \in H$.
\item If $d$ is odd, then $(d-1)\widehat{\phi}\Lambda_{1}(h) = 0$ for all $h \in H \cap A_{d}$.
\end{enumerate}
\end{proposition}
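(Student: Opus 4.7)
The plan is to derive a relation of the form $(d-1)\widehat\phi\Lambda_1(h) = -\widehat\phi\Lambda_1(\pi_h)$ for a suitable element $\pi_h\in V_d$, and then to exploit the simplicity of $V'_d$ to see that the right-hand side vanishes.

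First, I will introduce the finite permutation element $\pi_h \in V_d$ defined by $\pi_h(ix_1x_2\cdots) = h(i)x_1x_2\cdots$, so that $\pi_h$ permutes the top-level balls $1A^\omega,\ldots,dA^\omega$ according to $h$ without affecting the subsequent letters. A direct computation (using that the $\Lambda_i(h)$ have pairwise disjoint support and hence commute) yields the factorization
\[ h = \pi_h\cdot\prod_{i=1}^d \Lambda_i(h)\qquad\text{in }V_d(H). \]
Applying the injective homomorphism $\Lambda_1\colon V_d(H)\to V_d(H)$ produces
\[ \Lambda_1(h) = \Lambda_1(\pi_h)\cdot\prod_{i=1}^d \Lambda_{1i}(h), \]
where I write $\Lambda_{1i}(h) := \Lambda_1(\Lambda_i(h))$ for the element acting as $h$ on $1iA^\omega$ and trivially elsewhere.

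Next I will check that every factor above lies in $V'_d(H)$, so that $\widehat\phi$ can be applied. This is automatic when $d$ is even. When $d$ is odd and $h\in H\cap A_d$, I will write down tables in which the governing permutation $\sigma$ is either the identity (for $\Lambda_1(h)$ and each $\Lambda_{1i}(h)$) or given by $h$ on $\{1,\ldots,d\}$ and the identity elsewhere (for $\Lambda_1(\pi_h)$); in every case $\sigma$ is even because $h\in A_d$. Then by Lemma~\ref{lemma: conjugate}, each $\Lambda_{1i}(h)$ is conjugate to $\Lambda_1(h)$ inside $V'_d\le V'_d(H)$, so the abelianness of $A$ gives $\widehat\phi\Lambda_{1i}(h) = \widehat\phi\Lambda_1(h)$ for every $i$. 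Applying $\widehat\phi$ to the displayed factorization of $\Lambda_1(h)$ therefore yields
\[ \widehat\phi\Lambda_1(h) = \widehat\phi\Lambda_1(\pi_h) + d\cdot\widehat\phi\Lambda_1(h), \]
which rearranges to $(d-1)\widehat\phi\Lambda_1(h) = -\widehat\phi\Lambda_1(\pi_h)$.

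To conclude, I will observe that $\Lambda_1(\pi_h)\in V_d$ (since $\pi_h\in V_d$), and the parity computation above in fact shows $\Lambda_1(\pi_h)\in V'_d$. Since $V'_d$ is simple and non-abelian (\cite{NekJOT}, as recalled in the excerpt), its abelianization is trivial; hence the restriction $\widehat\phi|_{V'_d}$ vanishes identically and $\widehat\phi\Lambda_1(\pi_h)=0$. The main obstacle I anticipate is the parity bookkeeping in the odd case, since the tables that arise have mixed middle rows (some entries equal to $h$ and others equal to the identity) and so are not of the uniform type treated by Lemma~\ref{lemma: oddeventables}. However, parity is by definition the sign of $\sigma$ and can be read off directly from the top and bottom rows of a sorted table, so the verification amounts to careful inspection rather than a subtle invariant computation.
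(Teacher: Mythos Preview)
Your proof is correct and follows essentially the same approach as the paper's: your element $\Lambda_1(\pi_h)$ is precisely the element the paper calls $\widehat{h}=\sum_{i=1}^d S_{1h(i)}S_{1i}^\ast+\sum_{i=2}^d S_iS_i^\ast$, and your factorization $\Lambda_1(h)=\Lambda_1(\pi_h)\cdot\prod_{i=1}^d\Lambda_{1i}(h)$ is the identity $\widehat{h}^{-1}\Lambda_1(h)=\prod_{i=1}^d\Lambda_{1i}(h)$ used there. The only cosmetic difference is that you obtain the factorization by first decomposing $h$ upstairs and then applying the homomorphism $\Lambda_1$, whereas the paper writes down the table for $\Lambda_1(h)$ directly and strips the middle row.
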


\begin{proof}
There are two cases.
\begin{enumerate}
\item We have $\Lambda_{1}(h) = \Sigma_{i=1}^{d} S_{1h(i)}hS_{1i}^{\ast} + \Sigma_{i=2}^{d} S_{i}S_{i}^{\ast}$.
We set $\widehat{h} = \Sigma_{i=1}^{d} S_{1h(i)}S_{1i}^{\ast} + \Sigma_{i=2}^{d} S_{i}S_{i}^{\ast}$.
Thus $\widehat{h} \in V_{d} \leq V_{d}'(H)$. Since $V_{d}$ is simple and nonabelian, we conclude that $\widehat{\phi}(\widehat{h})=0$.
Therefore
$$ \widehat{\phi}\Lambda_{1}(h) = \widehat{\phi}(\widehat{h}^{-1}\Lambda_{1}(h)) = \widehat{\phi}\left(\prod_{i=1}^{d} \Lambda_{1i}(h)\right) =
d(\widehat{\phi}\Lambda_{1}(h)),$$
where the final equation follows from Lemma \ref{lemma: conjugate}.

\item This works as in the first case, but with one minor difference. We define $\widehat{h}$ as above. This time we observe that 
$\widehat{h} \in V_{d}'$, by Lemma \ref{lemma: oddeventables}, since $h \in H \cap A_{d}$. Because $V_{d}'$ is simple  and nonabelian, we conclude
that $\widehat{\phi}(\widehat{h}) = 0$, and the rest of the proof goes through unchanged. 
\end{enumerate}
\end{proof}

\begin{proposition} \label{prop: sufficientprop}
Let $A$ be an abelian group.
\begin{enumerate}
\item Let $d$ be even. If $\phi: H \rightarrow A$ is a homomorphism satisfying $(d-1)\phi(h) = 0$ for all $h \in H$, then there is a unique
homomorphism $\widehat{\phi}:  V_{d}'(H) \rightarrow A$ such that $\left( \widehat{\phi}  \Lambda_{1} \right)|_{H} = \phi$.
\item Let $d$ be odd. If $\phi: H \rightarrow A$ is a homomorphism satisfying $(d-1)\phi(h) = 0$  for all $h \in H \cap A_{d}$, then there is a unique
homomorphism $\widehat{\phi}: V_{d}'(H) \rightarrow A$ such that $\left( \widehat{\phi}  \Lambda_{1} \right)|_{H} = \phi$.
\end{enumerate}
Moreover, in both cases, $\widehat{\phi}(V_{d}'(H)) = \phi(H)$.
\end{proposition}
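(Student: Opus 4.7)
The plan is to define $\widehat{\phi}$ explicitly via a formula on tables, verify invariance under splitting, and then check the homomorphism property, extension condition, and uniqueness. For $g \in V_d'(H)$ with a chosen table $\mathcal{T}$ having columns $(v_i, h_i, u_{\sigma(i)})_{i=1}^m$, I would set
\[
\widehat{\phi}(g) := \sum_{i=1}^{m} \phi(h_i) + p(\mathcal{T}) \cdot c_0,
\]
where $p(\mathcal{T}) \in \{0,1\}$ records whether $\sigma$ is odd, and the correction $c_0 \in A$ is defined by: $c_0 = 0$ in case (1), and also in case (2) when $H \leq A_d$; in the remaining subcase of (2), put $c_0 := (d-1)\phi(h_0)$ for any fixed $h_0 \in H \setminus A_d$. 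The hypothesis makes $c_0$ independent of the choice of $h_0$ and forces $2c_0 = 0$: for odd $h_0, h_0'$ one has $h_0 (h_0')^{-1} \in H \cap A_d$, and taking $h_0' = h_0$ gives $2c_0 = (d-1)\phi(h_0^2) = 0$.

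The central step is well-definedness, and since any two tables for $g$ share a common refinement by splittings, it suffices to check invariance under a single split of column $i$. A straightforward inversion count (tracking internal and crossing inversions of the expanded block) shows that when $d$ is odd, the parity $p$ flips precisely when $h_i \notin A_d$; combined with the change $(d-1)\phi(h_i)$ in $\sum_j \phi(h_j)$, the net change in $\widehat{\phi}$ is $0$ when $h_i \in H \cap A_d$ (both terms contribute $0$) and $c_0 + (1-2p)c_0 = 2(1-p)c_0 = 0$ when $h_i \in H \setminus A_d$ (using $2c_0 = 0$). For $d$ even, the hypothesis $(d-1)\phi(h_i) = 0$ makes the sum invariant and $c_0 = 0$, so invariance is immediate. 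The homomorphism property then follows by refining the tables of $g_1$ and $g_2$ so that the codomain partition of $g_2$ matches the domain partition of $g_1$: the resulting table for $g_1 g_2$ has middle entries $h^{(1)}_{\sigma_2(i)} h^{(2)}_i$ and permutation $\sigma_1 \sigma_2$, so additivity of $\phi$ together with $\mathrm{sign}(\sigma_1\sigma_2) = \mathrm{sign}(\sigma_1)\mathrm{sign}(\sigma_2)$ yields $\widehat{\phi}(g_1 g_2) = \widehat{\phi}(g_1) + \widehat{\phi}(g_2)$ modulo $2c_0 = 0$.

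Evaluating on the natural table of $\Lambda_1(h)$ (one column $(1,h,1)$ plus $d-1$ columns $(i,1,i)$ for $i \geq 2$, with $\sigma = \mathrm{id}$) gives $\widehat{\phi}(\Lambda_1(h)) = \phi(h)$, confirming the extension property. The moreover clause is immediate since every summand and the correction $c_0$ lie in $\phi(H)$, while $\phi(H) \subseteq \widehat{\phi}(V_d'(H))$ because $\widehat{\phi}\circ\Lambda_1 = \phi$. For uniqueness, any such homomorphism must vanish on the simple nonabelian subgroup $V_d'$; using the normal-form decomposition $g = g_0 \circ \prod_i \Lambda_{v_i}(h_i)$ with $g_0 \in V_d$, together with Lemma~\ref{lemma: conjugate} to conjugate each $\Lambda_{v_i}(h_i)$ to $\Lambda_1(h_i)$, this pins down $\widehat{\phi}$ on the subgroup generated by $V_d'$ and $\Lambda_1(H)$; in case (2b) the remaining coset of $V_d'$ inside $V_d$ is handled by the identity $h_0 = \widehat{h_0} \circ \prod_{i=1}^d \Lambda_i(h_0)$ for odd $h_0$, where $\widehat{h_0} \in V_d \setminus V_d'$ is the ``permutation'' element sending $iA^\omega$ to $h_0(i)A^\omega$; evaluating $\widehat{\phi}$ on both sides forces $\widehat{\phi}(\widehat{h_0}) = \phi(h_0) - d\phi(h_0) = -c_0 = c_0$. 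The principal obstacle will be the bookkeeping in case (2b), where the identity $2c_0 = 0$ must be invoked consistently through parity-flipping splits and through table composition.
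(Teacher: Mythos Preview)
Your approach is correct and in fact more self-contained than the paper's. The paper establishes existence by invoking Nekrashevych's extension criterion from \cite{NekJOT} (which packages the splitting-invariance check for the diagonal copy of $H$ only) and then has to do extra work to verify $\widehat{\phi}\Lambda_1(h)=\phi(h)$ a posteriori. Your explicit formula $\widehat{\phi}(g)=\sum_i\phi(h_i)+p(\mathcal T)c_0$, once splitting-invariance is checked for arbitrary columns, yields the extension property for free from the natural table of $\Lambda_1(h)$. Your inversion count (for $d$ odd the parity flips exactly when $h_i\notin A_d$) is the right observation, and the bookkeeping with $2c_0=0$ goes through as you indicate. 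For the homomorphism property and the ``moreover'' clause the two proofs are essentially parallel.

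There is one genuine, if small, gap in your uniqueness argument for case (2b). In the identity $h_0=\widehat{h_0}\circ\prod_{i=1}^d\Lambda_i(h_0)$ you implicitly use $\widehat{\phi}(h_0)=\phi(h_0)$ to solve for $\widehat{\phi}(\widehat{h_0})$, but nothing you have said so far forces that value: the element $h_0$ (acting diagonally) is not $\Lambda_1(h_0)$, and the identity itself involves the unknown $\widehat{\phi}(\widehat{h_0})$. You need an independent expression for $h_0$ in terms of $V_d'$ and $\Lambda$-factors. The paper obtains this by splitting the $d$-column table of $h_0$ once more: the resulting $(2d-1)$-column table has even permutation part (Lemma~\ref{lemma: oddeventables}), giving $\widehat{\phi}(h_0)=(2d-1)\phi(h_0)=\phi(h_0)+2c_0=\phi(h_0)$. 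With that line inserted, your argument is complete.
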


\begin{proof}
We first prove uniqueness (for both cases). Thus suppose that $\widehat{\phi}: V_{d}'(H) \rightarrow A$ satisfies $  \widehat{\phi}\Lambda_{1} = \phi$, for all
$h \in H$. Assume that $d$ is even. Let $g \in V_{d}'(H)$; say $g = \Sigma_{i=1}^{m} S_{u_{i}}h_{i}S_{v_{i}}^{\ast}$, where $h_{i} \in H$. We let
$\widehat{g} = \Sigma_{i=1}^{m} S_{u_{i}}S_{v_{i}}^{\ast}$. Since $\widehat{g} \in V_{d}$ and $V_{d}$ is simple and nonabelian, $\widehat{\phi}(\widehat{g}) = 0$.

$$\widehat{\phi}(g) = \widehat{\phi}(\widehat{g}^{-1}g)
= \widehat{\phi}\left( \prod_{i=1}^{m} \Lambda_{v_{i}}(h_{i}) \right)
= \sum_{i=1}^{m} \widehat{\phi}\Lambda_{v_{i}}(h_{i})
= \sum_{i=1}^{m} \phi(h_{i}).$$
This proves uniqueness for the case in which $d$ is even.

Now assume that $d$ is odd. There are two subcases. We first consider the case in which $H \not  \leq A_{d}$. We check uniqueness by verifying that the 
condition $\widehat{\phi}\Lambda_{1}(h) = \phi(h)$ for all $h \in H$ completely determines $\widehat{\phi}$ on a generating set for $V_{d}'(H)$.
We note that $V_{d}'(H) = \langle V_{d}', v, \{ \Lambda_{1}(h) : h \in H \} \rangle$, where $v$ is an arbitrary element of $V_{d} \backslash V_{d}'$.

Let $h \in H \backslash A_{d}$. We write $h = \Sigma_{i=1}^{d} S_{h(i)} h S_{i}^{\ast}$.
Let $\widehat{h} = \Sigma_{i=1}^{d} S_{h(i)}S_{i}^{\ast}$. By Lemma \ref{lemma: oddeventables},
$\widehat{h} \in V_{d} \backslash V_{d}'$. We want to compute $\widehat{\phi}(\widehat{h})$.
$$\widehat{\phi}(\widehat{h}^{-1}h) = \widehat{\phi}\left( \prod_{i=1}^{d} \Lambda_{i}(h) \right)
= \sum_{i=1}^{d} \widehat{\phi}\Lambda_{i}(h) = d \phi(h).$$
Thus $\widehat{\phi}(\widehat{h}) = \widehat{\phi}(h) - d\phi(h)$.

We now split the original table for $h$ at the first column. The resulting table is even, by Lemma \ref{lemma: oddeventables}.
We let $\widetilde{h}$ denote the result of striking out the middle row of the latter table. Thus, $\widetilde{h} \in V_{d}'$. We have
$$ \widehat{\phi}(h) = \widehat{\phi}(\widetilde{h}^{-1}h) = (2d-1)\widehat{\phi}\Lambda_{1}(h) = (2d-1)\phi(h).$$
It follows that $\widehat{\phi}(\widehat{h}) = (d-1)\phi(h)$.

One also has $\widehat{\phi}(V_{d}') = 0$ (since $V_{d}'$ is simple and nonabelian) and $\widehat{\phi}\Lambda_{1}(h) = \phi(h)$ for
each $h \in H$. Thus, we've completely determined $\widehat{\phi}$ on a generating set, proving uniqueness.

In the other subcase, $d$ is odd and $H \leq A_{d}$. This case essentially follows the pattern of the case in which $d$ is even.
The proof is omitted.

Finally, we prove the existence of $\widehat{\phi}$ in cases (1) and (2).
We assume first that $d$ is  even. According to Nekrashevych \cite{NekJOT}, we can extend a homomorphism $\pi: H \rightarrow A$ to
$\pi: V_{d}(H) \rightarrow A$ if
$$ h = \sum_{i=1}^{n} S_{y_{i}} h_{i} S_{x_{i}}^{\ast} \text{ implies } \pi(h) = \sum_{i=1}^{n} \pi(h_{i}).$$
We apply this principle to $\phi: H \rightarrow A$. Let $h \in H$, and suppose $h = \sum_{i=1}^{n} S_{y_{i}}h_{i}S_{x_{i}}^{\ast}$. The
definition of $h$ implies that $h_{i} = h$, for $i=1,\ldots, n$. We must have $n= 1 + (d-1)k$, for some $k \in \mathbb{N} \cup \{ 0 \}$.
Therefore,
$$ \sum_{i=1}^{n} \phi(h_{i}) = \left[ 1 + (d-1)k \right] \phi(h) = \phi(h).$$
It follows that Nekrashevych's condition is satisfied, so there is a well-defined homomorphism $\widehat{\phi}: V_{d}(H) \rightarrow A$ which
extends $\phi: H \rightarrow A$. We must show that $\phi(h) = \widehat{\phi}\Lambda_{1}(h)$, for all $h \in H$. Let $h \in H$. We have
$\Lambda_{1}(h) = \sum_{i=1}^{d} S_{1h(i)}hS_{1i}^{\ast} + \sum_{i=2}^{d} S_{j}S_{j}^{\ast}$. 
Set $\widetilde{h} = \sum_{i=1}^{d} S_{1h(i)}S_{1i}^{\ast} + \sum_{i=2}^{d} S_{j}S_{j}^{\ast}$. Since $\widetilde{h} \in V_{d}$, 
$\widehat{\phi}(\widetilde{h}) = 0$.
$$ \widehat{\phi}\Lambda_{1}(h) = \widehat{\phi}(\widetilde{h}^{-1}\Lambda_{1}(h)) = 
\widehat{\phi}\left( \prod_{i=1}^{d} \Lambda_{1i}(h)\right) = d\widehat{\phi}\Lambda_{1}(h).$$
Let $\widehat{h} = \sum_{i=1}^{d} S_{h(i)}S_{i}^{\ast}$. Since $\widehat{h} \in V_{d}$, $\widehat{\phi}(\widehat{h}) = 0$.
$$\widehat{\phi}(h) = \widehat{\phi}(\widehat{h}^{-1}h) 
= \widehat{\phi}\left( \prod_{i=1}^{d} \Lambda_{i}(h) \right) = d \widehat{\phi}\Lambda_{1}(h).$$
Therefore $\widehat{\phi}\Lambda_{1}(h) = \widehat{\phi}(h) = \phi(h)$, for all $h \in H$, as required.

Now assume that $d$ is odd, and $\phi: H \rightarrow A$ satisfies $(d-1) \phi(h) = 0$, for all $h \in H \cap A_{d}$. We want to find
$\widehat{\phi}: V_{d}'(H) \rightarrow A$ such that $\widehat{\phi}\Lambda_{1}(h) = \phi(h)$, for all $h \in H$. According to Nekrashevych
\cite{NekJOT}, a homomorphism $\phi: H \rightarrow A$ satisfying:
\begin{itemize}
\item [$\ast$] There is $z \in A$, where $2z = 0$, such that $h = \sum_{i=1}^{m} S_{u_{i}}h_{i}S_{v_{i}}^{\ast}$ ($h \in H$)
implies
\begin{itemize}
\item $\phi(h) = \sum_{i=1}^{m} \phi(h_{i})$ if $\sum_{i=1}^{m} S_{u_{i}}S_{v_{i}}^{\ast}$ is even, and
\item  $\phi(h) = \sum_{i=1}^{m} \phi(h_{i}) + z$ if $\sum_{i=1}^{m} S_{u_{i}}S_{v_{i}}^{\ast}$ is odd.
\end{itemize}
\end{itemize}
extends to a homomorphism $\widehat{\phi}: V_{d}'(H) \rightarrow A$. (Note that, in our case, the above equations can be simplified since 
$h_{i} = h$ for $i=1, \ldots, m$.) If $H \leq A_{d}$, then we set $z=0$. If $H \not \leq A_{d}$, we set $z= (d-1)\phi(h)$ for some
(equivalently, any) $h \in H \backslash A_{d}$. (Any two elements $h_{1}, h_{2} \in H \backslash A_{d}$ must satisfy 
$(d-1)\phi(h_{1}) = (d-1)\phi(h_{2})$, since $(d-1)\phi(h_{1}h_{2}^{-1}) = 0$ by our assumptions.)

We check Nekrashevych's condition. First suppose that $h \in H \cap A_{d}$. Now $h = \sum_{i=1}^{m} S_{u_{i}}hS_{v_{i}}^{\ast}$. Since we must
have $m = 1 + (d-1)k$, for some $k \in \mathbb{N} \cup \{ 0 \}$,
$$ \sum_{i=1}^{m} \phi(h) = (1 + (d-1)k)\phi (h) = \phi(h).$$
If $h \in H \backslash A_{d}$, then we can express $h$ in the same form as in the preceding lines. We have $m = 1 + (d-1)k$, for some
$k \in \mathbb{N} \cup \{ 0 \}$. The table for $h$ is odd or even accordingly as $k$ is odd or even. If $k$ is odd, then
\begin{align*}
\sum_{i=1}^{m} \phi(h) + z &= \left[ 1 + (d-1)(2\ell - 1) \right] \phi(h) + (d-1)\phi(h) \\
&= \left[ 1 + 2(d-1)\ell \right] \phi(h)\\
&= \phi(h) + (d-1)\ell \phi(h^{2}) \\
&= \phi(h).
\end{align*}
If $k$ is even, then $\sum_{i=1}^{m} \phi(h) = \phi(h)$, exactly as in the case when $d$ is even. It follows that there is a homomorphism
$\widehat{\phi}: V_{d}'(H) \rightarrow A$ extending $\phi: H \rightarrow A$.

The final step is to verify that $\widehat{\phi}\Lambda_{1}(h) = \phi(h)$ for all $h \in H$. Here we merely sketch the argument. If 
$h \in H \cap A_{d}$, then we can apply the argument for the case $d$ even with essentially no change. If $h \in H \backslash A_{d}$, 
then we can still mimic the case $d$ even, except that we must split the table for $\Lambda_{1}(h)$ twice (in order to produce an even table). The
remaining adjustments are straightforward.

The proof of the final statement (that $\widehat{\phi}(V_{d}'(H)) = \phi(H)$) follows from the expressions for $\widehat{\phi}(h)$ that
were derived in the proof of uniqueness.  
\end{proof}

\begin{definition}
If $G$ is a group, then we let $G^{k} = \{ g^{k} \mid g \in G \}$.  We note
that $G^{k}$ is merely a set, not necessarily a group.
\end{definition}

\begin{definition}
If $G$ is a group, then we let $G_{ab}$ denote its abelianization $G/[G,G]$.
\end{definition}

\begin{theorem} \label{thm: quotient}
Let $H \leq \Sigma_{d}$, and
let $\widehat{\phi}: V_{d}'(H) \rightarrow V_{d}'(H)_{ab}$ denote the canonical projection. 
The composition $\widehat{\phi} \Lambda_{1}: H \rightarrow V_{d}'(H)_{ab}$ is surjective.
\begin{enumerate}
\item If $d$ is even, then the kernel of $\widehat{\phi} \Lambda_{1}$ is 
$$ N = \langle [H,H], H^{d-1} \rangle.$$
\item If $d$ is odd, then the kernel of $\widehat{\phi} \Lambda_{1}$ is
$$ N = \langle [H,H], (H \cap A_{d})^{d-1} \rangle.$$
\end{enumerate}
In either case, we have $V_{d}'(H)_{ab} \cong H/N$.
\end{theorem}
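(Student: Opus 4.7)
My plan is to deduce the theorem from Propositions \ref{prop: necessaryprop} and \ref{prop: sufficientprop} combined with the universal property of abelianization. Throughout, I write $\pi = \widehat{\phi}$ for the canonical projection $V_d'(H) \to V_d'(H)_{ab}$.

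For surjectivity, I apply Proposition \ref{prop: sufficientprop} to the homomorphism $\phi := \pi \circ \Lambda_1|_H : H \to V_d'(H)_{ab}$; its hypotheses are satisfied by Proposition \ref{prop: necessaryprop} applied to $\pi$. The proposition then produces a homomorphism $V_d'(H) \to V_d'(H)_{ab}$ extending $\phi$, which by the uniqueness clause of that proposition must equal $\pi$ itself. The ``moreover'' clause then gives $\pi(V_d'(H)) = \phi(H) = (\pi \circ \Lambda_1)(H)$; since $\pi$ is onto by definition, this shows $\pi \circ \Lambda_1 : H \to V_d'(H)_{ab}$ is surjective.

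For the kernel I prove both containments. The containment $N \subseteq \ker(\pi \circ \Lambda_1)$ is immediate: $[H,H]$ lies in the kernel because $V_d'(H)_{ab}$ is abelian, and Proposition \ref{prop: necessaryprop} gives $(d-1)\pi\Lambda_1(h) = \pi\Lambda_1(h^{d-1}) = 0$ for every $h \in H$ (resp.\ every $h \in H \cap A_d$) when $d$ is even (resp.\ odd). For the reverse containment, I consider the abelian group $H/N$ and the quotient map $q : H \to H/N$. By construction of $N$, the map $q$ satisfies the hypotheses of Proposition \ref{prop: sufficientprop}, so it extends to a homomorphism $\widehat{q} : V_d'(H) \to H/N$ with $\widehat{q} \circ \Lambda_1 = q$. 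By the universal property of abelianization, $\widehat{q}$ factors as $\widehat{q} = \alpha \circ \pi$ for some $\alpha : V_d'(H)_{ab} \to H/N$, whence $\alpha \circ (\pi \circ \Lambda_1) = q$. Any $h \in \ker(\pi \circ \Lambda_1)$ then satisfies $q(h) = 0$, i.e.\ $h \in N$.

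Combining the two containments yields $\ker(\pi \circ \Lambda_1) = N$, and the final isomorphism $V_d'(H)_{ab} \cong H/N$ follows from the first isomorphism theorem applied to the surjection $\pi \circ \Lambda_1 : H \to V_d'(H)_{ab}$. I do not anticipate a serious obstacle: the deep content has already been packaged into Propositions \ref{prop: necessaryprop} and \ref{prop: sufficientprop}, and the theorem becomes a formal consequence once one pairs them with the universal property of $V_d'(H)_{ab}$.
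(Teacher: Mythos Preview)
Your proof is correct and follows essentially the same approach as the paper: both arguments deduce surjectivity from the ``moreover'' clause of Proposition~\ref{prop: sufficientprop} (applied to $\phi=\widehat{\phi}\Lambda_1|_H$, whose hypotheses are supplied by Proposition~\ref{prop: necessaryprop}), and both identify the kernel by extending the quotient map $H\to H/N$ to $V_d'(H)$ via Proposition~\ref{prop: sufficientprop} and then factoring through the abelianization. The only cosmetic difference is that the paper packages the kernel computation as showing a particular map $\theta\colon H/N\to V_d'(H)_{ab}$ is an isomorphism, whereas you prove the two containments $N\subseteq\ker(\widehat{\phi}\Lambda_1)$ and $\ker(\widehat{\phi}\Lambda_1)\subseteq N$ directly; the content is identical.
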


\begin{proof}
We will prove the theorem in the case $d$ is even, the other case being similar. By Proposition \ref{prop: necessaryprop},
we have $(d-1)\widehat{\phi}\Lambda_{1}(h) = 0$ for all $h \in H$. Thus, reading Proposition \ref{prop: sufficientprop} with
$\phi = \widehat{\phi}\Lambda_{1}|_{H}$, we find that $\phi(H) = \widehat{\phi}(V_{d}'(H)) = V_{d}'(H)_{ab}$. It follows that
$\widehat{\phi} \Lambda_{1}: H \rightarrow V_{d}'(H)_{ab}$ is surjective.

We have surjective homomorphisms $\phi: H \rightarrow V_{d}'(H)_{ab}$ and $\pi: H \rightarrow H/N$, where the latter is the canonical
projection. Since $(d-1)\pi(h) = 0$ for all $h \in H$, there is a unique homomorphism $\widehat{\pi}: V_{d}'(H) \rightarrow H/N$ such that
$\widehat{\pi} \Lambda_{1} = \pi$, by Proposition \ref{prop: sufficientprop}. Since $\phi(N) = 0$, there is a well-defined homomorphism
$\theta: H/N \rightarrow V_{d}'(H)_{ab}$ such that $\theta \pi = \phi$.

We claim that $\theta$ is an isomorphism. The function $\theta$ is surjective since $\theta \pi = \phi$ and $\phi$ is surjective.
Suppose $\theta(hN) = 0$ (i.e., $\theta\pi(h) = 0$). It follows that $\widehat{\phi}\Lambda_{1}(h) = \phi(h) = 0$, so that 
$\Lambda_{1}(h) \in [V_{d}'(H), V_{d}'(H)]$. Thus
$hN = \pi(h) = \widehat{\pi}\Lambda_{1}(h) = 0$, since $H/N$ is abelian and $\Lambda_{1}(h)$ is a product of commutators, so $\theta$ is injective.

Thus $N = Ker\pi = Ker\phi = Ker\widehat{\phi}\Lambda_{1}$. 
\end{proof}

%%%%%%%%%%%%%%%%%%%%%%%%%%%%%%%%%%%%%%%%%%%%%%%%%%%%%%%%%%%%%%%%%%%%%%%%%%%%%%%%%%%%
\subsection*{The Commutator Subgroup of $V_{d}'(H)$ is Simple}
%%%%%%%%%%%%%%%%%%%%%%%%%%%%%%%%%%%%%%%%%%%%%%%%%%%%%%%%%%%%%%%%%%%%%%%%%%%%%%%%%%%%%%%%
The argument of this section is adapted slightly from Brin \cite{BrinHDTG}.

\begin{lemma}
Let $K$ be a closed proper subset of $A^{\omega}$, and let $U$ be an open subset of $A^{\omega}$.  There
is $v \in V_{d}'$ such that $v(K) \subseteq U$.
\end{lemma}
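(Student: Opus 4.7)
The plan is to construct $v \in V_d'$ explicitly by specifying a pair of compatible partitions of $A^\omega$ into balls and an appropriate bijection between them, so that every piece of the source partition that could meet $K$ is paired with a target piece lying inside $U$.

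First I would locate two anchor balls. Since $K$ is a closed proper subset of $A^\omega$, its complement is open and nonempty and therefore contains a ball $B_0 = u_0 A^\omega$. If $U$ is empty then $K$ must be empty and $v = \id$ works; otherwise $U$ contains a ball $B' = w_0 A^\omega$. By extending the shorter of $u_0, w_0$ by arbitrary symbols (and passing to the corresponding sub-ball, which still lies inside $A^\omega \setminus K$ or $U$ respectively), I may assume $|u_0| = |w_0| = n$.

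Next I build two partitions of $A^\omega$ into balls, each of total size $d^n - 1 + d^P$, where $P \geq n$ is chosen so that $d^P \geq d^n - 1$. The source partition $\mathcal{P}$ consists of the $d^n - 1$ siblings of $B_0$ at depth $n$ together with the $d^P$ sub-balls of $B_0$ at depth $n+P$. The target partition $\mathcal{Q}$ is built analogously from $B'$. I then pair each sibling of $B_0$ with a distinct sub-ball of $B'$ (these are all in $U$), and pair each sub-ball of $B_0$ with one of the remaining targets; the remaining targets are the $d^n - 1$ siblings of $B'$ together with the $d^P - (d^n - 1)$ leftover sub-balls of $B'$, totaling $d^P$, which matches the $d^P$ sub-balls of $B_0$. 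Defining $v$ as the unique order-preserving similarity from Remark~\ref{rem:sim properties}(5) on each pair produces an element of $V_d$. Since $K \cap B_0 = \emptyset$, we have $K \subseteq \bigcup (\text{siblings of } B_0)$, whose $v$-images lie in $B' \subseteq U$, so $v(K) \subseteq U$.

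The main obstacle is ensuring $v \in V_d'$ rather than merely $V_d$. When $d$ is even this is automatic since $V_d' = V_d$. When $d$ is odd, the parity of the table defining $v$ is well-defined by Lemma~\ref{lemma: oddeventables}, and we need an even table. Choosing $P \geq 1$ guarantees at least $d \geq 3$ sub-balls of $B_0$, so if the initial pairing gives an odd table I simply swap the targets assigned to two of the sub-balls of $B_0$; this is a transposition on the column permutation, which flips parity. The swap preserves the condition $v(K) \subseteq U$ because $B_0$ is disjoint from $K$, so the precise destinations of the sub-balls of $B_0$ are irrelevant to the image of $K$.
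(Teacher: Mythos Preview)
Your construction is correct and complete. The one-shot explicit construction you give---a single pair of partitions at matched depths, with the parity fixed by a swap supported in $B_0$---is a genuinely different organization from the paper's proof. The paper proceeds in two stages: first it finds $d-1$ disjoint balls in $K^c$ and an element $v_1 \in V_d'$ sending them onto $1A^\omega,\ldots,(d-1)A^\omega$, so that $v_1(K) \subseteq dA^\omega$; then it invokes Lemma~\ref{lemma: conjugate} to produce $v_2 \in V_d'$ carrying $dA^\omega$ into a ball $\hat{B} \subseteq U$, and composes. The paper's approach is more modular and reuses the conjugacy lemma, but it leaves the actual construction of $v_1$ (and in particular the parity control) implicit. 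Your argument is self-contained and makes the combinatorics and the parity correction fully explicit; it also avoids the slightly delicate point of finding $d-1$ disjoint balls in $K^c$ by working with a single ball $B_0$ and subdividing.

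One small correction: your sentence ``If $U$ is empty then $K$ must be empty'' is not justified---nothing forces $K=\emptyset$ when $U=\emptyset$. The lemma as stated is simply false in that edge case, and both your proof and the paper's tacitly assume $U \neq \emptyset$ (the paper immediately picks a ball $\hat{B} \subseteq U$). You should either add the hypothesis $U \neq \emptyset$ or simply drop that sentence and assume $U$ nonempty from the outset.
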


\begin{proof}
Let $K$ be closed; let $B_{1}$, $B_{2}$, $\ldots$, $B_{d-1}$ be open disjoint metric balls in $K^{c}$.  We
can find $v_{1} \in V_{d}'$ such that $v_{1}(B_{i}) = iA^{\omega}$, for 
$i = 1, \ldots, d-1$. It follows that $v_{1}(K) \subseteq dA^{\omega}$.

Given an open subset $U$ in $A^{\omega}$, we let $\hat{B} \subseteq U$ be an open metric ball.  By Lemma \ref{lemma: conjugate},
there is $v_{2} \in V_{d}'$ so that $v_{2}(dA^{\omega}) = \hat{B}$.  Therefore, $v_{2}v_{1}(K) \subseteq v_{2}(dA^{\omega}) = \hat{B} \subseteq U$,
and $v_{2}v_{1} \in V_{d}'$.
\end{proof}

\begin{definition}
Let $f: A^{\omega} \rightarrow A^{\omega}$ be a homeomorphism. The \emph{support} of $f$, denoted $\text{supp}(f)$, 
is $\{ x \in A^{\omega} \mid f(x) \neq x \}$.
\end{definition}

\begin{lemma}
The group $V_{d}'$ can be generated by a set $S$ such that,
for any $x, y \in S$, there is an open ball $B \subseteq A^{\omega}$ so that
$x|_{B} = y|_{B} = id_{B}$.
\end{lemma}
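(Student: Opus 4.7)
The plan is to take $S$ to be a set of ``deep'' tree transpositions (when $d$ is even) or tree $3$-cycles (when $d$ is odd), and verify directly that $S$ generates $V_d'$ and satisfies the common-fixed-ball property. Fix $n \in \mathbb{N}$ with $d^n > 6$; for instance, $n = 3$ works for every $d \geq 2$. For words $u, v \in A^+$ with $|u|, |v| \geq n$, let $\tau_{u,v} \in V_d$ denote the tree transposition that swaps $uA^\omega$ and $vA^\omega$ via the unique order-preserving similarity $uy \leftrightarrow vy$, acting as the identity on $A^\omega \setminus (uA^\omega \cup vA^\omega)$. For distinct words $u, v, w \in A^+$ of length $\geq n$, let $\rho_{u,v,w}$ denote the analogous tree $3$-cycle. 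I will define $S$ to be the set of all $\tau_{u,v}$ when $d$ is even, and the set of all $\rho_{u,v,w}$ when $d$ is odd.

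To verify the common-fixed-ball property, I observe that each element of $S$ has support contained in the union of at most three balls of depth $\geq n$, so any pair $x, y \in S$ has combined support in the union of at most six such balls. Under the probability measure assigning mass $d^{-|w|}$ to each ball $wA^\omega$, the combined support has total measure at most $6\, d^{-n} < 1$; hence its complement is a nonempty clopen set and contains an open ball $B$ on which both $x$ and $y$ act as the identity.

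To verify $\langle S \rangle = V_d'$, I will appeal to the standard fact that $V_d$ is generated by tree transpositions of pairs of its balls. Given $g \in V_d'$, iterated splitting of a table for $g$ will produce a decomposition of $g$ as a product of tree transpositions whose domain and codomain words all have length at least $n$. For even $d$, each such tree transposition already lies in $S$, and the decomposition expresses $g$ as an element of $\langle S \rangle$. For odd $d$, each tree transposition has odd parity and so does not lie in $V_d'$, but parity preservation of splitting for $V_d$-tables forces $g \in V_d'$ to be expressible using an even number of tree transpositions; these can then be regrouped into tree $3$-cycles using identities such as $(ab)(cd) = (acb)(acd)$ and $(ab)(ac) = (acb)$ in the symmetric group. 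Each resulting tree $3$-cycle involves at most four depth-$\geq n$ balls and therefore lies in $S$.

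The main obstacle will be making rigorous the decomposition of an arbitrary $g \in V_d$ as a product of tree transpositions all of whose involved balls have depth at least $n$. When $g$ is not depth-preserving (so the lengths $|v_i|$ and $|u_{\sigma(i)}|$ in its table differ column by column), simultaneous splitting only increases both lengths by $1$ and cannot equalize them; nevertheless, one can still sort the two partitions into alignment via tree transpositions whose both balls are of the required depth, and the parity bookkeeping in the odd case follows exactly the same pattern.
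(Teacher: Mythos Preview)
Your approach coincides with the paper's for even $d$: the paper also takes $S$ to be the set of ``small transpositions'' $\tau(u,v)$ with $|u|,|v|\geq 3$, and argues generation via the tree-pair picture by first introducing carets so both trees contain the full depth-$3$ subtree, then sorting the range tree into the domain tree by transpositions, leaving a leaf permutation. Your measure argument for the common-fixed-ball property is a clean alternative to the paper's direct observation that the union of supports is a proper closed set.

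For odd $d$ you diverge: the paper takes $S$ to be products $\tau_1\tau_2$ of two \emph{disjoint} small transpositions, whereas you take tree $3$-cycles. Your regrouping step has a genuine gap. The identities $(ab)(cd)=(acb)(acd)$ and $(ab)(ac)=(acb)$ are symmetric-group identities, and to interpret them as identities among tree transpositions and tree $3$-cycles you need the balls $a,b,c,d$ to be pairwise disjoint---otherwise $\rho_{a,c,b}$ is not even defined. In a decomposition $g=\tau_k\cdots\tau_1$ produced by the sorting procedure, consecutive $\tau_i$ typically involve nested balls (one step rearranges a subtree, the next works inside it), so the hypotheses of your identities can fail. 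The paper sidesteps this entirely: whenever a single transposition $\tau_2$ is needed to alter the shape of the range tree, it pairs $\tau_2$ with a ``dummy'' transposition $\tau_1$ swapping two leaves of the current range tree, chosen disjoint from $\tau_2$; then $\tau_1\tau_2\in S$ directly, and no cross-step regrouping is needed. Your $3$-cycle set does generate $V_d'$, but to prove it you would need a device like this (or a reduction showing all transpositions in your decomposition can be taken among pairwise-disjoint balls), and that is exactly the work your sketch omits.
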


\begin{proof}
We sketch the proof. Suppose first that $d$ is even (and so $V_{d}' = V_{d}$). 
An element $\tau = \tau(u,v) \in V_{d}$ ( where $u, v \in A^{\ast}$, $|u|, |v| \geq 3$) 
is a \emph{small transposition} if $\tau(ua_{1}a_{2} \ldots) = va_{1}a_{2}\ldots$,
$\tau(va_{1}a_{2} \ldots) = ua_{1}a_{2}\ldots$, and $\tau$ fixes every other point 
in $A^{\omega}$. We let $S$ be the set of all small transpositions. If $\tau_{1}, \tau_{2} \in S$, then
$\text{supp}(\tau_{1}) \cup \text{supp}(\tau_{2})$ is a proper closed subset of $A^{\omega}$, so there is
an open ball $B \subseteq A^{\omega}$ such that $B \cap (\text{supp}(\tau_{1}) \cup \text{supp}(\tau_{2})) = \emptyset$.
Now we need only check that $S$ generates $V_{d}$.

We appeal to the well-known interpretation of the groups $V_{d}$ using tree pairs. An element $v \in V_{d}$ can be expressed as
a triple $(T_{1}, T_{2}, \sigma)$, where $T_{1}$ and $T_{2}$ are rooted ordered $d$-ary trees, and $\sigma$ is a bijection 
between their leaves. The nodes (vertices) of the trees represent balls in $A^{\omega}$. Given $v = (T_{1}, T_{2}, \sigma)$, we can
introduce  cancelling carets in order to express $v$ as $(T'_{1}, T'_{2}, \sigma')$, where each of $T'_{1}$ and $T'_{2}$ contains
the full rooted ordered $d$-ary subtree of depth $3$. Now if $\tau = \tau(u_{1},u_{2}) \in S$, then $\tau v = ( T'_{1}, T''_{2}, \sigma'')$, 
where $T''_{2}$ is obtained from $T'_{2}$ by interchanging the trees below $u_{1}$ and $u_{2}$. It follows that there is a sequence
$\tau_{1}, \tau_{2}, \tau_{3}, \ldots, \tau_{n} \in S$ such that
$\tau_{n}\tau_{n-1}\ldots \tau_{1}v = ( T'_{1}, T'_{1}, \widehat{\sigma})$, for some permutation $\widehat{\sigma}$. Now $\widehat{\sigma}$ is simply a 
permutation of the leaves of $T'_{1}$, and it follows that $\widehat{\sigma} \in \langle S \rangle$. Thus, $S$ generates $V_{d}$.

If $d$ is odd, then the proof is similar, but one needs to use even permutations. We let $S = \{ \tau_{1}\tau_{2} \mid \tau_{1}, \tau_{2} \text{ are disjoint small
transpositions} \}$. One now proceeds in the same way. We can always choose $\tau_{1}$ in such a way that it permutes the leaves of the range tree $T'_{2}$,
and so only the transposition $\tau_{2}$ alters $T'_{2}$. Thus, for $v \in V'_{d}$, there is a sequence $s_{1}, s_{2}, \ldots, s_{n} \in S$ such that 
$s_{n}s_{n-1}\ldots s_{1} v = ( T'_{1}, T'_{1}, \sigma)$ where $\sigma$ is an even permutation of the leaves. It follows that $\sigma \in \langle S \rangle$.  
 \end{proof}

\begin{lemma}
If $N \trianglelefteq [ V_{d}'(H), V_{d}'(H)]$ and $N \neq 1$, then $V_{d}' \leq N$.
\end{lemma}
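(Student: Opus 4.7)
The plan is to show that $N \cap V_d'$ is nontrivial. Since $V_d'$ is simple and nonabelian, $V_d' = [V_d', V_d'] \subseteq [V_d'(H), V_d'(H)]$, so $V_d'$ normalizes $N$ by conjugation. Hence $N \cap V_d'$ is a normal subgroup of $V_d'$, and once I establish that it is nontrivial, simplicity of $V_d'$ will force $V_d' \subseteq N$.

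To produce a nontrivial element of $N \cap V_d'$, I fix an arbitrary $n \in N \setminus \{1\}$. Since $A^\omega$ has no isolated points and $n$ moves some point $x_0$, continuity produces a ball $B_0$ containing $x_0$ with $n(B_0) \cap B_0 = \emptyset$. I then pick $g_1, g_2 \in V_d'$ with $\text{supp}(g_i) \subseteq B_0$ such that $g_1$ and $g_2$ do not commute; such elements exist because $V_d'$ is nonabelian and any pair of non-commuting $V_d'$-elements on $A^\omega$ can be transported via a similarity $A^\omega \to B_0$ and extended by the identity off $B_0$ without changing parity (the additional $(v,v)$ columns in the table do not affect the permutation's sign).

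Set $h := [n, g_1]$. Since $V_d'$ normalizes $N$, $g_1 n g_1^{-1} \in N$, and thus $h = n \cdot (g_1 n g_1^{-1})^{-1} \in N$. The supports of $g_1^{-1}$ and $n g_1 n^{-1}$ lie in the disjoint sets $B_0$ and $n(B_0)$ respectively, so these two elements commute, and $h = (n g_1 n^{-1}) g_1^{-1}$ restricts to $g_1^{-1}$ on $B_0$, to $n g_1 n^{-1}$ on $n(B_0)$, and to the identity elsewhere. Now form $[h, g_2]$, which lies in $N$ by the same normality argument. A routine computation, using $n(B_0) \cap B_0 = \emptyset$ and $\text{supp}(g_2) \subseteq B_0$, shows that $[h, g_2]$ is supported in $B_0$ and agrees there with $[g_1^{-1}, g_2]$.

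Consequently, $[h, g_2]$ is the extension-by-identity of $[g_1^{-1}, g_2] \in V_d'$, so $[h, g_2] \in V_d'$, and it is nontrivial by the choice of $g_1, g_2$. This produces the required nontrivial element of $N \cap V_d'$ and completes the plan. The point to verify most carefully is the vanishing of $[h, g_2]$ on $n(B_0)$: there $g_2$ acts as the identity, so $h g_2 h^{-1}$ also restricts to the identity on $n(B_0)$, causing the commutator to collapse. Everything else reduces to straightforward support bookkeeping, and the nontrivial commutation of $g_1$ and $g_2$ inside the simple nonabelian group $V_d'$ is what ultimately ensures the constructed element is not the identity.
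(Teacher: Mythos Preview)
Your proof is correct and takes a somewhat different route from the paper's. The paper follows Brin's approach: it first establishes (in a separate lemma) that $V_d'$ admits a generating set $S$ in which any two generators share a ball on which both act trivially; then, given $j \in N \setminus \{1\}$ and a ball $E$ with $j(E) \cap E = \emptyset$, it conjugates one generator $y$ by an element of $N$ so that its support becomes disjoint from that of $x$, and deduces $[x,y] \in N$ for all $x,y \in S$. This forces $V_d'/(V_d' \cap N)$ to be abelian, whence $V_d' \leq N$ by simplicity.

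Your argument is the classical Higman--Epstein double-commutator trick: you manufacture a single nontrivial element of $N \cap V_d'$ directly, supported in the ball $B_0$, and then invoke simplicity of $V_d'$ together with the observation that $N \cap V_d' \trianglelefteq V_d'$ (which holds because $V_d' = [V_d',V_d'] \subseteq [V_d'(H),V_d'(H)]$ normalizes $N$). This bypasses the auxiliary generating-set lemma entirely and is more self-contained. The paper's approach, by contrast, keeps the argument visibly parallel to Brin's treatment of the higher-dimensional Thompson groups, which may be useful if one wants to adapt the proof to other settings where that template applies.
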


\begin{proof}
Let $N \trianglelefteq [ V_{d}'(H), V_{d}'(H)]$, and let $1 \neq j \in N$.  There is some open ball $E \subseteq Ends(T_{d})$
such that $j(E) \cap E = \emptyset$.  We choose some generating set $S$ for $V_{d}'$ as in the previous lemma, and let $x, y \in S$ be arbitrary.
We will show that $[x,y] := xyx^{-1}y^{-1} \in N$.  We first note that $V_{d}' \leq [V_{d}'(H), V_{d}'(H)]$, since $V_{d}'$ is 
simple and nonabelian, and therefore
must be sent to $0$ by the projection $\pi: V_{d}'(H) \rightarrow V_{d}'(H)_{ab}$.

By the defining property of $S$, there is some open ball $B$ so that $x_{\mid B} = y_{ \mid B} = id_{B}$.  
By Lemma \ref{lemma: conjugate}, there is $k \in V_{d}'$
so that $k(B^{c}) \subseteq E$.  We claim that 
$$\hat{y} := y^{j^{k^{-1}}} := y^{k^{-1}jk} = k^{-1}jkyk^{-1}j^{-1}k$$
and $x$ commute.  

One easily shows that $supp(a^{b}) = b \cdot supp(a)$ and that elements of $V_{d}'(H)$ with disjoint supports
must commute.  Now
$$\text{supp}\left( y^{(j^{k^{-1}})} \right) = \left( j^{k^{-1}} \right) \cdot \text{supp}(y) 
= k^{-1}jk \cdot \text{supp}(y) 
\subseteq k^{-1}jk(B^{c})
\subseteq k^{-1}j(E).$$
and
$$ k \cdot supp(x) \subseteq k \cdot (B^{c}) \subseteq E \quad \Rightarrow \quad supp(x) \subseteq k^{-1}(E),$$
so it follows that the supports of $\hat{y}$ and $x$ are disjoint, so that these elements must commute.  Moreover,
one readily checks that $[x, \hat{y}] = [x,y]$ modulo $N$. It follows directly that $[x,y] \in N$, since $[x, \hat{y}] = 1$.

It now follows that 
$\displaystyle V_{d}' / V_{d}' \cap N$ is abelian.  Since $V_{d}'$ is simple and nonabelian, we have $V_{d}' \leq N$.
\end{proof}

\begin{theorem} \label{thm: simplecommutator}
The commutator subgroup of $V_{d}'(H)$ is simple.
\end{theorem}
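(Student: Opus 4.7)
The plan is to combine the previous lemma (which shows that any nontrivial $N \trianglelefteq G := [V'_d(H), V'_d(H)]$ contains $V'_d$) with an abelianization argument. First I would observe that $G$ is normal in $V'_d(H)$ (being its commutator subgroup), so for each $a \in V'_d(H)$ the conjugate $aNa^{-1}$ is also a normal subgroup of $G$, via a routine computation using $a^{-1}Ga = G$. Since $aNa^{-1}$ is nontrivial, the previous lemma applies and yields $V'_d \leq aNa^{-1}$, equivalently $a^{-1}V'_d a \leq N$. Thus the normal closure $V^*$ of $V'_d$ in $V'_d(H)$ is contained in $N$.

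It therefore suffices to show $G \leq V^*$, which will follow once $V'_d(H)/V^*$ is abelian. For $g \in V'_d(H)$ with table $g = \sum_i S_{u_{\sigma(i)}}h_i S_{v_i}^*$, the standard decomposition gives $g = \left(\prod_i \Lambda_{u_{\sigma(i)}}(h_i)\right) v_{\mathrm{tree}}$ with $v_{\mathrm{tree}} \in V_d$. A case analysis on the parity of $d$ and whether $H \leq A_d$ shows that $v_{\mathrm{tree}}$ lies in $V^*$, possibly (when $d$ is odd and $H$ contains an odd permutation) up to multiplication by a fixed odd element $w \in V_d \setminus V'_d$. Together with Lemma~\ref{lemma: conjugate}, which conjugates $\Lambda_u(h)$ to $\Lambda_1(h)$ by an element of $V'_d \leq V^*$, this shows that $V'_d(H)/V^*$ is generated by the cosets of $\Lambda_1(h)$ for $h \in H$ (and possibly of $w$).

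Commutativity of these generators is then immediate: $\Lambda_1(h_1)$ and $\Lambda_2(h_2)$ have disjoint supports and so commute in $V'_d(H)$, while $\Lambda_2(h_2)$ and $\Lambda_1(h_2)$ are equivalent mod $V^*$ by Lemma~\ref{lemma: conjugate}, so the cosets of $\Lambda_1(h_1)$ and $\Lambda_1(h_2)$ commute. Taking $w$ to be the top-level transposition of $1A^\omega$ and $2A^\omega$, a direct calculation gives $w\Lambda_1(h)w^{-1} = \Lambda_2(h)$, so the coset of $w$ also commutes with each coset of $\Lambda_1(h)$. Hence $V'_d(H)/V^*$ is abelian, $G \leq V^*$, and the chain $V^* \leq N \leq G$ collapses to $N = G$. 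The main obstacle will be the parity bookkeeping when $d$ is odd: one must verify that the canonical table for each relevant $\Lambda_u(h)$ has $\sigma$ of the correct parity so that $\Lambda_u(h) \in V'_d(H)$, and that $v_{\mathrm{tree}}$ inherits the correct parity from $g$.
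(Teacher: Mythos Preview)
Your argument is correct and its first half coincides with the paper's: both show that $N$ contains every $V_d'(H)$-conjugate of $V_d'$ (you phrase this as $V^* \le N$ for the normal closure $V^*$; the paper phrases it dually as $V_d' \le M := \bigcap_v vNv^{-1}$). The divergence is in the second half. The paper simply observes that $M \trianglelefteq V_d'(H)$ and then \emph{cites} Nekrashevych's theorem that every proper quotient of $V_d'(H)$ is abelian, which forces $[V_d'(H),V_d'(H)] \le M \le N$. You instead prove the needed special case directly: that $V_d'(H)/V^*$ is abelian, via the decomposition $g = \bigl(\prod_i \Lambda_{u_{\sigma(i)}}(h_i)\bigr)\hat g$ and the disjoint-support/conjugacy trick for the $\Lambda$-generators (and $w$). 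This makes your proof self-contained at the cost of the parity bookkeeping you flag; the paper's route is a one-line appeal to \cite{NekJOT}. Note that your computation is essentially reproving, in the specific case of the normal subgroup $V^*$, exactly the Nekrashevych input the paper invokes --- so the two proofs are really the same skeleton with different sourcing for the abelian-quotient step. One small point to tidy: ensure the table for $g$ has been split so that every $u_{\sigma(i)}$ lies in $A^+$ before invoking Lemma~\ref{lemma: conjugate}.
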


\begin{proof}
Let $1 \neq N \trianglelefteq [V_{d}'(H), V_{d}'(H)]$.  Set
$$ M = \bigcap_{v \in V_{d}'(H)} vNv^{-1}.$$
Each group $vNv^{-1}$ is non-trivial and normal in $[V_{d}'(H), V_{d}'(H)]$.  
It follows from the previous lemma that $V_{d}' \leq vNv^{-1}$ for all
$v \in V_{d}'(H)$, so $V_{d}' \leq M$.  Now $M \trianglelefteq V_{d}'(H)$ by construction.
Since the only proper quotients of $V_{d}'(H)$ are abelian according to Nekrashevych \cite{NekJOT}, 
every normal subgroup of
$V_{d}'(H)$ contains $[V_{d}'(H), V_{d}'(H)]$.  It follows
that
$$ [V_{d}'(H), V_{d}'(H)] \leq M \leq N \leq [V_{d}'(H), V_{d}'(H)].$$
Therefore, all of the above containments are equalities.
\end{proof}

We conclude with a couple of simple applications of the ideas from this section. 

\begin{corollary}
If $d \geq 2$ and $H \leq \Sigma_{d}$, then $V_{d}(H)$ has a simple subgroup of finite index.
\end{corollary}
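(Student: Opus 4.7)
The plan is to combine Theorem~\ref{thm: simplecommutator} with Theorem~\ref{thm: quotient} to exhibit $[V'_{d}(H), V'_{d}(H)]$ as the desired simple subgroup of finite index.

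First, I would recall that by Theorem~\ref{thm: simplecommutator}, $[V'_{d}(H), V'_{d}(H)]$ is simple, so it only remains to check that it has finite index in $V_{d}(H)$. The index $[V_{d}(H) : V'_{d}(H)]$ is $1$ or $2$ by the definition of $V'_{d}(H)$, so by multiplicativity of indices it suffices to show that $[V'_{d}(H) : [V'_{d}(H), V'_{d}(H)]]$ is finite, i.e., that the abelianization $V'_{d}(H)_{ab}$ is finite.

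For this I would invoke Theorem~\ref{thm: quotient}: regardless of the parity of $d$, one has $V'_{d}(H)_{ab} \cong H/N$, where $N$ is a subgroup of $H$. Since $H \leq \Sigma_{d}$ is finite, the quotient $H/N$ is finite, so $V'_{d}(H)_{ab}$ is finite as required. Putting this together, $[V'_{d}(H), V'_{d}(H)]$ is a simple subgroup of $V_{d}(H)$ whose index is bounded by $2 \cdot |H/N|$, which is finite.

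There is no substantial obstacle here; the statement is essentially a bookkeeping corollary of the two main theorems of the section, once one notes that finiteness of $H$ makes the abelianization computed in Theorem~\ref{thm: quotient} automatically finite.
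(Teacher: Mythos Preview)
Your proof is correct and follows essentially the same approach as the paper: you use Theorem~\ref{thm: simplecommutator} to get simplicity of $[V'_{d}(H), V'_{d}(H)]$, then combine the fact that $[V_{d}(H):V'_{d}(H)]\leq 2$ with Theorem~\ref{thm: quotient} (and finiteness of $H$) to conclude finite index. The paper's argument is identical, just slightly more terse.
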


\begin{proof}
The group $V'_{d}(H)$ has finite index in $V_{d}(H)$. The commutator subgroup of $V'_{d}(H)$ is simple by Theorem \ref{thm: simplecommutator},
and has finite index in $V'_{d}(H)$ by Theorem \ref{thm: quotient}. Thus, $[V'_{d}(H), V'_{d}(H)]$ is a simple subgroup of finite index in $V_{d}(H)$.
\end{proof}

\begin{corollary}
If $d \geq 2$ is even, then $V_{d}(\Sigma_{d})$ is simple. If $d \geq 3$ is odd, then $V_{d}(\Sigma_{d})$ has a simple subgroup of index $2$.
\end{corollary}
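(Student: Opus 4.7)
The plan is to obtain both parts of the corollary as direct consequences of Theorem~\ref{thm: quotient} and Theorem~\ref{thm: simplecommutator}, by computing the abelianization of $V'_d(\Sigma_d)$ explicitly in each of the two cases. First I note that $V'_d(\Sigma_d)=V_d(\Sigma_d)$ in both cases: when $d$ is even this is immediate from the definition, and when $d$ is odd it holds because $\Sigma_d$ contains odd permutations. So the task reduces to identifying the kernel $N$ appearing in Theorem~\ref{thm: quotient} with $H=\Sigma_d$.

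For the even case, I would use Theorem~\ref{thm: quotient}(1), so $N=\langle[\Sigma_d,\Sigma_d],\Sigma_d^{d-1}\rangle$. If $d=2$ then $\Sigma_d^{d-1}=\Sigma_2$ gives $N=\Sigma_2$. If $d\geq 4$, then $[\Sigma_d,\Sigma_d]=A_d$, and since $d-1$ is odd, every transposition $\tau$ satisfies $\tau^{d-1}=\tau$, so $\tau\in\Sigma_d^{d-1}\subseteq N$; together with $A_d\leq N$ this forces $N=\Sigma_d$. Hence $V_d(\Sigma_d)_{ab}=1$, i.e.\ $V_d(\Sigma_d)=[V_d(\Sigma_d),V_d(\Sigma_d)]$, and this commutator subgroup is simple by Theorem~\ref{thm: simplecommutator}. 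Thus $V_d(\Sigma_d)$ itself is simple.

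For the odd case, I would apply Theorem~\ref{thm: quotient}(2), so $N=\langle[\Sigma_d,\Sigma_d],(\Sigma_d\cap A_d)^{d-1}\rangle=\langle A_d, A_d^{d-1}\rangle$. Since $A_d$ is a subgroup, $A_d^{d-1}\subseteq A_d$, so $N=A_d$. Therefore $V_d(\Sigma_d)_{ab}\cong\Sigma_d/A_d\cong\mathbb{Z}/2$, which shows that $[V_d(\Sigma_d),V_d(\Sigma_d)]$ has index $2$ in $V_d(\Sigma_d)$, and it is simple by Theorem~\ref{thm: simplecommutator}.

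There is no real obstacle here: the proof is a two-line parity computation inside $\Sigma_d$, followed by quoting the two main theorems of the section. The only point that requires a moment of attention is the even subcase $d=2$, where $[\Sigma_d,\Sigma_d]$ is trivial and one must observe that the single generator $\Sigma_2^1=\Sigma_2$ already fills out $N$; this is the only place where a uniform statement across all even $d$ needs a small sanity check.
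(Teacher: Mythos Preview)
Your proof is correct and follows essentially the same approach as the paper: both identify $V'_d(\Sigma_d)=V_d(\Sigma_d)$ in each case, invoke Theorem~\ref{thm: quotient} to compute the abelianization, and then apply Theorem~\ref{thm: simplecommutator}. The paper leaves the verification that $N=\Sigma_d$ (even case) and $N=A_d$ (odd case) implicit, whereas you spell out these computations in $\Sigma_d$ explicitly, including the small $d=2$ check; this is just added detail, not a different argument.
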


\begin{proof}
If $d$ is even, then $V_{d}(\Sigma_{d}) = V'_{d}(\Sigma_{d})$ by definition. The latter group is equal to its commutator subgroup by Theorem \ref{thm: quotient},
which is simple by Theorem \ref{thm: simplecommutator}. If $d$ is odd, then we still have $V_{d}(\Sigma_{d}) = V'_{d}(\Sigma_{d})$, since $\Sigma_{d}$ contains
odd permutations. The commutator subgroup $[V'_{d}(\Sigma_{d}), V'_{d}(\Sigma_{d})]$ is simple by Theorem \ref{thm: simplecommutator} 
and has index two in $V'_{d}(\Sigma_{d})$ by Theorem \ref{thm: quotient}.
\end{proof}
     
%%%%%%%%%%%%%%%%%%%%%%%%%%%%%%%%%%%%%%%%%%%%%%%%%%%%%%%%%%%%%%%%%%%

%%%%%%%%%%%%%%%%%%%%%%%%%%%%%%%%%%%%%%%%%%%%%%%%%%%%%%%%%%%%%%%%%
\section{Some FSS groups are braided diagram groups} \label{sec:diagramgroups}

In this section, we will show that the class of braided diagram groups (Definition \ref{def: diagramgroups}) over tree-like semigroup presentations
(Definition \ref{def: treelike}) is exactly the same as the class of FSS groups defined by small similarity structures  
(Definition \ref{def: smallsim}). 
We review all of the necessary definitions below. The main results of the section are Theorem \ref{thm: bigisomorphism}
and Corollary \ref{cor:maincor}.

The theory of braided diagram groups was first sketched by Guba and Sapir \cite{GubaSapir}.

%%%%%%%%%%%%%%%%%%%%%%%%%%%%%%%%%%%%%%%%%%%%%%%%%
\subsection*{Braided diagram groups over semigroup presentations}
%%%%%%%%%%%%%%%%%%%%%%%%%%%%%%%%%%%%%%%%%%%%%%%%%

\begin{definition}
Let $\Sigma$ be a set, called an \emph{alphabet}. The \emph{free semigroup on $\Sigma$}, denoted $\Sigma^{+}$, is
the collection of all positive non-empty strings formed from $\Sigma$, i.e., 
$$ \Sigma^{+} = \{ u_{1}u_{2} \ldots u_{n} \mid n \in \mathbb{N}, u_{i} \in \Sigma \text{ for } i \in \{ 1, \ldots, n \} \}.$$
The \emph{free monoid on $\Sigma$}, denoted $\Sigma^{\ast}$, is the union $\Sigma^{+} \cup \{ 1 \}$, where $1$ denotes the
empty string. (Here we assume that $1 \not \in \Sigma$ to avoid ambiguity.)

We write $w_{1} \equiv w_{2}$ if $w_{1}$ and $w_{2}$ are equal as words in $\Sigma^{\ast}$.

The operations in $\Sigma^{+}$ and $\Sigma^{\ast}$ are concatenation.
\end{definition}

\begin{definition}
A \emph{semigroup presentation} $\mathcal{P} = \langle \Sigma \mid \mathcal{R} \rangle$ consists of an alphabet $\Sigma$ and
a set $\mathcal{R} \subseteq \Sigma^{+} \times \Sigma^{+}$. The elements of $\mathcal{R}$ are called \emph{relations}.
\end{definition}

\begin{remark}
A relation $(w_{1}, w_{2}) \in \mathcal{R}$ can be viewed as an equality between the words $w_{1}$ and $w_{2}$. 
We use ordered pairs to describe these equalities because we will occasionally want to make a distinction between the 
left and right sides of a relation.

A semigroup presentation $\mathcal{P}$ determines a semigroup  $S_{\mathcal{P}}$. We define a relation $\sim$ on 
$\Sigma^{+}$ as follows: $w_{1} \sim w_{2}$ if $w_{1} \equiv u \ell v$ and $w_{2} \equiv urv$ where $u, v \in \Sigma^{\ast}$
and $(\ell, r) \in \mathcal{R}$. The transitive, symmetric closure of $\sim$, which we will denote $\dot{\sim}$, is an equivalence
relation on $\Sigma^{+}$. The equivalence classes of $\dot{\sim}$ are the elements of $S_{\mathcal{P}}$. The operation of $S_{\mathcal{P}}$
is concatenation, which is well-defined and associative on the equivalence classes.

In this paper, we will make very little direct use of the semigroup $S_{\mathcal{P}}$.
\end{remark}

\begin{definition} \label{def: braideddiagrams}
(Braided Semigroup Diagrams)
A \emph{frame} is a homeomorphic copy of $\partial([0,1]^{2}) = ( \{ 0, 1 \} \times [0,1]) \cup ( [0,1] \times \{ 0, 1 \} )$. A frame
has a \emph{top} side, $(0,1) \times \{ 1 \}$, a \emph{bottom} side, $(0,1) \times \{ 0 \}$, and \emph{left} and \emph{right} sides,
$\{ 0 \} \times [0,1]$ and $\{ 1 \} \times [0,1]$, respectively. The top and bottom of a frame have obvious left to right orderings.

A \emph{transistor} is a homeomorphic copy of $[0,1]^{2}$. A transistor has top, bottom, left, and right sides, just as a frame does. The
top and bottom of a transistor also have obvious left to right orderings.

A \emph{wire} is a homeomorphic copy of $[0,1]$. Each wire has a bottom $0$ and a top $1$.

Let $\mathcal{P} = \langle \Sigma \mid \mathcal{R} \rangle$ be a semigroup presentation. Let $\mathcal{T}(\Delta)$ be a finite
(possibly empty) set of transistors. Let $\mathcal{W}(\Delta)$ be a finite, nonempty  set of wires. We let $F(\Delta) = \partial ( [0,1]^{2})$ be
a frame. We let $\ell_{\Delta}: \mathcal{W}(\Delta) \rightarrow \Sigma$ be an arbitrary function, called the \emph{labelling function}.

For each wire $W \in \mathcal{W}(\Delta)$, we choose a point $t(W)$ on the bottom of a transistor, or on the top of the frame, and a point $b(W)$
on the top of a transistor, or on the bottom of the frame. The points $t(W)$ and $b(W)$ are called the \emph{top} and \emph{bottom contacts} of $W$, respectively.

We attach the top of each wire $W$ to $t(W)$ and the bottom of $W$ to $b(W)$. The resulting topological space $\Delta$ is called a \emph{braided diagram over
$\mathcal{P}$} if the following additional conditions are satisfied:

\begin{enumerate}
\item If $W_{i}$, $W_{j} \in \mathcal{W}(\Delta)$, $t(W_{i}) = t(W_{j})$ only if $W_{i} = W_{j}$, and $b(W_{i}) = b(W_{j})$ only if $W_{i} = W_{j}$.
In other words, the disjoint union of all of the wires maps injectively into the quotient.
(We note that, by definition, one cannot have $t(W_{i}) = b(W_{j})$, even if $i = j$.)
\item We consider the top of some transistor $T \in \mathcal{T}(\Delta)$. Reading from left to right, we find contacts 
$$ b(W_{i_{1}}), b(W_{i_{2}}), \ldots, b(W_{i_{n}}),$$
where $n \geq 0$. The word $\ell_{t}(T) = \ell(W_{i_{1}}) \ell(W_{i_{2}}) \ldots \ell(W_{i_{n}})$ is called the \emph{top label of $T$}.
Similarly, reading from left to right along the bottom of $T$, we find contacts
$$ t(W_{j_{1}}), t(W_{j_{2}}), \ldots, t(W_{j_{m}}),$$
where $m \geq 0$. The word $\ell_{b}(T) = \ell(W_{j_{1}}) \ell(W_{j_{2}}) \ldots \ell(W_{j_{m}})$ is called the \emph{bottom label of $T$}.
We require that, for any $T \in \mathcal{T}(\Delta)$, either $(\ell_{t}(T), \ell_{b}(T)) \in \mathcal{R}$ or $(\ell_{b}(T), \ell_{t}(T)) \in \mathcal{R}$.
(We emphasize that it is not sufficient for $\ell_{t}(T)$ to be equivalent to $\ell_{b}(T)$ modulo the relation $\sim$ determined by $\mathcal{R}$.)
\item We define a relation $\preceq$ on $\mathcal{T}(\Delta)$ as follows. Write $T_{1} \preceq T_{2}$ if there is some wire $W$ such that
$t(W) \in T_{2}$ and $b(W) \in T_{1}$. We require that the transitive closure $\dot{\preceq}$ of $\preceq$ be a strict partial order on $\mathcal{T}(\Delta)$.
\end{enumerate}
\end{definition}

\begin{definition} \label{def: (a,b)diagrams}
Let $\Delta$ be a braided diagram over $\mathcal{P}$. Reading from left to right across the top of the frame $F(\Delta)$,
we find contacts 
$$ t(W_{i_{1}}), t(W_{i_{2}}), \ldots, t(W_{i_{n}}),$$
for some $n \geq 0$. The word $\ell(W_{i_{1}})\ell(W_{i_{2}}) \ldots \ell(W_{i_{n}}) = \ell_{t}(\Delta)$
is called the \emph{top label of $\Delta$}. We can similarly define the \emph{bottom label of $\Delta$}, $\ell_{b}(\Delta)$.
We say that $\Delta$ is a \emph{braided $(\ell_{t}(\Delta), \ell_{b}(\Delta))$-diagram over $\mathcal{P}$}.
\end{definition}

\begin{remark} \label{rem: caution} 
One should note that braided diagrams, despite the name, are not truly braided. In fact, two braided diagrams are equivalent
(see Definition \ref{def: equivalentdiagrams}) if there is a certain type of marked homeomorphism between them. Equivalence therefore doesn't depend
on any embedding into a larger space. Braided diagram groups (Definition \ref{def: diagramgroups}) also seem to have little in common 
with Artin's braid groups.  
\end{remark}

\begin{example}
Let $\mathcal{P} = \langle a, b, c \mid ab=ba, ac=ca, bc=cb \rangle$. Figure \ref{figure1} shows an example of a braided $(aabc, acba)$-diagram
over the semigroup presentation $\mathcal{P}$. The frame is the box formed by the dashed line. The wires that appear to cross in the figure do not
really touch, and it is unnecessary to specify which wire passes over the other one. See Remark \ref{rem: caution}.
\begin{center}
\begin{figure}[!h]
\includegraphics{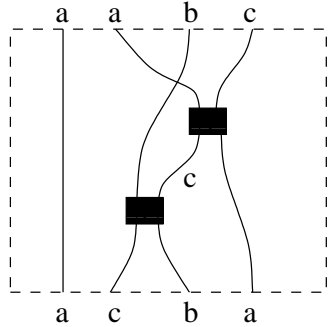}
\caption{A braided $(aabc, acba)$-diagram over the semigroup presentation $\mathcal{P} = \langle a, b, c \mid ac=ca, ab=ba, bc=cb \rangle$.}
\label{figure1}
\end{figure}
\end{center}
\end{example}

If $\Delta$ is a braided diagram over $\mathcal P$, then its \emph{inverse}  $\Delta^{-1}$ is defined in the obvious way: the picture is reflected across a horizontal line so that tops and bottoms are interchanged.

\begin{remark}
Suppose that $\Delta$ is a braided diagram over some semigroup presentation $\mathcal{P}$. Each transistor of $\Delta$ must have nonempty top and bottom labels,
by Definition \ref{def: braideddiagrams}. It also follows that the top and bottom labels of $\Delta$ itself are nonempty. Indeed, if $\Delta$ has a least one
transistor, then it will have at least one transistor $T$ that is maximal with respect to the strict partial order $\dot{\preceq}$. There is at least one wire
$W$ such that $b(W)$ is on the top of $T$. The only possibility for $t(W)$ is that it is on the top of the frame, so the top label of $\Delta$ is nonempty. Similarly,
the bottom label of $\Delta$ is nonempty. If $\Delta$ has no transistors, it will nevertheless have at least one wire $W$ by Definition \ref{def: braideddiagrams},
and the ends of $W$ will be attached to the bottom and top of the frame, making the top and bottom labels of $\Delta$ nonempty in this case as well. 
\end{remark}

\begin{definition}
(Concatenation of braided diagrams)
Let $\Delta_{1}$ and $\Delta_{2}$ be braided diagrams over $\mathcal{P}$. We suppose that $\Delta_{1}$ 
is a $(w_{1}, w_{2})$-diagram and $\Delta_{2}$ is a $(w_{2},w_{3})$-diagram. We define the concatenation $\Delta_{1} \circ \Delta_{2}$ 
as follows.

Suppose that $W_{i_{1}}, W_{i_{2}}, \ldots W_{i_{n}} \in \mathcal{W}(\Delta_{1})$ are the wires of $\Delta_{1}$ which meet the bottom of the frame
$F(\Delta_{1})$, listed in such a way that $b(W_{i_{p}})$ is to the left of $b(W_{i_{q}})$ if $p < q$.
We let $W_{j_{1}}, W_{j_{2}}, W_{j_{3}}, \ldots, W_{j_{n}} \in \mathcal{W}(\Delta_{2})$ be the wires of $\Delta_{2}$, similarly listed in the order that their 
top contacts are arranged from left to right 
on the frame $F(\Delta_{2})$. We note that $\ell(W_{i_{k}}) = \ell(W_{j_{k}})$ for
$k \in \{ 1, \ldots, n \}$ by our assumptions. Remove the bottom of $F(\Delta_{1})$ and the top of $F(\Delta_{2})$, and identify the top of the wire 
$W_{j_{k}}$ with the bottom of the wire $W_{i_{k}}$. Glue the point $(0,0) \in F(\Delta_{1})$ to $(0,1) \in F(\Delta_{2})$ and
$(1,0) \in F(\Delta_{1})$ to $(1,1) \in F(\Delta_{2})$. The resulting space is the concatenation $\Delta_{1} \circ \Delta_{2}$. 
There is a natural labelling function $\ell$ on the new collection of wires making $\Delta_{1} \circ \Delta_{2}$ a braided diagram over $\mathcal{P}$.
\end{definition}

\begin{definition}
(Dipoles)
Let $\Delta$ be a braided semigroup diagram over $\mathcal{P}$. We say that the transistors $T_{1}, T_{2} \in \mathcal{T}(\Delta)$, $T_{1} \, \dot{\preceq} \, T_{2}$,
form a \emph{dipole} if:
\begin{enumerate}
\item the bottom label of $T_{1}$ is the same as the top label of $T_{2}$, and
\item there are wires $W_{i_{1}}, W_{i_{2}}, \ldots, W_{i_{n}} (n \geq 1)$ such that the bottom contacts of  $T_{2}$, read from left to right, are
precisely $$t(W_{i_{1}}), t(W_{i_{2}}), \ldots, t(W_{i_{n}})$$
and the top contacts of $T_{1}$, read from left to right, are precisely
$$ b(W_{i_{1}}), b(W_{i_{2}}), \ldots, b(W_{i_{n}}).$$
\end{enumerate}
Define a new braided diagram as follows. Remove the transistors $T_{1}$ and $T_{2}$ and all of the wires $W_{i_{1}}, \ldots, W_{i_{n}}$ connecting the top
of $T_{1}$ to the bottom of $T_{2}$. Let $W_{j_{1}}, \ldots, W_{j_{m}}$ be the wires attached (in that order) 
to the top of $T_{2}$, and let $W_{k_{1}}, \ldots, W_{k_{m}}$ be the wires attached to the bottom of $T_{1}$. We glue the bottom of
$W_{j_{\ell}}$ to the top of $W_{k_{\ell}}$. There is a natural well-defined labelling function on the resulting wires, since 
$\ell(W_{j_{\ell}}) = \ell(W_{k_{\ell}})$ by our assumptions. We say that the new diagram $\Delta'$ is obtained from $\Delta$ by \emph{reducing the dipole 
$(T_{1}, T_{2})$}. The inverse operation is called \emph{inserting a dipole}.
\end{definition}

\begin{definition} \label{def: equivalentdiagrams}
(Equivalent Diagrams)
We say that two diagrams $\Delta_{1}$, $\Delta_{2}$ are \emph{equivalent} if there is a homeomorphism 
$\phi: \Delta_{1} \rightarrow \Delta_{2}$ that preserves the labels on the wires, restricts to a homeomorphism
$\phi_{|}: F(\Delta_{1}) \rightarrow F(\Delta_{2})$, preserves the tops and bottoms of the transistors and frame, and preserves
the left to right orientations on the transistors and the frame. We write $\Delta_{1} \equiv \Delta_{2}$.
\end{definition}

\begin{definition} \label{def: equivalentmodulodipoles}
(Equivalent Modulo Dipoles; Reduced Diagram)
We say that $\Delta$ and $\Delta'$ are \emph{equivalent modulo dipoles}
if there is a sequence $\Delta \equiv \Delta_{1},  \Delta_{2},  \ldots , \Delta_{n} \equiv \Delta'$,
where $\Delta_{i+1}$ is obtained from $\Delta_{i}$ by either inserting or removing a dipole, for $i \in \{ 1, \ldots, n-1 \}$.

A braided diagram $\Delta$ over a semigroup presentation is called \emph{reduced} if it contains no dipoles.
\end{definition}

\begin{example}
In Figure \ref{figure2}, we have two braided diagrams over the semigroup presentation $\mathcal{P} = \langle a, b, c \mid ab=ba, ac=ca, bc=cb \rangle$.
The two rightmost transistors in the diagram on the left form a dipole, and the diagram on the right is the result of reducing that dipole. 
\begin{center}
\begin{figure}[!h]
\includegraphics{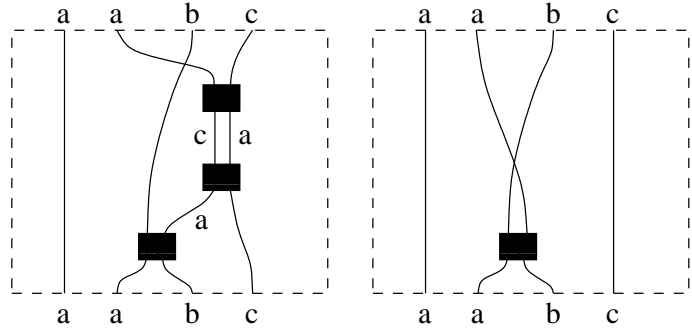}
\caption{The diagram on the right is obtained from the one on the left by reduction of a dipole.}
\label{figure2}
\end{figure}
\end{center}
\end{example}

\begin{proposition} \cite{Far3}
Equivalence modulo dipoles is an equivalence relation on the set of all braided diagrams over $\mathcal{P}$. Each equivalence class contains 
a unique reduced diagram. 
\end{proposition}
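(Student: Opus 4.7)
Both parts of the assertion fall out of a standard rewriting-theoretic analysis, together with a careful inspection of when two dipoles can overlap. Here is the plan.

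That the relation is an equivalence relation is essentially built into the definition. Reflexivity comes from the trivial length-one sequence; symmetry from reversing a sequence and noting that dipole insertion and removal are mutually inverse operations; and transitivity from concatenation of sequences.

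For the uniqueness claim, I would work with the one-sided reduction rewriting system whose single steps are dipole removals (no insertions). Existence of a reduced representative in each equivalence class follows from termination: each dipole removal deletes exactly two transistors, so any sequence of reductions starting at $\Delta$ has length at most $|\mathcal{T}(\Delta)|/2$. Uniqueness then reduces, via Newman's Lemma, to verifying local confluence: if $\Delta$ admits two distinct single-step reductions of dipoles $(T_1, T_2)$ and $(T_3, T_4)$, the two resulting diagrams $\Delta^{(1)}$ and $\Delta^{(2)}$ should possess a common further reduct.

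The verification of local confluence splits according to the intersection $\{T_1,T_2\}\cap\{T_3,T_4\}$. If this intersection is empty, the two reductions affect disjoint sets of transistors and wires, so they manifestly commute: $\Delta^{(1)}$ still contains the dipole $(T_3, T_4)$, and its removal gives the same result as removing $(T_1, T_2)$ from $\Delta^{(2)}$. Otherwise the dipoles share a transistor. The defining condition --- that \emph{all} top contacts of $T_1$ are paired (via dipole wires) with \emph{all} bottom contacts of $T_2$, and symmetrically for $(T_3, T_4)$ --- forces any shared extreme to determine the other: $T_1=T_3$ forces $T_2=T_4$ (since each wire on the top of $T_1$ has a unique $t$-contact, pinning down both $T_2$ and $T_4$), and similarly $T_2=T_4$ forces $T_1=T_3$. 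Also $T_1=T_4$ combined with $T_1\,\dot{\preceq}\,T_2$ and $T_3\,\dot{\preceq}\,T_4=T_1$ produces a chain $T_3\,\dot{\preceq}\,T_1\,\dot{\preceq}\,T_2$ in which $T_1$ is the middle transistor; the symmetric case $T_2=T_3$ yields a chain with $T_2$ in the middle. In either of these ``stacked'' subcases, chasing the labels through the two dipole conditions shows that the two non-shared transistors agree in both their top labels and their bottom labels. A direct inspection shows that each single-step reduction collapses the three-transistor chain down to just one of the two non-shared transistors, with identical external wire attachments in the same left-to-right order. Hence $\Delta^{(1)}\equiv\Delta^{(2)}$ already, with no further reduction needed.

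The main obstacle I anticipate is notational rather than conceptual: the wire-splicing bookkeeping in the shared-transistor subcase must be done carefully to confirm that the external wire permutations agree on the nose, and that the labels genuinely propagate as claimed through both dipole conditions. Once that is established, Newman's Lemma yields confluence of the reduction rewriting system and hence uniqueness of the reduced representative in each equivalence class.
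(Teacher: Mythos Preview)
Your argument is correct, and the approach via termination plus Newman's Lemma is the standard one. Note, however, that the paper does not actually prove this proposition: it simply cites \cite{Far3} for the result. So there is no ``paper's own proof'' to compare against here; the reference \cite{Far3} presumably contains an argument along the lines you sketch.

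One small clarification worth recording in the stacked subcase (say $T_1=T_4$, giving a chain $T_3\,\dot{\preceq}\,T_1\,\dot{\preceq}\,T_2$): the dipole conditions force the top and bottom labels of $T_1$ to be, respectively, the bottom label of $T_2$ and the top label of $T_2$, and simultaneously the bottom label of $T_3$ and the top label of $T_3$. Hence $T_2$ and $T_3$ have identical top and bottom labels. Removing $(T_1,T_2)$ splices the wires formerly on top of $T_2$ onto the top of $T_3$ (since the wires on the bottom of $T_1$ are precisely the dipole wires for $(T_3,T_1)$), while removing $(T_3,T_1)$ splices the wires formerly on the bottom of $T_3$ onto the bottom of $T_2$. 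In each case the surviving transistor has the same external attachments in the same order, so $\Delta^{(1)}\equiv\Delta^{(2)}$ as you claim. Your sketch already says this; I mention it only because the wire-splicing relies on the dipole wires of one pair being exactly the ``outer'' wires of the shared transistor for the other pair, which is the one point where a reader might pause.
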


\begin{theorem} \label{thm: braideddiagramgroups} \cite{Far3}
Let $\mathcal{P} = \langle \Sigma \mid \mathcal{R} \rangle$ be a semigroup presentation, and let $w \in \Sigma^{+}$. The set of all braided $(w,w)$-diagrams
over $\mathcal{P}$, modulo dipoles, forms a group $D_{b}(\mathcal{P}, w)$ under the operation of concatenation.
\end{theorem}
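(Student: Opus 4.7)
My plan is to verify the four group axioms for the set of equivalence classes of braided $(w,w)$-diagrams under concatenation, relying on the previously stated proposition that equivalence modulo dipoles is an equivalence relation admitting a unique reduced representative in each class.

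First, I would establish that concatenation descends to a well-defined binary operation on equivalence classes. The key observation is that any dipole in $\Delta_1$ or $\Delta_2$ remains a dipole in $\Delta_1 \circ \Delta_2$, because inserting or removing a dipole only involves transistors and wires local to a single factor, and the gluing of the bottom of $F(\Delta_1)$ to the top of $F(\Delta_2)$ does not create or destroy any adjacency relations within either factor. Thus if $\Delta_1 \equiv \Delta_1'$ and $\Delta_2 \equiv \Delta_2'$ modulo dipoles, a sequence of dipole moves in each factor can be lifted to a sequence of dipole moves in the concatenation, showing $\Delta_1 \circ \Delta_2 \equiv \Delta_1' \circ \Delta_2'$ modulo dipoles.

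Next, I would handle associativity and the identity. For associativity, the diagrams $(\Delta_1 \circ \Delta_2) \circ \Delta_3$ and $\Delta_1 \circ (\Delta_2 \circ \Delta_3)$ are equivalent in the sense of Definition~\ref{def: equivalentdiagrams} (the underlying labeled transistor-wire data is identical; only the placement of frames differs), so a fortiori they are equivalent modulo dipoles. For the identity, given $w = x_1 x_2 \cdots x_n \in \Sigma^+$, let $\varepsilon_w$ denote the braided $(w,w)$-diagram with no transistors and exactly $n$ vertical wires, where the $i$th wire is labeled $x_i$ and connects the $i$th contact from the left on the top of the frame to the $i$th contact from the left on the bottom of the frame. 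Concatenation with $\varepsilon_w$ on either side produces a diagram equivalent to the original, since the gluing simply extends each wire across the former seam without introducing new transistors.

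The main work, and the expected main obstacle, is proving the existence of inverses: that $[\Delta] \cdot [\Delta^{-1}] = [\varepsilon_w] = [\Delta^{-1}] \cdot [\Delta]$ in the quotient. I would prove this by induction on the number of transistors of $\Delta$. The base case, where $\Delta$ has no transistors, is immediate because $\Delta$ is then itself (equivalent to) $\varepsilon_w$ up to a relabeling permutation of the wires, and the concatenation with its reflection consists of wires whose endpoints match trivially. For the inductive step, I would choose a transistor $T$ of $\Delta$ that is maximal with respect to $\dot{\preceq}$, so that every wire meeting the top of $T$ runs directly to the top of the frame $F(\Delta)$. In $\Delta \circ \Delta^{-1}$, the transistor $T$ sits directly above its mirror image $T'$ in $\Delta^{-1}$, and the wires joining them form precisely the configuration required in the definition of a dipole: the top label of $T'$ equals the bottom label of $T$ and the wires emerging from the bottom of $T$ pass through the gluing region to the top of $T'$ in the correct left-to-right order. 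Reducing this dipole yields a concatenation $\Delta_0 \circ \Delta_0^{-1}$, where $\Delta_0$ is $\Delta$ with $T$ removed and its top wires extended down to the bottom frame; by construction $\Delta_0$ has one fewer transistor than $\Delta$, so the inductive hypothesis applies. The symmetric argument handles $\Delta^{-1} \circ \Delta$. The delicate point requiring care is verifying that the maximal transistor choice really does produce a genuine dipole in the concatenation, which relies on the acyclicity condition (3) of Definition~\ref{def: braideddiagrams} to guarantee the existence of such a maximal $T$ and to ensure its top wires all reach the frame.
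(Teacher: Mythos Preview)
The paper does not prove this theorem; it is simply cited from \cite{Far3}. Your outline is the standard argument and is essentially correct, but there is one concrete error in the inverse step that would cause the proof as written to fail.

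You choose a transistor $T$ that is \emph{maximal} for $\dot{\preceq}$, correctly noting that the wires on the \emph{top} of $T$ then run to the top of $F(\Delta)$. You then assert that ``the wires emerging from the bottom of $T$ pass through the gluing region to the top of $T'$.'' This does not follow: for a maximal $T$, the wires attached to the \emph{bottom} of $T$ may terminate on other transistors of $\Delta$, not on the bottom of the frame, so in $\Delta \circ \Delta^{-1}$ there is no reason for $T$ and its mirror $T'$ to be adjacent. In fact $T$ sits at the very top of the concatenation and $T'$ at the very bottom, with all the other transistors in between.

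The fix is to choose $T$ \emph{minimal} for $\dot{\preceq}$. Then the wires on the bottom of $T$ run directly to the bottom of $F(\Delta)$, i.e., to the seam; the mirror $T'$ is maximal in $\Delta^{-1}$, so the wires on the top of $T'$ run to the top of $F(\Delta^{-1})$, again the seam. Because reflection preserves left--right order, these wires match up in the correct order, and $(T',T)$ is a genuine dipole. Reducing it yields $\Delta_0 \circ \Delta_0^{-1}$, where $\Delta_0$ is obtained from $\Delta$ by deleting $T$ and extending the wires formerly attached to the top of $T$ down to the bottom of the frame; note that $\Delta_0$ is a $(w,w')$-diagram for some possibly different $w'$, so the inductive hypothesis should be stated for arbitrary braided diagrams rather than only $(w,w)$-diagrams. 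With this correction the induction goes through; the base case and the well-definedness, associativity, and identity arguments are fine as you have them.
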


\begin{definition} \label{def: diagramgroups}
We call $D_{b}(\mathcal{P}, w)$ the \emph{braided diagram group over $\mathcal{P}$ based at $w$}. 
\end{definition}

%%%%%%%%%%%%%%%%%%%%%%%%%%%%%%%%%%%%
\subsection*{The isomorphism theorem}
%%%%%%%%%%%%%%%%%%%%%%%%%%%%%%%%%%%%%%%

\begin{definition} \label{def: treelike}
A semigroup presentation $\mathcal{P} = \langle \Sigma \mid \mathcal{R} \rangle$ is \emph{tree-like} if,
\begin{enumerate}
\item every relation $(w_{1}, w_{2}) \in \mathcal{R}$ satisfies $|w_{1}| = 1$ and $|w_{2}| > 1$;
\item if $(a, w_{1}), (a, w_{2}) \in \mathcal{R}$, then $w_{1} \equiv w_{2}$.
\end{enumerate}
\end{definition}

By a \emph{linearly ordered ultrametric space} we mean an ultrametric metric space $X$ with a linear order such that whenever $B_1$ and $B_2$ are disjoint balls in $X$ with
some point of $B_1$ less than some point of $B_2$, then every point of $B_1$ is less than every point of $B_2$. Thus, there is an induced linear order on any collection of disjoint balls in $X$. 

\begin{definition} \label{def: smallsim}
Let $X$ be a linearly ordered compact ultrametric space. Let $\sm_{X}$ be a finite similarity structure
on $X$ such that for every pair of balls $B_1, B_2$ in $X$, the following two conditions hold:
\begin{enumerate}
\item $|\sm_{X}(B_{1},B_{2})| \leq 1$, and
\item each $h \in \sm_{X}(B_{1}, B_{2})$ is order-preserving. 
\end{enumerate}
We say that $\sm_{X}$ is a \emph{small}
similarity structure.
\end{definition}

\begin{definition} \label{def: psimx}
Let $X$ be a linearly ordered compact ultrametric space with a small similarity structure $\sm_{X}$.
Define a semigroup presentation $\mathcal{P}_{\sm_{X}} = \langle \Sigma \mid \mathcal{R} \rangle$
as follows. Let 
$$ \Sigma = \{ [B] \mid B \text{ is a ball in } X \}.$$
(Recall that $[B]$ is the $\sm_{X}$-class of the ball $B \subseteq X$.)
If $B \subseteq X$ is a ball, let $B_{1}, \ldots, B_{n}$ be the maximal proper subballs
of $B$, listed in order. If $B$ is a point, then $n=0$. We set
$$ \mathcal{R} = \{ ( [B], [B_{1}][B_{2}]\ldots[B_{n}]) \mid n \geq 1, B \text{ is a ball in } X \}.$$
\end{definition}

\begin{remark} \label{rem: psimx}
We note that $\mathcal{P}_{\sm_{X}}$ will always be a tree-like semigroup presentation, for any choice of
linearly ordered compact ultrametric space $X$ and small similarity structure $\sm_{X}$.
\end{remark}

\begin{theorem} \label{thm: bigisomorphism}
If $X$ is a linearly ordered compact ultrametric space with a small similarity structure $\sm_{X}$,
then 
$$ \Gamma(\sm_{X}) \cong D_{b}(\mathcal{P}_{\sm_{X}}, [X]).$$
Conversely, if $\mathcal{P} = \langle \Sigma \mid \mathcal{R} \rangle$ is a tree-like semigroup presentation,
and $x \in \Sigma$, then there is a linearly ordered compact ultrametric space $X_{\mathcal{P}}$ and a small
finite similarity structure $\sm_{X_{\mathcal{P}}}$ such that
$$D_{b}(\mathcal{P}, x) \cong \Gamma(\sm_{X_{\mathcal{P}}}).$$  
\end{theorem}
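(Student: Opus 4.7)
The plan is to prove both directions by constructing explicit isomorphisms from group-theoretic data to diagram data.

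For the forward direction, I will associate to each $\gamma \in \Gamma(\sm_X)$ a braided $([X],[X])$-diagram $\Delta_\gamma$ over $\mathcal{P}_{\sm_X}$ as follows. Since $\gamma$ is locally determined by $\sm_X$ and $X$ is compact, there exist partitions $\mathcal{P}_1 = \{B_1 < \cdots < B_m\}$ and $\mathcal{P}_2 = \{B'_1, \ldots, B'_m\}$ of $X$ into balls and a bijection $\sigma$ so that $\gamma|_{B_i}$ is the unique element of $\sm_X(B_i, B'_{\sigma(i)})$; smallness of $\sm_X$ makes this data canonical once the partitions are fixed. The diagram $\Delta_\gamma$ is built by a sequence of ``top'' transistors that successively expand $\{X\}$ to $\mathcal{P}_1$, each transistor realizing a relation $([B], [C_1]\cdots[C_n]) \in \mathcal{R}$ coming from the maximal proper subballs of some $B$, followed by $m$ vertical wires labeled $[B_1],\ldots,[B_m]$ permuted by $\sigma$, and then ``bottom'' transistors that contract $\mathcal{P}_2$ back to $\{X\}$ by inverse relations. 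Refining either partition is easily seen to introduce cancelling dipoles, so $\Delta_\gamma$ is well-defined modulo dipoles.

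I will then verify the homomorphism property by passing to a common partition refinement when composing: for $\gamma_1,\gamma_2 \in \Gamma(\sm_X)$, choose partitions such that $\gamma_2$ sends a chosen partition $\mathcal Q$ to $\mathcal P_1$, so that the concatenation $\Delta_{\gamma_1} \circ \Delta_{\gamma_2}$ and $\Delta_{\gamma_1\gamma_2}$ share the same middle data, and all identifications are forced by the smallness condition $|\sm_X(B,B')| \le 1$. Injectivity follows because if $\Delta_\gamma$ reduces to the trivial diagram on $[X]$, then $\gamma$ agrees with the identity on each ball of some common partition, hence on all of $X$. Surjectivity follows by reading off partitions and a matching from a given reduced diagram: a diagram specifies two partitions of $X$ into balls with matching $\sm_X$-equivalence data along with a permutation, and smallness then selects a unique $\gamma \in \Gamma(\sm_X)$ realizing this data.

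For the converse, given a tree-like presentation $\mathcal{P} = \langle \Sigma \mid \mathcal{R}\rangle$ and $x \in \Sigma$, I will build a rooted ordered labeled tree $T_\mathcal{P}$ as follows. The root is labeled $x$; at any vertex labeled $a$, if there is a relation $(a, a_1\cdots a_n) \in \mathcal{R}$ (unique by tree-likeness), attach $n$ ordered children labeled $a_1, \ldots, a_n$ in order; otherwise the vertex is a leaf. The space $X_\mathcal{P}$ is the set consisting of all ends of $T_\mathcal{P}$ together with all terminal leaves, endowed with the natural compact ultrametric (where the balls are precisely the ``subtree closures'' of vertices) and the lexicographic linear order coming from the ordering of children. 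Define $\sm_{X_\mathcal{P}}(B_1,B_2)$ to contain the unique label-preserving, order-preserving bijection when the vertices corresponding to $B_1$ and $B_2$ share the same $\Sigma$-label, and to be empty otherwise; one checks routinely that this is a small finite similarity structure, and that the resulting presentation $\mathcal{P}_{\sm_{X_\mathcal{P}}}$ is canonically isomorphic to the sub-presentation of $\mathcal{P}$ generated by letters reachable from $x$. Applying the forward direction then yields $D_b(\mathcal{P}, x) \cong \Gamma(\sm_{X_{\mathcal{P}}})$.

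The main obstacle will be the bookkeeping required to prove that the assignment $\gamma \mapsto \Delta_\gamma$ is a well-defined homomorphism: one must systematically show that passing to finer partitions corresponds to inserting dipoles and that concatenation of diagrams matches composition of homeomorphisms. The smallness hypothesis on $\sm_X$ is essential at both points, since it is what lets a combinatorial diagram determine a unique homeomorphism (every pair of equivalence classes admits at most one similarity) and what forces two diagrams coming from different partition choices to differ only by dipole insertions.
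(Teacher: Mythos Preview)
Your proposal is correct and follows essentially the same route as the paper: defining triples $(\mathcal{P}_1,\mathcal{P}_2,\phi)$ are sent to diagrams of the form $\Delta_{\mathcal{P}_2}\circ\Delta_{\phi^{-1}}\circ\Delta_{\mathcal{P}_1}^{-1}$, well-definedness is checked via subdivision (dipole insertion), and the converse is handled by building the labelled ordered tree on $x$, taking its end space, and reading the small similarity structure off the labels. The only cosmetic difference is orientation: the paper places $\mathcal{P}_2$ at the top and $\mathcal{P}_1$ at the bottom of the diagram, whereas you describe it the other way around, but this is immaterial.
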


\begin{proof}
If $\gamma \in \Gamma(\sm_{X})$, then there are partitions $\mathcal{P}_{1}$, $\mathcal{P}_{2}$ of $X$ into balls,
and a bijection $\phi: \mathcal{P}_{1} \rightarrow \mathcal{P}_{2}$ such that, for any $B \in \mathcal{P}_{1}$,
$\gamma(B) = \phi(B)$ and $\gamma|_{B} \in \sm_{X}(B, \gamma(B))$.
Since $|\sm_{X}(B,\gamma(B))| \leq 1$, the triple $(\mathcal{P}_{1}, \mathcal{P}_{2}, \phi)$ determines $\gamma$ without
ambiguity. We call $(\mathcal{P}_{1}, \mathcal{P}_{2}, \phi)$ a \emph{defining triple} for $\gamma$. Note that a given $\gamma$
will usually have many defining triples. Let $\mathcal{D}$ be the set of all defining triples, for $\gamma$ running over all of
$\Gamma(\sm_{X})$.

We will now define a map $\psi: \mathcal{D} \rightarrow D_{b}(\mathcal{P}_{\sm_{X}}, [X])$. To a partition $\mathcal{P}$
of $X$ into balls, we first assign a braided diagram $\Delta_{\mathcal{P}}$ over $\mathcal{P}_{\sm_{X}}$. There is a transistor 
$T_{B} \in \mathcal{T}(\Delta_{\mathcal{P}})$ for each ball $B$ which properly contains some ball of $\mathcal{P}$. There is a wire
$W_{B} \in \mathcal{W}(\Delta_{\mathcal{P}})$ for each ball $B$ which contains a ball of $\mathcal{P}$. The wires are attached as follows:
\begin{enumerate}
\item If $B=X$, then we attach the top of $W_{B}$ to the top of the frame.
 If $B \neq X$, then the top of the wire $W_{B}$ is attached to the bottom of the transistor $T_{\widehat{B}}$, where $\widehat{B}$ is the (unique) ball
that contains $B$ as a maximal proper subball.

Moreover, we attach the wires in an ``order-respecting" fashion. Thus, if $\widehat{B}$ is a ball properly containing balls of $\mathcal{P}$, we let 
$B_{1}, B_{2}, \ldots, B_{n}$ be the collection of maximal proper subballs of $\widehat{B}$, listed in order. We attach the wires $W_{B_{1}}, W_{B_{2}}, \ldots,
W_{B_{n}}$ so that $t(W_{B_{i}})$ is to the left of $t(W_{B_{j}})$ on the bottom of $T_{\widehat{B}}$ if $i < j$.

\item The bottom of the wire $W_{B}$ is attached to the top of $T_{B}$ if $B$ properly contains a ball of $\mathcal{P}$. If not (i.e., if $B \in \mathcal{P}$),
then we attach the bottom of $W_{B}$ to the bottom of the frame. We can arrange, moreover, that the wires are attached in an order-respecting manner to the
bottom of the frame. (Thus, if $B_{1} < B_{2}$ ($B_{1}, B_{2} \in \mathcal{P}$), we have that $b(W_{B_{1}})$ is to the left of $b(W_{B_{2}})$.)
\end{enumerate}
The labelling function $\ell: \mathcal{W}(\Delta_{\mathcal{P}}) \rightarrow \Sigma$ sends $W_{B}$ to $[B]$. It is straightforward to check that the
resulting $\Delta_{\mathcal{P}}$ is a braided diagram over $\mathcal{P}_{\sm_{X}}$. The top label of $\Delta_{\mathcal{P}}$ is $[X]$.

Given a bijection $\phi: \mathcal{P}_{1} \rightarrow \mathcal{P}_{2}$, where $\mathcal{P}_{1}$ and $\mathcal{P}_{2}$ are partitions of $X$ into balls
and $[B] = [\phi(B)]$, we can define a braided diagram $\Delta_{\phi}$ over $\mathcal{P}_{\sm_{X}}$ as follows. We let 
$\mathcal{T}(\Delta_{\phi}) = \emptyset$, and $\mathcal{W}(\Delta_{\phi}) = \{ W_{B} \mid B \in \mathcal{P}_{1} \}$.  
We attach the top of each wire to the frame in such a way that $t(W_{B_{1}})$ is to the left of $t(W_{B_{2}})$ if $B_{1} < B_{2}$. We attach the
bottom of each wire to the bottom of the frame in such a way that $b(W_{B_{1}})$ is to the left of $b(W_{B_{2}})$ if $\phi(B_{1}) < \phi(B_{2})$.

Now, for a defining triple $(\mathcal{P}_{1}, \mathcal{P}_{2}, \phi) \in \mathcal{D}$, we set
$\psi( (\mathcal{P}_{1}, \mathcal{P}_{2}, \phi)) = \Delta_{\mathcal{P}_{2}} \circ \Delta_{\phi^{-1}} \circ \Delta_{\mathcal{P}_{1}}^{-1}
\in \mathcal{D}_{b}(\mathcal{P}_{\sm_{X}}, [X])$.

We claim that any two defining triples $(\mathcal{P}_{1}, \mathcal{P}_{2}, \phi)$, $(\mathcal{P}_{1}', \mathcal{P}_{2}', \phi')$ 
for a given $\gamma \in \Gamma(\sm_{X})$
have the same image
in $D_{b}(\mathcal{P}_{\sm_{X}}, [X])$, modulo dipoles. We begin by proving an intermediate statement. Let $(\mathcal{P}_{1}, \mathcal{P}_{2}, \phi)$
be a defining triple. Let $B \in \mathcal{P}_{1}$, and let $\widehat{B}_{1}, \ldots, \widehat{B}_{n}$ be the collection of maximal proper subballs of $B$, listed
in order. We let $B' = \phi(B)$ and let $\widehat{B}'_{1}, \ldots, \widehat{B}'_{n}$ be the collection of maximal proper subballs of $B'$. (Note that
$[B'] = [B]$ by our assumptions, so both have the same number of maximal proper subballs, and in fact $[\widehat{B}_{i}] = [\widehat{B}'_{i}]$ for $i = 1, \ldots, n$,
since $\gamma|_{B} \in \sm_{X}(B, B')$ and the elements of $\sm_{X}(B, B')$ preserve order.) We set 
$\widehat{\mathcal{P}}_{1} = (\mathcal{P}_{1} - \{ B \}) \cup \{ \widehat{B}_{1}, \ldots, \widehat{B}_{n} \}$,
$\widehat{\mathcal{P}}_{2} = (\mathcal{P}_{2} - \{ B' \}) \cup \{ \widehat{B}'_{1}, \ldots, \widehat{B}'_{n} \}$,
and $\widehat{\phi}|_{\mathcal{P}_{1} - \{ B \}} = \phi|_{\mathcal{P}_{1} - \{ B \}}$, 
$\widehat{\phi} (\widehat{B}_{i}) = \widehat{B}'_{i}$. We say that $(\widehat{\mathcal{P}}_{1}, \widehat{\mathcal{P}}_{2}, \widehat{\phi})$
is obtained from  $(\mathcal{P}_{1}, \mathcal{P}_{2}, \phi)$ by \emph{subdivision} at $(B, B')$.
We claim that $\psi( (\widehat{\mathcal{P}}_{1}, \widehat{\mathcal{P}}_{2}, \widehat{\phi}))$ is obtained from
$\psi( (\mathcal{P}_{1}, \mathcal{P}_{2}, \phi))$ by inserting a dipole.

We can build $\Delta_{\widehat{\mathcal{P}}_{1}}$ from $\Delta_{\mathcal{P}_{1}}$ as follows.
The bottom of the wire $W_{B}$ is attached to the bottom of the frame in $\Delta_{\mathcal{P}_{1}}$. 
We introduce a new transistor $T_{B}$ and $n$ new wires $W_{\widehat{B}_{1}}, \ldots, W_{\widehat{B}_{n}}$. We attach the tops of
the wires $W_{\widehat{B}_{1}}, \ldots, W_{\widehat{B}_{n}}$ to the bottom of $T_{B}$ (in order). Let $(a,b)$ be an open neighborhood
of $b(W_{B}) \in F(\Delta_{\mathcal{P}_{1}})$ containing no other contacts of wires from $\Delta_{\mathcal{P}_{1}}$. We choose a new
$b(W_{B})$ on the top of $T_{B}$, and attach the bottoms of the wires $W_{\widehat{B}_{1}}, \ldots, W_{\widehat{B}_{n}}$ inside the interval 
$(a,b)$, while preserving the left to right order. The resulting diagram is $\Delta_{\widehat{P}_{1}}$. We can similarly build $\Delta_{\widehat{\mathcal{P}}_{2}}$
from $\Delta_{\mathcal{P}_{2}}$.

We construct $\Delta_{\widehat{\phi}}$ from $\Delta_{\phi}$ as follows. We consider the wire $W_{B}$; let $(a,b)$ be an open interval in the top of 
$F(\Delta_{\phi})$ containing $t(W_{B})$, but no other contacts, and let $(c,d)$ be an open interval in the bottom of $F(\Delta_{\phi})$ containing
$b(W_{B})$, but no other contacts. We remove the wire $W_{B}$ and introduce the wires $W_{\widehat{B}_{1}}, \ldots, W_{\widehat{B}_{n}}$.
We attach the tops of the new wires to $(a,b)$ in such a way that $t(W_{\widehat{B}_{i}})$ is to the left of $t(W_{\widehat{B}_{j}})$ if $i<j$.
Attach the bottoms of the wires to $(c,d)$ so that $b(W_{\widehat{B}_{i}})$ is to the left of $b(W_{\widehat{B}_{j}})$. The
resulting diagram is $\Delta_{\widehat{\phi}}$.

We can now compare $\psi((\mathcal{P}_{1}, \mathcal{P}_{2}, \phi))$ with $\psi((\widehat{\mathcal{P}}_{1}, \widehat{\mathcal{P}}_{2}, \widehat{\phi}))$.
Assume $b(W_{B})$ is the $k$th contact from the left on the bottom of $F(\Delta_{\mathcal{P}_{1}})$, and 
$b(W_{B'})$ is the $\ell$th contact from the left on the bottom of $F(\Delta_{\mathcal{P}_{2}})$. There is a wire
$W \in \mathcal{W}(\Delta_{\phi^{-1}})$ running from the $\ell$th top contact of $\Delta_{\phi^{-1}}$ to the bottom $k$th contact, since $\phi(B) = B'$. The
wire $W_{\widehat{B}'_{i}} \in \mathcal{W}(\Delta_{\widehat{P}_{2}} \circ \Delta_{\widehat{\phi}^{-1}} \circ \Delta_{\widehat{\mathcal{P}}_{1}^{-1}})$ will
run from the bottom of $T_{B'}$ to the $(\ell + i - 1)$st contact at the bottom of $\Delta_{\widehat{P}_{2}}$, through the $(k+i-1)$st contact at the bottom
of $\Delta_{\widehat{\phi}^{-1}}$, and eventually terminate at the $i$th top contact of $T_{B}$. Thus, the wires $W_{\widehat{B}'_{i}}$ leading from the bottom
of $T_{B'}$ to the top of $T_{B}$ are attached in the same order at both ends. It follows that $(T_{B}, T_{B'})$ is a dipole in $\Delta_{\widehat{\mathcal{P}}_{2}}
\circ \Delta_{\widehat{\phi}^{-1}} \circ \Delta_{\widehat{\mathcal{P}}_{1}}^{-1}$. Removing the dipole results in 
$\Delta_{\mathcal{P}_{2}} \circ \Delta_{\widehat{\phi}^{-1}} \circ \Delta_{\mathcal{P}_{1}}^{-1}$.
This proves that $\psi((\widehat{\mathcal{P}}_{1}, \widehat{\mathcal{P}}_{2}, \widehat{\phi}))$ is obtained from 
$\psi((\mathcal{P}_{1}, \mathcal{P}_{2}, \phi))$ by inserting a dipole, as claimed.

Now suppose that $(\mathcal{P}_{1}, \mathcal{P}_{2}, \phi)$ and $(\mathcal{P}'_{1}, \mathcal{P}'_{2}, \phi')$ are defining triples
for the same element $\gamma \in \Gamma(\sm_{X})$. We can find a common refinement $\mathcal{P}''_{1}$ of $\mathcal{P}_{1}$ and $\mathcal{P}'_{1}$.
After repeating subdivision we can pass from $(\mathcal{P}_{1}, \mathcal{P}_{2}, \phi)$ to $(\mathcal{P}''_{1}, \widehat{P}_{2}, \widehat{\phi})$
(for some partition $\widehat{\mathcal{P}}_{2}$ of $X$ into balls and some bijection $\widehat{\phi}: \mathcal{P}''_{1} \rightarrow \widehat{\mathcal{P}}_{2}$).
Since subdivision doesn't change the values of $\psi$ modulo dipoles, $\psi((\mathcal{P}_{1}, \mathcal{P}_{2}, \phi)) = 
\psi((\mathcal{P}''_{1}, \widehat{\mathcal{P}}_{2}, \widehat{\phi}))$ modulo dipoles. Similarly, we can subdivide
$(\mathcal{P}'_{1}, \mathcal{P}'_{2}, \phi')$ repeatedly in order to obtain $(\mathcal{P}''_{1}, \widehat{\mathcal{P}}'_{2}, \widehat{\phi}')$, where
$\psi((\mathcal{P}'_{1}, \mathcal{P}'_{2}, \phi')) = \psi((\mathcal{P}''_{1}, \widehat{\mathcal{P}}'_{2}, \widehat{\phi}'))$ modulo dipoles.
Both $(\mathcal{P}''_{1}, \widehat{\mathcal{P}}'_{2}, \widehat{\phi}')$
and $(\mathcal{P}'_{1}, \widehat{\mathcal{P}}_{2}, \widehat{\phi})$ are defining triples for $\gamma$, so we are forced to have
$\widehat{\phi} = \widehat{\phi}'$ and $\widehat{\mathcal{P}}_{2} = \widehat{\mathcal{P}}'_{2}$.
It follows that $\psi((\mathcal{P}_{1}, \mathcal{P}_{2}, \phi)) = \psi((\mathcal{P}'_{1}, \mathcal{P}'_{2}, \phi'))$, so $\psi$ induces a function
from $\Gamma(\sm_{X})$ to $D_{b}(\mathcal{P}_{\sm_{X}}, [X])$. We will call this function $\widehat{\psi}$.

Now we will show that $\widehat{\psi}: \Gamma(\sm_{X}) \rightarrow D_{b}(\mathcal{P}_{\sm_{X}}, [X])$ is a homomorphism. Let $\gamma, \gamma' \in
\Gamma(\sm_{X})$. After subdividing as necessary, we 
can choose defining triples $(\mathcal{P}_{1}, \mathcal{P}_{2}, \phi)$ and $(\mathcal{P}'_{1}, \mathcal{P}'_{2}, \phi')$ for 
$\gamma$ and $\gamma'$ (respectively) in such a way that $\mathcal{P}_{2} = \mathcal{P}'_{1}$. It follows easily that 
$(\mathcal{P}_{1}, \mathcal{P}'_{2}, \phi' \phi)$ is a defining triple for $\gamma' \gamma$. Therefore, 
$\widehat{\psi}(\gamma' \gamma) = \Delta_{\mathcal{P}'_{2}} \circ \Delta_{(\phi' \phi)^{-1}} \circ \Delta^{-1}_{\mathcal{P}_{1}}$.
Now 
\begin{align*}
\widehat{\psi}(\gamma') \circ \widehat{\psi}(\gamma) &= \Delta_{\mathcal{P}'_{2}} \circ \Delta_{(\phi')^{-1}} \circ \Delta^{-1}_{\mathcal{P}'_{1}} \circ
\Delta_{\mathcal{P}_{2}} \circ \Delta_{\phi^{-1}} \circ \Delta^{-1}_{\mathcal{P}_{1}} \\
&= \Delta_{\mathcal{P}'_{2}} \circ \Delta_{(\phi')^{-1}} \circ \Delta_{\phi^{-1}} \circ \Delta^{-1}_{\mathcal{P}_{1}} \\
&= \Delta_{\mathcal{P}'_{2}} \circ \Delta_{(\phi'\phi)^{-1}} \circ \Delta^{-1}_{\mathcal{P}_{1}}
\end{align*}
Therefore, $\widehat{\psi}$ is a homomorphism. 
   
We now show that $\widehat{\psi}: \Gamma(\sm_{X}) \rightarrow D_{b}(\mathcal{P}_{\sm_{X}}, [X])$ is injective. Suppose that $\widehat{\psi}(\gamma) = 1$.
We choose a defining triple $(\mathcal{P}_{1}, \mathcal{P}_{2}, \phi)$ for $\gamma$ with the property that, if $B \subseteq X$ is a ball, $\gamma(B)$ is a ball,
and $\gamma|_{B} \in \sm_{X}(B, \gamma(B))$, then $B$ is contained in some ball of $\mathcal{P}_{1}$. We claim that 
$\psi((\mathcal{P}_{1}, \mathcal{P}_{2}, \phi))$ is a reduced diagram. If there were a dipole $(T_{1}, T_{2})$, then we would have
$T_{1} \in \mathcal{T}(\Delta_{\mathcal{P}_{1}}^{-1})$ and $T_{2} \in \mathcal{T}(\Delta_{\mathcal{P}_{2}})$, since it is impossible for $\Delta_{\mathcal{P}}$
to contain any dipoles, for any partition $\mathcal{P}$ of $X$ into balls. Thus $T_{1} = T_{B_{1}}$ and $T_{2} = T_{B_{2}}$, where $[B_{1}] = [B_{2}]$ and
the wires from the bottom of $T_{B_{2}}$ attach to the top of $T_{B_{1}}$, in order.
This means that, if $\widehat{B}_{1}, \ldots, \widehat{B}_{n}$ are the maximal proper subballs of $B_{1}$, 
and $\widehat{B}'_{1}, \ldots, \widehat{B}'_{n}$ are the maximal proper subballs of $B_{2}$, then $\gamma(\widehat{B}_{i}) = \widehat{B}'_{i}$, where the latter
is a ball, and $\gamma|_{\widehat{B}_{i}} \in \sm_{X}(\widehat{B}_{i}, \widehat{B}'_{i})$.

Now, since $[B_{1}] = [B_{2}]$, there is $h \in \sm_{X}(B_{1}, B_{2})$. Since $\sm_{X}$ is closed under restrictions and $h$ preserves order, 
we have $h_{i} \in \sm_{X}( \widehat{B}_{i}, \widehat{B}'_{i})$ for $i = 1, \ldots, n$, where $h_{i} = h|_{\widehat{B}_{i}}$. It follows that
$\gamma|_{\widehat{B}_{i}} = h_{i}$, so, in particular, $\gamma|_{B_{1}} = h$.
Since $B_{1}$ properly contains some ball in $\mathcal{P}_{1}$, this is a contradiction. Thus, $\psi((\mathcal{P}_{1}, \mathcal{P}_{2}, \phi))$
is reduced.

We claim that $\psi((\mathcal{P}_{1}, \mathcal{P}_{2}, \phi))$ contains no transistors (due to the condition $\widehat{\psi}(\gamma) = 1$).
We've shown that $\psi((\mathcal{P}_{1}, \mathcal{P}_{2}, \phi))$ is a reduced diagram in the same class as the identity
$1 \in D_{b}(\mathcal{P}_{\sm_{X}}, [X])$. The identity can be represented as the (unique) $([X], [X])$-diagram $\Delta_{1}$ with only a single wire,
$W_{X}$, and no transistors. We must have $\psi((\mathcal{P}_{1}, \mathcal{P}_{2}, \phi)) \equiv \Delta_{1}$. Thus,
there is no ball that properly contains a ball of $\mathcal{P}_{1}$. It can only be that $\mathcal{P}_{1} = \{ X \}$, so we must have
$\gamma \in \sm_{X}(X,X)$. This forces $\gamma = 1$, so $\widehat{\psi}$ is injective.

Finally we must show that $\widehat{\psi}: \Gamma(\sm_{X}) \rightarrow D_{b}(\mathcal{P}_{\sm_{X}}, [X])$ is surjective. Let
$\Delta$ be a reduced $([X],[X])$-diagram over $\mathcal{P}_{\sm_{X}}$. A transistor $T \in \mathcal{T}(\Delta)$ is called \emph{positive}
if its top label is the left side of a relation in $\mathcal{P}_{\sm_{X}}$, otherwise (i.e., if the top label is the right side of a relation
in $\mathcal{P}_{\sm_{X}}$) the transistor $T$ is \emph{negative}. It is easy to see that the sets of positive and negative transistors partition
$\mathcal{T}(\Delta)$. We claim that, if $\Delta$ is reduced, then we cannot have $T_{1} \dot{\preceq} T_{2}$ when $T_{1}$ is positive and $T_{2}$
is negative. If we had such $T_{1} \dot{\preceq} T_{2}$, then we could find $T'_{1} \preceq T'_{2}$, where $T'_{1}$ is positive and $T'_{2}$ is negative.
Since $T'_{1}$ is positive, there is only one wire $W$ attached to the top of $T'_{1}$. This wire must be attached to the bottom of $T'_{2}$, since
$T'_{1} \preceq T'_{2}$, and it must be the only wire attached to the bottom of $T'_{2}$, since $T'_{2}$ is negative and $\mathcal{P}_{\sm_{X}}$
is a tree-like semigroup presentation by \ref{rem: psimx}. Suppose that $\ell(w) = [B]$. By the definition of $\mathcal{P}_{\sm_{X}}$, $[B]$ is the left
side of exactly one relation, namely $([B], [B_{1}][B_{2}]\ldots[B_{n}])$, where the $B_{i}$ are maximal proper subballs of $B$, listed in order. It follows
that the bottom label of $T'_{1}$ is $[B_{1}][B_{2}]\ldots[B_{n}]$ and the top label of $T'_{2}$ is $[B_{1}][B_{2}]\ldots[B_{n}]$. Therefore
$(T'_{1}, T'_{2})$ is a dipole. This proves the claim.

A diagram over $\mathcal{P}_{\sm_{X}}$ is \emph{positive} if all of its transistors are positive, and \emph{negative} if all of its transistors are negative.
We note that $\Delta$ is positive if and only if $\Delta^{-1}$ is negative, by the description of inverses in the proof of Theorem \ref{thm: braideddiagramgroups}.
The above reasoning shows that any reduced $([X], [X])$-diagram over $\mathcal{P}_{\sm_{X}}$ can be written
$\Delta = \Delta^{+}_{1} \circ \left( \Delta_{2}^{+} \right)^{-1}$, where $\Delta^{+}_{i}$ is a positive diagram for $i = 1,2$.

We claim that any positive diagram $\Delta$ over $\mathcal{P}_{\sm_{X}}$ with top label $[X]$ is $\Delta_{\mathcal{P}}$ (up to a reordering of the bottom 
contacts), where $\mathcal{P}$ is some partition of $X$. There is a unique wire $W \in \mathcal{W}(\Delta)$ making a top contact with the frame. We call
this wire $W_{X}$. Note that its label is $[X]$ by our assumptions. The bottom contact of $W_{X}$ lies either on the bottom of the frame, or on top of some
transistor. In the first case, we have $\Delta = \Delta_{\mathcal{P}}$ for $\mathcal{P} = \{ X \}$ and we are done. In the second, the bottom contact of 
$W_{X}$ lies on top of some transistor $T$, which we call $T_{X}$. Since the top label of $T_{X}$ is $[X]$, the bottom label must be $[B_{1}]\ldots [B_{k}]$,
where $B_{1}, \ldots, B_{k}$ are the maximal proper subballs of $X$. Thus there are wires $W_{1}, \ldots, W_{k}$ attached to the bottom of $T_{X}$, and we 
have $\ell(W_{i}) = [B_{i}]$, for $i = 1, \ldots, k$. We relabel each of the wires $W_{B_{1}}, \ldots, W_{B_{k}}$, respectively. Note that
$\{ B_{1}, \ldots, B_{k} \}$ is a partition of $X$ into balls. We can continue in this way, inductively labelling each wire with a ball $B \subseteq X$.
If we let $\overline{B}_{1}, \ldots, \overline{B}_{m}$ be the resulting labels of the wires which make bottom contacts with the frame, then 
$\{ \overline{B}_{1}, \ldots, \overline{B}_{m} \} = \mathcal{P}$ is a partition of $X$ into balls, and $\Delta = \Delta_{\mathcal{P}}$ by construction,
up to a reordering of the bottom contacts.

We can now prove surjectivity of $\widehat{\psi}$. Let $\Delta \in D_{b}(\mathcal{P}_{\sm_{X}}, [X])$ be reduced. We can write
$\Delta = \Delta_{2}^{+} \circ \left( \Delta_{1}^{+} \right)^{-1}$, where $\Delta_{i}^{+}$ is positive, for $i=1,2$. It follows that
$\Delta_{i}^{+} = \Delta_{\mathcal{P}_{i}} \circ \sigma_{i}$, for $i=1,2$, where $\mathcal{P}_{i}$ is a partition of $X$ into balls and $\sigma_{i}$ is 
diagram containing no transistors. Thus, $\Delta = \Delta_{\mathcal{P}_{2}} \circ \sigma_{2} \circ \sigma_{1}^{-1} \circ \Delta_{\mathcal{P}_{1}}^{-1}
= \psi(( \mathcal{P}_{1}, \mathcal{P}_{2}, \phi))$, where $\phi: \mathcal{P}_{1} \rightarrow \mathcal{P}_{2}$ is a bijection determined by
$\sigma_{2} \circ \sigma_{1}^{-1}$. Therefore, $\widehat{\psi}$ is surjective.

For the converse, we must show that if $\mathcal{P} = \langle \Sigma \mid \mathcal{R} \rangle$ is a tree-like semigroup presentation, $x \in \Sigma$, then there
is a linearly ordered compact ultrametric space $X_{\mathcal{P}}$ and a small similarity structure $\sm_{X_{\mathcal{P}}}$ such that
$D_{b}(\mathcal{P}, x) \cong \Gamma(\sm_{X_{\mathcal{P}}})$. 
Construct a labelled ordered simplicial tree $T_{(\mathcal{P}, x)}$ as follows. Begin with a vertex $\ast$, the root, labelled by $x \in \Sigma$. 
By the definition of tree-like semigroup presentation (Definition \ref{def: treelike}), there is at most  one relation in $\mathcal{R}$ having
the word $x$ as its left side. Let us suppose first that $(x,x_{1}x_{2}\ldots x_{k}) \in \mathcal{R}$, where $k \geq 2$. 
We introduce $k$ children of the root, labelled $x_{1}, \ldots, x_{k}$ (respectively), each connected to the root by an edge. 
The children are ordered from left to right in such a way that we read the
word $x_{1}x_{2}\ldots x_{k}$ as we read the labels of the children from left to right. If, on the other hand, $x$ is not the left side of any relation
in $\mathcal{R}$, then the tree terminates -- there is only the root. We continue similarly: if $x_{i}$ is the left side of some relation 
$(x_{i},  y_{1}y_{2}\ldots y_{m}) \in \mathcal{R}$ ($m \geq 2$),
then this relation is unique and we introduce a labelled ordered collection of children, as above. If $x_{i}$ is not the left side of any relation
in $\mathcal{R}$, then $x_{i}$ has no children. This builds a labelled ordered tree $T_{(\mathcal{P}, x)}$. We note that if a vertex $v 
\in T_{(\mathcal{P}, x)}$ is labelled by $y \in \Sigma$, then the subcomplex $T_{v} \leq T_{(\mathcal{P},x)}$ spanned by $v$ and all of its descendants is isomorphic
to $T_{(\mathcal{P}, y)}$, by a simplicial isomorphism which preserves the labelling and the order. 

We let $\mathrm{Ends}(T_{(\mathcal{P}, x)})$ denote the set of all edge-paths $p$ in $T_{(\mathcal{P}, x)}$ such that:
i) $p$ is without backtracking; ii) $p$ begins at the root; iii) $p$ is either infinite, or $p$ terminates at a vertex without children.
We define a metric on $\mathrm{Ends}(T_{(\mathcal{P}, x)})$ as follows. If $p, p' \in \mathrm{Ends}(T_{(\mathcal{P},  x)})$ and $p, p'$ have exactly
$m$ edges in common, then we set $d(p,p') = e^{-m}$. This metric makes $\mathrm{Ends}(T_{(\mathcal{P},x)})$ a compact ultrametric space, and it is linearly ordered
by the ordering of the tree. We can describe the
balls in $\mathrm{Ends}(T_{(\mathcal{P},x)})$ explicitly. Let $v$ be a vertex of $T_{(\mathcal{P},x)}$. We set $B_{v} = 
\{ p \in \mathrm{Ends}(T_{(\mathcal{P},x)}) \mid v \text{ lies  on } p \}$. Every such set is a ball, and every ball in $\mathrm{Ends}(T_{(\mathcal{P},x)})$
has this form. We can now describe a finite similarity structure $\sm_{X_{\mathcal{P}}}$ on $\mathrm{Ends}(T_{(\mathcal{P}, x)})$. Let $B_{v}$ and $B_{v'}$ be the balls
corresponding to the vertices $v, v' \in T_{(\mathcal{P},x)}$. If $v$ and $v'$ have different labels, then we set $\sm_{X_{\mathcal{P}}}(B_{v}, B_{v'}) = \emptyset$.
If $v$ and $v'$ have the same label, say $y \in \Sigma$, then there is label- and order-preserving simplicial isomorphism $\psi: T_{v} \rightarrow T_{v'}$.
Suppose that $p_{v}$ is the unique edge-path without backtracking connecting the root to $v$. Any point in $B_{v}$ can be expressed in the form
$p_{v}q$, where $q$ is an edge-path without backtracking in $T_{v}$. We let  $\widehat{\psi}: B_{v} \rightarrow B_{v'}$ be defined by the
rule $\widehat{\psi}(p_{v}q) = p_{v'}\psi(q)$. The map $\widehat{\psi}$ is easily seen to be a surjective similarity. 
We set $\sm_{X_{\mathcal{P}}}(B_{v}, B_{v'}) = \{ \widehat{\psi} \}$. The resulting assignments give a small similarity structure $\sm_{X_{\mathcal{P}}}$ on 
the linearly ordered compact
ultrametric space $\mathrm{Ends}(T_{(\mathcal{P},x)})$.

Now we can apply the first part of the theorem: setting $X_{\mathcal{P}} = \mathrm{Ends}(T_{(\mathcal{P},x)})$, we
have $\Gamma(\sm_{X_{\mathcal{P}}}) \cong D_{b} (\mathcal{P}_{\sm_{X_{\mathcal{P}}}}, [X_{\mathcal{P}}]) \cong D_{b}(\mathcal{P}, x)$.
\end{proof}

\begin{example}
The generalized Thompson's groups $V_{d}$ are isomorphic to the braided diagram groups $D_{b}(\mathcal{P}, x)$, 
where $\mathcal{P} = \langle x \mid (x,x^{d}) \rangle$.
This fact was already proved in \cite{GubaSapir} and \cite{Far3}, and it is also a consequence of Theorem \ref{thm: bigisomorphism}. 
\end{example}  

%%%%%%%%%%%%%%%%%%%%%%%%%%%%%%%%%%%%%%%%%%%%%%%%%%%%%%%%%%%%%%%%%%%%%%%%%%%%%
\subsection*{FSS groups of small $\mathrm{Sim}$-structures}
%%%%%%%%%%%%%%%%%%%%%%%%%%%%%%%%%%%%%%%%%%%%%%%%%%%%%%%%%%%%%%%%%%%%%%%%%%%%%

In this subsection, we will show how to weaken the hypothesis of Theorem \ref{thm: bigisomorphism} somewhat.

\begin{lemma} \label{lemma: createorder}
If $X$ is a compact ultrametric space and the $\mathrm{Sim}$-structure satisfies $|\sm_{X}(B_{1}, B_{2})| \leq 1$ for every 
pair of balls $B_{1}, B_{2} \subseteq X$, then
there is a linear order $\leq$ on $X$ such that, for each $\gamma \in \mathrm{Sim}_{X}(B_{1},B_{2})$,
$\gamma : B_{1} \rightarrow B_{2}$ is order-preserving (for arbitrary $B_{1}$, $B_{2}$ such that
$\mathrm{Sim}_{X}(B_{1},B_{2}) \neq \emptyset$).
\end{lemma}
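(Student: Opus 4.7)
The plan is to first put a compatible linear ordering on the maximal proper sub-balls (MPSBs) of each ball, and then use the tree structure of balls in a compact ultrametric space to extend this to a linear ordering of $X$ itself.

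For the first step, I would define an equivalence relation $\sim$ on the set of balls by $B_1 \sim B_2$ iff $\sm_X(B_1,B_2) \neq \emptyset$; the Identities, Inverses, and Compositions axioms make this an equivalence relation. By the hypothesis $|\sm_X(B_1,B_2)| \leq 1$, whenever $B_1 \sim B_2$ there is a \emph{unique} similarity $h_{B_1,B_2}$ between them. For each $\sim$-class $\mathcal{C}$, I would choose a representative $R_\mathcal{C}$ and pick an arbitrary linear order on its MPSBs. For any other $B \in \mathcal{C}$, the Restrictions axiom guarantees that $h_{R_\mathcal{C},B}$ bijects the MPSBs of $R_\mathcal{C}$ with the MPSBs of $B$, and I would transport the order accordingly. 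Consistency of this transport within the class is forced by uniqueness: any composition of similarities along a chain $R_\mathcal{C} \to B \to B'$ equals the unique element of $\sm_X(R_\mathcal{C},B')$, so the order on MPSBs of $B'$ obtained directly agrees with the order obtained via $B$.

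Next, for distinct $x,y \in X$, the smallest ball containing both is $B_{x,y}=B(x,d(x,y))$ (any ball containing both must contain this one, by the ultrametric inequality). The MPSBs $A_x,A_y$ of $B_{x,y}$ containing $x$ and $y$ are distinct by minimality of $B_{x,y}$, and I would define $x<y$ iff $A_x<A_y$ in the already-chosen order on MPSBs of $B_{x,y}$. Trichotomy is immediate. For transitivity, I would use the standard tree-style case analysis: for any three points $x,y,z$, either the three balls $B_{x,y},B_{y,z},B_{x,z}$ all coincide (Case A), or two coincide and the third is strictly smaller (Cases B, C, D, depending on which pair). Case A reduces to transitivity on the MPSBs of the common ball; Cases B and C collapse to a comparison in a single ball because two of the three points land in the same MPSB; Case D is ruled out because the assumed inequalities would force a cycle in the order on MPSBs. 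The condition on disjoint balls required by the paper's notion of a linearly ordered ultrametric space reduces to the same MPSB observation: disjoint sub-balls of a common ambient ball inject into distinct MPSBs, and the comparison of any two points depends only on those MPSBs.

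Finally, I would verify order-preservation of an arbitrary $h \in \sm_X(B_1,B_2)$. Given $x<y$ in $B_1$, the Restrictions axiom places $h|_{B_{x,y}}$ in $\sm_X(B_{x,y},h(B_{x,y}))$; applying the same argument to $h^{-1}$ shows $h(B_{x,y})=B_{h(x),h(y)}$, and then $h|_{B_{x,y}}$ is the unique similarity between these two $\sim$-equivalent balls. By the very definition of the order on MPSBs of $h(B_{x,y})$, this unique similarity sends $A_x,A_y$ to the MPSBs of $B_{h(x),h(y)}$ containing $h(x),h(y)$ in an order-preserving manner, so $h(x)<h(y)$. The main obstacle is the consistency check in the second paragraph: it is the hypothesis $|\sm_X(B_1,B_2)|\le 1$ that makes the transported order on MPSBs across each $\sim$-class independent of all choices, and without this uniqueness one could not simultaneously satisfy the order-preservation requirement for every $h \in \sm_X$.
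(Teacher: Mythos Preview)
Your proof is correct and follows essentially the same approach as the paper: choose a representative from each $\sm_X$-class, fix an arbitrary order on its maximal proper sub-balls, transport via the unique similarity, and then compare two points $x,y$ by comparing the maximal proper sub-balls of the smallest ball containing both. Your write-up is in fact more explicit than the paper's in two places---you spell out the consistency of the transported MPSB order across a $\sim$-class (which the paper uses implicitly), and you give a full argument for why every $h\in\sm_X(B_1,B_2)$ is order-preserving, whereas the paper simply asserts this is ``clear from the definition.''
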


\begin{proof}
Choose a collection $\mathcal{B}'$ of balls, one from each $\mathrm{Sim}_{X}$-class of balls in $X$.  We 
let $\mathcal{B} \subseteq \mathcal{B}'$ denote the subcollection of balls that are not singleton sets.
Suppose that $B \in \mathcal{B}$.  Suppose that $\{ B_{1}, \ldots, B_{m} \}$ is the collection of all maximal
proper subballs of $B$.  We impose an (arbitrary) strict linear order $\prec$ on $\{ B_{1}, \ldots, B_{m} \}$,
say
$$ B_{1} \prec B_{2} \prec \ldots \prec B_{m}.$$
We similarly choose a linear order on the maximal proper subballs for each $B \in \mathcal{B}$.   

If $B \subseteq X$ is an arbitrary ball, and $B$ is not a singleton, then there is a unique 
$\hat{B} \in \mathcal{B}$ such that $\mathrm{Sim}_{X}(B, \hat{B}) \neq \emptyset$, and thus
there is a unique $\gamma \in \mathrm{Sim}_{X}(B, \hat{B})$.  If $\{ B_{1}, \ldots, B_{m} \}$ is the
collection of maximal proper subballs of $B$, then we define
$$ B_{i} \prec B_{j} \Leftrightarrow \gamma(B_{i}) \prec \gamma(B_{j}).$$
(The sets $\gamma(B_{i})$ and $\gamma(B_{j})$ are maximal proper subballs in $\hat{B}$ since similarities
take maximal proper subballs to maximal proper subballs, and therefore $\gamma(B_{i})$ and $\gamma(B_{j})$
are comparable under the order defined on proper subballs of $\hat{B}$.)

Now for $x,y \in X$, we write $x \leq y$ if: i) $x=y$, or ii) if there is some ball $B \subseteq X$ such
that $x \in B_{i}$ and $y \in B_{j}$, where $B_{i}$ and $B_{j}$ are maximal proper subballs of $B$, and
$B_{i} \prec B_{j}$.

We claim first that $\leq$ is a linear order on $X$.  Indeed, it is clear that $x \leq x$ for each $x \in X$.
Suppose that $x \leq y$ and $y \leq x$, and suppose, for a contradiction, that $x \neq y$.  It follows
that there are balls $B'$ and $B''$ ($B' \neq B''$) and maximal proper subballs $B_{1}'$, $B_{2}'$ and
$B_{1}''$, $B_{2}''$ such that $B_{1}' \prec B_{2}'$, $B_{1}'' \prec B_{2}''$, $x \in B_{1}' \cap B_{2}''$,
and $y \in B_{2}' \cap B_{1}''$.  Since the balls $B'$ and $B''$ have points in common, they must be nested.
Suppose $B' \subsetneq B''$.  It follows that any two proper subballs of $B'$ must be contained in the same 
maximal proper subball of $B''$.  Since $B_{1}' \cap B_{2}'' \neq \emptyset$, we have
$B_{1}', B_{2}' \subseteq B_{2}''$, and, since $B_{2}' \cap B_{1}'' \neq \emptyset$, we have
$B_{1}', B_{2}' \subseteq B_{1}''$.  This is a contradiction, because $B_{1}'' \cap B_{2}'' = \emptyset$.
Therefore $\leq$ is antisymmetric.

Now suppose that $x \leq y$ and $y \leq z$.  We want to show that $x \leq z$.  We can assume that
$x \neq y$ and $y \neq z$.  There are balls $B'$ and $B''$ and maximal proper subballs 
$B_{1}', B_{2}' \subseteq B'$, $B_{1}'', B_{2}'' \subseteq B''$ such that $B_{1}' \prec B_{2}'$,
$B_{1}'' \prec B_{2}''$, $x \in B_{1}'$, $y \in B_{2}' \cap B_{1}''$, and $z \in B_{2}''$.
Since $y \in B' \cap B''$, it must be that $B'$ and $B''$ are nested.  We consider two cases:
i) $B' = B''$ and ii) $B' \subsetneq B''$.

If $B' = B''$, then we must have $B_{2}' = B_{1}''$ so that
$B_{1}' \prec B_{2}' \prec B_{2}''$, from which the conclusion $x \leq z$ easily follows.  
If $B' \subsetneq B''$, then $B_{1}'$ and $B_{2}'$ are both contained in the same maximal proper subball
of $B''$, and therefore $B_{1}', B_{2}' \subseteq B_{1}''$ (since $B_{2}' \cap B_{1}'' \neq \emptyset$).
Thus $x \in B_{1}''$, so $x \leq z$.

We prove that the order is linear. Let $x, y \in X$, $x \neq y$. There is a ball $B$ that is the smallest of all balls
containing both $x$ and $y$. Let $B'$ be the maximal proper subball of $B$ containing $x$, and let $B''$ be the maximal proper subball
of $B$ containing $y$. Our assumptions imply that $B' \cap B'' = \emptyset$ (since $x \notin B''$ and $y \notin B'$). Thus, either
$B' < B''$ or $B'' < B'$. In either case, $x$ and $y$ are comparable in the order $\leq$.

Finally, it is clear from the definition of $\leq$ that for every pair of balls $B_1, B_2$,  each $\gamma \in \sm_{X}(B_{1}, B_{2})$ 
preserves $\leq$.
\end{proof}

\begin{corollary} \label{cor:maincor}
If $\mathcal{P} = \langle \Sigma \mid \mathcal{R} \rangle$ is a tree-like semigroup presentation and
$w \in \Sigma$, then $D_{b}(\mathcal{P},w)$ is isomorphic to $\Gamma(\sm_{X})$, for some compact ultrametric
space $X$ and finite similarity structure $\sm_{X}$ satisfying $|\sm_{X}(B_{1}, B_{2})| \leq 1$ for all balls $B_{1}, B_{2} \subseteq X$. 
Conversely,
if $X$ is a compact ultrametric space and $\sm_{X}$ is a finite similarity structure satisfying $|\sm_{X}(B_{1}, B_{2})| \leq 1$ for all
balls $B_{1}, B_{2} \subseteq X$,
then there is a tree-like semigroup presentation $\mathcal{P} = \langle \Sigma \mid \mathcal{R} \rangle$ and $w \in \Sigma$
such that $\Gamma(\sm_{X}) \cong D_{b}(\mathcal{P},w)$.
\end{corollary}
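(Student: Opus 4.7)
The plan is to reduce Corollary~\ref{cor:maincor} to Theorem~\ref{thm: bigisomorphism} in both directions, using Lemma~\ref{lemma: createorder} to bridge the gap between the hypothesis ``$|\sm_X(B_1,B_2)|\leq 1$'' and the stronger hypothesis of a \emph{small} similarity structure (which additionally requires a compatible linear order making every similarity order-preserving).

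For the first direction, let $\mathcal{P} = \langle \Sigma \mid \mathcal{R}\rangle$ be a tree-like semigroup presentation and $w \in \Sigma$. The second half of the proof of Theorem~\ref{thm: bigisomorphism} already produces, from this data, a compact ultrametric space $X_{\mathcal{P}} = \mathrm{Ends}(T_{(\mathcal{P},w)})$ equipped with a finite similarity structure $\sm_{X_{\mathcal{P}}}$ that satisfies $|\sm_{X_{\mathcal{P}}}(B_1,B_2)|\leq 1$ (in fact it is a small similarity structure in the sense of Definition~\ref{def: smallsim}, since $X_{\mathcal{P}}$ inherits a linear order from the tree and all similarities are built from label- and order-preserving simplicial isomorphisms). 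Theorem~\ref{thm: bigisomorphism} then yields $D_b(\mathcal{P},w) \cong \Gamma(\sm_{X_{\mathcal{P}}})$, which gives exactly what is required in this direction.

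For the converse, I would start with $X$ and $\sm_X$ such that $|\sm_X(B_1,B_2)| \leq 1$ for every pair of balls. Apply Lemma~\ref{lemma: createorder} to obtain a linear order $\leq$ on $X$ with respect to which every element of every $\sm_X(B_1,B_2)$ is order-preserving. One should first check that this order interacts correctly with the ultrametric, i.e.\ that $(X,\leq)$ is a \emph{linearly ordered ultrametric space} in the sense used before Definition~\ref{def: smallsim}: whenever two disjoint balls $B_1, B_2$ have some point of $B_1$ less than some point of $B_2$, then every point of $B_1$ is less than every point of $B_2$. This is immediate from the definition of $\leq$ constructed in the proof of Lemma~\ref{lemma: createorder}, since the order of points in disjoint balls $B_1, B_2 \subseteq B$ is determined by the $\prec$-order on the maximal proper subballs of the smallest ball containing both. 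With this verification in hand, $\sm_X$ is a small similarity structure on the linearly ordered compact ultrametric space $(X,\leq)$, and Theorem~\ref{thm: bigisomorphism} gives $\Gamma(\sm_X) \cong D_b(\mathcal{P}_{\sm_X}, [X])$. By Remark~\ref{rem: psimx}, $\mathcal{P}_{\sm_X}$ is tree-like, and $[X] \in \Sigma$, completing the converse.

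The main (and really only) obstacle is confirming that the order produced by Lemma~\ref{lemma: createorder} makes $(X,\leq)$ a linearly ordered ultrametric space in the precise sense required by Definition~\ref{def: smallsim}; once that technicality is dispatched, both halves of the corollary are immediate applications of Theorem~\ref{thm: bigisomorphism}. No additional computation is required beyond unpacking the definitions.
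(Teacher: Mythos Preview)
Your proof is correct and follows essentially the same route as the paper: the first direction is a direct application of Theorem~\ref{thm: bigisomorphism}, and the converse uses Lemma~\ref{lemma: createorder} to produce the linear order and then applies Theorem~\ref{thm: bigisomorphism} again. Your explicit verification that the order from Lemma~\ref{lemma: createorder} makes $X$ a linearly ordered ultrametric space is a point the paper's proof leaves implicit, so your argument is, if anything, slightly more careful.
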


\begin{proof}
The first statement is a direct consequence of Theorem \ref{thm: bigisomorphism}. Conversely, if $X$ is a compact ultrametric space
and $\sm_{X}$ has the above properties, we can apply Lemma \ref{lemma: createorder} to create a linear order on $X$ that is preserved
by each $\gamma \in \sm_{X}(B_{1}, B_{2})$, and then apply Theorem \ref{thm: bigisomorphism}.
\end{proof}

%%%%%%%%%%%%%%%%%%%%%%%%%%%%%%%%%%%%%%%%%%%%%%%%%%%%%%%%%%%%%%%%%%%%%%%%%%%%%%%%%%%%%

%%%%%%%%%%%%%%%%%%%%%%%%%%%%%%%%%%%%%%%%%%%%%%%%%%%%%%%%%%%%%%%%%%%%%%%%%%%%%%%%%%%%

%%%%%%%%%%%%%%%%%%%%%%%%%%%%%%%%%%%%%%%%%%%%%%%%%%%%%%%%%%%%%%%%%%%%%%%%%%%%%%%%%%%%%%%%
%%%%%%%%%%%%
%%%%%%%%%%%%%%%%%%%%%%%%%%%%%%%%%%%
%%%%%%%%%%%%%%%%%%%%%%%%%%%%%%%%%%%%%
\bibliographystyle{plain}
\bibliography{biblio}

\end{document}